\theoremstyle{plain}
\newtheorem{thm}{Theorem}[section]
\newtheorem{prop}[thm]{Proposition}
\newtheorem{lem}[thm]{Lemma}
\newtheorem{cor}[thm]{Corollary}
\numberwithin{equation}{section}
\theoremstyle{definition}
\newtheorem{defn}[thm]{Definition}
\newtheorem{rem}[thm]{Remark}
\newcommand*{\rom}[1]{\expandafter\@slowromancap\romannumeral #1@}
\newcommand{\B}{\mathbb{B}}
\newcommand{\C}{\mathbb{C}}
\newcommand{\N}{\mathbb{N}}
\newcommand{\R}{\mathbb{R}}
\newcommand{\Z}{\mathbb{Z}}
\newcommand{\calA}{\mathcal{A}}
\newcommand{\calL}{\mathcal{L}}
\def\dist{\mathop\mathrm{dist}} 
\def\supp{\mathop\mathrm{supp}} 
\def\diam{\mathop\mathrm{diam}} 
\def\Stop{\mathop\mathrm{Stop}} 
\def\Top{\mathop\mathrm{Top}}
\def\BWGL{\mathop\mathrm{BWGL}}
\def\ve{\varepsilon}
\newlength{\leftstackrelawd}
\newlength{\leftstackrelbwd}
\def\leftstackrel#1#2{\settowidth{\leftstackrelawd}%
{${{}^{#1}}$}\settowidth{\leftstackrelbwd}{$#2$}%
\addtolength{\leftstackrelawd}{-\leftstackrelbwd}%
\leavevmode\ifthenelse{\lengthtest{\leftstackrelawd>0pt}}%
{\kern-.5\leftstackrelawd}{}\mathrel{\mathop{#2}\limits^{#1}}}
\let\d = \undefined
\DeclareMathOperator{\d}{\textrm{d}}
\title[TSP in a Hilbert Space]{A $d$-dimensional Analyst's Travelling Salesman Theorem for subsets of Hilbert space}
\author{Matthew Hyde }
\address{Matthew Hyde\\
School of Mathematics \\ University of Edinburgh \\ JCMB, Kings Buildings \\
Mayfield Road, Edinburgh,
EH9 3JZ, Scotland.}
\email{m.hyde "at" ed.ac.uk}
\subjclass[2010]{28A75,28A78,28A12}
\keywords{Rectifiability, Travelling salesman theorem, beta numbers, Reifenberg parametrization}
\thanks{M. Hyde was supported by The Maxwell Institute Graduate School in Analysis and its
Applications, a Centre for Doctoral Training funded by the UK Engineering and Physical
Sciences Research Council (grant EP/L016508/01), the Scottish Funding Council, Heriot-Watt
University and the University of Edinburgh.}
\begin{document}

\begin{abstract}
We are interested in quantitative rectifiability results for subsets of infinite dimensional Hilbert space $H$. We prove a version of Azzam and Schul's $d$-dimensional Analyst's Travelling Salesman Theorem in this setting by showing for any lower $d$-regular set $E \subseteq H$ that 
\[  \diam(E)^d +  \beta^d(E) \sim \mathscr{H}^d(E) + \text{Error}, \]
where $\beta^d(E)$ give a measure of the curvature of $E$ and the error term is related to the theory of uniform rectifiability (a quantitative version of rectifiability introduced by David and Semmes). To do this, we show how to modify the Reifenberg Parametrization Theorem of David and Toro so that it holds in Hilbert space. As a corollary, we show that a set $E \subseteq H$ is uniformly rectifiable if and only if it satisfies the so-called Bilateral Weak Geometric Lemma, meaning that $E$ is bi-laterally well approximated by planes at most scales and locations.    

\end{abstract}

\maketitle

\tableofcontents

\section{Introduction}
	Let $H$ be a real separable Hilbert space and $1 \leq d < \dim(H)$ (the dimension of $H$, possibly infinite). We say a set $E \subseteq H$ is \textit{$d$-rectifiable} if it can be covered $\mathscr{H}^d$-a.e. by countably many Lipschitz images of $\R^d.$ Here, $\mathscr{H}^d$ stands for the $d$-dimensional Hausdorff measure. Rectifiable sets are a central object of study in geometric measure theory and form a natural setting in many other areas. In the early nineties, driven by trying to understand the $L^2$-boundedness of certain singular integral operators, there was interest in studying more quantitative aspects of rectifiability. Two of the main results to come out of this period were the Analyst's Travelling Salesman Theorem of Peter Jones \cite{jones1990rectifiable}, and the theory of uniform rectifiability, due to David and Semmes \cite{david1991singular,david1993analysis}. 

Peter Jones' Travelling Salesman Theorem (TST) gives a necessary and sufficient condition for when a set $E \subseteq \R^2$ can be contained in a rectifiable curve (curve of finite length). This condition is stated in terms of $\beta$-numbers, which give a measure of flatness at a particular location and scale. For sets $E,B \subseteq H,$ define
\begin{align}\label{e:beta-number}
	\beta_{E,\infty}^d(B) = \frac{2}{r_B}\inf_L \sup\{\text{dist}(y,L) : y \in E \cap B\},
\end{align}
where $L$ ranges over $d$-planes in $H.$ Jones proved the following in the case that $H = \R^2.$ 
\begin{thm}\label{1DTSP}
	Let $E \subseteq \R^2$ and let $\Delta$ denote the collection of dyadic cubes in $\R^2.$ Then there is a connected set $\Gamma \supseteq E$ such that 
	\begin{align}\label{e:TSP1}
		\mathscr{H}^1(\Gamma) \lesssim \diam E + \sum_{\substack{Q \in \Delta \\ Q \cap E \not= \emptyset}} \beta_{E,\infty}^1(3Q)^2\ell(Q).
	\end{align}
	Conversely, if $\Gamma \subseteq \C$ is connected and $\mathscr{H}^1(\Gamma) < \infty$, then
	\begin{align}\label{e:TSP2}
		\diam \Gamma + \sum_{\substack{Q \in \Delta \\ Q \cap \Gamma \not= \emptyset}} \beta_{\Gamma,\infty}^1(3Q)^2\ell(Q) \lesssim \mathscr{H}^1(\Gamma).
	\end{align}
\end{thm}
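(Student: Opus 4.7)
I would prove the two inequalities in turn, both relying on Pythagorean-type estimates that produce $\beta^2$ (rather than $\beta$) savings.

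\emph{Upper bound \eqref{e:TSP1}.} Normalise so $\diam(E) = 1$. For each $n \geq 0$, fix a maximal $2^{-n}$-separated set $X_n \subseteq E$ with $X_{n-1} \subseteq X_n$. For each dyadic $Q$ meeting $E$ with $\ell(Q) \sim 2^{-n}$, let $L_Q$ be a line realizing (or nearly realizing) the infimum defining $\beta_{E,\infty}^1(3Q)$. I would construct polygonal curves $\Gamma_n \supseteq X_n$ inductively: start with a segment through the points of $X_0$; to produce $\Gamma_n$ from $\Gamma_{n-1}$, replace each edge of $\Gamma_{n-1}$ passing near a cube $Q$ of generation $n$ by a shorter polyline that follows $\pi_{L_Q}(X_n \cap Q)$, where $\pi_{L_Q}$ denotes orthogonal projection. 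The crucial local estimate is that switching from $\pi_{L_{Q'}}$ (the parent line) to $\pi_{L_Q}$ on a segment of length $\ell(Q)$ changes the length by $O(\theta^2 \ell(Q))$, where $\theta = \angle(L_Q, L_{Q'}) \lesssim \beta_{E,\infty}^1(3Q)$; this is the Pythagorean gain, expressed as $\sec\theta - 1 \sim \theta^2$. Summing these local contributions over all cubes gives $\mathscr{H}^1(\Gamma_n) \leq \diam(E) + C \sum_Q \beta_{E,\infty}^1(3Q)^2 \ell(Q)$, the sequence $\Gamma_n$ converges in Hausdorff distance to a connected set $\Gamma \supseteq \overline{\bigcup_n X_n} \supseteq E$, and lower semicontinuity of length transfers the bound to $\Gamma$.

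\emph{Lower bound \eqref{e:TSP2}.} Now let $\Gamma$ be connected with $\mathscr{H}^1(\Gamma) < \infty$. For each $Q$ meeting $\Gamma$, set $\beta_Q := \beta_{\Gamma,\infty}^1(3Q)$ and fix $L_Q$ attaining the infimum. Connectedness of $\Gamma \cap 3Q$ and the existence of $y_Q \in \Gamma \cap 3Q$ with $\dist(y_Q, L_Q) \gtrsim \beta_Q \ell(Q)$ force the curve to contain an arc that resembles a triangle with base $\sim \ell(Q)$ along $L_Q$ and apex at $y_Q$; the two slanted sides exceed the base by $\gtrsim \beta_Q^2 \ell(Q)$, again by Pythagoras. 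This extra length can be charged to a Carleson-type region associated to $Q$, and Jones' packing argument (or a martingale/tree argument based on a stopping-time decomposition of cubes into $\beta_Q \geq \beta_0$ vs.\ $\beta_Q < \beta_0$) shows that these excess contributions can be distributed over essentially disjoint pieces of $\Gamma$, so their total is controlled by $\mathscr{H}^1(\Gamma)$. The term $\diam(\Gamma) \leq \mathscr{H}^1(\Gamma)$ is immediate from connectedness.

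\emph{Main obstacle.} The crux in both directions is the quadratic (not linear) dependence on $\beta$; a naive construction in the upper bound would attach a spike of length $\beta \ell(Q)$ at each cube and sum to $\sum \beta \ell(Q)$, which is generally infinite. The geometric gain by a factor of $\beta$ comes only from the orthogonal projection / tilted-edge picture, and making this rigorous requires showing that consecutive approximating lines $L_{Q'}, L_Q$ are close (quantitatively), that the polylines $\Gamma_n$ form a Cauchy sequence in an appropriate sense, and that the final limit really contains $E$. The analogous difficulty for the lower bound is the Carleson packing: one must ensure that each portion of $\Gamma$ is counted boundedly often when summing the triangular excesses, which is where the dyadic martingale structure and a careful pigeonholing argument enter.
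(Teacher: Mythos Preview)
The paper does not prove Theorem~\ref{1DTSP}: it is stated in the introduction as a classical result of Jones \cite{jones1990rectifiable} (with the extensions to $\R^n$ and Hilbert space attributed to Okikiolu and Schul), and is used only as motivation and historical context for the higher-dimensional results that follow. There is therefore no proof in the paper to compare your proposal against.

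That said, your sketch is broadly in the spirit of the classical argument. The upper bound via successive polygonal refinements with a Pythagorean $\beta^2$ gain, and the lower bound via triangular excess plus a Carleson packing, are the right themes. A few cautions if you intend to flesh this out: in the upper bound, Jones' actual construction is a farthest-insertion scheme rather than a projection-onto-lines scheme, and the bookkeeping of which edge absorbs which new point is where most of the work lies; your description of ``replacing each edge by a polyline following $\pi_{L_Q}(X_n\cap Q)$'' is too vague to guarantee connectedness and the claimed length increment. In the lower bound, connectedness of $\Gamma$ does \emph{not} imply connectedness of $\Gamma\cap 3Q$, so the ``arc resembling a triangle'' picture needs a more careful localisation (one typically works with an arc-length parametrisation and compares chord to arc on dyadic subintervals of the parameter domain). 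These are the standard technical hurdles in any write-up of Jones' theorem, not fatal gaps in your plan.
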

In other words, if $E$ is flat enough then there exists a rectifiable curve $\Gamma$ so that $E \subseteq \Gamma$ and there is quantitative control on the length of $\Gamma.$ Moreover, the second half of Theorem \ref{1DTSP} shows there is quantitative control on how non-flat a curve can be. This was later extended to subsets of $\R^n$ by Okikiolu \cite{okikiolu1992characterization} (with the implicit constants depending on the ambient dimension $n$), and to subsets of infinite dimensional Hilbert space by Schul \cite{schul2007subsets}. Notice that if $\Gamma$ is a curve, then Theorem \ref{1DTSP} implies 
\begin{align}\label{e:TSP3}
	\mathscr{H}^1(\Gamma) \sim \diam \Gamma + \sum_{\substack{Q \in \Delta \\ Q \cap E \not=\emptyset}} \beta^{1}_{\Gamma,\infty}(3Q)^2\ell(Q). 
\end{align}
Similar results have been considered in a number of non-Hilbert space settings, for example, the Heisenberg group \cite{ferrari2007geometric,li2016upper,li2016traveling}, Carnot groups \cite{chousionis2019traveling,li2019stratified}, and general metric spaces \cite{hahlomaa2005menger, hahlomaa2008menger, schul2007ahlfors, david2020sharp}. See \cite{badger2019holder} for a similar result concerning Hölder curves.  

Theorem \ref{1DTSP} is concerned with quantitative information for 1-dimensional subsets of Euclidean space; for higher dimensional subsets, we are lead to a discussion of uniformly rectifiable sets introduced by David and Semmes in \cite{david1993analysis}. 

\begin{defn}\label{d:UR}
	A set $E \subseteq H$ is called \textit{uniformly $d$-rectifiable} (UR) if it is \textit{Ahlfors $d$-regular}, meaning there is $A > 0$ such that
\[r^d/A \leq \mathscr{H}^d(E \cap B(x,r)) \leq Ar^d \ \text{for all} \ x \in E, \ r \in (0,\text{diam}E) \] 
and has \textit{big pieces of Lipschitz images of} $\R^d$, meaning there are constants $L,c>0$ such that for all $x \in E$ and $r \in (0,\diam E),$ there is an $L$-Lipschitz map $f:\R^d \rightarrow H$ such that 
\[\mathscr{H}^d(E \cap B(x,r) \cap f(B_d(0,r))) \geq cr^d.\]
\end{defn} 
Uniform rectifiability is a stronger quantitative analogue of rectifiability (every UR set is rectifiable but not every rectifiable set is UR); it has found applications to singular integrals \cite{david1993analysis, tolsa2009uniform, nazarov2014uniform} and harmonic measure, see \cite{azzam2020harmonic} and the references therein. When David and Semmes defined uniform rectifiability in \cite{david1993analysis}, they proved many equivalent conditions. One important result (now a black-box in the theory of UR) is a characterization of UR sets by the Bilateral Weak Geometric Lemma. 

To state the definition, we need some notation. For closed sets $E,F \subseteq H$ and a set $B \subseteq H$, recall the local normalized Hausdorff distance
\begin{align}\label{e:LHD}
	d_B(E,F) = \frac{2}{\text{diam}(B)}\max \left\{\sup_{y \in E \cap B}\text{dist}(y,F), \sup_{y \in F \cap B}\text{dist}(y,E) \right\}.
\end{align}
If $B = B(x,r),$ we may write $d_{x,r} = d_{B(x,r)}.$ Let $E \subseteq H$ and $\mathscr{D}$ be the Christ-David cubes for $E$ (these are subsets of $E$ that behave like Euclidean dyadic cubes, see Theorem \ref{cubes} for more details). For a cube $Q \in \mathscr{D}$, let $x_Q$ denote its ``centre", $\ell(Q)$ denote its ``side-length" and $B_Q = B(x_Q,\ell(Q)).$ Again, see Theorem \ref{cubes}. For constants $A\geq 1$ and $\ve>0$, define 
\[ \BWGL(A,\ve) = \{Q \in \mathscr{D} : d_{AB_Q}(E,P) \geq \ve \mbox{ for all $d$-planes } P\} \]
and for a cube $Q \in \mathscr{D},$ define
\[ \BWGL(Q,A,\ve) = \sum_{\substack{R \subseteq Q \\ R \in \BWGL(A,\ve)}} \ell(R)^d. \]
\begin{defn}\label{d:BWGL}
We say $E$ satisfies the \textit{Bilateral Weak Geometric Lemma} (BWGL) if for each $A \geq 1$ and $\ve > 0$ there is a constant $C = C(A,\ve)$ so that for any $Q \in \mathscr{D}$,
\[ \BWGL(Q,A,\ve) \leq C \ell(Q)^d. \]
\end{defn} 
David and Semmes originally stated the BWGL in terms of a \textit{Carleson measure} condition, see \cite[Definition 2.2]{david1993analysis}, however the above formulation is equivalent. See \cite[Pages 54-55]{david1993analysis}, where a similar equivalence is discussed for the \textit{Weak Geometric Lemma} (WGL). In \cite{david1993analysis}, David and Semmes show that an Ahlfors $d$-regular set in $\R^n$ is UR if and only if it satisfies the BWGL. This has been an essential tool in many results concerning UR, for example \cite{tolsa2015uniform, hofmann2016uniform, azzam2020weak}. As a corollary of our main result, we show the same is true for subsets of infinite dimensional Hilbert space. 
\begin{thm}\label{t:BWGL}
	Let $E \subseteq H$ be Ahlfors $d$-regular. Then $E$ is UR if and only if $E$ satisfies the \emph{BWGL}. 
\end{thm}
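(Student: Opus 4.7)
The plan is to derive both directions from the $d$-dimensional TST for subsets of $H$ proved in the main body of the paper, combined with the Hilbert-space version of David--Toro's Reifenberg Parametrization Theorem established there. Under Ahlfors $d$-regularity we have $\mathscr{H}^d(E \cap B_Q) \sim \ell(Q)^d$ for every Christ--David cube $Q \in \mathscr{D}$, and the main TST provides a two-sided comparison between $\mathscr{H}^d$ and sums of $\beta^d$-numbers and $\BWGL$ contributions. The forward direction is the easier of the two; the reverse is where the Hilbert-space Reifenberg theorem does the heavy lifting.

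\textbf{UR $\Rightarrow$ BWGL.} The main TST, localised to a Christ--David cube $Q_0 \in \mathscr{D}$, should give a Carleson-type bound on $\BWGL(Q_0,A,\ve)$ in terms of $\mathscr{H}^d(E \cap AB_{Q_0})$ and sums of squared $\beta^d$-numbers over subcubes of $Q_0$. Ahlfors $d$-regularity controls the first contribution by $\ell(Q_0)^d$. For the $\beta^d$ sum one invokes the standard fact that UR sets enjoy a Carleson packing of $\beta$-numbers, which in $\R^n$ follows from the big-pieces-of-Lipschitz-images condition by reduction to direct $\beta^d$ estimates for Lipschitz graphs; this reduction uses only Christ's cubes and pointwise Lipschitz-graph computations, both available in $H$. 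Combining these bounds gives $\BWGL(Q_0,A,\ve) \lesssim \ell(Q_0)^d$, uniformly in $Q_0$, which is exactly BWGL.

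\textbf{BWGL $\Rightarrow$ UR.} Fix $x \in E$ and $r \in (0,\diam E)$, and pick $Q_0 \in \mathscr{D}$ with $x_{Q_0}$ close to $x$ and $\ell(Q_0) \sim r$. For appropriately chosen $A$ and $\ve$, the BWGL hypothesis says that only a Carleson-packable family of subcubes of $Q_0$ fails to admit a bi-laterally $\ve$-close $d$-plane in $AB_R$. A stopping-time construction isolates a subcollection $\mathscr{G} \subseteq \{R \subseteq Q_0\}$ on which such planes exist and vary slowly from scale to scale, with complement a Whitney-type family $\{R_i\}$ satisfying $\sum_i \ell(R_i)^d \lesssim \ell(Q_0)^d$. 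These are exactly the hypotheses of the Hilbert-space Reifenberg parametrization proved in the paper, which supplies a Lipschitz map $f \colon \R^d \to H$ whose image is bi-laterally $O(\ve)$-close to $E$ inside $B(x,r)$ away from the $R_i$. The Carleson packing of the $R_i$ together with Ahlfors regularity then forces
\[
\mathscr{H}^d\bigl(E \cap f(\R^d) \cap B(x,r)\bigr) \gtrsim r^d,
\]
which is the big-pieces-of-Lipschitz-images condition, giving UR.

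\textbf{Main obstacle.} The bulk of the work lies in the reverse direction, specifically in verifying that the stopping-time regions produced by BWGL satisfy the flatness and coherence hypotheses of the infinite-dimensional Reifenberg parametrization. In $\R^n$ one freely exploits ambient compactness when choosing optimal approximating planes, when taking Arzel\`a--Ascoli limits of coherent plane families, and in various local covering arguments; in $H$ all of these steps must be replaced by quantitative estimates in terms of $\beta^d$-numbers, which is the delicate content of the paper's modification of David--Toro. Once that parametrization is accepted, the corollary reduces to the David--Semmes blueprint, and the cube structure of Christ required for the stopping-time bookkeeping is available in $H$ without change.
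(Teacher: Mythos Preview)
Your outline is broadly on target---both directions do hinge on the TST inequalities and on the Hilbert-space David--Toro parametrization---but two of your steps gloss over exactly the places where the paper has to do real work.

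In the UR $\Rightarrow$ BWGL direction you invoke as a ``standard fact'' the Carleson packing of $\beta$-numbers for UR sets in $H$, claiming it reduces to Lipschitz-graph computations. This is precisely what is \emph{not} available off the shelf in infinite dimensions (the paper's remark after the statement of the theorem points out that the David--Semmes argument uses a finite ``unit star'' that has no analogue in $H$). The paper instead bootstraps: Schul's theorem upgrades big pieces of Lipschitz images to big pieces of \emph{bi}-Lipschitz images of $\R^d$; a David--Semmes extension lemma turns these into global bi-Lipschitz images $\tilde E = g(\R^d)$; a direct argument (Lemma~\ref{l:bi-lip-BWGL}, proved in Section~\ref{s:Thm2} using Dorronsoro's theorem and a degree-theory computation to control $\eta^d_\infty$) shows such $\tilde E$ satisfy BWGL; then Theorem~\ref{Thm1} gives the $(2,2)$-geometric lemma for $\tilde E$, and a big-pieces transfer lemma passes it to $E$. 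Your sketch does not supply any of these ingredients, and in particular Lemma~\ref{l:bi-lip-BWGL} is singled out in the paper as requiring a new proof in $H$.

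In the BWGL $\Rightarrow$ UR direction your final step---concluding $\mathscr{H}^d(E \cap f(\R^d) \cap B(x,r)) \gtrsim r^d$ from a single Reifenberg surface close to $E$---does not quite work as stated: the surface is only close to $E$, not contained in it, so the intersection could a priori be null. The paper instead runs the stopping-time decomposition from Section~\ref{s:Thm2}, obtaining for each region $S$ a bi-Lipschitz surface $\Sigma_S$ that actually contains the ``never-stopped'' points of $E$ in $Q(S)$. It then \emph{stitches} the inverse maps $h_S^{-1}$ across the bounded hierarchy of transition cubes (following the David--Semmes template from \cite[Section~16]{david1991singular}) to produce an $L$-bi-Lipschitz map from a large-measure subset $F \subseteq E \cap B(x,r)$ into $\R^d$; Kirszbraun then gives the Lipschitz image. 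So the map goes from $E$ to $\R^d$, not the other way. Finally, note that you have the relative difficulty backwards: the paper flags the forward direction (specifically Lemma~\ref{l:bi-lip-BWGL}) as the place where a genuinely new Hilbert-space argument was needed.
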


\begin{rem}
	The proof of Theorem \ref{t:BWGL} for subsets of $\R^n$ can be found in \cite[Chapter II.2]{david1993analysis}. The proof relies on the existence of a  ``unit star" $\Sigma_0$, which is the intersection of the unit ball with a \textit{finite} number of $d$-planes, chosen in such a way that $\Sigma_0$ has large projections onto every $d$-plane $P \subseteq \R^n.$ No such set exists in Hilbert space. These star-like sets are also used in the David and Semmes proof of the fact that that WHIP and WTP imply BWGL. See \cite[Theorem II.3.9]{david1993analysis} and its proof beginning in \cite[Section II.3.5]{david1993analysis}.
\end{rem}

For sets which are not Ahlfors $d$-regular, some partial results akin to the first half of Theorem \ref{1DTSP} are obtained in \cite{pajot1996theoreme, david2012reifenberg}. However, the second half of Theorem \ref{1DTSP} (with the $\beta$-numbers as defined in \eqref{e:beta-number}) is known to be false by an example of Fang \cite{fang1990cauchy}. With a different $\beta$-number, some progress was made in \cite{azzam2018analyst} for lower content $d$-regular sets. A set $E \subseteq H$ is said to be \textit{lower content $d$-regular} in a ball $B$ if there exists a constant $c>0$ such that for each $x \in E \cap B$ and $r \in (0,r_B),$ 
\begin{align*}
	\mathscr{H}_\infty^d(E \cap B(x,r)) \geq cr^d. 
\end{align*}
Here, $\mathscr{H}_\infty^d$ denotes the Hausdorff content, see Section \ref{s:prelims}. Azzam and Schul introduced a new $\beta$-number, based on a $\beta$-number of David and Semmes \cite[Equation 1.9]{david1991singular}, and proved a version of Jones' TST for lower content $d$-regular subsets of $\R^n$. Our main result is a version of that in \cite{azzam2018analyst} for infinite dimensional Hilbert spaces. We first recall the $\beta$-number from \cite{azzam2018analyst}. For $E \subseteq H$ and a ball $B$, define
\begin{align}\label{e:beta-cont}
	\beta_E^{d,p}(B) &= \inf_L \left(\frac{1}{r_B^d} \int_0^1 \mathscr{H}_\infty^d (\{x \in E \cap B : \text{dist}(x,P) > tr_B \})t^{p-1}\, dt \right)^\frac{1}{p},
\end{align}
where the infimum is taken over all $d$-planes $L$ in $H$. Also, define the critical exponent, 
\begin{align}
	p(d) =	\begin{cases}
		\frac{2d}{d-2} & d > 2 \\
		\infty & d \leq 2		
	\end{cases}.
\end{align}
The statement, which separate into two parts, reads as follows.

\begin{thm}\label{Thm1}
	Let $1 \leq d < \dim(H)$, $1 \leq p < p(d)$, $C_0 > 1$ and $A > 10^5$. Let $E \subseteq H$ be a lower content $d$-regular set with regularity constant $c$ and Christ-David cubes $\mathscr{D}.$ There exists $\ve >0$ small enough so that the following holds.
	Let $Q_0 \in \mathscr{D}$ and 
	\[ \beta_{E,C_0,d,p}(Q_0) \coloneqq \ell(Q_0)^d + \sum_{Q \subseteq Q_0} \beta^{d,p}_E(C_0B_Q)^2\ell(Q)^d. \] 
	Then
	\begin{align}\label{e:dTSP1}
		\beta_{E,C_0,d,p}(Q_0) \lesssim_{A,d,c,p,C_0,\epsilon} \mathscr{H}^d(Q_0) + \emph{BWGL}(Q_0,A,\ve).
	\end{align}
\end{thm}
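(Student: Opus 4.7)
The plan is to follow the strategy of Azzam--Schul \cite{azzam2018analyst} for subsets of $\R^n$, substituting the Hilbert space Reifenberg parametrization (established earlier in the paper) for the original David--Toro parametrization. First, I would dispose of the $\ell(Q_0)^d$ term by noting that lower content $d$-regularity together with the standard properties of Christ--David cubes gives $\mathscr{H}^d(Q_0)\geq \mathscr{H}_\infty^d(Q_0)\gtrsim_c \ell(Q_0)^d$, so it suffices to estimate the sum $\sum_{Q\subseteq Q_0}\beta_E^{d,p}(C_0 B_Q)^2\ell(Q)^d$.

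Next I would split this sum according to whether $Q\in\BWGL(A,\ve)$. Contributions from BWGL cubes are absorbed into $\BWGL(Q_0,A,\ve)$ using the trivial bound $\beta_E^{d,p}(C_0 B_Q)\lesssim 1$ (which follows from $\mathscr{H}_\infty^d(E\cap B)\lesssim r_B^d$ and the definition of $\beta_E^{d,p}$). For each remaining \emph{flat} cube, there is a $d$-plane $P_Q$ with $d_{A B_Q}(E,P_Q)<\ve$. A standard stopping-time construction then partitions the flat cubes into coherent trees $\mathbf{S}\subseteq\mathscr{D}$ with top cubes $Q(\mathbf{S})$, where the planes $\{P_Q : Q\in\mathbf{S}\}$ are all close to $P_{Q(\mathbf{S})}$. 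A tree is stopped at a cube $R$ because (i) $R\in\BWGL(A,\ve)$, (ii) $P_R$ has tilted or drifted too far from $P_{Q(\mathbf{S})}$, or (iii) $R$ is a minimal cube in $Q_0$.

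On each tree $\mathbf{S}$, the combined flatness and coherence conditions meet the input hypotheses of the Hilbert space Reifenberg parametrization, producing a surface $\Sigma_{\mathbf{S}}\subseteq H$ that is (a) a bi-Lipschitz image of $P_{Q(\mathbf{S})}$ with constants depending only on $d$, (b) close to $E$ inside $C_0 B_{Q(\mathbf{S})}$, and (c) of controlled measure, $\mathscr{H}^d(\Sigma_{\mathbf{S}})\lesssim \ell(Q(\mathbf{S}))^d$. A Dorronsoro-type $L^p$ estimate on the Lipschitz graph $\Sigma_{\mathbf{S}}$---which is where the hypothesis $p<p(d)$ is essential---then gives the \emph{tree estimate}
\[ \sum_{Q\in\mathbf{S}} \beta_E^{d,p}(C_0 B_Q)^2 \ell(Q)^d \lesssim \mathscr{H}^d(\Sigma_{\mathbf{S}}) \lesssim \ell(Q(\mathbf{S}))^d. \]

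It remains to sum over trees, which reduces to the packing estimate $\sum_{\mathbf{S}}\ell(Q(\mathbf{S}))^d \lesssim \ell(Q_0)^d+\BWGL(Q_0,A,\ve)$: every non-root top cube is created either at a BWGL-stop (packed by the BWGL term) or at a tilt-stop, and tilt-stops are in turn packed by a Carleson-type argument driven by the tree estimate itself and the lower content regularity. The main obstacle is the construction of the approximating surface $\Sigma_{\mathbf{S}}$ in infinite-dimensional $H$: the David--Toro construction in $\R^n$ uses partitions of unity, orthogonal complements, and dimension-sensitive compactness. Once the Hilbert space Reifenberg parametrization proved earlier is in place, the remainder is a careful transcription of the Azzam--Schul scheme, arranged so that all implicit constants depend only on $d$, $c$, $p$, $C_0$, $A$ and $\ve$ and not on $\dim H$.
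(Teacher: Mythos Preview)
Your overall architecture is right and matches the paper's strategy: split off the BWGL cubes, run a flatness-based stopping time on the rest, build a David--Toro surface for each tree, transfer the $\beta$-sum to the surface, and pack the tops. But one step is oversimplified in a way that matters.

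The surface $\Sigma_{\mathbf S}$ produced by the Reifenberg parametrization is \emph{not} a bi-Lipschitz image of a plane under your stopping conditions. The bi-Lipschitz conclusion of the David--Toro theorem (Theorem~\ref{LipDT}) requires the summability condition $\sum_k \ve_k'(f_k(z))^2 \le M$, i.e.\ square-summability of the local angle errors along each orbit. Stopping on bilateral flatness $b\beta_E^d(AB_Q)<\ve$ (or on a single tilt threshold from $P_{Q(\mathbf S)}$) only gives $\ve_k'\lesssim\ve$ pointwise; it does not give $\ell^2$-summability in $k$. So $\Sigma_{\mathbf S}$ is only $C\ve$-Reifenberg flat (bi-H\"older), and you cannot invoke Dorronsoro on it directly to obtain the tree estimate $\sum_{Q\in\mathbf S}\beta_E^{d,p}(C_0B_Q)^2\ell(Q)^d\lesssim \ell(Q(\mathbf S))^d$.

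The paper handles this by inserting an extra layer: it first proves the analogous $\beta$-estimate for Reifenberg flat surfaces (Theorem~\ref{Thm3}) as a standalone result. That proof runs its own nested stopping-time argument (on angle tilt), producing a sequence of approximating surfaces $\Sigma^N$ that are only \emph{locally} Lipschitz graphs (Lemma~\ref{Graph2}); Dorronsoro is applied to those local graphs, and the errors are controlled by a separate packing estimate (Lemma~\ref{l:Azzam-Stop}(5)). Only after Theorem~\ref{Thm3} is in hand does the paper run the BWGL-based stopping time you describe, using Theorem~\ref{Thm3} as a black box on each $\Sigma^{Q(S)}$ (see Claim~2 in the proof of Proposition~\ref{ThmPart1}). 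In particular, the paper's stopping condition for Theorem~\ref{Thm1} is purely on $b\beta$---there are no tilt stops---so every minimal cube has a BWGL child, and the packing of tops (Lemma~\ref{l:goodlem}) comes entirely from the BWGL term together with the $\mathscr{H}^d$-measure of the ``never stopped'' set $\{d_Q=0\}$, via the delicate Lemma~\ref{l:difficult}. Your ``tilt-stops packed by a Carleson argument driven by the tree estimate'' does not appear, and would be circular if attempted with a $\beta$-sum stopping condition (that is the mechanism for Theorem~\ref{Thm2}, not Theorem~\ref{Thm1}).

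A smaller point: the paper also reduces first to $E\subseteq\R^n$ with constants independent of $n$ (Lemma~\ref{l:main-red2}), so that it can quote verbatim the dimension-free parts of \cite{azzam2018analyst}; this is bookkeeping rather than a new idea, but it is part of the actual argument.
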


\begin{thm}\label{Thm2}
	Let $1 \leq d < \dim(H)$, $1 \leq p < \infty$, $C_0 > 1$ be sufficiently large (we will choose $C_0 > 2\rho^{-1}$ with $\rho$ as in Theorem \ref{cubes}), $A > 1$ and $\ve >0.$ Let $E \subseteq H$ be a lower content $d$-regular set with regularity constant $c$ and Christ-David cubes $\mathscr{D}$. For $Q_0 \in \mathscr{D}$ we have 
	\begin{align}\label{e:dTSP2}
		\mathscr{H}^d(Q_0) + \BWGL(Q_0,A,\ve) \lesssim_{A,d,c,C_0,\epsilon} \beta_{E,C_0,d,p}(Q_0),
	\end{align}
	with $\beta_{E,C_0,d,p}(Q_0)$ as in Theorem \ref{Thm1}. 
\end{thm}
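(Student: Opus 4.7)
The plan is to follow the strategy of Azzam--Schul from \cite{azzam2018analyst}, splitting the inequality into a BWGL bound and an $\mathscr{H}^d$ bound, with the latter using as its main new ingredient the Hilbert space Reifenberg parametrization theorem constructed earlier in this paper.

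\textbf{The BWGL bound.} First I would prove
\[ \BWGL(Q_0,A,\ve) \lesssim_{A,c,\ve} \sum_{Q \subseteq Q_0} \beta^{d,p}_E(C_0 B_Q)^2 \ell(Q)^d \]
by showing that any $R \subseteq Q_0$ with $R \in \BWGL(A,\ve)$ satisfies $\beta^{d,p}_E(C_0 B_R) \gtrsim_{A,c} \ve$. Assume for contradiction that the infimum in $\beta^{d,p}_E(C_0 B_R)$ is attained at a plane $L$ with a very small value. Then outside a set of tiny Hausdorff content, $E \cap C_0 B_R$ lies in a thin slab around $L$, and a standard covering argument (once $C_0$ exceeds $A$ by a definite factor) upgrades this to the one-sided bound $\sup_{y \in E \cap A B_R} \dist(y, L) \lesssim \ve r_R$. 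For the reverse inequality, any $y \in L \cap A B_R$ far from $E$ would, by lower content $d$-regularity of $E$, force a ball near $y$ containing a definite amount of $d$-content of $E$ lying far from $L$, contradicting the smallness of $\beta^{d,p}_E(C_0 B_R)$. Hence $d_{AB_R}(E,L) < \ve$ and so $R \notin \BWGL(A,\ve)$, a contradiction; the asserted bound then follows by summing $\beta^{d,p}_E(C_0 B_R)^2 \ell(R)^d \gtrsim \ve^2 \ell(R)^d$ over BWGL cubes.

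\textbf{The $\mathscr{H}^d$ bound.} For this I would use a corona decomposition: partition the non-BWGL cubes of $Q_0$ into trees $\mathcal{T}_j$ on which the best $d$-plane varies slowly, rooted at top cubes $\{Q_j\}$. A standard Carleson counting argument, using the angle-change $\lesssim \beta$ estimate at the parent cube and the BWGL bound just proved, gives
\[ \sum_j \ell(Q_j)^d \lesssim \ell(Q_0)^d + \BWGL(Q_0,A,\ve) + \sum_{Q \subseteq Q_0} \beta^{d,p}_E(C_0 B_Q)^2 \ell(Q)^d \lesssim \beta_{E,C_0,d,p}(Q_0). \]
Inside each tree, the Hilbert-space Reifenberg parametrization applied to the slowly-varying planes produces a $d$-dimensional Reifenberg-flat surface $\Sigma_j$ containing $E \cap Q(\mathcal{T}_j)$ in a thin neighbourhood, with $\mathscr{H}^d(\Sigma_j \cap CB_{Q_j}) \lesssim \ell(Q_j)^d$. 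A nearest-point projection argument then transfers this to $\mathscr{H}^d(E \cap Q(\mathcal{T}_j)) \lesssim \ell(Q_j)^d$, and summing over $j$ completes the proof.

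The hardest step will be the $\mathscr{H}^d$ bound, specifically ensuring that the Reifenberg parametrization produces area and Lipschitz bounds that are independent of $\dim(H)$ --- this is exactly the new technical content of the Hilbert space modification of David--Toro developed earlier in this paper. The corona counting is a routine variant of \cite{azzam2018analyst}, and the BWGL step is a straightforward contradiction argument using lower content regularity.
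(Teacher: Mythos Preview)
Your BWGL argument contains a genuine gap. The $\beta$-number is intrinsically one-sided: small $\beta^{d,p}_E(C_0B_R)$ forces $E\cap C_0B_R$ close to some plane $L$, but says nothing about whether $L$ is close to $E$. Your claim that a point $y\in L\cap AB_R$ far from $E$ ``would, by lower content $d$-regularity of $E$, force a ball near $y$ containing a definite amount of $d$-content of $E$'' is false --- lower regularity only gives content in balls centred on $E$, and near such a $y$ there is simply no $E$ at all. Concretely, let $E=P\cap\overline{B(0,1)}$ for a $d$-plane $P$: this is lower content $d$-regular, yet any cube $R$ with $x_R$ near the boundary of the disk and $A\ell(R)>\dist(x_R,\partial E)$ has $\beta^{d,p}_E(C_0B_R)=0$ (take $L=P$) while $R\in\BWGL(A,\ve)$, since $P\cap AB_R$ extends past $E$. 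No choice of $C_0$ relative to $A$ repairs this.

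The paper's route is organised quite differently. A single stopping-time decomposition is run with the criterion $\sum_{R'\subseteq T\subseteq Q}\beta^{d,1}_E(MB_T)^2<\lambda^2$; this gives the Carleson packing $\sum_{S}\ell(Q(S))^d\lesssim\ell(Q_0)^d+\sum_Q\beta^2\ell(Q)^d$ directly, with no BWGL input. On each region the Hilbert-space Reifenberg theorem in its bi-Lipschitz form (Theorem~\ref{LipDT}) produces a $(1+C\lambda)$-bi-Lipschitz surface $\Sigma_S$, and one shows that the set $G\subseteq Q_0$ of points lying in only finitely many regions satisfies $G\subseteq\bigcup_S\Sigma_S$ while $\mathscr{H}^d(Q_0\setminus G)=0$; this yields $\mathscr{H}^d(Q_0)\lesssim\sum_S\mathscr{H}^d(\Sigma_S)\lesssim\sum_S\ell(Q(S))^d$. (Your nearest-point-projection step would not suffice here: lower content regularity gives no upper bound on $\mathscr{H}^d(E\cap Q_j)$.) The BWGL bound is then split into cubes with $\beta^d_{E,\infty}(AB_Q)$ large, handled by Lemma~\ref{lemma:betap_betainfty}, and cubes with $\eta^d_{E,\infty}(AB_Q)$ large; the latter are controlled by transferring to $\eta^d_{\Sigma_S,\infty}$ and then bounding the BWGL of the bi-Lipschitz surface $\Sigma_S$ itself via Dorronsoro (Lemma~\ref{l:bi-lip-BWGL}). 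This $\eta$-step is precisely what your direct argument cannot supply, and it is the genuinely new work in the Hilbert-space setting.
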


\begin{rem}
As already mentioned, this was proven in the case $\dim(H) < \infty$ in \cite{azzam2018analyst} (with constant depending on $\dim(H)$), see \cite[Theorem 3.2, Theorem 3.3]{azzam2018analyst}. Our contribution is the case $\dim(H) = \infty$ (we also achieve constants independent of $\dim(H)$ if $\dim(H) < \infty$).  
\end{rem} 

Instead of asking what kind of sets can imitate the role of rectifiable curves in Theorem \ref{1DTSP}, Theorem \ref{Thm1} and Theorem \ref{Thm2} focus on proving estimates like \eqref{e:TSP3} for lower content $d$-regular subsets of $H$. In contrast to \eqref{e:TSP3}, note the appearance of the error term $\BWGL(Q_0,A,\ve).$ This accounts for the fact a general lower content $d$-regular set may have holes, while a curve does not. The error terms disappears for example if $E$ is Reifenberg flat. Recall that a set $E \subseteq H$ is said to be $(\ve,d,\delta_0)$-\textit{Reifenberg flat} if for each $x \in E$ and $0 < r < \delta_0$ there exists a $d$-plane $P_{x,r}$ so that 
\[ d_{x,r}(E,P_{x,r}) \leq \ve. \] 
In fact, the proof of Theorem \ref{Thm1} will utilize a similar result specialized to Reifenberg flat surfaces. Again, the following was proved in \cite{azzam2018analyst} for the case $\dim(H) < \infty$ with constants depending on $n$.  

\begin{thm}\label{Thm3}
	Let $1 \leq d < \dim(H)$, $1 \leq p < p(d)$ and $C_0 > 1$. Suppose $E$ is an $(\ve,d,\delta_0)$-Reifenberg flat surface with Christ-David cubes $\mathscr{D}$. There exists $\ve >0$ small enough so that for any $Q_0 \in \mathscr{D}$ we have 
	\begin{align}\label{e:dTSP3}
		\beta_{E,C_0,d,p}(Q_0) \lesssim_{d,p,C_0,\epsilon,\delta_0} \mathscr{H}^d(Q_0).
	\end{align}
\end{thm}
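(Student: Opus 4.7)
The plan is to use the Hilbert-space version of the David--Toro Reifenberg parametrization theorem (developed earlier in the paper) to produce a coherent family of approximating planes $\{P_Q\}_{Q\subseteq Q_0}$ indexed by the Christ--David cubes in $Q_0$, then to control the content $\beta$-numbers $\beta_E^{d,p}(C_0B_Q)$ by a tilt quantity $\alpha_Q$ that packs as a Carleson measure against $\ell(Q)^d$. Since Reifenberg flatness automatically gives lower content $d$-regularity, $\mathscr{H}^d(Q_0)\gtrsim \ell(Q_0)^d$, so the desired bound reduces to showing the left-hand side of \eqref{e:dTSP3} is $\lesssim \ell(Q_0)^d$.

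Step 1 (parametrization and approximating planes). For $\ve$ small enough, apply the Hilbert-space Reifenberg parametrization to obtain a bi-Lipschitz map $\sigma \colon P_0 \to H$ with constants $1\pm O(\ve)$ whose image locally coincides with $E$ near $Q_0$, together with $d$-planes $P_Q$ for each $Q\subseteq Q_0$ satisfying
\[ d_{C_0B_Q}(E,P_Q)\lesssim \ve, \qquad \alpha_Q := \angle(P_Q,P_{Q^{\ast}}), \]
where $Q^\ast$ is the parent cube. The construction of $\sigma$ is driven by the rotations between the $P_Q$, and delivers the Carleson-type packing
\[ \sum_{Q\subseteq Q_0} \alpha_Q^2\,\ell(Q)^d \lesssim \ell(Q_0)^d. \]

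Step 2 (telescoping estimate on $\beta^{d,p}$). For $x\in E\cap C_0 B_Q$, since $x$ lies within $O(\ve)\ell(R)$ of each ancestor plane $P_R$, a telescoping argument along the chain $Q\subset Q^\ast\subset\cdots\subset Q_0$ gives a pointwise bound of the form
\[ \dist(x,P_Q)^2 \lesssim \sum_{R\,:\,Q\subseteq R\subseteq Q_0,\; C_0B_Q\subset C_0'B_R} \alpha_R^2\,\ell(R)\,\ell(Q)\cdot\left(\tfrac{\ell(Q)}{\ell(R)}\right)^{\theta} \]
for some $\theta>0$ (the $\ell(Q)/\ell(R)$ factor comes from reading off the $P_R$-displacement inside the smaller ball $C_0B_Q$). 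Plugging this into \eqref{e:beta-cont} with the test plane $P_Q$, and using lower content regularity $\mathscr{H}_\infty^d(E\cap C_0 B_Q)\lesssim \ell(Q)^d$, one obtains
\[ \beta_E^{d,p}(C_0B_Q)^2\,\ell(Q)^d \lesssim \sum_{R\,:\,Q\subseteq R\subseteq Q_0} \alpha_R^2\,\ell(R)^d \left(\tfrac{\ell(Q)}{\ell(R)}\right)^{\theta}. \]
Swapping the order of summation over $Q$ and $R$, the inner geometric sum $\sum_{Q\subseteq R}(\ell(Q)/\ell(R))^\theta$ is bounded by an absolute constant depending only on $d$ and $\theta$, so
\[ \sum_{Q\subseteq Q_0} \beta_E^{d,p}(C_0B_Q)^2\,\ell(Q)^d \lesssim \sum_{R\subseteq Q_0}\alpha_R^2\,\ell(R)^d \lesssim \ell(Q_0)^d \lesssim \mathscr{H}^d(Q_0), \]
which, combined with $\ell(Q_0)^d\lesssim \mathscr{H}^d(Q_0)$, gives \eqref{e:dTSP3}.

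The principal obstacle, and where the Hilbert space setting differs from \cite{azzam2018analyst}, is Step 1: one needs the Carleson packing of the $\alpha_Q$ to be dimension-free, since arguments based on compactness of Grassmannians or on volume counting for $\ve$-nets in $P_Q$ are unavailable in infinite dimensions. This is exactly what the author's modification of David--Toro is designed to deliver. Once that packing estimate is in hand, the telescoping in Step 2 and the Fubini swap in Step 3 are routine and proceed as in the finite-dimensional proof; the $p<p(d)$ hypothesis enters only implicitly through the comparison between $\beta^{d,p}$ and pointwise distances, not through any dimension-dependent Sobolev constant.
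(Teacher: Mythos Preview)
Your proposal has a genuine circularity in Step 1. You assert that the David--Toro parametrization ``delivers the Carleson-type packing'' $\sum_{Q\subseteq Q_0}\alpha_Q^2\,\ell(Q)^d\lesssim\ell(Q_0)^d$, but this is not what the parametrization gives. Theorem~\ref{DT} takes as \emph{input} the pointwise smallness $\ve_k(x_{j,k})<\ve$ (which Reifenberg flatness supplies) and outputs a bi-H\"older map; to upgrade to bi-Lipschitz (Theorem~\ref{LipDT}) you must already \emph{assume} the square-sum condition $\sum_k\ve_k'(f_k(z))^2\le M$, which is essentially the Carleson packing you want. Reifenberg flatness alone only gives $\alpha_Q\lesssim\ve$ for each $Q$ individually, and $\sum_{Q\subseteq Q_0}\ve^2\ell(Q)^d$ diverges. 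So the packing estimate in Step~1 is, up to a change of coordinates, equivalent to the conclusion of the theorem; it does not come for free.

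The paper's argument is structurally different and does not proceed via angle telescoping. Instead it runs a stopping-time on the cubes where one stops when $\angle(P_T,P_Q)$ first exceeds a threshold $\alpha$; within each stopping-time region $S$ the David--Toro construction produces an approximating surface $\Sigma^N$ which, by Lemma~\ref{Graph2}, is locally a $C\alpha$-Lipschitz graph over $P_{Q(S)}$. One then transfers $\beta_E^{d,p}$ to $\beta_{\Sigma^N}^{d,p}$ via Lemma~\ref{betaest}, and controls the latter by Dorronsoro's theorem (Theorem~\ref{ThmDor}/Corollary~\ref{CorDor}) applied to the Lipschitz graph, yielding $\sum_{Q\in S}\beta_{\Sigma^N}^{d,p}(2C_0B_Q')^2\ell(Q)^d\lesssim\alpha^2\ell(Q(S))^d$. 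The sum of $\ell(Q(S))^d$ over all stopping-time tops is then bounded by $\mathscr{H}^d(Q_0)$ via Lemma~\ref{l:Azzam-Stop}(5), which is itself a nontrivial estimate borrowed from \cite{azzam2018analyst}. Dorronsoro's theorem is the essential analytic input here and is entirely absent from your sketch; it is precisely what converts ``small Lipschitz constant'' into ``square-summable $\beta$-numbers'', and there is no shortcut around it via David--Toro alone.
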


Before moving on, let us mention some related works. Edelen, Naber and Valtorta \cite{edelen2016quantitative} give very general results about how well the size of a measure $\mu$ is bounded from above by a quantity involving $\beta$-numbers. These $\beta$-numbers are like those introduced in \eqref{e:beta-cont}, except one integrates with respect to $\mu$ instead of $\mathscr{H}^d_\infty.$ In contrast to the above, their result requires the existence of a locally finite measure. Moreover, even if $\mathscr{H}^d|_E$ is locally finite, the $\beta$-number associated to the measure $\mathscr{H}^d|_E$ can still be quite unstable. See \cite[Section 1.3]{azzam2018analyst}. \\

In the Euclidean setting, Theorem \ref{Thm1} and Theorem \ref{Thm2} were extended in \cite{azzam2019quantitative} to allow for more general error terms in place of $\BWGL(Q_0,A,\ve).$ Like $\BWGL(Q_0,A,\ve)$, these error terms are connected to the theory of uniform rectifiability, see \cite{azzam2019quantitative} for more details. Whereas the results in this paper focus on proving estimates like \eqref{e:TSP3}, a combination of two recent papers gives a $d$-dimensional TST that more closely resembles that in Theorem \ref{1DTSP}. Working with the $\beta$-numbers as defined in \eqref{e:beta-cont}, Villa \cite{villa2019higher} shows that if $\beta_{E,C_0,p}(Q_0) < \infty$ for a lower content $d$-regular set $E$ and $Q_0 \in \mathscr{D}$, then $Q_0$ can be contained in a \textit{stable $d$-surfaces} $\Sigma$ with $\mathscr{H}^d(\Sigma) \sim \beta_{E,C_0,p}(Q_0)$. Stable $d$-surfaces were defined by David in \cite{david2004hausdorff}. In \cite{hyde2020analyst}, we introduce a new $\beta$-number and show that for \textit{any} set $E$ and $Q_0 \in \mathscr{D}$, $Q_0$ can be contained in a lower regular set $F$ with $\tilde\beta_{E,C_0,p}(Q_0) \sim \beta_{F,C_0,p}(Q_0),$ where $\tilde\beta_{E,C_0,p}$ is defined as in the statement of Theorem \ref{Thm1} with respect to the $\beta$-number from \cite{hyde2020analyst}. Combing the two, for any $E \subseteq \R^n$ and $Q_0 \in \mathscr{D}$, there exists a stable $d$-surface $\Sigma$ so that $Q_0 \subseteq \Sigma$ and $\mathscr{H}^d(\Sigma) \sim \tilde\beta_{E,C_0,p}(Q_0).$

\subsection{Structure of the paper}

The proof of the forward direction of Theorem \ref{Thm1} will be carried out in Section \ref{s:Thm1}. This will make use of Theorem \ref{Thm3} which is proven in Section \ref{Sec:StopTime}. For each of these, it suffices to consider the sets as subsets of $\R^n$ for some $n \in \N$ and show that the constants are independent of $n$. These reductions are shown in Appendix \ref{a:Euclidean}. The proof then follows a similar strategy employed by Azzam and Schul in \cite{azzam2018analyst}. We will state without proof those results from \cite{azzam2018analyst} whose constants do not depend on $n$; we will provide alternative proofs for those results whose constants do depend on $n$. Let us mention here that there is a mistake in Azzam and Schul's proof of Theorem \ref{e:dTSP1}, specifically, in the proof of \cite[Lemma 10.4]{azzam2018analyst}. We fix the error in this paper, with a bit more work. An important tool will be a Hilbert space version of the Reifenberg parametrization result of David and Toro. A similar result (also for general Banach spaces) has appeared (in fragments) in \cite{edelen2018effective}, however, David and Toro's Euclidean formulation is very general and can be applied as soon as one has a \textit{coherent collection of balls and planes} (CCBP), see Definition \ref{d:CCBP}. In Section \ref{s:DT} and Appendix \ref{a:DT}, we describe how to tie these results together to give a Hilbert space version that is also stated in terms of a CCBP. The proofs of Theorem \ref{Thm2} and Theorem \ref{t:BWGL} will be carried out in Section \ref{s:Thm2} and Section \ref{s:BWGL}, respectively.

\section{Preliminaries}\label{s:prelims}

We gather some notation and results that will be needed throughout.  If there exists $C>0$ such that $a \leq Cb,$ we write $a \lesssim b.$ If the constant $C$ depends on a parameter $t$, we write $a \lesssim_t b.$ We will write $a \sim b$ if $a \lesssim b$ and $b\lesssim a,$ and define $a \sim_t b$ similarly. From here on, we fix $H$ to be an infinite dimensional real and separable Hilbert Space. For a ball $B \subseteq H$, we will let $r_B$ denote its radius. For closed sets $A,B \subseteq H$, let
\begin{align*}
\text{dist}(A,B) = \inf\{|x-y| : x \in A, \ y \in B \}
\end{align*}
and 
\[\text{diam}(A) = \sup\{|x-y| : x,y \in A \}. \]
For $A \subseteq H, \ d \geq 0$ and $0 <\delta \leq \infty$ define
\begin{align}\label{e:HausCont}
\mathscr{H}^d_\delta(A) = \inf \left\{ \sum_i \diam(A_i)^d : A \subseteq \bigcup_i A_i \mbox{ and } \diam A_i \leq \delta \right\}.
\end{align}
The $d$-dimensional Hausdorff \textit{content} of $A$ is defined to be $\mathscr{H}^d_\infty(A)$ and the $d$-dimensional Hausdorff \textit{measure} of $A$ is defined to be
\[ \mathscr{H}^d(A) = \lim_{\delta \rightarrow 0} \mathscr{H}^d_\delta(A).\]

\subsection{Christ-David cubes}

We will need a version of Christ-David cubes tailored to subsets $E$ of $H$. These are a collection of  subsets of $E$ that behave like Euclidean dyadic cubes. Theorem \ref{cubes} below is a combination of the formulations due to Christ \cite{christ1990b}, and Hytönen and Martikainen \cite{hytonen2012non}.   
\begin{lem}\label{cubes}
There exist constants $c_0, \rho \in (0,1)$ so that the following holds. Let $E \subseteq H$ and $X_k$ be a sequence of maximal $\rho^k$-separated nets. Then, for each $k \in \Z$, there is a collection $\mathscr{D}_k$ of subsets of $E$, which we call \textit{cubes}, such that:
\begin{enumerate}
\item If $Q_1,Q_2 \in \mathscr{D} = \bigcup_{k}\mathscr{D}_k$ and $Q_1 \cap Q_2 \not= \emptyset,$ then $Q_1 \subseteq Q_2$ or $Q_2 \subseteq Q_1.$ 
\item For $Q \in \mathscr{D},$ let $k(Q)$ be the unique integer so that $Q \in \mathscr{D}_k$ and set $\ell(Q) = 5\rho^k.$ Then there is $x_Q \in X_k$ such that
\begin{align*}
B(x_Q,c_0\ell(Q)) \subseteq Q \subseteq B(x_Q , \ell(Q)) \eqqcolon B_Q. 
\end{align*}
\item For each $k \in \Z,$ $E \subseteq \bigcup_{Q \in \mathscr{D}_k} B_Q.$
\end{enumerate}
With some additional assumption on the space $H$ or the set $E$, we have some more properties.  
\begin{enumerate}
	\setcounter{enumi}{3}
	\item If $H = \R^n$, then for each $k \in \Z, \ E = \bigcup_{Q \in \mathscr{D}_k} Q.$ 
	\item If $E$ is Ahlfors $d$-regular, then there exists $C \geq 1$ so that $\mathscr{H}^d( \{ x \in Q : \dist(x,E \setminus Q) \leq \eta \rho^k \} ) \lesssim \eta^\frac{1}{C} \ell(Q)^d$ for all $Q \in \mathscr{D}$ and $t > 0.$
\end{enumerate}

\end{lem}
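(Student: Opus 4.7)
The plan is to adapt the classical construction of Christ \cite{christ1990b}, as refined for general metric spaces by Hytönen and Martikainen \cite{hytonen2012non}, to the Hilbert space setting. Separability of $H$ is the only extra ingredient needed beyond the purely metric argument: it guarantees that $E$ is separable as a metric space, so every maximal $\rho^k$-separated net $X_k \subseteq E$ is countable. Given the sequence of nets $\{X_k\}_{k \in \Z}$ from the hypothesis, I would assign to each $x \in X_k$ a parent $\pi(x) \in X_{k-1}$ by choosing the nearest point of $X_{k-1}$ (or indeed any point within $\rho^{k-1}$, whose existence is guaranteed by maximality of $X_{k-1}$). Iterating yields a tree whose nodes at level $k$ are the elements of $X_k$, and each $x \in X_k$ has a well-defined set of descendants at all finer scales.

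For $\rho \in (0,1)$ small enough (e.g.\ $\rho < 1/100$) and $c_0 \in (0, \rho/2)$, I define the cube $Q = Q_x^k$ rooted at $x_Q = x \in X_k$ as the union inside $E$ of all balls $B(y, c_0 \rho^n)$ with $n \geq k$ and $y$ a descendant of $x$ in $X_n$. Properties (1)--(3) then follow from standard arguments that use only the metric structure: nesting (1) is immediate from the tree structure of parents; the outer inclusion in (2) comes from summing the geometric series $\sum_{n \geq k} \rho^n \lesssim \rho^k$; the inner inclusion uses that a point of $E$ within $c_0 \rho^k$ of $x_Q$ must lie in some descendant ball of $x_Q$, provided $c_0$ is small compared to $\rho$ (here one uses the separation of $X_k$); and the covering statement (3) is the defining property of maximal $\rho^k$-nets. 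None of these steps touch any Euclidean-specific feature, so they all pass through to $H$ verbatim.

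For the refinement (4) in $\R^n$, I would invoke the original argument of Christ, where a measure-theoretic inflation allows each $\mathscr{D}_k$ to be turned into an honest partition of $E$; alternatively one can intersect $E$ with a shifted family of ordinary Euclidean dyadic cubes and carry out the proof directly. The small-boundary property (5) under Ahlfors regularity is the most delicate step: one shows that $\{x \in Q : \dist(x, E \setminus Q) \leq \eta \ell(Q)\}$ is contained in a union of descendant balls living at a scale comparable to $\eta \ell(Q)$, whose total $\mathscr{H}^d$-measure is controlled via the upper $d$-regularity bound, and the geometric summation in $\eta$ yields the $\eta^{1/C}$ factor. I expect the main obstacle to be bookkeeping: one must verify at each stage that the classical arguments are genuinely metric and do not implicitly rely on finite dimension (for instance through volume comparisons, projections, or covering numbers). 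Once this verification is in place, the separability of $H$ handles the remaining set-theoretic subtleties and the rest of the construction is routine.
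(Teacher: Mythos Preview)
Your sketch is essentially correct and aligns with the standard constructions the paper cites; note that the paper does not actually prove this lemma but simply attributes properties (1)--(3) to the general metric-space formulations in \cite{christ1990b} and \cite{hytonen2012non}, property (4) specifically to Hyt\"onen--Martikainen, and property (5) to Christ. One small attribution slip in your outline: the honest-partition statement (4) is \emph{not} obtained by Christ's measure-theoretic argument (which only gives a partition up to a null set in the Ahlfors-regular case) but by the Hyt\"onen--Martikainen ``half-open'' construction, which yields genuine set-theoretic partitions of $E$ at every scale without any regularity hypothesis; in $\R^n$ this is what the paper actually uses.
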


\begin{rem}
	Properties (1)-(3) only rely on the metric space structure of $H$ and hold in both formulations; (3) is not explicitly stated in either but is an easy consequence of the definitions. Property (4) is unique to the cubes constructed in \cite{hytonen2012non}, which are those used in \cite{azzam2018analyst}. Finally, (5) is a property of the cubes constructed in \cite{christ1990b}, we will make use of this when proving Theorem \ref{t:BWGL}.
	\end{rem}
For $Q \in \mathscr{D}$ and $N \in \N$, let $Q^{(N)}$ denote the unique cube $Q'$ so that $Q \subseteq Q'$ and $\ell(Q) = \rho^N \ell(Q').$ We call $Q^{(1)}$ the \textit{parent} of $Q$. We let $\text{Child}(Q)$ denote the set of $R \in \mathscr{D}$ so that $R^{(1)} = Q.$ The cubes in $\text{Child}(Q)$ are the \textit{children} of $Q$. We introduce a distance function to a collection of cubes $\mathscr{C} \subseteq \mathscr{D}.$ For $x \in H$ set
\[ d_\mathscr{C}(x) = \inf\{\ell(R) + \text{dist}(x,R) : R \in \mathscr{C} \}, \] 
and for $Q \in \mathscr{D},$ set
\[ d_\mathscr{C}(Q) = \inf\{ d_\mathscr{C}(x) : x \in Q \} = \inf \{ \ell(R) + \dist(Q,R) : R \in \mathscr{C} \}. \]
The following lemma is standard and can be found in, for example, \cite{azzam2018analyst}.
\begin{lem}\label{LipCubes}
Let $\mathscr{C} \subseteq \mathscr{D}$ and $Q,Q^\prime \in \mathscr{D}.$ Then
\begin{align}\label{Tr}
d_\mathscr{C}(Q) \leq 2\ell(Q) + \emph{dist}(Q,Q^\prime) + 2\ell(Q^\prime) + d_\mathscr{C}(Q^\prime).
\end{align}
\end{lem}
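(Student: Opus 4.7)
The plan is to reduce this to the elementary triangle inequality for set-distances. Starting from the definition, for any $R \in \mathscr{C}$ we have
\[ d_\mathscr{C}(Q) \leq \ell(R) + \dist(Q,R), \]
so it suffices to produce a suitable upper bound for $\dist(Q,R)$ in terms of $\dist(Q,Q')$, $\ell(Q')$, and $\dist(Q',R)$, and then take an infimum over $R \in \mathscr{C}$.

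To bound $\dist(Q,R)$, I would use the standard set-distance inequality
\[ \dist(A,C) \leq \dist(A,B) + \diam(B) + \dist(B,C), \]
which follows by picking near-optimal points and applying the usual triangle inequality twice. Taking $A = Q$, $B = Q'$, $C = R$, and noting that property (2) of Theorem \ref{cubes} gives $Q' \subseteq B(x_{Q'},\ell(Q'))$ and hence $\diam(Q') \leq 2\ell(Q')$, we obtain
\[ \dist(Q,R) \leq \dist(Q,Q') + 2\ell(Q') + \dist(Q',R). \]
Plugging back in,
\[ d_\mathscr{C}(Q) \leq \dist(Q,Q') + 2\ell(Q') + \bigl[\ell(R) + \dist(Q',R)\bigr], \]
and taking the infimum over $R \in \mathscr{C}$ yields
\[ d_\mathscr{C}(Q) \leq \dist(Q,Q') + 2\ell(Q') + d_\mathscr{C}(Q'), \]
which is in fact sharper than the stated inequality; the stated bound \eqref{Tr} follows by adding the non-negative quantity $2\ell(Q)$ on the right.

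There is no real obstacle here: the content lies entirely in the triangle inequality for set-distances and the relationship $\diam(Q') \leq 2\ell(Q')$ coming from the Christ--David cube structure. The only thing worth flagging is that the extra $2\ell(Q)$ in \eqref{Tr} is slack, presumably included for aesthetic symmetry (the right-hand side is then symmetric in $Q$ and $Q'$ up to the terminal $d_\mathscr{C}$ term) and for convenience in subsequent applications where a bound symmetric in the two cubes is needed.
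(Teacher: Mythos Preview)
Your proof is correct. The paper does not give its own argument here; it simply states the lemma as standard and cites \cite{azzam2018analyst}, so there is nothing to compare against beyond noting that your triangle-inequality approach is exactly the expected one (and, as you observe, actually yields the slightly sharper bound without the $2\ell(Q)$ term).
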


At various points in the proof we will need to split $\mathscr{D}$ into stopping-time regions, defined below.

\begin{defn}\label{StoppingTime}
	A collection of cubes $S \subseteq \mathscr{D}$ is called a \textit{stopping-time region} if the following hold.
	\begin{enumerate}
		\item There is a \textit{top cube} $Q(S) \in S$ such that $Q(S)$ contains all cubes in $S$. 
		\item If $Q \in S$ and $Q \subseteq R \subseteq Q(S),$ then $R \in S$.
		\item If $Q \in S$, then all siblings of $Q$ are also in $S$. 
	\end{enumerate}
	The \textit{minimal cubes} of $S$, denoted by $\min(S)$, are those cubes in $S$ whose children are not in $S$ and let $z(S)$ denote the points contained in $Q(S)$ for which we never stopped, i.e. 
	\begin{align*}
		z(S) = Q(S) \setminus \bigcup \{Q : Q \in \min(S) \}.
	\end{align*}
\end{defn}

\subsection{Choquet integrals and $\beta$-numbers} For a set $E \subseteq H$ and a function $f : H \rightarrow [0,\infty)$, we define the Choquet integral of $f$ over $E$ by the formula
\begin{align}
\int_E f^p \, d\mathscr{H}_\infty^d \coloneqq \int_0^\infty \mathscr{H}_\infty^d( \{x \in E : f(x) > t \}) t^{p-1} \, dt.
\end{align} 
We state some properties of Choquet integration. For the first lemma, see \cite[Proposition 2.1]{wang2011some} for more details. Note, in \cite{wang2011some} there are additional upper and lower continuity assumption, but these are not required for the following lemma.

\begin{lem}\label{Choquet}
	Let $E \subseteq H,$ $f,g: H \rightarrow [0,\infty)$ such that $f \leq g$, and $\alpha >0$. Then 
	\begin{enumerate}
		\item $\int_E f \, d\mathscr{H}_\infty^d \leq  \int_E g \, d\mathscr{H}_\infty^d;$
		\item $ \int_E (f + \alpha) \, d\mathscr{H}_\infty^d = \int_E f  \, d\mathscr{H}_\infty^d + \alpha \mathscr{H}_\infty^d(E);$
		\item $\int_E \alpha f  \, d\mathscr{H}_\infty^d =\alpha \int_E f \, d\mathscr{H}_\infty^d.$
	\end{enumerate}
\end{lem}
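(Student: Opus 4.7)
The plan is to unfold the definition of the Choquet integral as a layer-cake integral over superlevel sets and then apply three elementary manipulations: inclusion of superlevel sets, a change of variables $s=t-\alpha$, and a change of variables $s=t/\alpha$. Throughout, the only facts about Hausdorff content $\mathscr{H}^d_\infty$ I need are monotonicity under set inclusion and that $\mathscr{H}^d_\infty$ takes values in $[0,\infty]$; no countable additivity or outer-regularity is required, which is precisely why the Choquet formalism is convenient in this setting.

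For (1), I would observe that $f\leq g$ pointwise implies $\{x\in E:f(x)>t\}\subseteq\{x\in E:g(x)>t\}$ for every $t\geq 0$, and therefore $\mathscr{H}^d_\infty(\{f>t\}\cap E)\leq\mathscr{H}^d_\infty(\{g>t\}\cap E)$ by monotonicity of the content. Integrating in $t$ against the (nonnegative) measure on $(0,\infty)$ gives the stated inequality. For (3), I would start from
\[
\int_E \alpha f\,d\mathscr{H}^d_\infty=\int_0^\infty\mathscr{H}^d_\infty(\{x\in E:\alpha f(x)>t\})\,dt=\int_0^\infty\mathscr{H}^d_\infty(\{x\in E:f(x)>t/\alpha\})\,dt
\]
and make the substitution $s=t/\alpha$, $ds=dt/\alpha$, to pull the factor $\alpha$ outside the integral.

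For (2), the key observation is that for $0\leq t<\alpha$ we have $\{x\in E:f(x)+\alpha>t\}=\{x\in E:f(x)>t-\alpha\}=E$, since $f\geq 0$ and $t-\alpha<0$. Splitting the layer-cake integral at $t=\alpha$ gives
\[
\int_E(f+\alpha)\,d\mathscr{H}^d_\infty=\int_0^\alpha \mathscr{H}^d_\infty(E)\,dt+\int_\alpha^\infty\mathscr{H}^d_\infty(\{x\in E:f(x)>t-\alpha\})\,dt,
\]
and a shift $s=t-\alpha$ in the second integral converts it to $\int_E f\,d\mathscr{H}^d_\infty$, producing the claimed identity. The $t=\alpha$ boundary point is a single Lebesgue-null value and can be ignored.

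None of this requires infinite-dimensionality or any of the cube or lower-regularity machinery introduced above, and there is no real obstacle; the only item to watch is the strict inequality in the definition of the superlevel set when handling (2), and it is resolved by the Lebesgue-negligibility of the exceptional point.
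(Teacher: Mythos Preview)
Your proof is correct. The paper does not actually prove this lemma; it simply cites \cite[Proposition 2.1]{wang2011some} and remarks that the continuity hypotheses there are unnecessary. Your argument supplies the standard layer-cake verification directly, and each step is sound: monotonicity of $\mathscr{H}^d_\infty$ for (1), the shift $s=t-\alpha$ for (2), and the rescaling $s=t/\alpha$ for (3). There is nothing to add.
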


\begin{lem}[{\cite[Lemma 2.1]{azzam2018analyst}}] \label{l:subsum}
Let $0 < p< \infty.$ Let $f_i:H \rightarrow [0,\infty)$ be a countable collection of Borel functions in $H.$ If the sets $\emph{supp} \, f_i = \{f_i > 0\}$ have bounded overlap, meaning there exists a $C < \infty$ such that
\[ \sum \mathds{1}_{\emph{supp} \, f_i} \leq C,\]
then
\begin{align}
\int_H \left( \sum f_i \right)^p \, d\mathscr{H}^d_\infty \leq C^p \sum \int_H f_i^p \, d\mathscr{H}^d_\infty.
\end{align}
\end{lem}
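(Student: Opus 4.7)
The plan is to reduce the statement to a pointwise inequality combined with countable subadditivity of Hausdorff content and the layer-cake definition of the Choquet integral. The key pointwise observation is that at any $x \in H$, at most $C$ of the functions $f_i$ can be nonzero, so
\[ \sum_i f_i(x) \leq C \max_i f_i(x). \]
Consequently, for every $t > 0$,
\[ \Bigl\{ x : \sum_i f_i(x) > t \Bigr\} \subseteq \Bigl\{ x : \max_i f_i(x) > t/C \Bigr\} = \bigcup_i \{x : f_i(x) > t/C\}. \]

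Next I would apply countable subadditivity of the Hausdorff content $\mathscr{H}^d_\infty$ (which follows directly from its definition as an infimum over covers) to conclude
\[ \mathscr{H}^d_\infty\Bigl( \Bigl\{ \sum_i f_i > t \Bigr\} \Bigr) \leq \sum_i \mathscr{H}^d_\infty( \{ f_i > t/C \} ). \]

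Finally, I would insert this into the definition of the Choquet integral, interchange sum and integral by Tonelli (legitimate since all integrands are nonnegative), and substitute $s = t/C$ in each term:
\begin{align}
\int_H \Bigl( \sum_i f_i \Bigr)^p d\mathscr{H}^d_\infty
&= \int_0^\infty \mathscr{H}^d_\infty\Bigl( \Bigl\{ \sum_i f_i > t \Bigr\} \Bigr) t^{p-1}\, dt \\
&\leq \sum_i \int_0^\infty \mathscr{H}^d_\infty(\{f_i > t/C\}) t^{p-1}\, dt \\
&= C^p \sum_i \int_0^\infty \mathscr{H}^d_\infty(\{f_i > s\}) s^{p-1}\, ds
= C^p \sum_i \int_H f_i^p\, d\mathscr{H}^d_\infty.
\end{align}

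There is no real obstacle here; the only point that deserves a sentence of justification is countable subadditivity of $\mathscr{H}^d_\infty$, which is immediate from the covering definition \eqref{e:HausCont} with $\delta = \infty$ by concatenating optimal covers of each set in the union. Monotonicity of the Choquet integral in the integrand (Lemma \ref{Choquet}(1)) gives the first inequality above, and the Tonelli interchange is justified because each $t \mapsto \mathscr{H}^d_\infty(\{f_i > t/C\}) t^{p-1}$ is nonnegative and Borel measurable (the distribution function is monotone).
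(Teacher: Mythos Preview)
Your proof is correct; the pointwise bound $\sum_i f_i \le C\max_i f_i$, the level-set inclusion, countable subadditivity of $\mathscr{H}^d_\infty$, and the layer-cake substitution all go through exactly as you describe. The paper itself does not supply a proof of this lemma but simply quotes it from \cite[Lemma~2.1]{azzam2018analyst}, so there is no in-paper argument to compare against; your argument is in fact the standard one.
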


We need to know that Choquet integration satisfies a Jensen inequality. The proof of this fact in \cite{azzam2018analyst} is unique to $\R^n.$ We adapt the usual proof of Jensen's inequality (see for example Rudin \cite{rudin2006real}) to work in $H$.  

\begin{lem}\label{JensenPhi}
Suppose $E \subseteq H$ with $0 < \mathscr{H}_\infty^d(E) <\infty$, $\phi : [0,\infty) \rightarrow [0,\infty)$ is convex and $f : H \rightarrow [0,\infty)$ is bounded. Then,
\begin{align}\label{e:Jensen}
\phi \left( \frac{1}{\mathscr{H}^{d}_{\infty}(E) }\int_E f \, d\mathscr{H}^{d}_{\infty} \right) \leq \frac{1}{\mathscr{H}^{d}_{\infty}(E)}\int_E \phi \circ f \, d\mathscr{H}^{d}_{\infty}.
\end{align}
\end{lem}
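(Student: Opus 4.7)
My plan is to adapt Rudin's supporting-line proof of Jensen's inequality, replacing the linearity of Lebesgue integration with properties (1)--(3) of the Choquet integral in Lemma~\ref{Choquet} together with the layer-cake formula. Write $m = \mathscr{H}_\infty^d(E)$ and $t = m^{-1}\int_E f\,d\mathscr{H}_\infty^d$. By convexity of $\phi$ on $[0,\infty)$, there is a subgradient $\beta \in \R$ of $\phi$ at $t$ satisfying $\phi(s) \geq \phi(t) + \beta(s-t)$ for all $s \geq 0$. Substituting $s = f(x)$ and using $\phi \circ f \geq 0$ gives the pointwise bound
\[ \phi(f(x)) \geq \bigl(\phi(t) + \beta(f(x) - t)\bigr)_+, \]
so that by Lemma~\ref{Choquet}(1) it suffices to prove
\[ \int_E \bigl(\phi(t) + \beta(f - t)\bigr)_+\,d\mathscr{H}_\infty^d \geq \phi(t) m. \]

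I would then split into cases according to the sign of $\beta$. When $\beta \geq 0$ and $\phi(t) - \beta t \geq 0$, the affine function $(\phi(t) - \beta t) + \beta s$ is non-negative on $[0, \infty)$ and properties~(2) and~(3) of Lemma~\ref{Choquet} combine to give $\int_E((\phi(t)-\beta t) + \beta f)\,d\mathscr{H}_\infty^d = (\phi(t) - \beta t)m + \beta \cdot tm = \phi(t) m$, as required. When $\beta > 0$ but $\phi(t) - \beta t < 0$, the positive part equals $\beta(f - t_0)_+$ with $t_0 = t - \phi(t)/\beta \in (0,t]$. The layer-cake formula gives
\[ \int_E (f - t_0)_+\,d\mathscr{H}_\infty^d = \int_{t_0}^\infty \mathscr{H}_\infty^d(E \cap \{f > v\})\,dv = tm - \int_0^{t_0}\mathscr{H}_\infty^d(E\cap\{f>v\})\,dv \geq (t - t_0)m, \]
where the last step uses the trivial bound $\mathscr{H}_\infty^d(E\cap\{f>v\}) \leq m$; multiplying through by $\beta$ yields $\beta(t - t_0)m = \phi(t) m$.

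I expect the main obstacle to be the case $\beta < 0$, which has no analogue in Rudin's proof because there linearity handles both signs uniformly. Here the positive part equals $|\beta|(t_* - f)_+$ with $t_* = t + \phi(t)/|\beta|$, and the layer-cake formula converts the Choquet integral into $\int_0^{t_*}\mathscr{H}_\infty^d(E\cap\{f<v\})\,dv$. At this point the subadditivity of Hausdorff content, in the form
\[ \mathscr{H}_\infty^d(E\cap\{f<v\}) + \mathscr{H}_\infty^d(E\cap\{f\geq v\}) \geq m, \]
becomes the essential tool, since no additivity is available. Combined with the layer-cake identity $\int_0^\infty \mathscr{H}_\infty^d(E\cap\{f\geq v\})\,dv = tm$, it yields $\int_0^{t_*}\mathscr{H}_\infty^d(E\cap\{f<v\})\,dv \geq (t_* - t)m$, and multiplying by $|\beta|$ gives $|\beta|(t_* - t)m = \phi(t)m$, completing the argument. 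The bounded\-ness hypothesis on $f$ is used only to ensure the various layer-cake integrals converge and that the subgradient $\beta$ may be taken finite.
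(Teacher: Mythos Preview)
Your argument is correct and follows the same supporting-line idea as the paper: set $t = m^{-1}\int_E f$, pick a subgradient at $t$, and integrate the resulting affine inequality using Lemma~\ref{Choquet}. The paper simply writes the inequality as $\gamma f + \phi(t) \le \phi\circ f + \gamma t$ and applies parts (1)--(3) of Lemma~\ref{Choquet} directly; this is clean when $\gamma\ge 0$ (both sides are then non-negative and split via (2),(3)), and since the only application in the paper is $\phi(s)=s^p$ with $p>1$, that case is all that is actually needed.

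Your version is more scrupulous: by passing to the positive part and splitting on the sign of $\beta$ (and, in the $\beta>0$ subcase, on the sign of $\phi(t)-\beta t$), you avoid ever integrating a function that may take negative values. In particular, your treatment of $\beta<0$ via the subadditivity estimate $\mathscr H^d_\infty(E\cap\{f<v\}) + \mathscr H^d_\infty(E\cap\{f\ge v\}) \ge \mathscr H^d_\infty(E)$ genuinely fills a point the paper's write-up leaves implicit, since Lemma~\ref{Choquet} offers neither additivity of integrands nor scaling by negative constants. So: same method, but your case analysis makes the argument airtight for arbitrary convex $\phi:[0,\infty)\to[0,\infty)$, at the cost of a longer proof; the paper's two-line integration suffices for the monotone $\phi$ it actually uses.
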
 

\begin{proof}
Since $f$ is bounded and $0 < \mathscr{H}^{d}_{\infty}(E) <\infty$, we can set 
\[t = \frac{1}{\mathscr{H}^{d}_{\infty}(E) }\int_E f \, d\mathscr{H}^{d}_{\infty} < \infty.\] 
Since $\phi$ is convex, if $0 \leq s <t <u <\infty$, then 
\begin{align}\label{Jensen1}
\frac{\phi(t)-\phi(s)}{t-s} \leq \frac{\phi(u)-\phi(t)}{u-t}.
\end{align}
Let $\gamma$ be the supremum of the right-hand side of \eqref{Jensen1} taken over all $u \in [0,t).$ It is clear then that 
\begin{align}
\phi(t) \leq \phi(s) + \gamma \cdot (t -s)
\end{align}
for all $s \in [0,\infty).$ By rearranging the above inequality, for any $x \in H$ we have 
\[ \gamma f(x) + \phi(t) \leq \phi(f(x)) + \gamma t . \]
Integrating both sides with respect to $x$ over $E$, and using Lemma \ref{Choquet}, we have 
\begin{align}
	\gamma \int_E f \, d\mathscr{H}^d_\infty + \mathscr{H}^d_\infty(E)\phi(t) \leq \int_E \phi \circ f \, d\mathscr{H}^{d}_{\infty} + \gamma \mathscr{H}^d_\infty(E)t,
\end{align}
hence, 
\begin{align}
\mathscr{H}^{d}_{\infty}(E) \phi(t) \leq \int_E \phi \circ f \, d\mathscr{H}^{d}_{\infty} + \gamma \cdot \left(\mathscr{H}^{d}_{\infty}(E) t - \int_E f \, d\mathscr{H}^{d}_{\infty} \right).
\end{align}
By definition of $t$, the second term on the right-hand side of the above inequality is zero, from which the lemma follows. 

\end{proof}

Let us recall the $\beta$-number of Azzam and Schul.

\begin{defn}\label{d:beta}
For a set $E \subseteq H$, a ball $B$ and a $d$-dimensional plane $P$, define 
\begin{align*}
\beta_E^{d,p}(B,P) &= \left( \frac{1}{r_B^d} \int_{E \cap B} \left(\frac{\text{dist}(x,P)}{r_B} \right)^p \, \mathscr{H}_\infty^d(x)\right)^\frac{1}{p} \\
&=\left(\frac{1}{r_B^d} \int_0^1 \mathscr{H}_\infty^d (\{x \in E \cap B : \text{dist}(x,P) > tr_B \})t^{p-1}\, dt \right)^\frac{1}{p}
\end{align*}
and set 
\begin{align*}
\beta_E^{d,p}(B) = \inf \{\beta_E^{d,p}(B,P) : P \ \text{is a $d$-dimensional plane in} \ X \}.
\end{align*}
\end{defn}

We shall need the following properties of $\beta^{d,p}_E$.   

\begin{lem}[{\cite[Lemma 2.12]{azzam2018analyst}}] \label{lemma:betap_betainfty}
Assume $E \subseteq H$ and there is $B$ centred on $E$ so that for all $B' \subseteq B$ centred on $E$ we have $\mathscr{H}^d_\infty(E \cap B') \geq cr_{B'}^d$. Then 
\begin{align}\label{eq:betap_betainfty}
\beta_{E,\infty}^d\left( \frac{1}{2}B\right) \lesssim_{c,d} \beta_E^{d,1}(B)^\frac{1}{d+1}.
\end{align}
\end{lem}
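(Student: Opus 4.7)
The plan is to control $\dist(y,P)/r_B$ pointwise for every $y\in E\cap \tfrac{1}{2}B$, where $P$ is a $d$-plane nearly attaining the Choquet infimum. Substituting $P$ as a candidate in the infimum defining $\beta_{E,\infty}^d(\tfrac{1}{2}B)$ then yields the bound. The handle is the standard Chebyshev idea adapted to $\mathscr{H}^d_\infty$: if some point is far from $P$, lower content regularity supplies an entire ball of points far from $P$, which then must register in the integral that defines $\beta_E^{d,1}(B,P)$.

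Write $\beta=\beta_E^{d,1}(B)$. First I would handle the large-$\beta$ regime trivially: any $d$-plane through the centre of $B$ is a candidate for the infimum defining $\beta_{E,\infty}^d(\tfrac{1}{2}B)$ and gives $\sup_{y\in \tfrac12 B}\dist(y,L)\leq r_B/2$, so $\beta_{E,\infty}^d(\tfrac{1}{2}B)\leq 2$. Hence if $\beta$ is bounded below by some threshold $\tau_{c,d}>0$ (to be chosen), the conclusion is immediate from $1\lesssim_{c,d}\beta^{1/(d+1)}$. So I may assume $\beta<\tau_{c,d}$.

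Choose $P$ with $\beta_E^{d,1}(B,P)\leq 2\beta$. Fix $y\in E\cap\tfrac{1}{2}B$ and set $t=\dist(y,P)/r_B$. Since $y\in\tfrac{1}{2}B$, the ball $B(y,s)$ lies inside $B$ and is centred on $E$ for any $s\leq r_B/2$, so the lower content regularity hypothesis gives $\mathscr{H}^d_\infty(E\cap B(y,s))\geq cs^d$; moreover, every $x\in B(y,s)$ satisfies $\dist(x,P)\geq tr_B-s$. Taking $s=tr_B/2$ (legitimate provided $t\leq 1$, which I will verify below), these combine to give, for every $u\in[0,t/2]$,
\[ \mathscr{H}^d_\infty(\{x\in E\cap B:\dist(x,P)>ur_B\}) \geq c(tr_B/2)^d. \]
Integrating in $u$ over $[0,t/2]\subseteq[0,1]$ and using $\beta_E^{d,1}(B,P)\leq 2\beta$,
\[ 2\beta\, r_B^d \geq r_B^d\beta_E^{d,1}(B,P) \geq c(tr_B/2)^d\cdot(t/2) = c\,r_B^d\,t^{d+1}/2^{d+1}, \]
whence $t\lesssim_{c,d}\beta^{1/(d+1)}$. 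Taking the supremum over $y$ and substituting $L=P$ in the definition of $\beta_{E,\infty}^d(\tfrac{1}{2}B)=\frac{4}{r_B}\inf_L\sup_{y\in E\cap \tfrac12 B}\dist(y,L)$ gives the desired inequality.

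The one wrinkle is the standing assumption $t\leq 1$ used to pick $s=tr_B/2$. If instead $t>1$ for some $y$, running the same argument with $s=r_B/2$ yields $\dist(x,P)\geq r_B/2$ on $B(y,r_B/2)\cap E$ and hence $2\beta\geq c/2^{d+1}$ after integrating over $u\in[0,1/2]$; choosing $\tau_{c,d}$ strictly smaller than $c/2^{d+2}$ rules this out. I do not expect any real obstacle here: the argument is the standard "ball of bad points" Chebyshev estimate transported to the Choquet setting, and the key point for this paper is that nothing in it refers to the ambient dimension of $H$, so the implicit constants depend only on $c$ and $d$.
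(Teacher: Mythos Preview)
Your argument is correct. Note that the paper does not actually prove this lemma: it is stated with a direct citation to \cite[Lemma 2.12]{azzam2018analyst} and no proof is given here. Your Chebyshev-type argument---using lower content regularity to produce a ball of points far from $P$ whenever a single point is far, then integrating---is the standard route and is essentially the proof one finds in the cited reference; importantly, as you observe, it is dimension-free, which is consistent with the paper's emphasis on constants independent of $n$.
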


\begin{lem}\label{c:beta}
Let $1 \leq p < \infty$ and $E \subseteq H$. Then, for any ball $B$ centred on $E$ with $\mathscr{H}^d_\infty(E \cap B) > 0,$ 
\begin{align}
\beta_E^{d,1}(B) \lesssim \beta_E^{d,p}(B)
\end{align}
\end{lem}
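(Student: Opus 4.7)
The plan is to deduce this directly from the Choquet Jensen inequality (Lemma \ref{JensenPhi}) applied to the convex function $\phi(t)=t^p$, after rearranging the resulting inequality so that the averages are taken with respect to $r_B^d$ rather than $\mathscr{H}^d_\infty(E\cap B)$.

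Fix a $d$-plane $P$. Using the layer-cake reformulation of $\beta_E^{d,p}(B,P)$ given in Definition \ref{d:beta} (which only integrates $t$ on $[0,1]$), it is equivalent to work with the bounded function $f(x) = \min\bigl(\mathrm{dist}(x,P),r_B\bigr)/r_B \in [0,1]$, and verify that
\[ \beta_E^{d,q}(B,P) = \left(\frac{1}{r_B^d}\int_{E\cap B} f^q\, d\mathscr{H}_\infty^d\right)^{1/q}\qquad (q\in\{1,p\}). \]
Set $M := \mathscr{H}_\infty^d(E\cap B)$, which is positive by hypothesis and finite since $M \leq \mathscr{H}_\infty^d(B) \leq (2r_B)^d$. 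Since $f$ is bounded and $\phi(t)=t^p$ is convex on $[0,\infty)$ (as $p\ge 1$), Lemma \ref{JensenPhi} gives
\[ \left(\frac{1}{M}\int_{E\cap B} f\, d\mathscr{H}_\infty^d\right)^{p} \leq \frac{1}{M}\int_{E\cap B} f^p\, d\mathscr{H}_\infty^d. \]

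Taking $p$-th roots and multiplying through by $M/r_B^d$ yields
\[ \beta_E^{d,1}(B,P) = \frac{1}{r_B^d}\int_{E\cap B} f\, d\mathscr{H}_\infty^d \leq \left(\frac{M}{r_B^d}\right)^{\!1-\frac{1}{p}}\beta_E^{d,p}(B,P). \]
Combining with the bound $M/r_B^d \leq 2^d$ coming from covering $B$ by itself in the definition of Hausdorff content, we obtain $\beta_E^{d,1}(B,P)\lesssim \beta_E^{d,p}(B,P)$ with an implicit constant depending only on $d$ and $p$ (not on the ambient Hilbert space). Taking the infimum over all $d$-planes $P$ completes the proof.

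The only mildly delicate point is the boundedness hypothesis in Lemma \ref{JensenPhi}, which forces us to work with the truncated integrand $\min(\mathrm{dist}(x,P),r_B)/r_B$ rather than $\mathrm{dist}(x,P)/r_B$; the equivalence noted in Definition \ref{d:beta} makes this harmless. Everything else is elementary algebraic manipulation.
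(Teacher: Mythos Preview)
Your proof is correct and follows essentially the same approach as the paper: both apply the Choquet Jensen inequality (Lemma \ref{JensenPhi}) with $\phi(t)=t^p$ and use $\mathscr{H}^d_\infty(E\cap B)\lesssim r_B^d$ to convert between the two normalizations. Your treatment is in fact slightly more careful than the paper's in that you explicitly truncate to $f=\min(\mathrm{dist}(\cdot,P),r_B)/r_B$ to meet the boundedness hypothesis of Lemma \ref{JensenPhi}, whereas the paper applies Jensen directly to $\mathrm{dist}(\cdot,L)$; the layer-cake over $[0,1]$ in Definition \ref{d:beta} makes this distinction harmless, exactly as you note.
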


\begin{proof}
The lemma is trivial for $p =1,$ so we can suppose $p>1.$ We can assume without loss of generality that $r_B = 1.$ Let $\ve > 0$ and suppose $L$ is a $d$-plane so that 
\[\beta^{d,p}_E(B,L) \leq \beta^{d,p}_E(B) + \ve.\]
Define a function $\phi: [0,\infty) \rightarrow [0,\infty),$ by setting $\phi(x) = x^p.$ For this range of $p$, $\phi$ is convex. Using this with $0 < \mathscr{H}^d_\infty(E \cap B) < \infty,$ Lemma \ref{JensenPhi} implies
\begin{align}
\beta^{d,1}_E(B) \leq \beta_E^{d,1}(B,L) &= \int_{E \cap B} \text{dist}(x,L) \, d\mathscr{H}_\infty^d \\
&\lesssim \mathscr{H}_\infty^d(E \cap B)^{1-\frac{1}{p}}\left(\int_{E \cap B} \text{dist}(x,L)^p \, d\mathscr{H}_\infty^d \right)^\frac{1}{p} \\
&\leq \left(\int_{E \cap B} \text{dist}(x,L)^p \, d\mathscr{H}_\infty^d \right)^\frac{1}{p} \leq \beta_E^{d,p}(B) + \ve. 
\end{align}
This finishes the proof since $\ve > 0$ was arbitrary. 
\end{proof}

\begin{lem} [{\cite[Lemma 2.14]{azzam2018analyst}}]\label{lemma:monotonicity}
Let $1 \leq p < \infty$ and $E \subseteq H.$ Then, for all balls $B^\prime \subseteq B$ centred on $E,$
\begin{align}
\beta^{d,p}_E(B^\prime) \leq \left(\frac{r_B}{r_{B^\prime}} \right)^{1+\frac{d}{p}} \beta_E^{d,p}(B). 
\end{align}
\end{lem}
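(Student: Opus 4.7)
The plan is the standard one for this sort of scaling estimate: take a $d$-plane $L$ that nearly realises the infimum defining $\beta^{d,p}_E(B)$, and use the same $L$ as a competitor in the definition of $\beta^{d,p}_E(B')$. The inequality $r_{B'}\le r_B$ and the inclusion $E\cap B'\subseteq E\cap B$ will together account for the factor $(r_B/r_{B'})^{1+d/p}$.

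Concretely, fix $\varepsilon>0$ and pick a $d$-plane $L$ with $\beta^{d,p}_E(B,L) \le \beta^{d,p}_E(B)+\varepsilon$. In the definition of $\beta^{d,p}_E(B',L)$, I would first write $\operatorname{dist}(x,L)/r_{B'} = (r_B/r_{B'})\cdot\operatorname{dist}(x,L)/r_B$, pull the constant $(r_B/r_{B'})^p$ out of the Choquet integral via Lemma~\ref{Choquet}(3), and then replace the domain $E\cap B'$ by the larger set $E\cap B$ using monotonicity (Lemma~\ref{Choquet}(1) applied to the super-level sets of $x\mapsto (\operatorname{dist}(x,L)/r_B)^p\mathds{1}_{E\cap B'}$ versus the same with $\mathds{1}_{E\cap B}$). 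After extracting the prefactor $1/r_{B'}^d = (r_B/r_{B'})^d \cdot 1/r_B^d$, one recognises $\beta^{d,p}_E(B,L)^p$ on the right-hand side and obtains
\begin{align*}
\beta^{d,p}_E(B',L) \le \left(\frac{r_B}{r_{B'}}\right)^{1+\frac{d}{p}} \beta^{d,p}_E(B,L).
\end{align*}

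Since $\beta^{d,p}_E(B') \le \beta^{d,p}_E(B',L)$ and $\beta^{d,p}_E(B,L)\le \beta^{d,p}_E(B)+\varepsilon$, sending $\varepsilon\to 0$ gives the claimed bound. There is no real obstacle here; the only subtlety worth being careful about is that Choquet integration is not additive in the integrand, so one should formulate monotonicity in terms of super-level sets (or equivalently in terms of the three properties collected in Lemma~\ref{Choquet}) rather than by splitting the integrand into pieces.
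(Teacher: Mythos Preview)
Your argument is correct and is exactly the standard scaling-and-inclusion proof. Note that the paper does not supply its own proof of this lemma; it is quoted directly from \cite[Lemma~2.14]{azzam2018analyst}, with only the remark that the exponent there should read $1+d/p$ rather than $d+p$. Your computation confirms precisely this corrected exponent.
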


\begin{rem}
	In \cite{azzam2018analyst}, the exponent of $r_B/r_{B'}$ is stated as $d+p.$ Examining their proof, one sees that the above exponent is correct. 
\end{rem}

The following lemma allows us to control the $\beta$-number of a set $E_1$ by the $\beta$-number of a nearby set $E_2$ with an error term depending on the average distance between the two sets. It is a Hilbert space version of \cite[Lemma 2.21]{azzam2018analyst}. 

\begin{lem}\label{betaest}
Let $1 \leq p < \infty.$ There exists $\ve>0$ such that the following holds. Suppose $E_1,E_2 \subseteq H$, $B^1$ is a ball centred on $E_1$ and $B^2$ is a ball of same radius but centred on $E_2$ such that $B^1 \subseteq 2B^2.$ Suppose for $i=1,2$ and all balls $B \subseteq 2B^i$ centred on $E_i$ we have $\mathscr{H}_\infty^d( E_i \cap B) \geq c{r_B}^d$  for some $c > 0.$ Then
\begin{align*}
\beta_{E_1}^{d,p}(B^1,P) &\lesssim_{c,d,p} \beta_{E_2}^{d,p}(2B^2,P) \\
&\hspace{4em}+ \left(\frac{1}{r^d_{B^1}}\int_{E_1 \cap 2B^1}\left( \frac{\dist(y,E_2)}{r_{B_1}}\right)^p \, d\mathscr{H}_\infty^d(y) \right)^\frac{1}{p}.
\end{align*} 
\end{lem}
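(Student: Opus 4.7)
The natural starting point is the triangle inequality: for every $y \in E_1$ and every $z \in E_2$, $\dist(y,P) \leq |y-z| + \dist(z,P)$. I would apply this with $z = z(y) \in E_2$ a (nearly) nearest point to $y$, so that $|y-z(y)| \leq 2\dist(y,E_2)$, and then use the elementary inequality $(a+b)^p \leq 2^{p-1}(a^p+b^p)$ together with the monotonicity of Choquet integration (Lemma \ref{Choquet}) to get
\begin{align}
\int_{E_1 \cap B^1}\!\dist(y,P)^p\,d\mathscr{H}^d_\infty(y) \lesssim_p \int_{E_1 \cap B^1}\!\dist(y,E_2)^p\,d\mathscr{H}^d_\infty + \int_{E_1\cap B^1}\!\dist(z(y),P)^p\,d\mathscr{H}^d_\infty(y).
\end{align}
After dividing through by $r_{B^1}^{d+p}$ and taking $p$-th roots, the first summand is already bounded by the error term, so everything reduces to controlling the second summand by a constant times $\int_{E_2 \cap 2B^2}\dist(z,P)^p\,d\mathscr{H}^d_\infty(z)$.

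For that second piece I would use Jensen's inequality for Choquet integrals (Lemma \ref{JensenPhi}) to replace the pointwise value $\dist(z(y),P)^p$ by an average over $E_2 \cap B(z(y),\rho(y))$, where $\rho(y) := 2\dist(y,E_2)$. Since $B(z(y),\rho(y)) \subseteq 2B^2$ once constants are tuned (the hypothesis $B^1 \subseteq 2B^2$ forces $|x_1-x_2|\leq r_{B^1}$, and one splits into the cases $\dist(y,E_2)$ small or large), the lower content $d$-regularity of $E_2$ yields $\mathscr{H}^d_\infty(E_2 \cap B(z(y),\rho(y))) \gtrsim_c \rho(y)^d$, and Jensen produces
\begin{align}
\dist(z(y),P)^p \lesssim_{c,p} \dist(y,E_2)^p + \rho(y)^{-d}\int_{E_2 \cap B(z(y),\rho(y))}\dist(z,P)^p\,d\mathscr{H}^d_\infty(z).
\end{align}

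The last step — and the main obstacle — is to swap the outer integral in $y$ with the inner integral in $z$ while avoiding any Besicovitch- or packing-type ingredient, since those are dimension-dependent. The mechanism is a pointwise overlap count: for fixed $z \in E_2 \cap 2B^2$, the set of admissible $y$'s satisfies $|y-z| \leq 2\dist(y,E_2)$, so that $\rho(y)^{-d} \lesssim |y-z|^{-d}$ whenever $z$ belongs to $B(z(y),\rho(y))$. One then estimates the resulting $\int_{E_1 \cap 2B^1} |y-z|^{-d}\mathds{1}_{|y-z|\leq C\dist(y,E_2)}\,d\mathscr{H}^d_\infty(y)$ by a constant via a dyadic (or layer-cake) decomposition on $|y-z|$, using the lower regularity of $E_1$ to ensure the resulting balls saturate their content bound rather than accumulating. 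I expect this dyadic counting — replacing Besicovitch and producing the required $\lesssim_{c,d,p}1$ bound independent of $\dim(H)$ — to be the most delicate technical piece of the argument; the geometric factor $\alpha$ coming from the nearest-point projection lying in $\alpha B^2$ rather than exactly in $2B^2$ can then be absorbed via the monotonicity of $\beta^{d,p}$ in the ball (Lemma \ref{lemma:monotonicity}).
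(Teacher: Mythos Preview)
Your strategy diverges from the paper's at a crucial point, and I believe the final step as sketched does not go through.

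The paper does not attempt any Fubini-type manoeuvre. Instead it works directly at the level-set layer of the Choquet integral: after normalising to $B^1=\B$, it proves the pointwise inequality
\[
\mathscr{H}^d_\infty\bigl(\{x\in E_1\cap\B:\dist(x,P)>t\}\bigr)\ \lesssim\ \mathscr{H}^d_\infty\bigl(\{x\in E_2\cap 2\B:\dist(x,P)>t/2\}\bigr)+\mathscr{H}^d_\infty\bigl(\{x\in E_1\cap 2\B:\dist(x,E_2)>t/32\}\bigr)
\]
for every $t>0$, and then integrates against $t^{p-1}\,dt$. The level-set estimate is obtained by building a net in $E_t$ with separation governed by $\delta(x)=\max\{\dist(x,P),16\dist(x,E_2)\}$, splitting the net according to which term realises the maximum, and comparing the resulting balls with near-optimal covers of the two right-hand level sets. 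The dimension-free input is Lemma~\ref{ENV} (the packing lemma for disjoint balls lying close to a $d$-plane), which replaces Besicovitch.

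Your route runs into two genuine obstacles at the ``swap'' step. First, Choquet integration with respect to $\mathscr{H}^d_\infty$ is only subadditive, not additive, so there is no Fubini theorem available; exchanging $\int_{E_1}\int_{E_2}$ cannot be justified by the tools in Section~\ref{s:prelims}, and Lemma~\ref{l:subsum} only helps when the supports have bounded overlap, which is not the situation here. Second, even heuristically, the inner quantity you want to bound,
\[
\int_{E_1\cap 2B^1}|y-z|^{-d}\,\mathds{1}_{\{|y-z|\le C\dist(y,E_2)\}}\,d\mathscr{H}^d_\infty(y),
\]
is not controlled by lower regularity of $E_1$: lower regularity gives \emph{lower} bounds on $\mathscr{H}^d_\infty(E_1\cap B)$, whereas what you need to sum a dyadic decomposition of this integral is an \emph{upper} bound of the form $\mathscr{H}^d_\infty(E_1\cap\{y:|y-z|\sim 2^{-j}\})\lesssim 2^{-jd}$. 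Without upper regularity (which is not assumed) the series need not converge; for instance if $E_1$ is a $d$-plane and $E_2$ clusters near a single point $z$, the constraint $\dist(y,E_2)\sim|y-z|$ is nearly vacuous and the integral behaves like $\int_{|y|\le 1}|y|^{-d}\,d\mathscr{H}^d(y)$, which diverges. The paper's covering argument sidesteps both issues by never writing a double integral.
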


Azzam and Schul's proof of the above result relies heavily on the use of the Besicovitch Covering Theorem and is unsuitable in our setting. Our proof of Lemma \ref{betaest} follows a similar approach to the proof of \cite[Lemma 2.29]{hyde2020analyst}. An important ingredient is the following.

\begin{lem}[{\cite[Theorem 3.10]{edelen2018effective}}]\label{ENV}
Let $V$ be an affine $d$-dimensional plane in a Banach space $X$, and $\{B(x_i,r_i)\}_{i \in I}$ be a family of pairwise disjoint balls with $r_i \leq R, \ B(x_i,r_i) \subseteq B(x,R),$ for some $x \in X$ and $\emph{dist}(x_i,V) < r_i/2.$ Then, there is a constant $\kappa = \kappa(d)$ such that
\begin{equation}\label{e:ENV}\sum_{i \in I} r_i^d \leq \kappa R^d.
\end{equation}
\end{lem}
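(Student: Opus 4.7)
The plan is a direct volume-packing argument inside the plane $V$. For each $i \in I$, the hypothesis $\dist(x_i,V) < r_i/2$ yields, by the definition of infimum, a point $v_i \in V$ with $|v_i - x_i| < r_i/2$. Writing $B_V(v,r) := \{w \in V : |w - v| < r\}$ for the relative ball in $V$, the triangle inequality immediately gives $B_V(v_i, r_i/2) \subseteq B(x_i, r_i)$: if $|w - v_i| < r_i/2$, then $|w - x_i| \leq |w - v_i| + |v_i - x_i| < r_i$. Consequently the pairwise disjointness of $\{B(x_i, r_i)\}_{i \in I}$ transfers to the family $\{B_V(v_i, r_i/2)\}_{i \in I}$ inside $V$.

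I would next localise all these relative balls inside one ball in $V$ of radius $2R$. Since $v_i \in B_V(v_i, r_i/2) \subseteq B(x_i, r_i) \subseteq B(x, R)$, each $v_i$ sits in the nonempty set $V \cap B(x, R)$; fix any $v^* \in V \cap B(x, R)$. For $w \in V \cap B(x, R)$ the triangle inequality yields $|w - v^*| \leq |w - x| + |x - v^*| < 2R$, so
\[ V \cap B(x, R) \subseteq B_V(v^*, 2R), \]
and in particular $B_V(v_i, r_i/2) \subseteq B_V(v^*, 2R)$ for every $i \in I$.

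The packing bound then follows from Haar measure on $V$. Equip the $d$-dimensional affine plane $V$ with the translation-invariant Lebesgue measure $\calL$ coming from any linear identification of the direction of $V$ with $\R^d$, and set $\omega := \calL(B_V(0,1)) \in (0,\infty)$; then translation-invariance and $d$-homogeneity give $\calL(B_V(v,r)) = \omega r^d$ for all $v \in V$ and $r > 0$. Disjointness and the containment in $B_V(v^*, 2R)$ yield
\[ \omega \cdot 2^{-d} \sum_{i \in I} r_i^d \;=\; \sum_{i \in I} \calL\bigl(B_V(v_i, r_i/2)\bigr) \;\leq\; \calL\bigl(B_V(v^*, 2R)\bigr) \;=\; \omega \cdot (2R)^d, \]
(interpreting uncountable sums as suprema over finite subcollections, which is all the argument ever uses). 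Cancelling $\omega$ gives $\sum_{i \in I} r_i^d \leq 4^d R^d$, so $\kappa(d) = 4^d$ works.

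I do not expect any real obstacle. The one mildly subtle point is that the argument must not secretly use a Hilbert-space structure on $X$, and it does not: the ambient norm restricted to $V$ enters only through the single constant $\omega$, which cancels from both sides. Hence the resulting $\kappa$ depends only on the dimension $d$ of $V$ and not on the ambient Banach space, as required for the applications later in the paper.
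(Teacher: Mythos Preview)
Your proof is correct. The paper does not give its own proof of this lemma; it is quoted directly as \cite[Theorem 3.10]{edelen2018effective} and used as a black box throughout. Your argument is the natural volume-packing proof: project each ball to a half-radius relative ball in $V$, observe disjointness is inherited, and compare $d$-dimensional Lebesgue volumes inside $V$. The only point requiring care in the Banach-space setting---that the constant $\omega = \calL(B_V(0,1))$ depends on the induced norm on $V$ but cancels from both sides---you handle explicitly, so the resulting bound $\kappa(d)=4^d$ is genuinely dimension-only, as the paper needs.
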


\begin{rem}
Lemma \ref{ENV} says that a collection of disjoint balls in $X$ lying close to a $d$-plane satisfy a $d$-dimensional packing condition. This will not only be useful for us here, but will play an important role throughout the paper, as it provides us with a way to implement volume arguments in infinite dimensional Hilbert space.
\end{rem}

\begin{proof}[Proof of Lemma \ref{betaest}]
By scaling and translating, we can assume that $B^1 = \B,$ the unit ball in $H$ centred at the origin. For $t>0$, set
\begin{align*}
E_t = \{x \in E \cap \B : \text{dist}(x,L) > t \}.
\end{align*}
It suffices to show 
\begin{align}\label{e:leq}
\mathscr{H}_\infty^d(E_t) &\lesssim_{c,d,p} \mathscr{H}_{\infty}^{d}(\{x \in F \cap 2\B : \text{dist}(x,L) > t/2 \}) \\ 
&\hspace{4em} + \mathscr{H}_\infty^d(\{x \in E \cap 2\B : \text{dist}(x,F) > t/32 \}),
\end{align}
since then
\begin{align*}
\beta_E^{d,p}(\B,L)^p &= \int_0^1 \mathscr{H}_\infty^d(E_t) t^{p-1} \, dt  \\
&\lesssim_{c,d,p} \int_0^1 \mathscr{H}_{\infty}^{d} (\{x \in F \cap 2\B : \text{dist}(x,L) > t/2 \}) t^{p-1} \, dt \\
&\hspace{4em} +  \int_0^1 \mathscr{H}_\infty^d (\{x \in E \cap 2\B : \text{dist}(x,F) > t/32 \}) t^{p-1} \, dt \\
&\lesssim \int_0^1 \mathscr{H}_{\infty}^{d} (\{x \in F \cap 2\B : \text{dist}(x,L) > 2t \}) t^{p-1} \, dt \\
&\hspace{4em} +  \int_0^1 \mathscr{H}_\infty^d (\{x \in E \cap 2\B : \text{dist}(x,F) > 2t\}) t^{p-1} \, dt \\
&\sim\beta_F^{d,p}(2\B,L)^p +  \int_{E \cap 2\B} \dist(x,F)^p \, d\mathscr{H}_\infty^d(x),
\end{align*}
So, let us prove \eqref{e:leq}. We first need to construct a suitable cover for $E_t.$ For $x \in E_t$, let 
$$\delta(x) = \max\{ \text{dist}(x,L), 16\text{dist}(x,F) \}$$
and set $X_t$ to be a maximally separated net in $E_t$ such that, for $x,y \in X_t,$ we have 
\begin{align}\label{Separation}
|x-y| \geq 4 \max\{\delta(x),\delta(y)\}.
\end{align}
Enumerate $X_t = \{x_i\}_{i \in I}.$ For $x_i \in X_t$ denote $B_i = B(x_i,\delta(x_i)).$ Notice, these balls are non-degenerate since $x_i \in X_t \subseteq E_t.$ By maximality we know the balls $\{4B_i\}_{i \in I}$ cover $E_t$ so,
\begin{align}\label{E_t}
\mathscr{H}_\infty^d(E_t) \lesssim \sum_{i \in I} r_{4B_i}^d. 
\end{align} 
We partition $I = I_1 \cup I_2$, where 
\begin{align*}
I_1 = \{ i \in I : \delta(x_i) = \text{dist}(x_i,L) \} 
\end{align*}
and
\begin{align*}
I_2 = \{ i \in I : \delta(x_i) = 16\text{dist}(x_i,F) \}.
\end{align*}
If $\text{dist}(x_i, L) = 16\text{dist}(x_i,F)$, we put $i$ in $I_1$ or $I_2$ arbitrarily. By \eqref{E_t}, it follows that 
\[\mathscr{H}^d_\infty(E_t) \lesssim \sum_{i \in I_1} r_{4B_i}^d + \sum_{i \in I_2} r_{4B_i}^d.\]
We will show that
\begin{align}\label{I_1}
\sum_{i \in I_1} r_{4B_i}^d \lesssim_{c,d,p} \mathscr{H}^{d}_{\infty}( \{x \in F \cap 2\B : \dist(x,L) > t/2\})
\end{align}
and
\begin{align}\label{I_2}
\sum_{i \in I_2} r_{4B_i}^d \lesssim_{c,d,p} \mathscr{H}^d_\infty ( \{x \in E \cap 2\B : \dist(x,F) > t/32\})
\end{align}
from which \eqref{e:leq} follows. The rest of the proof is dedicated to proving \eqref{I_1} and \eqref{I_2}. Let us begin with \eqref{I_1}. Let 
\[F_t = \{x \in F \cap 2\B : \text{dist}(x,L) > t/2\}\]
and $\mathscr{B}$ be a collection of balls centred on $F,$ which cover $F_t$, such that 
\begin{align}\label{e:good}
\mathscr{H}_{\infty}^{d}(F_t) \sim \sum_{B \in \mathscr{B}} r_B^d.
\end{align}
For $i \in I_1$, since $\text{dist}(x_i,L) >t$ (by virtue of that fact that $x_i \in E_t$), and 
\[\tfrac{1}{2}B_i = B(x_i,\text{dist}(x_i,L)/2) \supseteq B(x_i , 8\text{dist}(x_i,F)),\] 
we have
\begin{align}\label{e:use}
F \cap \tfrac{1}{2}B_i \not= \emptyset \quad \text{and} \quad F \cap \tfrac{1}{2}B_i \subseteq F_t.
\end{align}
Additionally, since the collection of balls $\mathscr{B}$ form a cover of $F_t$, there exists at least one $B \in \mathscr{B}$ such that $B \cap \tfrac{1}{2}B_i \not=\emptyset.$ With this observation, we can partition $I_1$ as follows. Let
\begin{align*}
I_{1,1} &= \{i \in I_1 : \text{there exists} \ B \in \mathscr{B} \ \text{such that} \ \tfrac{1}{2}B_i \cap B \not= \emptyset \ \text{and} \ r_B \geq r_{B_i} \}, \\
I_{1,2} &= \{i \in I_1 : r_B < r_{B_i} \mbox{ for all } B \in \mathscr{B} \mbox{ such that }   \tfrac{1}{2}B_i \cap B \not= \emptyset \}. 
\end{align*}
We first control the sum over $I_{1,1}.$ Let $B \in \mathscr{B}$. If there is a ball $B_i$ satisfying $\tfrac{1}{2}B_i \cap B \not=\emptyset$ and $r_B \geq r_{B_i}$ (which by definition implies $i \in I_{1,1}$), then $2B_i \subseteq 4B.$ By \eqref{Separation} we know the $\{2B_i\}$ are disjoint and satisfy $\text{dist}(x_i,L) \leq r_{2B_i}/2,$ so by Lemma \ref{ENV}, we have
\begin{align*}
\sum_{\substack{i \in I_{1,1} \\ \frac{1}{2}B_i \cap B \not=\emptyset \\ r_{B_i} \leq r_B}} r_{4B_i}^d \lesssim \sum_{\substack{i \in I_{1,1} \\ 2B_i \subseteq 4B}} r_{2B_i}^d \lesssim r_B^d.
\end{align*}
Thus,
\begin{align}\label{I_L^1}
\sum_{i \in I_{1,1}} r_{4B_i}^d \leq \sum_{B \in \mathscr{B}} \sum_{\substack{i \in I_{1,1} \\ \frac{1}{2}B_i \cap B \not=\emptyset \\ r_{B_i} \leq r_B}} r_{4B_i}^d \lesssim \sum_{B \in \mathscr{B}}r_B^d. 
\end{align}
We now turn our attention to $I_{1,2}.$ For $i \in I_{1,2}$, let $x_i^\prime$ be the point in $F$ closest to $x_i$ and set $B_i^\prime = B(x_i^\prime, \text{dist}(x_i,L)/4).$ Note that $B_i^\prime \subseteq \tfrac{1}{2}B_i,$ since for $y \in B_i^\prime$ we have
\begin{align*}
|y - x_i| \leq \frac{1}{4}\text{dist}(x_i,L) + |x_i'-x_i| \overset{(i \in I_1)}{\leq} \left(\frac{1}{4} + \frac{1}{16} \right)\text{dist}(x_i,L) \leq \frac{1}{2}\text{dist}(x_i,L).
\end{align*}
Since $F \cap \tfrac{1}{2}B_i \subseteq F_t$ by \eqref{e:use}, and $\mathscr{B}$ forms a cover for $F_t,$ the balls 
\[\{B \in \mathscr{B} : F \cap B \cap \frac{1}{2}B_i\not=\emptyset\}\]
form a cover for $F \cap \tfrac{1}{2}B_i.$ Furthermore, if $B \cap \tfrac{1}{2}B_i \not=\emptyset$ then $B \cap \tfrac{1}{2}B_j = \emptyset$ for all $i\not=j$, that is 
\begin{align}\label{e:1,2}
\# \{i \in I_{1,2} : B_i \cap B \not=\emptyset \} \leq 1,
\end{align}
since otherwise $|x_i - x_j| < 4\max\{\delta(x_i),\delta(x_j)\},$ contradicting \eqref{Separation}. Since $F$ is lower content $d$-regular, we have
\[\mathscr{H}_{\infty}^{d}(F \cap B_i^\prime) \gtrsim r_{B_i^\prime}^d \gtrsim r_{B_i}^d,\]
which implies
\begin{equation}
\begin{aligned}
\sum_{i \in I_{1,2}} r_{4B_i}^d &\lesssim \sum_{i \in I_{1,2}} \mathscr{H}_{\infty}^{d}(F \cap B_i^\prime) \leq  \sum_{i \in I_{1,2}} \mathscr{H}_{\infty}^{d}(F \cap \tfrac{1}{2}B_i) \\
&\leq \sum_{i \in I_{1,2}} \sum_{\substack{B \in\mathscr{B} \\ F \cap B \cap \frac{1}{2}B_i  \not=\emptyset}} r_B^d  \leq \sum_{B \in\mathscr{B}}  \sum_{\substack{i \in I_{1,2} \\ F \cap B \cap \frac{1}{2}B_i  \not=\emptyset}} r_B^d \overset{\eqref{e:1,2}}{\leq}  \sum_{B \in\mathscr{B}} r_B^d. 
\end{aligned}\label{I_L^2}
\end{equation}
Combining \eqref{I_L^1} and \eqref{I_L^2}, we conclude
\begin{align}\label{F_t}
\sum_{i \in I_1} r_{4B_i}^d \lesssim \sum_{B \in \mathscr{B}} r_B^d \overset{\eqref{e:good}}{\lesssim} \mathscr{H}_{\infty}^{d}(F_t)
\end{align}
which is \eqref{I_1}.

We turn our attention to proving \eqref{I_2}, the proof of which follows much the same as that for \eqref{I_1}. Let 
$$E_t^\prime =\{x \in E \cap 2\B : \text{dist}(x,F) > t/32 \}$$
and let $\mathscr{B}^\prime$ be a collection of balls covering $E_t'$ such that each $B \in \mathscr{B}'$ is centred on $E_t'$, has $r_B \leq 1$ and 
\begin{align}\label{e:good2}
\mathscr{H}_\infty^d(E_t^\prime) \sim \sum_{B \in \mathscr{B}^\prime} r_B^d.
\end{align}
As before, we partition $I_2$. For $i \in I_2,$ we have $\text{dist}(x_i,L) >t$ (since $x_i \in E_t$) and
\begin{align}
\text{dist}(x_i,F) \geq \text{dist}(x_i,L)/16 \geq t/16.
\end{align}
Hence,
\begin{align}\label{use1}
E \cap \tfrac{1}{32}B_i = E \cap B(x_i,\text{dist}(x_i,F)/2) \subseteq E_t^\prime.
\end{align}
Thus, for each $B_i,$ since $\mathscr{B}'$ forms a cover for $E_t',$ there exists $B \in \mathscr{B}^\prime$ such that $B \cap \tfrac{1}{32}B_i \not=\emptyset.$ We partition $I_2$ by letting
\begin{align*}
I_{2,1} &= \{i \in I_2 : \text{there exists} \ B \in \mathscr{B}^\prime \ \text{such that} \ \tfrac{1}{32}B_i \cap B \not= \emptyset \ \text{and} \ r_B \geq r_{B_i} \}, \\
I_{2,2} &= \{i \in I_2 : r_B < r_{B_i} \mbox{ for all } B \in \mathscr{B}' \mbox{ such that } \tfrac{1}{32}B_i \cap B \not= \emptyset \}. 
\end{align*}
If $B \in \mathscr{B}^\prime$ and $B \cap \tfrac{1}{32}B_i \not=\emptyset$ with $r_B \geq r_{B_i}$ then $2B_i \subseteq 4B.$ Furthermore, by \eqref{Separation}, we know the $\{2B_i\}$ are disjoint and satisfy $\text{dist}(x_i,L) < \text{dist}(x_i,F)/16 = r_{2B_i}/2,$ so by Lemma \ref{ENV}, we have
\begin{align*}
\sum_{\substack{i \in I_{2,1} \\ \frac{1}{32}B_i \cap B \not=\emptyset \\ r_{B_i} \leq r_B}} r_{4B_i}^d \lesssim \sum_{\substack{i \in I_{2,1} \\ 2B_i \subseteq 4B}} r_{2B_i}^d \lesssim r_B^d.
\end{align*}
Thus,
\begin{align}\label{I_F^1}
\sum_{i \in I_{2,1}} r_{4B_i}^d \leq \sum_{B \in \mathscr{B}^\prime} \sum_{\substack{i \in I_{2,1} \\ \frac{1}{32}B_i \cap B \not=\emptyset \\ r_{B_i} \leq r_B}} r_{2B_i}^d \leq \sum_{B \in \mathscr{B}^\prime}r_B^d. 
\end{align}
We now deal with $I_{2,2}.$ Since by \eqref{use1}, $E \cap \tfrac{1}{32}B_i \subseteq E_t^\prime$, the balls 
\[\{B \in \mathscr{B}^\prime : E \cap B \cap \frac{1}{32}B_i  \not=\emptyset\}\]
form a cover for $E \cap \tfrac{1}{32}B_i.$ As before, if $B \cap \tfrac{1}{32}B_i \not=\emptyset$ then $B \cap \tfrac{1}{32}B_j = \emptyset$ for all $i\not=j$ by \eqref{Separation}. By lower regularity of $E$, we know $\mathscr{H}_\infty^d(E \cap \tfrac{1}{32}B_i) \gtrsim r_{B_i}^d,$ from which we conclude
\begin{align}\label{I_F^2}
\begin{split}
\sum_{i \in I_{2,2}} r_{4B_i}^d &\lesssim \sum_{i \in I_{2,2}} \mathscr{H}_\infty^d(E \cap \tfrac{1}{32} B_i) \leq   \sum_{i \in I_{2,2}} \sum_{\substack{B \in\mathscr{B}^\prime \\ E \cap B \cap \frac{1}{32}B_i  \not=\emptyset}} r_B^d \\
&= \sum_{B \in\mathscr{B}^\prime}  \sum_{\substack{i \in I_{2,2} \\ E \cap B \cap \frac{1}{32}B_i  \not=\emptyset}} r_B^d \lesssim \sum_{B \in \mathscr{B}^\prime} r_B^d. 
\end{split}
\end{align}
The proof of \eqref{I_2} (and hence the proof of the lemma) is completed since
\[ \sum_{i\in I_2} r_{4B_i}^d =  \sum_{i\in I_{2,1}} r_{4B_i}^d + \sum_{i\in I_{2,2}} r_{4B_i}^d \stackrel{ \substack{\eqref{I_F^1} \\ \eqref{I_F^2}}}{\lesssim} \sum_{B \in \mathscr{B}'} r_B^d \stackrel{\eqref{e:good2}}{\lesssim} \mathscr{H}^d_\infty(E_t'). \]

\end{proof}

Finally, we can use $\beta$-numbers to control the angles between best approximating planes. First, some notation. For two planes $P,P^\prime$ containing the origin, we define
\begin{align}
	\angle(P,P^\prime) = d_{B(0,1)}(P,P^\prime).
\end{align}
If $P,P^\prime$ are general affine planes with $x \in P$ and $y \in P^\prime$, we define 
\begin{align}
	\angle(P,P^\prime) = \angle(P-x,P^\prime - y). 
\end{align}

\begin{lem}[{\cite[Lemma 2.18]{azzam2018analyst}}]\label{AngControl}
Let $M >1$, $\alpha >0$, $E$ a Borel set, and $\mathscr{D}$ be the cubes for $E$ from Lemma \ref{cubes} and $Q_0 \in \mathscr{D}.$ Let $P_Q$ satisfy $\beta_E^{d,1}(MB_Q) = \beta_E^{d,1}(MB_Q,P_Q)$. Let $Q,R \subseteq Q_0$, and suppose each $T \subseteq Q_0$ which contains either $Q$ or $R$ satisfies $\beta_E^{d,1}(MB_T) < \ve$. Then for $\Lambda >0$, if $\emph{dist}(Q,R) \leq \Lambda \max\{\ell(Q),\ell(R) \} \leq \Lambda^2\min\{\ell(Q),\ell(R)\},$ then
\begin{align*}
\angle(P_Q,P_R) \lesssim _{M,\Lambda}\ve.
\end{align*}
\end{lem}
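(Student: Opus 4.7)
Without loss of generality $\ell(Q) \leq \ell(R) \leq \Lambda \ell(Q)$, so the cubes have comparable size and $|x_Q - x_R| \leq \ell(Q) + \dist(Q,R) + \ell(R) \lesssim_\Lambda \ell(Q)$. My plan is to compare $P_Q$ and $P_R$ through a common ``intermediate'' plane $P_T$ attached to a large enough ancestor $T := Q^{(j)} \subseteq Q_0$ of $Q$, chosen so that $MB_Q \cup MB_R \subseteq \tfrac{1}{2}MB_T$. Since $\ell(T)/\ell(Q) = \rho^{-j}$ can be made an arbitrarily large constant depending only on $M$ and $\Lambda$, and since $M > 1$, such a $j = j(M,\Lambda)$ exists (if $Q_0$ is too small to accommodate it, we simply take $T = Q_0$, which is then already comparable in size to $Q$). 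The hypothesis on ancestors gives $\beta_E^{d,1}(MB_T) < \ve$, and I let $P_T$ be a corresponding minimiser.

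Under the paper's standing lower content $d$-regularity of $E$, Lemma \ref{lemma:betap_betainfty} upgrades this $L^1$-type bound to pointwise control: every $x \in E \cap \tfrac{1}{2}MB_T$ satisfies $\dist(x, P_T) \lesssim_{c,d} \ve^{1/(d+1)} M \ell(T)$, and similarly for $x \in E \cap \tfrac{1}{2}MB_Q$ with respect to $P_Q$ (and analogously at $R$). I next produce $d+1$ affinely well-spread test points: using lower regularity together with the thin-slab picture above, a projection-and-greedy argument gives points $y_0,\ldots,y_d \in E \cap \tfrac{1}{2}MB_Q$ whose orthogonal projections onto $P_Q$ form a $d$-simplex of inradius $\gtrsim \ell(Q)$. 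Here is where infinite-dimensionality genuinely intervenes: the greedy construction and the associated packing bound rely on Lemma \ref{ENV} in place of a standard Euclidean volume argument.

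Since $\tfrac{1}{2}MB_Q \subseteq \tfrac{1}{2}MB_T$ by construction, these same points satisfy $\dist(y_i, P_T) \lesssim_{M,\Lambda} \ve^{1/(d+1)} \ell(Q)$, and an elementary linear-algebra step (comparing two $d$-planes each of which passes within $\eta$ of a $d$-simplex of inradius $r$ forces the angle $\lesssim \eta/r$) yields $\angle(P_Q, P_T) \lesssim_{M,\Lambda} \ve^{1/(d+1)}$. Running the same argument with $R$ in place of $Q$ gives $\angle(P_R, P_T) \lesssim_{M,\Lambda} \ve^{1/(d+1)}$, and the triangle inequality for $\angle$, viewed as a Hausdorff-type distance on $d$-planes, completes the proof. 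The main obstacle is the Hilbert-space construction of the well-spread points, where the standard Euclidean volume argument is unavailable and one must rely on Lemma \ref{ENV}; I note that the natural exponent that this approach gives is $\ve^{1/(d+1)}$ rather than the linear $\ve$ stated, but in the applications of this lemma $\ve$ is always a smallness parameter and any positive power suffices (or one simply redefines the input $\ve$ to $\ve^{d+1}$).
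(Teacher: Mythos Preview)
The paper does not prove this lemma; it is quoted verbatim from \cite[Lemma~2.18]{azzam2018analyst} and used as a black box, so there is no in-paper argument to compare against. Your overall strategy---pass to a common ancestor $T$, find $d+1$ well-spread points of $E$ in $\tfrac12 MB_Q$, and compare $P_Q$ and $P_T$ through those points---is the right one, and your use of Lemma~\ref{ENV} to handle the infinite-dimensional packing step is exactly what the paper does elsewhere.

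The one substantive gap is the exponent. By routing through Lemma~\ref{lemma:betap_betainfty} you only obtain $\angle(P_Q,P_R)\lesssim\ve^{1/(d+1)}$, not the linear $\ve$ in the statement. You are right that for the downstream applications any positive power would do, but the stated bound is genuinely linear and can be recovered without passing through $\beta_{E,\infty}$. The fix is to locate the test points more carefully: first use lower regularity (and the Lemma~\ref{ENV} packing argument you already invoke) to find $d+1$ pairwise-separated sub-balls $B_0,\dots,B_d\subseteq \tfrac12 MB_Q$ of radius $\sim\ell(Q)$ whose centres project to a simplex of inradius $\gtrsim\ell(Q)$ in $P_Q$. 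Then, for each $i$, apply a Chebyshev-type estimate for the Choquet integral on $E\cap B_i$: since $\mathscr{H}^d_\infty(E\cap B_i)\gtrsim\ell(Q)^d$ while $\int_{E\cap B_i}\dist(\cdot,P_Q)\,d\mathscr{H}^d_\infty\lesssim \ve\,\ell(Q)^{d+1}$, there is a point $y_i\in E\cap B_i$ with $\dist(y_i,P_Q)\lesssim\ve\,\ell(Q)$ directly, and likewise $\dist(y_i,P_T)\lesssim\ve\,\ell(T)\sim_{M,\Lambda}\ve\,\ell(Q)$. Feeding these $y_i$ into your linear-algebra step now gives $\angle(P_Q,P_T)\lesssim_{M,\Lambda}\ve$ with no loss of exponent.

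A minor point: you invoke ``the paper's standing lower content $d$-regularity of $E$'', and indeed both your argument and the one above require it (Lemma~\ref{lemma:betap_betainfty} already does). The lemma as transcribed only says ``$E$ a Borel set'', but without lower regularity the optimal planes $P_Q$ need not be essentially unique and no angle bound is possible; the hypothesis is implicit in \cite{azzam2018analyst} and in every use of the lemma here.
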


\section{Reifenberg parametrisations in Hilbert space}\label{s:DT}

In this section we discuss the Reifenberg parametrization theorem of David and Toro \cite{david2012reifenberg}. This was one of the main tool in \cite{azzam2018analyst}, and we need a version to hold in Hilbert space. It turns out most of the arguments of David and Toro make sense in Hilbert space. We will describe the necessary changes and the remaining details will be outlined in the Appendix.  

\subsection{CCBP} The main result of \cite{david2012reifenberg} is stated in terms of a \textit{coherent collection of balls and planes} (CCBP), defined below.

\begin{defn}\label{d:CCBP}
Let $P_0$ be a $d$-plane in $H$. Let $\{ x_{j,k} \}, \ k \in \N, \ j \in J_k,$ be a collection of points in $H$ such that 
\begin{align}\label{e:csep}
|x_{i,k} - x_{j,k}| \geq 10^{-k} \eqqcolon r_k \quad \text{for all} \ i,j  \in J_k.
\end{align}
For each $k \in \N, \ j \in J_k$, let $B_{j,k} = B(x_{j,k},r_k)$ and $P_{j,k}$ be a $d$-plane such that $x_{j,k} \in P_{j,k}.$ The triple $\mathscr{P} = ( P_0 , \{B_{j,k}\},\{P_{j,k}\}), \ k \in \N, \ j \in J_k$, is said to be a \textit{coherent collection of balls and planes} (CCBP) if it satisfies the following conditions:
\begin{enumerate}
\item For each $k \in \N$ and $j \in J_k$,
\begin{align}\label{e:cinV^2}
x_{j,k} \in V^2_{k-1}, 
\end{align}
where 
\begin{align}\label{e:V_k}
 V_k^\lambda = \bigcup_{j \in J_k} \lambda B_{j,k}.
\end{align}
\item For $j \in J_0$, 
\begin{align}\label{e:cdist0}
\dist(x_{j,0},P_0) \leq \ve.
\end{align}
\item For $k \in \N$ and $i,j \in J_k$ such that $|x_{i,k} - x_{j,k}| \leq 100 r_k,$ 
\begin{align}\label{e:c100rk}
d_{x_{j,k},100r_k}(P_{i,k},P_{j,k}) \leq \ve.
\end{align}
\item For $j \in J_0$ and $x \in P_0$ such that $\dist(x,P_0) \leq 2,$
\begin{align}\label{e:c2}
d_{x_{j,0},100}(P_{j,0},P_0) \leq \ve.
\end{align}
\item For $k \in \N$, $i \in J_k$ and $j \in J_{k+1}$ such that $|x_{i,k} - x_{j,k+1}| \leq 2r_k, $
\begin{align}\label{e:c2r_k}
d_{x_{i,k},20r_k}(P_{i,k},P_{j,k+1}) \leq \ve.
\end{align}
\end{enumerate}
\end{defn}

\begin{rem}\label{r:ve_k}
	In practice, to verify properties (2)-(5), we will show $\ve_k(x_{j,k}) < \ve$ for all $k \geq0$ and $j \in J_k$, where 
	\[\varepsilon_k(x) = \sup\{d_{x_{i,l},10^4r_l}(P_{j,k},P_{i,l}) : j \in J_k, \ \abs{l-k} \leq 2, \ i \in J_l, x \in 100B_{j,k} \cap 100B_{i,l} \}.\] 
	\end{rem}

\subsection{Partition of unity}

The main alteration to the construction in \cite{david2012reifenberg} is in defining a partition of unity, adapted to the collection of balls in the CCBP. Let us outline the David-Toro construction, see also \cite[Chapter 3]{david2012reifenberg}. Let $k \geq 0.$ David and Toro begin by defining a collection of smooth bump functions $\theta_{j,k}$ supported on the balls $10B_{j,k}$, $j \in J_k$. They then introduce a collection of maximally separated points $x_{l,k}$ in $\R^n \setminus V_k^9$, a collection of balls $B_{l,k}$ centred on $x_{l,k}$, and a collection of function $\theta_{l,k}$ supported on $10B_{l,k}$. These are indexed by $l \in L_k.$ The supports of the function $\theta_{j,k}, \ j \in J_k \cup L_k$ have bounded overlap with constant depending on $n$, which allows them to construct a partition of unity for the whole of $\R^n.$ Since we are working in infinite dimensional Hilbert space, we have no hope of controlling the overlap of balls centred on an arbitrary collection of separated points. We can however, control the overlap of the balls $10B_{j,k}, \ j \in J_k$, using Lemma \ref{ENV}, since nearby points must lie close to a $d$-plane. We will use this fact to construct a partition of unity by the following lemma, which is a version of \cite[Lemma 3.1]{edelen2018effective}.  

\begin{lem}\label{l:tpu}
	Let $H$ be a Hilbert space. Let $\Gamma \geq 1$, $r>0$, and let $\{10B(x_i,r)\}_{i \in I}$ be a collection of balls with bounded overlap, so that 
	\[ \sum_{i \in I} \mathds{1}_{10B(x_i,r)}(x) \leq \Gamma \]
	for all $x \in H$. Then there are smooth Lipschitz functions $\phi_i : H \rightarrow [0,1]$ such that
	\begin{enumerate}
		\item $\supp \phi_i \subseteq 10B(x_i,r)$. \\
		\item $\sum_{i \in I} \phi_i = 1$ on $\bigcup_{i \in I} 8B(x_i,r).$  \\
		\item $|D^m\phi_i(y)| \lesssim_{\Gamma,m} r^{-m}$ for all $y \in H$ and $m \geq 0.$
	\end{enumerate}
	Here, $D^m \phi_i(y)$ denotes the $m^\text{th}$ Frech\'{e}t derivative of the function $\phi_i$ at the point $y.$ 
\end{lem}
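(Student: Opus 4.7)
The plan is to implement the standard recipe of taking bump functions supported on the given balls and normalizing them, with two modifications tailored to the Hilbert-space setting: I would use bumps expressed through the smooth function $y \mapsto |y - x_i|^2$ (rather than the distance, which is not smooth at $x_i$), and I would replace the denominator $\sum_j \psi_j$ by a smoothly-truncated version that is bounded below, since without dimension one cannot produce auxiliary ``filler'' bumps covering $H \setminus \bigcup_i 9B(x_i,r)$ to force a uniform lower bound on the naive sum.

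First, fix once and for all a smooth $\tilde\psi : \R \to [0,1]$ with $\tilde\psi \equiv 1$ on $[0,81]$ and $\tilde\psi \equiv 0$ on $[100,\infty)$, and define $\psi_i(y) = \tilde\psi(|y - x_i|^2/r^2)$. Since $y \mapsto \langle y - x_i, y - x_i\rangle$ is a quadratic polynomial on $H$ (its third and higher Fr\'{e}chet derivatives vanish identically), a Fa\`{a} di Bruno computation gives $|D^m \psi_i(y)| \lesssim_m r^{-m}$ with constants independent of $\dim H$. By construction $\psi_i \equiv 1$ on $9B(x_i,r)$ and $\supp \psi_i \subseteq 10B(x_i,r)$. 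Setting $S = \sum_{i \in I} \psi_i$, the bounded-overlap hypothesis combined with the derivative bounds on $\psi_i$ yields $0 \leq S \leq \Gamma$ and $|D^m S(y)| \lesssim_{\Gamma,m} r^{-m}$ for every $y \in H$, while $S(y) \geq 1$ whenever $y \in 8B(x_j,r)$ for some $j$, since then $\psi_j(y) = 1$.

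Next, fix a smooth nondecreasing function $\mu : \R \to [1,\infty)$ with bounded derivatives of all orders, satisfying $\mu(s) = s$ for $s \geq 1$ and $\mu(s) = 1$ for $s \leq 1/2$, and set
\[ \phi_i(y) \coloneqq \frac{\psi_i(y)}{\mu(S(y))}. \]
The inequality $\mu \circ S \geq 1$ makes the quotient smooth and bounded by $\psi_i$, so $\supp \phi_i \subseteq \supp \psi_i \subseteq 10B(x_i,r)$, giving (1). On $\bigcup_i 8B(x_i,r)$ one has $S \geq 1$, hence $\mu(S) = S$ and $\sum_i \phi_i = S/S = 1$, giving (2). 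For (3), Fa\`{a} di Bruno applied to $\mu \circ S$ (using the uniform bounds on $\mu^{(k)}$) yields $|D^m(\mu \circ S)| \lesssim_{\Gamma,m} r^{-m}$, and since $\mu \circ S \geq 1$ the same bound holds for $(\mu \circ S)^{-1}$; the Leibniz rule applied to $\phi_i = \psi_i \cdot (\mu \circ S)^{-1}$ combined with $|D^k \psi_i| \lesssim_k r^{-k}$ then gives $|D^m \phi_i(y)| \lesssim_{\Gamma,m} r^{-m}$.

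The only genuinely non-routine point is the smoothness of the quotient: in Euclidean space one usually gets a direct $\phi_i = \psi_i/S$ by engineering $S \geq 1$ on a neighbourhood of each $\supp \psi_i$, but doing so requires constructing additional bumps on a possibly infinite-dimensional complement with controlled overlap, which is impossible in general. The smoothed-maximum $\mu$ circumvents this cleanly and is the only dimension-free ingredient needed; every other estimate is local and uses only the hypothesis of $\Gamma$-bounded overlap.
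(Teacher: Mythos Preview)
Your argument is correct and follows the same template as the paper's (build smooth bumps supported in $10B(x_i,r)$, sum, and divide), with two deviations. You compose the one-variable bump with $|y-x_i|^2/r^2$ rather than $|y-x_i|/r$; the paper does the latter and appeals to smoothness of the Hilbert-space norm away from the origin (smoothness at $y=x_i$ coming from the bump being constant near $0$). More significantly, the paper sets $\phi_i=\tilde\phi_i/\Phi$ with $\Phi=\sum_j\tilde\phi_j$ directly, whereas you replace the denominator by $\mu(S)$ with $\mu$ equal to the identity on $[1,\infty)$ and globally bounded below. This is a genuine improvement: $\Phi$ is not bounded away from zero on $\supp\tilde\phi_i$, and in the degenerate case of a single isolated ball the quotient $\tilde\phi_i/\Phi$ equals $1$ on the open support and $0$ outside, hence is not even continuous. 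So the paper's normalization, as literally written, needs a device like your $\mu$ to make the derivative bounds honest; your version delivers them cleanly via the Leibniz/Fa\`a di Bruno computation you sketch.

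One small slip: a smooth nondecreasing $\mu:\R\to[1,\infty)$ with $\mu\equiv1$ on $(-\infty,1/2]$ and $\mu(s)=s$ on $[1,\infty)$ does not exist, since monotonicity forces $\mu\equiv1$ on $[1/2,1]$ and then $\mu'$ jumps at $s=1$. Simply relax the range to $[1/2,\infty)$ (e.g.\ $\mu(s)=1/2$ for $s\le0$, $\mu(s)=s$ for $s\ge1$, smooth in between with $\mu\ge1/2$); every estimate then goes through verbatim with the lower bound $\mu\circ S\ge1/2$ in place of $\ge1$.
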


\begin{proof}
	Let $\phi : \R \rightarrow \R$ be a standard $C^\infty$ bump function with compact support in $B(0,10)$ so that $\phi(y) = 1$ for all $y \in B(0,8)$, and $0 \leq \phi(y) \leq 1$ for all $y \in \R$. Notice then that $|\phi^{(m)}(y)| \lesssim_m 1$ for all $y \in \R,$ where $\phi^{(m)}$ denotes the $m^\text{th}$ derivative of $\phi.$ For each $i \in I$, define 
	\[ \tilde{\phi}_i(y) = \phi\left( \frac{|y-x_i|}{r} \right). \]
	Since $H$ is a Hilbert space, it has a smooth norm (see for example see \cite{fry2002smooth}). Using this, along with the estimates for $\phi^{(m)}$, we have 
	\begin{align}\label{e:derivphi}
		|D^m \tilde{\phi}_i(y)| \lesssim_m r^{-m}
	\end{align}
	for all $y \in H$ and $m \geq0 .$ Also, $\tilde{\phi}_i$ has compact support in $10B(x_i,r)$ and is identically equal to 1 on $8B(x_i,r)$ and so 
	\begin{align}\label{e:bigphi}
		 \Phi(y) \coloneqq \sum_{i \in I} \tilde{\phi}_i(y) \geq 1, \mbox{ for all } y \in \bigcup_{i \in I} 8B(x_i,r).
	\end{align}
	Since the balls $10B(x_i,r)$ have bounded overlap, we have $\Phi(y) \leq \Gamma.$ Thus, we can define
	\[ \phi_i(y) = \tilde{\phi}_i(y)/\Phi(y). \] 
	In this way, \textit{(1)} follows from the same statement for $\tilde{\phi}_i$, and \textit{(2)} follows from \eqref{e:bigphi} and the definition of $\phi_i$. Finally, \textit{(3)} follows from \eqref{e:derivphi}, \eqref{e:bigphi}, and the fact that the balls $10B(x_i,r)$ have bounded overlap.  
\end{proof}

\begin{lem}\label{l:spu}
Let $(P_0,\{B_{j,k}\},\{P_{j,k}\}), \ k \in \N, \ j \in J_k$, be a CCBP. For each $k \in \N$, the balls $\{10B_{j,k}\}$ have bounded overlap (with constant depending only on $d$), and there exists a collection of smooth Lipschitz function $\{\theta_{j,k}\}_{j \in J_k}$ such that the following holds. 
\begin{enumerate}
\item For each $j \in J_k$,
\begin{align}
\supp \theta_{j,k} \subseteq 10B_{j,k}.
\end{align}
\item For $y \in V_k^8$ we have
\begin{align}
\sum_{j \in J_k} \theta_{j,k}(y) = 1.
\end{align}
\item For each $j \in J_k$ and $y \in H$ we have
\begin{align}
|D^m\theta_{j,k}(y)| \lesssim_d r_{k}^{-m}.
\end{align}
\end{enumerate}
\end{lem}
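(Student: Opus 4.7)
The plan is to prove the two assertions in sequence: the bounded overlap of the balls $\{10B_{j,k}\}_{j \in J_k}$ is the substantive claim, and once it is in hand the existence of the partition of unity follows immediately from Lemma \ref{l:tpu}.

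First I would establish the bounded overlap. Fix $k \in \N$ and a point $y \in H$, and set $I_y = \{ j \in J_k : y \in 10 B_{j,k}\}$. Pick any $j_0 \in I_y$ (assuming $I_y$ is non-empty). For every $j \in I_y$, the triangle inequality gives $|x_{j,k} - x_{j_0,k}| \leq 20 r_k \leq 100 r_k$, so \eqref{e:c100rk} yields $d_{x_{j_0,k},100 r_k}(P_{j,k}, P_{j_0,k}) \leq \ve$. Since $x_{j,k} \in P_{j,k} \cap B(x_{j_0,k}, 100 r_k)$, the definition of local normalised Hausdorff distance forces
\[ \dist(x_{j,k}, P_{j_0,k}) \leq 100 \ve \, r_k. \]
By the separation \eqref{e:csep}, the balls $\{B(x_{j,k}, r_k/2)\}_{j \in I_y}$ are pairwise disjoint and contained in $B(x_{j_0,k}, 21 r_k)$. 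Provided $\ve$ is chosen sufficiently small (e.g.\ $\ve \leq 1/400$), we have $\dist(x_{j,k}, P_{j_0,k}) < (r_k/2)/2$ for every $j \in I_y$, so Lemma \ref{ENV} applies with $V = P_{j_0,k}$, $R = 21 r_k$ and $r_i = r_k/2$, giving
\[ |I_y| (r_k/2)^d \leq \kappa(d) (21 r_k)^d, \]
and therefore $|I_y| \leq 42^d \kappa(d) \eqqcolon \Gamma$, a constant depending only on $d$.

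With the bounded overlap in hand I would invoke Lemma \ref{l:tpu} applied to the family $\{10 B(x_{j,k}, r_k)\}_{j \in J_k}$ with the overlap constant $\Gamma$, and take the resulting functions $\phi_j$ to be $\theta_{j,k}$. Property (1) is the conclusion of Lemma \ref{l:tpu}(1); property (2) follows from Lemma \ref{l:tpu}(2) together with the identity $V_k^8 = \bigcup_{j \in J_k} 8 B_{j,k}$ from \eqref{e:V_k}; and property (3) is exactly Lemma \ref{l:tpu}(3), with the implicit constant depending on $\Gamma$, hence on $d$ alone. The only genuine obstacle is the overlap estimate itself, and it is mild: the key point is that Lemma \ref{ENV} converts the CCBP flatness hypothesis \eqref{e:c100rk} into a $d$-dimensional packing bound, replacing the usual Euclidean volume argument that would be unavailable in infinite dimensions.
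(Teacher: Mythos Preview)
Your proof is correct and follows essentially the same approach as the paper: both reduce to the bounded overlap estimate via Lemma \ref{l:tpu}, then use the CCBP condition \eqref{e:c100rk} to show all relevant centres lie close to a single $d$-plane, and finish with Lemma \ref{ENV}. The only differences are cosmetic choices of radii (you use $r_k/2$ and $B(x_{j_0,k},21r_k)$ where the paper uses $r_k/4$ and $B(x,30r_k)$).
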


\begin{proof}
Fix $k \in \N.$ By Lemma \ref{l:tpu}, it suffices to show that the collection of balls $\{10B_{j,k}, \ j \in J_k,$ have bounded overlap (with constant only dependent on $d$). Let $x \in H$ and let $J_k^\prime =\{j_0,j_1,\dots,j_N\} \subseteq J_k$ be such that 
\[ x \in \bigcap_{i=0}^N B(x_{j_i,k},10r_k).\]
Each $j_i \in J_k^\prime$ satisfies $|x_{j_i,k} - x_{j_0,k}| \leq 20r_k \leq 100r_k$, so by Definition \ref{d:CCBP} (3) (for $\ve$ small enough),
\[ \text{dist}(x_{j_i,k},P_{i_0,k}) \leq 100\ve r_k \leq r_k/8. \]
By \eqref{e:csep}, the balls $B(x_{j_i,k},r_k/4), \ j_i \in J_k^\prime$ are disjoint, and so, satisfy the conditions of Lemma \ref{ENV} with $P = P_{j_0,k}$ and $B = B(x,30r_k)$. This gives
\[ Nr_k \sim \sum_{j_i \in J_k^\prime} r_{\frac{1}{4}B_{j_i,k}}^d \lesssim_d r_k \] 
and we conclude $N \lesssim_d 1$. 
\end{proof}

In \cite{david2012reifenberg}, the functions $\theta_{l,k}, \ l \in L_k$ are grouped into a single function $\psi_k$, after which there is no mention of the defining functions $\theta_{l,k}.$ We shall introduce a similar function, also denoted $\psi_k$, which shares the same properties. This may seem artificial, but we hope that it makes comparing the two papers easier. For $k \geq 0$, define a function $\psi_k : H \rightarrow \R$ by setting
\[ \psi_k(y) = 1 - \sum_{j \in J_k} \theta_{j,k}(y). \] 
In this way 
\[ \psi_k(y) + \sum_{j \in J_k} \theta_{j,k}(y) =1 \quad \text{for all} \ y \in H. \] 

Additionally, we have the following properties of $\psi_k.$ Compare these to the corresponding items; \cite[Equation 3.12, Equation 3.15]{david2012reifenberg}. 

\begin{lem}
Let $k \geq 0.$ The function $\psi_k : H \rightarrow \R$ is $C^\infty$ and satisfies 
\[ \psi_k(y) = 0 \quad \text{for all} \ x \in V_k^8 \] 
and 
\[ |D^m \psi_k(y)| \lesssim_m r_k^{-m} \] 
for all $y \in H$ and $m \geq 0.$ 
\end{lem}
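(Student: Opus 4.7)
The plan is to deduce all three claims directly from the bounded-overlap of $\{10B_{j,k}\}_{j \in J_k}$ established in Lemma~\ref{l:spu}. The vanishing on $V_k^8$ is immediate: for $y \in V_k^8$, Lemma~\ref{l:spu}(2) gives $\sum_{j \in J_k} \theta_{j,k}(y) = 1$, hence $\psi_k(y) = 0$.

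For smoothness I would argue locally. Fix $y_0 \in H$, let $U = B(y_0, r_k/4)$, and set $J(U) = \{j \in J_k : 10 B_{j,k} \cap U \neq \emptyset\}$; I claim $\#J(U) \lesssim_d 1$. Any $j \in J(U)$ satisfies $|x_{j,k} - y_0| < 11 r_k$, so any two indices in $J(U)$ have centers within $22 r_k \leq 100 r_k$ of each other. Fixing $j_0 \in J(U)$ and choosing $\ve$ sufficiently small, Definition~\ref{d:CCBP}(3) then places every $x_{i,k}$ with $i \in J(U)$ within $r_k/8$ of $P_{j_0,k}$. The pairwise disjoint balls $B(x_{i,k}, r_k/4)$ (disjoint by \eqref{e:csep}) lie in $B(y_0, 12 r_k)$ and satisfy the hypotheses of Lemma~\ref{ENV}, yielding the claim. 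Consequently, on $U$ the sum defining $\psi_k$ reduces to $1$ minus a \emph{finite} sum of $C^\infty$ functions, so $\psi_k \in C^\infty(U)$; since $y_0$ was arbitrary, $\psi_k \in C^\infty(H)$.

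For the derivative bound I would write, at any $y \in H$, $D^m \psi_k(y) = -\sum_{j \in J_k} D^m \theta_{j,k}(y)$. Only indices with $y \in \supp \theta_{j,k} \subseteq 10 B_{j,k}$ contribute, and Lemma~\ref{l:spu} bounds their number by a constant depending only on $d$. Each surviving term is at most $\lesssim_d r_k^{-m}$ by Lemma~\ref{l:spu}(3), so summing yields $|D^m \psi_k(y)| \lesssim_m r_k^{-m}$.

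The only slightly delicate point is the local-finiteness step in the smoothness argument: the pointwise bounded overlap guaranteed by Lemma~\ref{l:spu} is not a priori the same as uniform boundedness on a neighborhood, but the identical packing argument that established bounded overlap in the proof of Lemma~\ref{l:spu} applies verbatim to the fattened point $U$, so this obstacle is routine. No other difficulties arise.
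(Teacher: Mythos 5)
Your proof is correct and takes essentially the same approach as the paper, which simply invokes the bounded overlap from Lemma~\ref{l:spu} together with the derivative bounds on the $\theta_{j,k}$. You spell out the local-finiteness step needed to pass from pointwise bounded overlap to smoothness (which the paper treats as implicit), but the underlying argument is identical.
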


\begin{proof}
Since the balls $10B_{j,k},$ $j \in J_k,$ have bounded overlap and each of the functions $\theta_{j,k}$ is $C^\infty,$ it follows that $\psi_k$ are $C^\infty$. The estimates on the derivative follows from bounded overlap with the estimates on the function $\theta_{j,k}.$ 
\end{proof}

\subsection{The Reifenberg map}
For $k \geq 0,$ we define a function $\sigma_k : H \rightarrow H$ by setting 
\[ \sigma_k(y) = y + \sum_{j \in J_k} \theta_{j,k}(y) [ \pi_{j,k}(y) - y ] = \psi_k(y)y + \sum_{j \in J_k} \theta_{j,k}(y) \pi_{j,k}(y), \]
and a function $f_k:P_0 \rightarrow H$ by setting
\[ f_0(y) = y \quad \text{and} \quad f_{k+1}= \sigma_k \circ f_k. \]  
Let $y \in H.$ Since $\supp \theta_{j,k} \subseteq 10B_{j,k},$ if $j \in J_k$ is such that $\theta_{j,k}(y) \not=0,$ then $y \in 10B_{j,k}.$ This implies $|y - \pi_{j,k}(y)| \leq 10r_k$, which, along with the bounded overlap of $\{10B_{j,k}\}$, implies 
\[ |\sigma_k(y) - y| \leq 10r_k. \] 
It follows that
\begin{align}\label{e:uniform}
 ||f_{k+1} - f_{k}||_\infty \leq 10r_k,
 \end{align}
so the functions $f_k$ converge uniformly to a continuous function $f$ on $P_0.$ We now state the Reifenberg parametrization theorem. We include each of the properties used in \cite{azzam2018analyst}. 

\begin{thm}\label{DT}
Let $(P_0,\{B_{j,k}\},\{P_{j,k}\})$ be a CCBP in $H$. Recall the definition,  
\[\varepsilon_k(x) = \sup\{d_{x_{i,l},10^4r_l}(P_{j,k},P_{i,l}) : j \in J_k, \ \abs{l-k} \leq 2, \ i \in J_k, x \in 100B_{j,k} \cap 100B_{i,l} \}.\]
There is $\ve_0>0$ such that if $\ve \in (0,\varepsilon_0)$ and $\ve_k(x_{j,k}) < \ve$ for all $k\geq0$ and $j \in J_k$, then the map $f:P_0 \rightarrow H$ (from above) is bijective on its image and the following hold.
\begin{enumerate}
\item We have 
\begin{equation}
E_\infty = \bigcap_{K=1}^\infty \overline{\bigcup_{k=K}^\infty \{x_{j,k}\}_{j \in J_k}} \subseteq \Sigma = f(P_0).
\end{equation}
\item $f(x) =x$ when $\emph{dist}(z,P_0) >2.$
\item There is $0 < \tau < 1$ so that if $x,y \in P_0$, and $|x-y| \leq 1$, then
\begin{equation}
\frac{1}{4}|x-y|^{1+\tau} \leq |f(x)-f(y)| \leq 10|x-y|^{1-\tau}. 
\end{equation}
\item $|f(x)-x| \lesssim \ve$ for $x \in H.$
\item For $x \in P_0,$ 
\[f(x) = \lim_k \sigma_k \circ \dots \circ \sigma_0.\]
\item For $k \geq 0,$
\begin{equation}
\sigma_k(y) = y \ \text{for} \ y \in H \setminus V^{10}_k.
\end{equation}
\item Let $\Sigma_0 = P_0$ and 
\begin{equation}
\Sigma_k = \sigma_k(\Sigma_{k-1}).
\end{equation}
There is a Lipschitz function $A_{j,k}:P_{j,k} \cap 49B_{j,k} \rightarrow P_{j,k}^\perp$ such that 
\begin{align}
	|A_{j,k}(x_{j,k})| &\lesssim \ve r_k, \\
	|DA_{j,k}(y)| &\lesssim \ve  \quad \text{for} \ y \in P_{j,k} \cap 49B_{j,k},
\end{align}
and if $\Gamma_{j,k}$ is its graph over $P_{j,k}$, then 
\begin{equation}
\Sigma_k \cap D(x_{j,k},P_{j,k},49r_k) = \Gamma_k \cap D(x_{j,k},P_{j,k},49r_k)
\end{equation}
where
\begin{equation}
D(x_{j,k},P_{j,k},49r_k) = \{z+w : z \in P \cap B(x,r), \ w \in P^\perp \cap B(0,r)\}.
\end{equation}
\item For $k \geq 0$ and $y \in \Sigma_k$, there is an affine $d$-plane $P$ through $y$ and a $C\ve$-Lipschitz and $C^2$-function $A:P \rightarrow P^\perp$ so that if $\Gamma$ is the graph of $A$ over $P$, then 
\begin{equation}
\Sigma_k \cap B(y,19r_k) = \Gamma \cap B(y,19r_k). 
\end{equation}
\item The surface $\Sigma = f(P_0)$ is $C\ve$-Reifenberg flat in the sense that for all $z \in \Sigma,$ and $t \in (0,1),$ there is $P=P(z,t)$ so that $d_{B(z,t)}(\Sigma,P) \leq \ve.$ 
\item For all $y \in \Sigma_k$,
\begin{align}\label{Sigma_ksigma_k}
|\sigma_k(y) - y| \lesssim \ve r_k
\end{align}
and moreover,
\begin{align}
\emph{dist}(y,\Sigma) \lesssim \ve r_k,\  \text{for} \ y \in \Sigma_k.
\end{align}
\item For $k \geq 0$ and $y \in \Sigma_k \cap V_k^8,$ choose $i \in J_k$ such that $y \in 10B_{i,k}.$ Then
\begin{align}
|\sigma_k(y) - \pi_{i,k}(y)| \lesssim \ve_k(y) r_k. 
\end{align}
\item For $x,y \in \Sigma_k$, 
\[ \angle(T\Sigma_k(x),T\Sigma_k(y)) \lesssim \ve \frac{|x-y|}{r_k}, \]
where $T\Sigma_k(x)$ denotes the tangent space at $x \in \Sigma_k.$ 
\item For $x \in \Sigma$ and $r>0$, 
\[\mathscr{H}^d_\infty(\Sigma \cap B(x,r)) \gtrsim r^d. \]
\end{enumerate}
\end{thm}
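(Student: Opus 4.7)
The plan is to follow the Euclidean argument of David and Toro in \cite{david2012reifenberg} almost line by line, identifying the handful of places where the proof uses $H = \R^n$ and providing substitutes valid in general Hilbert space. The only genuinely dimension-dependent ingredients in \cite{david2012reifenberg} are (a) the partition of unity subordinate to the balls $\{10 B_{j,k}\}_{j \in J_k}$, and (b) various packing estimates controlling how many such balls can meet a given region. Item (a) has been arranged in Lemma \ref{l:spu}. Item (b) is always applied to balls whose centres lie near a $d$-plane by \eqref{e:cinV^2} and \eqref{e:c100rk}, so in every such instance Lemma \ref{ENV} supplies a bound depending only on $d$.

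I would set up an induction carrying (7), (10), (11), (12) simultaneously for the surfaces $\Sigma_k$. The base case $\Sigma_0 = P_0$ is immediate, so assume the statements at step $k$. Using the partition of unity and the derivative bounds $|D^m \theta_{j,k}| \lesssim r_k^{-m}$ together with the CCBP conditions \eqref{e:c100rk}--\eqref{e:c2r_k}, one shows that on each $10 B_{j,k}$ the map $\sigma_k$ is $C\ve$-close in $C^2$ to the affine projection $\pi_{j,k}$ onto $P_{j,k}$. This immediately yields (10) and, combined with the step-$k$ graph property (7), gives (11) after choosing the plane $P_{i,k}$ nearest to $y$; Taylor-expanding $\sigma_k - \pi_{j,k}$ then upgrades the tangent control (12) from step $k$ to step $k{+}1$. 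The graph property (7) for $\Sigma_{k+1}$ is obtained by writing $\sigma_k$ as a small perturbation of $\pi_{j,k+1}$ on $49 B_{j,k+1}$ and applying the implicit function theorem; all of these estimates are quantitative and do not use finite-dimensionality.

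Once (7)--(12) are in hand, the remaining assertions follow standard routes. Uniform convergence of $f_k$ to $f$ is precisely \eqref{e:uniform} together with the geometric decay $r_k = 10^{-k}$, which gives (5), (2) by the construction of $\sigma_k$, and (4) by summing the telescoping bound. For (3), the bi-H\"older estimates come from tracking how $Df_k$ rotates between successive steps via (12), yielding an upper bound $\|Df_k\| \lesssim (1+C\ve)^k \leq r_k^{-\tau}$ and a lower bound on its smallest singular value $\gtrsim r_k^{\tau}$ for some small $\tau = \tau(\ve)$; integrating along segments in $P_0$ gives the two-sided H\"older inequality, which in turn forces injectivity of $f$. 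Property (9) is a restatement of (8) at the scale $r_k$ closest to $t$, and (1) follows because $x_{j,k} \in \Sigma_k$ and $\dist(\Sigma_k,\Sigma) \to 0$ uniformly by (10). Finally, lower content regularity (13) is deduced by observing that in each ball $B(x,r)$ with $r \sim r_k$, the projection of $\Sigma \cap B(x,r)$ onto $P_{i,k}$ covers a large portion of $P_{i,k} \cap B(x_{i,k}, r/2)$ by (8), and a Lemma \ref{ENV} covering argument bounds $\mathscr{H}^d_\infty$ from below by a multiple of $r^d$.

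The main obstacle is rerunning each packing argument of \cite{david2012reifenberg}: in finite dimensions David and Toro repeatedly bound $\#\{j \in J_k : x \in N B_{j,k}\}$ using a dimension-dependent constant, whereas here one must observe, exactly as in the proof of Lemma \ref{l:spu}, that the centres are $r_k$-separated and lie within $C r_k$ of a common $d$-plane, so Lemma \ref{ENV} yields the bound with a constant depending only on $d$ and $N$. A secondary point of care is (13): in \cite{david2012reifenberg} one uses the $d$-dimensional Lebesgue measure of a disk, but here the bi-Lipschitz graph statement (8) combined with Lemma \ref{ENV} applied to any efficient cover of $\Sigma \cap B(x,r)$ yields the required Hausdorff content lower bound.
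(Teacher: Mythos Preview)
Your outline is correct and matches the paper's strategy: the only dimension-dependent ingredients in \cite{david2012reifenberg} are the partition of unity and the packing bounds, and both are handled exactly as you say, via Lemma \ref{l:spu} and Lemma \ref{ENV}. The paper runs the same induction on $k$ carrying the local graph description (7) together with (10) and (11).

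A few implementation differences worth noting. For (7) the paper does not invoke an implicit function theorem; instead it constructs the graph maps explicitly in two stages, introducing auxiliary functions $F_{j,k}:P_{j,k}\cap 40B_{j,k}\to P_{j,k}^\perp$ describing $\Sigma_{k+1}$ over the \emph{old} plane $P_{j,k}$, then changing coordinates to $P_{j,k+1}$ via an explicit near-isometry $f$ with controlled inverse. This is done to keep careful track of the constants $C_1,\dots,C_5$ and close the induction without circularity; your implicit-function approach would also work but requires the same bookkeeping. Also, (11) does not need the inductive hypothesis (7): it follows directly from the partition of unity and \eqref{e:c100rk}, as in the paper's Lemma \ref{l:sig-pi}. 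Finally, for (13) the paper does not use Lemma \ref{ENV}; it rescales, invokes the bi-H\"older parametrisation of \cite[Proposition 2.9]{edelen2018effective} for Reifenberg flat sets (which uses (9), not (8)), and then argues that the orthogonal projection of $\Sigma\cap B$ onto the approximating plane covers a half-radius disk. Your projection idea via (8) at scale $r_k$ would also give the bound, and once the projection covers a disk the content lower bound is immediate from $1$-Lipschitzness of $\pi$; the appeal to Lemma \ref{ENV} there is unnecessary.
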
 

It is possible to show that $f$ is bi-Lipschitz, when there is better control on the angles between the planes $P_{j,k}.$ This is made precise below. 

\begin{thm}[{\cite[Theorem 2.5]{david2012reifenberg}}]\label{LipDT}
With the same notation and assumptions as Theorem \ref{DT}, assume additionally that there exists $M < \infty$ such that
\[ \sum_{k \geq0} \varepsilon_k^\prime(f_k(z))^2 \leq M \ \text{for} \ z \in P_0\]
with
\begin{align*}
\varepsilon_k^\prime(x) = \sup\{\d_{x_{i,l},10^4r_l}(P_{j,k},P_{i,l}) : j \in J_k, \ \abs{l-k} \leq 2, \ i \in J_l, x \in 10B_{j,k} \cap 10B_{i,l} \}.
\end{align*}
Then $f \colon P_0 \rightarrow \Sigma$ is $C(M,d)$-bi-Lipschitz. 
\end{thm}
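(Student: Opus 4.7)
The plan is to adapt the proof of Theorem 2.5 in \cite{david2012reifenberg}. The additional summability $\sum_k \varepsilon_k'(f_k(z))^2 \leq M$ is precisely what one needs to pass from the H\"older estimate of Theorem \ref{DT}(3) to a two-sided Lipschitz control via a telescoping product, and nothing in the argument uses the ambient dimension directly once Lemma \ref{l:spu} and Lemma \ref{ENV} are in hand.

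The core step is a one-scale ratio estimate: for $k \geq 0$, $y \in \Sigma_k \cap V_k^8$, and a unit tangent vector $v \in T\Sigma_k(y)$,
\[ \bigl| |D\sigma_k(y) v| - 1 \bigr| \lesssim \varepsilon_k'(y)^2. \]
This follows by differentiating $\sigma_k(z) = z + \sum_j \theta_{j,k}(z)(\pi_{j,k}(z) - z)$ using the $C^m$-bounds from Lemma \ref{l:spu}(3) together with the bounded overlap of $\{10B_{j,k}\}$. The key cancellation, due to David and Toro, is that whenever $10B_{i,k} \cap 10B_{j,k} \neq \emptyset$, property \eqref{e:c100rk} forces the displacement $\pi_{j,k}(y) - \pi_{i,k}(y)$ to have size $O(\varepsilon_k' r_k)$ and to lie almost orthogonal to both planes $P_{i,k}, P_{j,k}$, so its pairing with $v$ through $D\theta_{j,k}$ is second order in $\varepsilon_k'$. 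Iterating $f_{k+1} = \sigma_k \circ f_k$ then yields, for $x \in P_0$ and a unit $v \in P_0$,
\[ \prod_{l=0}^{k-1}\bigl(1 - C\varepsilon_l'(f_l(x))^2\bigr) \leq |Df_k(x) v| \leq \prod_{l=0}^{k-1}\bigl(1 + C\varepsilon_l'(f_l(x))^2\bigr), \]
and the summability hypothesis confines both products to a range $[C(M,d)^{-1}, C(M,d)]$ uniformly in $k$. Passing to the limit and integrating along segments in $P_0$ (which is legitimate because Theorem \ref{DT}(7) writes each $\Sigma_k$ locally as a graph over $P_{j,k}$, so tangent directions extend consistently) produces the desired two-sided bound on $f$.

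The main obstacle, or rather the only thing to check, is that every constant in the David--Toro derivation depends on $d$ alone and not on $\dim H$. Two items require attention: the multiplicity of $\{10B_{j,k}\}_{j \in J_k}$ must be bounded by some $O_d(1)$, which is the content of Lemma \ref{l:spu} via Lemma \ref{ENV}; and the bump functions $\theta_{j,k}$ must enjoy dimension-free $C^m$ bounds, which rests on the smoothness of the Hilbert norm exploited in Lemma \ref{l:tpu}. With these dimension-free inputs, the remainder of the David--Toro argument transfers essentially verbatim and yields a bi-Lipschitz constant depending only on $M$ and $d$, as claimed.
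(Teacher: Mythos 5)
The paper does not give its own proof of Theorem \ref{LipDT}: it cites David and Toro and remarks that, with the dimension-free inputs of Lemma \ref{l:spu} and Lemma \ref{l:tpu} in place, the argument of \cite[Section 9]{david2012reifenberg} transfers. Your sketch correctly identifies the heart of the matter — the second-order cancellation coming from the fact that the displacement $\sigma_k(y)-y$ is nearly normal to $\Sigma_k$, so that its tangential component relative to $T\Sigma_k$ is $O(\varepsilon_k'^2)$ rather than $O(\varepsilon_k')$ — and you are right that this is exactly why $\sum_k\varepsilon_k'^2\leq M$ (rather than a summability of $\varepsilon_k'$ itself) suffices. Your remarks about which constants must be checked for dimension-independence (bounded overlap via Lemma \ref{ENV}, smooth bump functions via the Hilbert norm in Lemma \ref{l:tpu}) are also the right ones.

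However, the final step of your argument has a genuine gap. You pass from the two-sided derivative estimate $\prod(1-C\varepsilon_l'^2)\leq|Df_k(x)v|\leq\prod(1+C\varepsilon_l'^2)$ to the two-sided distance estimate by ``integrating along segments in $P_0$.'' That integration only yields the \emph{upper} bound $|f(x)-f(y)|\leq C|x-y|$. A lower bound $|Df|\geq c$ does not by itself give $|f(x)-f(y)|\geq c|x-y|$ — a map can fold while having derivative bounded below (already $t\mapsto\operatorname{dist}(t,\Z)$ on $\R$ shows this). To close the lower bound along your route you would additionally need (i) global injectivity of $f$, and (ii) quasiconvexity of $\Sigma$ so that the Euclidean distance $|f(x)-f(y)|$ controls the length of some curve in $\Sigma$ joining $f(x)$ to $f(y)$, on which you can then use $|Df^{-1}|\leq C$. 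Neither step appears in your sketch. David and Toro sidestep this entirely by working with the \emph{distance ratio} $|\sigma_k(a)-\sigma_k(b)|/|a-b|$ directly for $a=f_k(x),\ b=f_k(y)$: writing $\sigma_k(a)-\sigma_k(b)=(a-b)+\Delta$, one has $|\Delta|\lesssim\varepsilon_k'|a-b|$ by the Lipschitz control on $\sigma_k-\mathrm{id}$, and $\Delta$ makes angle $O(\varepsilon_k')$ with the normal to $a-b$, forcing the parallel component to be $O(\varepsilon_k'^2|a-b|)$ and hence $\big||\sigma_k(a)-\sigma_k(b)|-|a-b|\big|\lesssim\varepsilon_k'^2|a-b|$. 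This estimate telescopes directly to two-sided bi-Lipschitz bounds with no separate lower-bound argument required, and is the route you should take.
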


With the modifications outlined above, most of the arguments from \cite{david2012reifenberg} give the required result. For the reader's convenience, we include the key details for the above results in the Appendix.

\section{Estimating curvature for Reifenberg flat sets}\label{Sec:StopTime}

In this section we prove Theorem \ref{Thm3}. As mentioned in the introduction, it suffices that prove Theorem \ref{Thm3} for subsets of $\R^n,$ as long as the constants are independent of $n$. The reason we prove Theorem \ref{Thm3} for subsets of Euclidean space, rather than working directly in Hilbert space, is so that we can state directly from \cite{azzam2018analyst} any result we may need. While most of the preliminary results from \cite{azzam2018analyst} remain true in Hilbert space, we wanted to save the reader from having to check this.  

\begin{lem}\label{l:main-red}
	Suppose for any $n \geq 2$, $1 \leq d < n$, $1 \leq p < p(d)$ and $C_0 > 1$, that there exists $\ve > 0$ so that \eqref{e:dTSP3} holds with constants $d,p$, and $C_0,$ for any $(\ve,d,\delta_0)$-Reifenberg flat surface $\Sigma \subseteq \R^n$, with constants independent of $n$. Then, there exists $C \geq 1$ so that \eqref{e:dTSP3} holds with constant $d,p,$ and $C_0$ for any $(C\ve,d,\delta_0)$-Reifenberg flat surface $\Sigma \subseteq H.$ 
\end{lem}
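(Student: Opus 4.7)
The plan is to approximate the Hilbert-space surface $\Sigma \subseteq H$ by finite-dimensional Reifenberg flat surfaces $\Sigma_N \subseteq \R^{n(N)}$, apply the hypothesised Euclidean estimate to each, and take a limit. Fix $Q_0 \in \mathscr{D}$ and, for each $N \geq 1$, set
\[ \beta^N_{\Sigma,C_0,d,p}(Q_0) \coloneqq \ell(Q_0)^d + \sum_{\substack{Q \subseteq Q_0 \\ k(Q) \leq k(Q_0)+N}} \beta^{d,p}_\Sigma(C_0 B_Q)^2 \ell(Q)^d. \]
This is a finite sum that increases to $\beta_{\Sigma,C_0,d,p}(Q_0)$ as $N \to \infty$, so by monotone convergence it suffices to bound $\beta^N_{\Sigma,C_0,d,p}(Q_0) \lesssim \mathscr{H}^d(Q_0)$ uniformly in $N$.

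First I would build a CCBP $\mathscr{P}_N$ of ``effective depth $N$'' for $\Sigma$ near $Q_0$. At each level $k \in \{0,\dots,N\}$, use the $(C\ve, d, \delta_0)$-Reifenberg flatness of $\Sigma$ to pick a maximally $\rho^k$-separated net $\{x_{j,k}\} \subseteq \Sigma \cap 10C_0 B_{Q_0}$ together with an approximating $d$-plane $P_{j,k}$ through $x_{j,k}$ satisfying $d_{x_{j,k},100 r_k}(\Sigma,P_{j,k}) \leq C\ve$; for $k > N$, extend the construction trivially by keeping the nets and planes stationary. Only finitely many planes appear, so they span a finite-dimensional affine subspace $V_N \subseteq H$ of dimension $n(N)$. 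The Hilbert-space Reifenberg parametrisation Theorem \ref{DT} applied to $\mathscr{P}_N$ produces a map $f_N : P_0 \to H$ whose image $\Sigma_N = f_N(P_0)$ lies in $V_N$; by construction $\Sigma_N$ is $(C'\ve,d,\delta_0')$-Reifenberg flat in $V_N$ \emph{at every scale} (at scales $\geq \rho^N$ from the CCBP, at smaller scales because Theorem \ref{DT}(8) writes $\Sigma_N$ locally as a $C\ve$-Lipschitz $C^1$ graph), and $\Sigma$, $\Sigma_N$ are within Hausdorff distance $\lesssim C\ve \rho^N \ell(Q_0)$ on $C_0 B_{Q_0}$, via Theorem \ref{DT}(10) together with the $\rho^N$-density of the level-$N$ net on $\Sigma$.

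Identifying $V_N$ with $\R^{n(N)}$, the hypothesis of the lemma yields $\beta_{\Sigma_N,C_0,d,p}(\tilde Q_0) \lesssim \mathscr{H}^d(\tilde Q_0)$ for a cube $\tilde Q_0$ in the Christ-David decomposition of $\Sigma_N$ chosen to correspond to $Q_0$, with constants independent of $n(N)$. To transfer this to $\Sigma$, match every $Q$ in the truncation with a cube $\tilde Q$ of comparable side length in $\Sigma_N$ whose centre lies within $C\ve \rho^N \ell(Q_0) \ll \ell(Q)$ of $x_Q$, so that $B_Q \subseteq 2B_{\tilde Q}$; Lemma \ref{betaest} then gives
\[ \beta^{d,p}_\Sigma(C_0 B_Q)^2 \lesssim \beta^{d,p}_{\Sigma_N}(2 C_0 B_{\tilde Q})^2 + \frac{1}{\ell(Q)^d} \int_{\Sigma \cap 2C_0 B_Q} \Bigl(\frac{\dist(y,\Sigma_N)}{\ell(Q)}\Bigr)^p d\mathscr{H}^d_\infty(y). \]
The first term, multiplied by $\ell(Q)^d$ and summed over the truncation, is dominated by $\beta_{\Sigma_N,C_0,d,p}(\tilde Q_0) \lesssim \mathscr{H}^d(\tilde Q_0) \lesssim \mathscr{H}^d(Q_0)$; the error terms contribute at most $(C\ve)^p \mathscr{H}^d(Q_0)$ since $\dist(\cdot,\Sigma_N) \lesssim C\ve \rho^N \ell(Q_0) \leq C\ve \ell(Q)$ for every $Q$ in the truncation. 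Letting $N \to \infty$ concludes the proof, provided $C$ was chosen large enough that $C'\ve$ stays below the Euclidean threshold supplied by the hypothesis.

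The main obstacle is verifying that $\Sigma_N$, built from the truncated CCBP, is genuinely Reifenberg flat in $V_N$ at \emph{every} scale and not merely at the scales $\geq \rho^N$ explicitly encoded in $\mathscr{P}_N$, since the hypothesised Euclidean estimate requires full Reifenberg flatness. This is where Theorem \ref{DT}(8) becomes essential: it realises $\Sigma_N$ locally as a $C^1$, $C\ve$-Lipschitz graph over a $d$-plane in $V_N$, and hence $\Sigma_N$ is automatically Reifenberg flat at arbitrarily small scales with constant $\lesssim \ve$. A secondary issue is the bookkeeping needed to match Christ-David cubes of $\Sigma$ with those of $\Sigma_N$, but because both cube families come from maximally $\rho^k$-separated nets and the two sets are within $\ve\rho^N$-Hausdorff distance on the relevant ball, a straightforward perturbation of the nets supplies the required correspondence.
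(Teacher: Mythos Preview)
Your approach is different from the paper's and largely workable, but the error-term analysis has a gap.

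The paper does not prove Lemma \ref{l:main-red} directly; it observes that it is a special case of Lemma \ref{l:main-red2} (the analogous reduction for general lower content regular sets) and proves only the latter, by a different approximation. There one places a small $d$-simplex $T_Q$ at the centre of each cube $Q \in \mathscr{D}_{k^*}$ for a suitably fine scale $k^*$, and sets $\tilde E = \bigcup_Q T_Q \subseteq \R^n$, where $n$ is the dimension of the span of the finitely many cube centres. This construction makes no use of Reifenberg flatness and applies to any lower regular set; the price is that $\tilde E$ is not Reifenberg flat, so the Euclidean estimate applied to $\tilde E$ carries a $\BWGL$ term which must then be transferred back separately. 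Your route via a truncated CCBP and Theorem \ref{DT} is tailored to the Reifenberg flat case: it produces a genuinely Reifenberg flat approximant $\Sigma_N$ (by Theorem \ref{DT}(9), so you need not invoke (8) for small scales), and hence no $\BWGL$ term appears. This is cleaner for the present lemma, though it does not extend to Lemma \ref{l:main-red2}.

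The gap is in your sentence ``the error terms contribute at most $(C\ve)^p \mathscr{H}^d(Q_0)$ since $\dist(\cdot,\Sigma_N) \leq C\ve \ell(Q)$ for every $Q$ in the truncation''. From the per-cube bound $\dist(\cdot,\Sigma_N) \leq C\ve \ell(Q)$ you only get an error contribution $\lesssim (C\ve)^2 \ell(Q)^d$ for each $Q$, and summing over the truncation yields
\[
(C\ve)^2 \sum_{k=0}^{N} \sum_{\substack{Q \in \mathscr{D}_k \\ Q \subseteq Q_0}} \ell(Q)^d \lesssim (C\ve)^2 (N+1)\,\mathscr{H}^d(Q_0),
\]
which is \emph{not} uniform in $N$. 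The fix is to retain the sharper bound you already stated, $\dist(\cdot,\Sigma_N) \lesssim C\ve \rho^N \ell(Q_0) \eqqcolon \delta$, inside the integral rather than relaxing it to $C\ve\ell(Q)$. Then the error for $Q$ is $\lesssim \delta^2 \ell(Q)^{d-2}$; using that the number of cubes at scale $k$ in $Q_0$ is $\lesssim \mathscr{H}^d(Q_0)\,\ell(Q)^{-d}$ (lower regularity plus disjointness) and summing the geometric series in $\rho^{-2k}$ gives a total of order $\delta^2 \rho^{-2N}\mathscr{H}^d(Q_0) \sim (C\ve)^2 \mathscr{H}^d(Q_0)$, uniform in $N$ as required.
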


Let $n \geq 2,$ $1 \leq d < n$, $1 \leq p < p(d)$ and $C_0 > 1$. Let $\ve > 0$ be a constant to be chosen sufficiently small and $\Sigma \subseteq \R^n$ be an $(\ve,d,\delta_0)$-Reifenberg flat surface. Let $X_k$ be a sequence of maximally $\rho^k$-separated nets (with $\rho$ as in Theorem \ref{cubes}) and let $\mathscr{D}$ be the Christ-David for $\Sigma$ from Theorem \ref{cubes}. Since $\Sigma \subseteq \R^n$, Theorem \ref{cubes}(4) now holds. Let $Q_0 \in \mathscr{D}$ and assume without loss of generality that $0 \in Q_0 \in \mathscr{D}_0$. From now on, when we write $\mathscr{D}$ we mean those cubes $Q$ in $\mathscr{D}$ so that $Q \subseteq Q_0.$ We may also assume $\mathscr{H}^d(Q_0) < \infty$ since otherwise the statement is trivial. We are left to show
\begin{align}\label{e:dTSP3'}
\sum_{\substack{Q \in \mathscr{D} \\ Q \subseteq Q_0}} \beta^{d,p}_\Sigma(C_0B_Q)^2\ell(Q)^d \lesssim \mathscr{H}^d(Q_0)
\end{align}
for $\Sigma \subseteq \R^n$ with constant independent of $n$. Since we are now working in $\R^n$, we can state those results in Sections 5,6 and 7 \cite{azzam2018analyst} whose constants are independent of $n$. The most import of these is the construction of a sequence of approximating surfaces $\Sigma^N$ (Definition \ref{d:sigmaN}) for $\Sigma$ and the local Lipschitz description of these approximating surfaces (Lemma \ref{Graph2}). The basic idea is to push the $\beta$-number estimates for $\Sigma$ onto $\beta$-number estimates for $\Sigma^N$ using Lemma \ref{betaest}. Because (locally) $\Sigma^{N}$ is approximately a Lipschitz graph over the previous surface, we can apply Dorronsoro's Theorem to control the $\beta$-number on $\Sigma^N.$ The error term in Lemma \ref{betaest} is controlled by Lemma \ref{l:Azzam-Stop}(5).

\subsection{Setup and definitions}

Let $M>1$ be a constant to be chosen sufficiently large. For each $Q \in \mathscr{D},$ let $P_Q^\prime$ be the $d$-plane such that
\[\beta_\Sigma^{d,p}(MB_Q) = \beta_\Sigma^{d,p}(MB_Q,P_Q^\prime) < \ve \] 
and $P_Q$ be the $d$-plane through $x_Q$ such that 
\[\beta_\Sigma^{d,p}(MB_Q,P_Q) \leq 2\beta_\Sigma^{d,p}(MB_Q) < 2\ve. \] 
Let $P_0 = P_{Q_0}.$ Let $\alpha > 0.$  

\begin{defn}[The stopping-time condition] For $Q \in \mathscr{D}$ we construct a stopping-time region $S_Q$ as follows. First, add $Q$ to $S_Q.$ Then, inductively, we add cubes $R$ to $S_Q$ if:
\begin{itemize}
\item $R^{(1)} \in S_Q.$ 
\item We have $\angle(P_T,P_Q) < \alpha$ for all siblings $T$ of $R$.  
\end{itemize}
\end{defn}

\subsection{Families of cubes} For each non-negative integer $N$, we define collections of cubes Layer($N$), Stop($N$), and Up($N$), and a collection of stopping-time regions $\mathscr{F}^N$ as follows. Let $\tau \in (0,1)$ be a constant to be chosen sufficiently small. First, set 
\[\text{Stop}(-1) = \mathscr{D}_0.\] 
Assume that Stop($N-1$) has been defined for some $N \geq 0.$ Let $\text{Layer}(N)$ be the collection of cubes from the stopping-time regions defined by those cubes in $\text{Stop}(N-1),$ that is, 
\begin{align*}
\text{Layer}(N) = \bigcup \{S_Q : Q \in \text{Stop}(N-1) \}.
\end{align*}  
Then, define $\text{Stop}(N)$ to be 
\begin{align*}
\text{Stop}(N) = \{Q \in \mathscr{D} : Q \ \text{max. such that there is} \ &Q^\prime \in \text{Child}(Q^{(1)}) \ \text{with} \ \ell(Q^\prime) < \tau d_{\text{Layer}(N)}(Q^\prime)\}. 
\end{align*}

\begin{rem}
	Note the non-standard notation. At the minute, $\text{Stop}(N)$ is not used to denote the minimal cubes of some stopping-time region but rather a collection of disjoint cubes spread out over $\Sigma$.
\end{rem}

The cubes $\text{Up}(N)$ will be those $Q \in \mathscr{D}$ which contain some cube in $\text{Stop}(N).$ Define $\text{Up}(-1) = \emptyset$. Then, assuming Up($N-1$) has been defined for some $N \geq 0$, let  
\begin{align*}
\text{Up}(N) = \text{Up}(N-1) \cup \{Q \in \mathscr{D} : Q \supseteq R \ \text{for some} \ R \in \text{Layer}(N) \cup \text{Stop}(N) \}.
\end{align*}

For each $Q \in \text{Stop}(N-1)$ define an extension of the stopping time region $S_Q$ by setting, 
\[S_Q^\prime = \{R \in \text{Up}(N) : R \subseteq Q\} \supseteq S_Q.\]
In particular, we add to $S_Q$ all those cubes between $\min(S_Q)$ and $\text{Stop}(N).$ It is easy to check that $S_Q^\prime$ still defines a stopping-time region in the sense of Definition \ref{StoppingTime}. Finally, for $N \geq 0,$ set 
\[\mathscr{F}^N = \{S_Q^\prime : Q \in \text{Stop}(N-1)\}.\]
By construction
\begin{align}\label{e:partition}
\mathscr{D} = \bigcup_{N \geq 0} \mathscr{F}^N.
\end{align}
The reason Azzam and Schul work with $S_Q'$ and not just $S_Q$ is because they want to ensure nearby minimal cubes have comparable sizes. For us, this is important as it allows us to prove minimal cubes have bounded overlap with constant only depending on $d$. If we work with the regular stopping-time regions it, is possible for nearby cubes to vary wildly in size. 

\begin{lem}\label{l:Azzam-Stop}
	Let $N \geq 0$, $C_1 > 1$ and $0 < \tau < \min\{\tau_0, (16 + 8C_1)^{-1}\},$ where $\tau_0 = \frac{1}{4(2+\rho^{-1})}$.
	\begin{enumerate}
		\item  If $Q,R \in \emph{Stop}(N)$ are such that $C_1B_Q \cap C_1B_R \not=\emptyset$, then $\ell(Q) \sim \ell(R).$ 
		\item The balls $C_1B_Q, \ Q \in \emph{Stop}(N)$, have bounded overlap in $\Sigma$, with constant depending on $d.$ 
		\item If $x \in \Sigma$ and $d_{\emph{Up}(N)}(x) > 0$, then there is $Q \in \emph{Stop}(N)$ so that $x \in Q.$ 
		\item If $Q \in \emph{Stop}(N)$ then 
		\[\ell(Q) \sim d_{\emph{Up}(N)}(Q). \]
		\item We have 
		\[ \sum_{N \geq0 } \sum_{Q \in \emph{Stop}(N)} \ell(Q)^d \lesssim_{d,\alpha,\ve} \mathscr{H}^d(Q_0). \] 
	\end{enumerate}
\end{lem}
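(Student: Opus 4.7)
The plan is to prove parts (1)--(5) in order. Parts (1)--(4) are structural and all follow from the stopping criterion $\ell(Q') < \tau d_{\text{Layer}(N)}(Q')$ (from the definition of $\text{Stop}(N)$), its maximality-derived companion $\ell(P_Q) \geq \tau d_{\text{Layer}(N)}(P_Q)$ for the parent of any $Q \in \text{Stop}(N)$, and the Lipschitz bound of Lemma \ref{LipCubes}.

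For (1), take $Q, R \in \text{Stop}(N)$ with $C_1 B_Q \cap C_1 B_R \neq \emptyset$ and assume without loss of generality $\ell(Q) \geq \ell(R)$. The sibling $Q' \in \text{Child}(P_Q)$ triggering the inclusion of $Q$ gives $d_{\text{Layer}(N)}(Q') > \ell(Q)/\tau$, while the parent bound combined with Lemma \ref{LipCubes} yields the matching upper estimate $d_{\text{Layer}(N)}(R') \lesssim \ell(R)/\tau$. A second application of Lemma \ref{LipCubes} between $Q'$ and $R'$, using $\text{dist}(Q', R') \lesssim C_1 \ell(Q)$ from the intersection hypothesis, gives $\ell(Q)/\tau \lesssim C_1 \ell(Q) + \ell(R)/\tau$, which forces $\ell(Q) \sim \ell(R)$ once $\tau$ satisfies the stated smallness. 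For (2), by (1) all $\text{Stop}(N)$-cubes whose $C_1$-balls overlap at a fixed point have comparable side-length $r$; Reifenberg flatness of $\Sigma$ places their centres within $O(\ve r)$ of a single $d$-plane in a ball of radius $O(C_1 r)$; and Lemma \ref{ENV} applied to the disjoint balls $B(x_Q, r/4)$ yields a $d$-dependent overlap bound.

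For (3), given $x \in \Sigma$ with $d_{\text{Up}(N)}(x) > 0$, consider the nested chain $\{Q^x_k\}$ of cubes containing $x$ at each generation and choose the smallest $k^\star \geq 1$ for which some child of $Q^x_{k^\star - 1}$ satisfies the stopping condition. Such a $k^\star$ must exist: otherwise no child of any $Q^x_j$ triggers the condition, so $\ell(Q^x_k) \geq \tau d_{\text{Layer}(N)}(Q^x_k)$ for all $k$, which forces $\text{Layer}(N)$-cubes to accumulate at $x$ and hence $d_{\text{Up}(N)}(x) \leq d_{\text{Layer}(N)}(x) = 0$, contradicting the hypothesis. By minimality of $k^\star$, ancestors of $Q^x_{k^\star}$ at level $\leq k^\star - 2$ fail the stopping condition, so the required maximality is automatic and $x \in Q^x_{k^\star} \in \text{Stop}(N)$. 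For (4), $d_{\text{Up}(N)}(Q) \leq \ell(Q)$ is trivial since $Q \in \text{Up}(N)$; for the reverse, any $R \in \text{Up}(N)$ is either an ancestor of $Q$ (so $\ell(R) \geq \ell(Q)$), satisfies $\text{dist}(Q, R) \gtrsim \ell(Q)$, or else $R \subseteq Q$ and $R$ must contain some $T \in \text{Stop}(k) \cup \text{Layer}(k)$ with $k \leq N$. Running the comparability argument of (1) on the pair $(Q, T)$ (or the appropriate ancestor of $T$) bounds $\ell(T)$, and hence $\ell(R)$, below by a constant multiple of $\ell(Q)$.

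Part (5) is the main estimate. The ancestors of a $\text{Stop}(N)$-cube cannot satisfy the stopping condition for their own parent, so no two cubes of $\text{Stop}(N)$ are nested and the family is pairwise disjoint in $\Sigma$. Combined with Ahlfors $d$-regularity of $\Sigma$ (Theorem \ref{DT}(13) together with the standard upper content bound for $\ve$-Reifenberg flat sets), this yields $\sum_{Q \in \text{Stop}(N)} \ell(Q)^d \lesssim \mathscr{H}^d(Q_0)$ for each fixed $N$. To convert this into a summable-in-$N$ estimate, the plan is to show the uniform bounded-overlap property that each $x \in Q_0$ lies in at most $O_{\alpha, \ve}(1)$ cubes of $\bigcup_{N \geq 0} \text{Stop}(N)$: passing from a $\text{Stop}(N)$-cube through $x$ down to the $\text{Stop}(N+1)$-cube through $x$ requires the intervening angular stopping-time region $S_Q$ (with threshold $\alpha$) to terminate, and Lemma \ref{AngControl} with $\ve \ll \alpha$ allows only $\lesssim_{\alpha, \ve} 1$ such terminations along a descending chain on the $\ve$-Reifenberg flat set $\Sigma$. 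A Fubini argument then produces the desired double-sum bound. Carefully quantifying this bounded overlap -- in particular controlling how many times the best-approximating planes $P_Q$ can rotate by an amount $\gtrsim \alpha$ along the chain through $x$ inside the ambient Hilbert space $H$ -- is where I expect the main difficulty to lie.
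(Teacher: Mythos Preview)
Your treatment of (1)--(4) is essentially in line with the paper: items (1), (4), (5) are cited from \cite{azzam2018analyst}, and the paper proves (2) and (3) directly by the same mechanisms you describe (comparable side-lengths plus Lemma~\ref{ENV} for (2); the contrapositive argument $\ell(R_j)\geq\tau d_{\text{Layer}(N)}(R_j)\Rightarrow d_{\text{Layer}(N)}(x)=0$ for (3)). Your sketch of (4) is vague in the case $R\subseteq Q$, but this is not the real issue.

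The genuine gap is in (5). The uniform bounded-overlap claim---that each $x\in Q_0$ lies in at most $O_{\alpha,\ve}(1)$ cubes of $\bigcup_N\text{Stop}(N)$---is \emph{false}, and Lemma~\ref{AngControl} cannot deliver it. That lemma only controls $\angle(P_Q,P_R)$ for cubes of \emph{comparable} scale and location; along a descending chain it gives $\angle(P_{Q_k},P_{Q_{k+1}})\lesssim\ve$ per generation, so the cumulative rotation over $K$ generations can be of order $K\ve$, which is unbounded. A logarithmic spiral $\gamma(t)=e^{-t}(\cos(\ve t),\sin(\ve t))$ is $C\ve$-Reifenberg flat with finite length, yet the tangent rotates by $\alpha$ infinitely many times along the chain through any point---so every $x$ lies in a $\text{Stop}(N)$-cube for \emph{every} $N$. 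Your Fubini reduction therefore collapses.

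What actually makes (5) work (this is \cite[Proposition~8.1]{azzam2018analyst}, which the paper cites without reproving) is a Pythagorean area-gain argument, not an overlap count. One compares $\mathscr{H}^d$ of the approximating surfaces $\Sigma^N$ across consecutive $N$: at each minimal cube $R$ of a region $S_Q$, the plane $P_R$ is tilted by $\gtrsim\alpha$ relative to $P_Q$, which forces the graph over $P_Q$ to pick up excess area $\gtrsim\alpha^2\ell(R)^d$. Telescoping these gains against the finite quantity $\mathscr{H}^d(Q_0)$ yields $\sum_N\sum_{Q\in\text{Stop}(N)}\ell(Q)^d\lesssim\alpha^{-2}\mathscr{H}^d(Q_0)$. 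The $\alpha^{-2}$ dependence is essential and is exactly what your pointwise-overlap strategy cannot see.
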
 

\begin{proof}
	Items \textit{(1)} and \textit{(4)} can be found in \cite[Section 5]{azzam2018analyst}, and item \textit{(5)} is \cite[Proposition 8.1]{azzam2018analyst}. Item \textit{(3)} can also be found in \cite[Section 5]{azzam2018analyst}, however this relies on \cite[Corollary 5.8]{azzam2018analyst}, whose proof contains an error. Thus, we shall prove both \textit{(2)} and \textit{(3)}.
	
	Let us start with \textit{(2)}. Fix $x \in \Sigma$ and let 
	\[\mathscr{D}_x = \{ Q \in \text{Stop}(N) : x \in Q \}.\]
	If $Q,Q^\prime \in \mathscr{D}_x$, by \textit{(1)}, we have $\ell(Q) \sim \ell(Q^\prime).$  Let $r = \sup_{Q \in \mathscr{D}_x} \ell(Q).$ It follows that $\{ c_0B_Q : Q \in \mathscr{D}_x\}$ are pairwise disjoint balls contained in $B = B(x,(c_0 + C_1)r).$ For $\ve > 0$ small enough, recalling that $\Sigma$ is $(\ve,d)$-Reifenberg flat, we can find a $d$-plane $P$ such that
	\begin{align*}
		\text{dist}(x_Q,P) \leq \ve (c_0 +C_1)r \leq \frac{c_0 \ell(Q)}{2}
	\end{align*}
	for all $Q \in \mathscr{D}_x.$ So, by Lemma \ref{ENV},
	\begin{align*}
		\#\mathscr{D}_x r^d \lesssim \sum_{Q \in \mathscr{D}_x} r_{c_0B_Q}^d \lesssim_d r^d.
	\end{align*}
	In particular, $\#\mathscr{D}_x \lesssim_d 1.$
	
	Now for \textit{(3)}. We shall prove the contrapositive. Fix $x \in \Sigma$ and suppose $x$ is not contained in any cube from $\text{Stop}(N).$ Let $R_j \in \mathscr{D}$ be a sequence of cubes such that $x \in R_j$ and $\ell(R_j) \downarrow 0.$ By assumption, each $R_j$ is not contained in any cube from $\text{Stop}(N)$, and so $\ell(R_j) \geq \tau d_{\text{Layer}(N)}(R_j)$. For each $j$, let $x_j \in R_j$ be so that $d_{\text{Layer}(N)}(x_j) \leq d_{\text{Layer}(N)}(R_j) + j^{-1}.$ Then  
	\begin{align}
		d_{\text{Layer}(N)}(x) &\leq |x - x_{j}| + d_{\text{Layer}(N)}(x_j) \leq |x - x_{j}| + d_{\text{Layer}(N)}(R_j) + j^{-1} \\
		&\leq (2 + \tau^{-1})\ell(R_j) + j^{-1}  \rightarrow 0,
	\end{align}
	and so $d_{\text{Layer}(N)}(x) = 0.$ Since $\text{Layer}(N) \subseteq \text{Up}(N),$ this implies $d_{\text{Up}(N)} = 0$ as required.    	
\end{proof}

\subsection{The sequence of approximating surfaces}

We can now construct the sequence of approximating surfaces alluded to at the beginning of this section. For each $N \geq 0$ we will construct a surface $\Sigma^N$ using Theorem \ref{DT}, where we use the centres of the cubes from $\text{Up}(N)$ as our nets. 

\begin{defn}
For $k \geq 0$ and integer, let $s(k)$ be such that $5\rho^{s(k)} \leq r_k \leq 5\rho^{s(k)-1}.$ We define $\text{Up}(N)_k = \mathscr{D}_{s(k)} \cap \text{Up}(N)$ and let $\mathscr{X}_k^N = \{x_{j,k}\}_{j \in J^N_k}$ be a maximally $r_k$ separated net for
\[\mathscr{C}_k^N \coloneqq \{x_Q : Q \in \text{Up}(N)_k\}. \]
\end{defn}

In \cite{azzam2018analyst}, it was shown that the points $\mathscr{X}_k^N, \ k \geq 0$ satisfy the conditions of Theorem \ref{DT} for each $N \geq 0.$ 

\begin{defn}\label{d:sigmaN}
Let $P_0 = P_{Q_0}$ and for $N \geq0$, let $\Sigma^N, \Sigma_k^N, \  \sigma^N, \ \sigma_k^N$ be the resulting surfaces and functions from Theorem \ref{DT}, respectively.
\end{defn} 

\begin{lem}[{\cite[Lemma 6.2]{azzam2018analyst}}]\label{d=0}
If $d_{\emph{Up}(N)}(x) = 0$ then $x \in \Sigma \cap \Sigma^N.$
\end{lem}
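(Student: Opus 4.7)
The plan is to unpack the hypothesis $d_{\mathrm{Up}(N)}(x)=0$ to produce a sequence of net points from the construction of $\Sigma^N$ that converges to $x$, and then invoke Theorem \ref{DT}(1) to conclude $x \in \Sigma^N$. Membership in $\Sigma$ will come essentially for free from the fact that $\Sigma$ is closed.

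First, by definition of $d_{\mathrm{Up}(N)}$, the hypothesis $d_{\mathrm{Up}(N)}(x)=0$ yields a sequence of cubes $R_j \in \mathrm{Up}(N)$ with
\[ \ell(R_j) + \dist(x,R_j) \longrightarrow 0. \]
In particular $\ell(R_j) \to 0$ and $\dist(x,R_j) \to 0$. Pick $y_j \in R_j$ with $|x-y_j| \leq \dist(x,R_j) + \ell(R_j) \to 0$; since each $y_j \in R_j \subseteq \Sigma$ and $\Sigma$ is closed (it is lower content regular, in particular nonempty-limit preserving), we get $x \in \Sigma$.

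Next I want $x \in \Sigma^N$. For each $j$, let $k_j$ be the unique non-negative integer so that $R_j \in \mathscr{D}_{s(k_j)} \cap \mathrm{Up}(N) = \mathrm{Up}(N)_{k_j}$, so that $\ell(R_j) = 5\rho^{s(k_j)} \sim r_{k_j}$; in particular $r_{k_j} \to 0$ and hence $k_j \to \infty$. Since $R_j \subseteq B_{R_j} = B(x_{R_j},\ell(R_j))$, the center $x_{R_j}$ lies in $\mathscr{C}_{k_j}^N$ and satisfies
\[ |x - x_{R_j}| \leq |x - y_j| + \ell(R_j) \longrightarrow 0. \]
By maximality of the net $\mathscr{X}_{k_j}^N = \{x_{i,k_j}\}_{i \in J_{k_j}^N}$ inside $\mathscr{C}_{k_j}^N$, there is some index $i_j \in J_{k_j}^N$ with $|x_{R_j} - x_{i_j,k_j}| \leq r_{k_j}$. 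Therefore
\[ |x - x_{i_j,k_j}| \leq |x - x_{R_j}| + r_{k_j} \longrightarrow 0. \]

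Finally, since $k_j \to \infty$, the sequence $\{x_{i_j,k_j}\}$ lies in $\bigcup_{k \geq K} \{x_{i,k}\}_{i \in J_k^N}$ for every fixed $K$, and converges to $x$. Thus $x \in E_\infty^N$ in the notation of Theorem \ref{DT}, and that theorem (item (1)) gives $x \in \Sigma^N$. Combined with $x \in \Sigma$ above, this proves $x \in \Sigma \cap \Sigma^N$. The only mildly subtle point is bookkeeping the two parametrizations by scale (the dyadic scale $s(k)$ versus the Reifenberg scale $k$), which is built into the definition of $\mathrm{Up}(N)_k$ so no new work is needed.
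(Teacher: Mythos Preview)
Your argument is correct and is exactly the approach the paper cites from \cite{azzam2018analyst}: extract cubes $R_j \in \mathrm{Up}(N)$ shrinking to $x$, pass to nearby net points $x_{i_j,k_j}$, and invoke Theorem \ref{DT}(1). Two cosmetic remarks: lower content regularity alone does not force closedness (justify $x \in \Sigma$ instead via the standing hypothesis that the Reifenberg flat surface is closed), and since $k \mapsto s(k)$ need not be surjective onto the set of generations you may need to first replace $R_j$ by its ancestor in generation $s(k_j)$ for an appropriate $k_j$, which is harmless because $\mathrm{Up}(N)$ is closed under taking ancestors.
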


\begin{lem}[{\cite[Lemma 6.7]{azzam2018analyst}}]\label{DistSigma}
Let $x \in \Sigma,$ then 
\begin{align}\label{e:dSS^N}
\dist(x,\Sigma^N) \lesssim \frac{\ve}{\tau} d_{\emph{Up}(N)}(x). 
\end{align}
\end{lem}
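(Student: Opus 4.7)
The plan is to construct an explicit point of $\Sigma^N$ close to $x$ by locating an appropriate ball of the CCBP used to build $\Sigma^N$ and then projecting through the associated Lipschitz graph of Theorem \ref{DT}(7). Set $r = d_{\text{Up}(N)}(x)$. If $r = 0$, Lemma \ref{d=0} gives $x \in \Sigma^N$ and there is nothing to prove, so assume $r > 0$. Choose $R \in \text{Up}(N)$ with $\ell(R) + \dist(x,R) \leq 2r$. Since $\text{Up}(N)$ is closed under taking ancestors (by its inductive definition in terms of cubes containing elements of $\text{Layer}(N) \cup \text{Stop}(N)$), I replace $R$ by an ancestor $R' \supseteq R$ with $\ell(R') \sim r$; then $R' \in \text{Up}(N)_k$ for the unique $k$ with $r_k \sim \ell(R')$, and $|x - x_{R'}| \leq \dist(x,R') + \ell(R') \lesssim r_k$. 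Since $x_{R'} \in \mathscr{C}_k^N$, the maximal $r_k$-separated net $\mathscr{X}_k^N$ produces some $x_{j,k}$ with $|x_{R'} - x_{j,k}| \leq r_k$, and so $|x - x_{j,k}| \lesssim r_k$.

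Next I would bound $\dist(x, P_{j,k})$. The CCBP plane $P_{j,k}$ is, by construction, the plane through $x_{j,k}$ associated to the cube $Q_{j,k} \in \text{Up}(N)_k$ with centre $x_{j,k}$, hence $\beta_\Sigma^{d,p}(MB_{Q_{j,k}}, P_{j,k}) < 2\ve$. Because $\Sigma$ is Reifenberg flat it is lower content $d$-regular (via Theorem \ref{DT}(13) applied to its defining Reifenberg parametrisation), so a Chebyshev-type argument combined with Lemma \ref{c:beta} (to pass from $p$ to $1$) promotes this averaged estimate to the uniform bound
\[ \sup\{\dist(y,P_{j,k}) : y \in \Sigma \cap \tfrac{M}{2} B_{Q_{j,k}}\} \lesssim \ve^{\frac{1}{d+1}} r_k; \]
this is essentially the content of Lemma \ref{lemma:betap_betainfty}. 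Choosing $M$ large enough that $x$ lies in $\tfrac{M}{2} B_{Q_{j,k}}$ (possible since $|x - x_{j,k}| \lesssim r_k$), we deduce $\dist(x, P_{j,k}) \lesssim \ve r_k$ (absorbing the harmless exponent $\tfrac{1}{d+1}$ into the smallness of $\ve$).

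Finally I would exhibit a nearby point of $\Sigma^N$. Theorem \ref{DT}(7) says $\Sigma_k^N$ coincides on $D(x_{j,k}, P_{j,k}, 49 r_k)$ with the graph of a $C\ve$-Lipschitz function $A_{j,k} \colon P_{j,k} \cap 49 B_{j,k} \to P_{j,k}^\perp$ satisfying $|A_{j,k}(x_{j,k})| \lesssim \ve r_k$. Writing $\pi_{j,k}$ for the orthogonal projection onto $P_{j,k}$, the lift $y := \pi_{j,k}(x) + A_{j,k}(\pi_{j,k}(x))$ belongs to $\Sigma_k^N$, and
\[ |x - y| \leq \dist(x, P_{j,k}) + |A_{j,k}(x_{j,k})| + \|DA_{j,k}\|_\infty\, |\pi_{j,k}(x) - x_{j,k}| \lesssim \ve r_k. \]
Combined with Theorem \ref{DT}(10), which gives $\dist(y, \Sigma^N) \lesssim \ve r_k$, we get $\dist(x, \Sigma^N) \lesssim \ve r_k \sim \ve r$, comfortably inside the claimed $(\ve/\tau) r$ bound.

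\textbf{Main obstacle.} The delicate point is the bookkeeping in the first two steps: the implicit constants in "$\ell(R') \sim r$" and "$|x - x_{j,k}| \lesssim r_k$" must be compatible both with making $x$ fall inside $\tfrac{M}{2} B_{Q_{j,k}}$ (so the $\beta$-estimate applies) and with landing inside the cylinder $D(x_{j,k}, P_{j,k}, 49 r_k)$ where Theorem \ref{DT}(7) describes $\Sigma_k^N$ as a graph. The slack built into the $1/\tau$ factor of the target inequality absorbs these geometric constants and reflects the fact that, by the stopping-time construction, sizes of cubes in $\text{Up}(N)$ are only comparable to distances measured by $d_{\text{Up}(N)}$ up to the stopping parameter $\tau$.
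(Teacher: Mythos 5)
Your overall plan---locate a CCBP ball $B_{j,k}$ at scale $r_k \sim d_{\text{Up}(N)}(x)$ with $|x - x_{j,k}| \lesssim r_k$, lift $x$ through the Lipschitz graph $A_{j,k}$ of Theorem \ref{DT}(7) to land on $\Sigma^N_k$, then apply Theorem \ref{DT}(10) to pass to $\Sigma^N$---is the right structure, and I believe it is essentially how the cited result is proved in \cite{azzam2018analyst}. The gap is in your bound on $\dist(x, P_{j,k})$. You route through Lemma \ref{lemma:betap_betainfty} to obtain $\sup\{\dist(y,P_{j,k}) : y \in \Sigma \cap \tfrac{M}{2}B_{Q_{j,k}}\} \lesssim \ve^{1/(d+1)} r_k$ and then claim to ``absorb the harmless exponent $\tfrac{1}{d+1}$ into the smallness of $\ve$'' to deduce $\dist(x,P_{j,k}) \lesssim \ve r_k$. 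This absorption goes the \emph{wrong} way: for $\ve$ small and $d\geq 1$ one has $\ve^{1/(d+1)} \gg \ve$, so what your argument actually yields is $\dist(x,\Sigma^N) \lesssim \ve^{1/(d+1)} d_{\text{Up}(N)}(x)$, which does not give the stated $\tfrac{\ve}{\tau} d_{\text{Up}(N)}(x)$ in the relevant parameter regime (where $\ve$ is chosen after $\tau$, hence $\ve^{1/(d+1)} > \ve/\tau$ once $\ve^{d/(d+1)} < \tau$).

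The remedy is to bypass the $\beta^{d,p}$-to-$\beta_\infty$ conversion and use the Reifenberg flatness of $\Sigma$ directly: there is a $d$-plane $P^{\mathrm{Reif}}$ with $d_{B(x_{j,k}, CMr_k)}(\Sigma, P^{\mathrm{Reif}}) \leq \ve$, whence $\dist(x, P^{\mathrm{Reif}}) \lesssim \ve r_k$ immediately and linearly in $\ve$. To transfer this to $P_{j,k}$, do a Chebyshev selection on the $\beta^{d,1}$-estimate for $P_{j,k}$ (using lower content regularity of $\Sigma$) to produce $d+1$ well-separated points of $\Sigma$ in $B(x_{j,k}, CMr_k)$ lying within $C\ve r_k$ of $P_{j,k}$; those same points lie within $C\ve r_k$ of $P^{\mathrm{Reif}}$ by Reifenberg flatness, which forces $d_{B(x_{j,k},CMr_k)}(P_{j,k}, P^{\mathrm{Reif}}) \lesssim \ve$ by the mechanism of Lemma \ref{AngControl}. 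Then $\dist(x, P_{j,k}) \lesssim \ve r_k$ linearly in $\ve$ and the rest of your argument closes. Two smaller loose ends you should address: the case $d_{\text{Up}(N)}(x) \gtrsim \ell(Q_0)$, where an ancestor $R'$ of comparable scale does not exist inside $\mathscr{D}$; and the fact that the generation of $R'$ must lie in the image of $k\mapsto s(k)$, which can skip integers when $\rho < 1/10$. Both are routine but need a sentence each.
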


The following is a local Lipschitz description of the surfaces $\Sigma^N.$ 

\begin{lem}[{\cite[Theorem 6.10]{azzam2018analyst}}]\label{Graph2}
Let $C_2 > 1$. There exists $M \geq 1$ large enough and $\tau$ small enough so that the following holds. Let $Q \in S \in \mathscr{F}^N$ and $P$ be a $d$-plane such that $d_{B_Q}(P,P_Q) < \theta.$ If $\alpha,\theta$ and $\ve$ are small enough (depending on $C_2$), then there is a $C(\alpha + \theta)$-Lipschitz map $A_{P,Q}^N : P \rightarrow P^\perp$ that is zero outside $P \cap \pi (B(x_Q, 2C_2\ell(Q)))$ such that if $\Gamma^N_{P,Q}$ is the graph of $A_{P,Q}^N$ along $P$, then 
\begin{align}
\Sigma^N \cap D(x_Q,P,C_2\ell(Q)) = \Gamma_{P,Q}^N \cap  D(\pi(x_Q),P,C_2\ell(Q)).
\end{align}
If $Q =Q(S),$ we will set $A_S = A_{P,Q(S)}^N$ and $\Gamma_S = \Gamma_{P,Q(S)}^N.$ 
\end{lem}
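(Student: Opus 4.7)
The plan is to transfer the local Lipschitz graph structure of $\Sigma^N$ over the planes $P_{j,k}$ given by Theorem \ref{DT}(7) onto the single plane $P$, exploiting the fact that the relevant $P_{j,k}$ are nearly parallel to $P_Q$, and hence to $P$.

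First I would establish angular control: there is a constant $C'$ depending on $C_2$ such that for every net point $x_{j,k}$ with $x_{j,k} \in C' B_Q$ and $r_k \leq C'\ell(Q)$, the plane $P_{j,k}$, which equals $P_R$ for some $R \in \text{Up}(N)$, satisfies $\angle(P_{j,k}, P_Q) \lesssim \alpha + \ve$. For cubes $R \in S_Q$ this holds directly by construction of $S_Q$. For $R \in S_Q^\prime \setminus S_Q$, which lies between $\min(S_Q)$ and some cube of $\text{Stop}(N)$, one chains Lemma \ref{AngControl} along the parent sequence from $R$ up to a cube in $S_Q$, absorbing a controlled geometric series in $\ve$ (for $M$ large enough so that all intermediate cubes have the small-$\beta$ hypothesis). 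Combined with the hypothesis $d_{B_Q}(P,P_Q) < \theta$, this yields $\angle(P_{j,k}, P) \lesssim \alpha + \theta + \ve$ for all such $j,k$.

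Next, by Theorem \ref{DT}(7) each piece $\Sigma^N \cap D(x_{j,k}, P_{j,k}, 49 r_k)$ is the graph of a $C\ve$-Lipschitz function $A_{j,k}$ over $P_{j,k}$. Writing $\pi$ for the orthogonal projection onto $P$, the small angle between $P_{j,k}$ and $P$ ensures that $\pi$ restricted to this graph is a bi-Lipschitz homeomorphism onto its image, and an elementary rotation-of-graphs computation produces a $C(\alpha + \theta + \ve)$-Lipschitz function $B_{j,k}$ on a slightly smaller subregion of $P$ with $\Sigma^N$ coinciding with the graph of $B_{j,k}$ there. Uniqueness of graph representation over $P$ shows that any two such $B_{j,k}$ agree on the overlap of their domains, so the pieces concatenate into a single function.

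The main obstacle is verifying that these concatenated domains cover $\pi(D(x_Q, P, C_2\ell(Q)))$ and that the resulting $A_{P,Q}^N$ is globally $C(\alpha + \theta)$-Lipschitz (with $\ve$ absorbed by requiring $\ve \leq \alpha + \theta$). For the covering I would use that every point of $\Sigma^N$ either lies in some $V_k^{10}$ or is a limit point in $E_\infty$, together with Lemma \ref{l:Azzam-Stop}(1)--(4), which guarantees that near $x_Q$ the scales $r_k$ of active net points remain comparable to $\ell(Q)$ down to the scale of the nearest $\text{Stop}(N)$ cube. On the set $\{d_{\text{Up}(N)}=0\}$, Lemma \ref{d=0} gives $\Sigma^N = \Sigma$ locally, and $\Sigma$ is itself $C\ve$-close to $P_Q$ by Reifenberg flatness, so these points also lie on the graph once $\ve \lesssim \alpha + \theta$. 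Finally, to enforce vanishing outside $P \cap \pi(B(x_Q, 2C_2\ell(Q)))$, I would multiply by a smooth radial cut-off equal to one on $\pi(B(x_Q, C_2\ell(Q)))$; the buffer between the radii $C_2\ell(Q)$ and $2C_2\ell(Q)$ preserves the Lipschitz constant up to an absolute factor while keeping the graph equality on $D(x_Q, P, C_2\ell(Q))$ intact.
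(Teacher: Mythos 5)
The paper does not prove this lemma at all: it cites \cite[Theorem~6.10]{azzam2018analyst} and, in the preamble to Section~\ref{Sec:StopTime}, simply remarks that since the set is now being treated as a subset of $\R^n$, the results from Sections~5--7 of \cite{azzam2018analyst} whose constants are $n$-independent may be quoted directly; Lemma~\ref{Graph2} is one of those. So there is nothing in the present paper to compare your argument against --- you are supplying a proof the paper deliberately omits. Your sketch is in the spirit of Azzam--Schul's construction (local graphs over the $P_{j,k}$ via Theorem~\ref{DT}(7), rotation onto the common plane $P$ using the small-angle control from the stopping condition and Lemma~\ref{AngControl}, then concatenation and a cut-off), so it is a reasonable outline, but there are two places where more care is genuinely needed before this could stand as a proof.

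First, when you chain Lemma~\ref{AngControl} from a cube $R\in S_Q'\setminus S_Q$ up to a cube of $S_Q$, you claim the accumulated error is a ``controlled geometric series in $\ve$.'' The actual mechanism is different: the smoothing was designed precisely so that the number of generations between $\min(S_Q)$ and $\Stop(N)$ is bounded by a constant depending only on $\tau$ (because $\ell(R)\sim\tau\,d_{\mathrm{Layer}(N)}(R)$ for $R\in\Stop(N)$ and $\tau$ is a fixed constant), so the chain has bounded length and the error is $C(\tau)\ve$, not the sum of a geometric series. You also need to cover net points $x_{j,k}\in C'B_Q$ coming from cubes of $\mathrm{Up}(N)$ that are near $Q$ but not contained in it; these require one more application of Lemma~\ref{AngControl} to a neighbouring cube of $S$ at a comparable scale, using the smallness of $\beta_\Sigma^{d,1}(MB_T)$ for all $T$, which holds here only because $\Sigma$ is $\ve$-Reifenberg flat. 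That hypothesis should be invoked explicitly. Second, the passage from local graph pieces to a single global graph over $P\cap\pi(D(x_Q,P,C_2\ell(Q)))$ is not purely a matter of ``uniqueness on overlaps.'' You must rule out folding, i.e.\ show $\pi|_{\Sigma^N\cap D(x_Q,P,C_2\ell(Q))}$ is injective, and you must show the projection is surjective onto the disc in $P$. Both require a topological argument of the kind used in Lemma~\ref{l:proj-covers} (or David--Toro's degree argument), combined with Theorem~\ref{DT}(12) to keep the tangent planes uniformly close to $P$; local bi-Lipschitzness of $\pi$ on each piece is not by itself enough. With those two points filled in, the cut-off step at the end is fine, provided you also record that $|A^N_{P,Q}|\lesssim(\alpha+\theta)\ell(Q)$ on $\pi(B(x_Q,2C_2\ell(Q)))$ so that the product with the $O(1/\ell(Q))$-Lipschitz cut-off stays $C(\alpha+\theta)$-Lipschitz.
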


\subsection{Dorronsoro's Theorem}\label{s:Dor} We describe the Dorronsoro-type theorem we will use. This is due to Hyt\"onen, Li and Naor \cite{hytonen2016quantitative}. For a Lipschitz function $f : \R^d \rightarrow \R^n$, let $\norm{f}_\text{Lip}$ denote its Lipschitz constant, and for $B \subseteq \R^d$ and $1 < p < \infty$, set
\begin{align}\label{e:defofA}
	\Omega_{f,p}(B) = \inf_A \left(  \fint_B \left( \frac{|f(y) - A(y)|}{r_B}\right)^p \, dy \right)^\frac{1}{p} 
\end{align}
where the infimum is taken over all affine maps $A : \R^d \rightarrow \R^n.$ Additionally, for $p = \infty$, define 
\[\Omega_{f,\infty}(B) = \inf_A \sup_{y \in B} \frac{|f(y) - A(y)|}{r_B}. \]
As a consequence of \cite[Theorem 40]{hytonen2016quantitative}, we get the following.

\begin{thm}\label{ThmDor}
Let $2 \leq p < \infty$ and $f: \R^d \rightarrow \R^n$ be a Lipschitz function with bounded support. Then 
\begin{align}
\int_{\R^n}\int_0^\infty \Omega_{f,p}(B(x,r))^2 \, \frac{dr}{r}dx \lesssim_{d,p} \diam ( \supp f)^d \norm{f}^2_{\emph{Lip}} 
\end{align}
\end{thm}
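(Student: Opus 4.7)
The plan is to obtain the estimate as a more or less immediate consequence of Theorem 40 in \cite{hytonen2016quantitative}, which is a Dorronsoro-type square function inequality for vector-valued Lipschitz functions. That theorem is formulated for target Banach spaces with martingale cotype $q$; specializing to $q = 2$, which applies in any Hilbert space (and in particular in $\R^n$ with constants independent of $n$), should give a bound of the form
\[
\int_{\R^d}\int_0^\infty \Omega_{f,p}(B(x,r))^2 \, \frac{dr}{r}\, dx \;\lesssim_{d,p}\; \|\nabla f\|_{L^2(\R^d)}^2
\]
for every $2 \le p < \infty$. The lower bound $p \ge 2$ is needed precisely because one is asking for a square function (i.e.\ an $L^2$-in-scale quantity) on the left while Jensen's inequality applied to $\Omega_{f,p}$ only runs the right way when $p \ge 2$.

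With that input in hand, the remainder is a trivial Lipschitz reduction. Since $f$ is Lipschitz with compact support, $|\nabla f| \le \|f\|_{\mathrm{Lip}}$ almost everywhere and $\nabla f$ vanishes off $\supp f$. Picking any $x_0 \in \supp f$, one has $\supp f \subseteq B(x_0, \diam(\supp f))$, so that
\[
\|\nabla f\|_{L^2(\R^d)}^2 \;\le\; \|f\|_{\mathrm{Lip}}^2 \cdot |\supp f| \;\lesssim_d\; \|f\|_{\mathrm{Lip}}^2 \, \diam(\supp f)^d.
\]
Combining the two displays gives the stated inequality.

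The main thing to verify carefully is that the formulation of Theorem 40 in \cite{hytonen2016quantitative} can indeed be massaged into the form above: one must check that the $\Omega_{f,p}$-type quantity they use matches the one in \eqref{e:defofA} (modulo the normalization $1/r_B$), that their square function is integrated in exactly the scale-invariant way $dr/r$, and that the implicit constants depend only on $d$ and $p$. The second and third points are standard and explicit in their statement; the first is simply a matter of unwinding definitions. No other analytic ingredient is needed, and no issue of $n$-dependence arises since the only vector-space property of the target entering the proof of Theorem 40 is its cotype, which is $2$ with an absolute constant for any Hilbert space. This $n$-independence is the reason Theorem \ref{ThmDor} is useful in combination with the Euclidean reductions of Appendix \ref{a:Euclidean}.
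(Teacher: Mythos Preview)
Your proposal is correct and matches the paper's approach: the paper does not give an independent proof of Theorem \ref{ThmDor} but simply records it as a consequence of \cite[Theorem 40]{hytonen2016quantitative}, and your writeup supplies exactly the missing glue (specialize to Hilbert-space targets to get cotype $2$ with absolute constant, then bound $\|\nabla f\|_{L^2(\R^d)}^2$ by $\|f\|_{\mathrm{Lip}}^2 \diam(\supp f)^d$ using compact support). Note also that the outer integral in the displayed inequality should be over $\R^d$, not $\R^n$, as you correctly write.
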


We can discretize the above theorem as follows. The proof is standard. 

\begin{cor}\label{CorDor}
Let $2 \leq p < \infty$ and $f: \R^d \rightarrow \R^n$ be a Lipschitz function with bounded support. Let $\mathscr{I}$ denote the collection of dyadic cubes in $\R^d$ and for each $I \in \mathscr{I},$ let $B_I = B(x_I,\diam(I)),$ where $x_I$ denotes the centre of $I$. Then  
\begin{align}
\sum_{I \in \mathscr{I}} \Omega_{f,p}(3B_I)^p \ell(I)^d \lesssim_{d,p}  \diam ( \supp f)^d \norm{f}^2_{\emph{Lip}}.
\end{align}

\end{cor}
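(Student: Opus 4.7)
The plan is to deduce Corollary \ref{CorDor} from Theorem \ref{ThmDor} by a standard dyadic discretization of the Dorronsoro integral, combined with a trivial pointwise control on $\Omega_{f,p}$ to bridge the gap between the exponent $2$ that falls out of the continuous estimate and the exponent $p$ that appears on the left-hand side of the corollary.

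The first step is a comparison lemma. Fix a dyadic $I \in \mathscr{I}$, and consider $x \in I$ and $r \in [\ell(I),2\ell(I)]$. Then $B(x,r) \subseteq 3B_I$ and $3B_I \subseteq B(x, Cr)$ with $r \sim_d \ell(I) \sim_d r_{3B_I}$. Using the definition \eqref{e:defofA} of $\Omega_{f,p}$, plugging the minimizing affine map on the larger ball in as a competitor on the smaller one, and absorbing the change-of-variables factor $(r_{B'}/r_B)^{1+d/p}$ that arises from rescaling $|f-A|/r_B$, one obtains
\[
\Omega_{f,p}(3B_I) \lesssim_{d,p} \Omega_{f,p}(B(x,r)).
\]
Squaring and averaging this over $x \in I$ and $r \in [\ell(I),2\ell(I)]$, with respect to the measure $\frac{dr}{r}\,dx$ (noting that $\int_{\ell(I)}^{2\ell(I)} \frac{dr}{r}$ is a fixed constant), yields
\[
\Omega_{f,p}(3B_I)^2 \ell(I)^d \;\lesssim_{d,p}\; \int_I \int_{\ell(I)}^{2\ell(I)} \Omega_{f,p}(B(x,r))^2 \, \frac{dr}{r}\, dx.
\]

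The second step is to sum over $I \in \mathscr{I}$. The regions $I \times [\ell(I),2\ell(I)]$, one per dyadic scale $k$, partition $\R^d \times (0,\infty)$ essentially disjointly (with bounded overlap depending only on $d$). Summing and applying Theorem \ref{ThmDor} gives the $L^2$ discrete Dorronsoro estimate
\[
\sum_{I \in \mathscr{I}} \Omega_{f,p}(3B_I)^2 \ell(I)^d \;\lesssim_{d,p}\; \diam(\supp f)^d \, \|f\|_{\mathrm{Lip}}^2.
\]

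The final step upgrades the exponent from $2$ to $p$ on the left. Choosing the constant competitor $A(y) \equiv f(x_B)$ in \eqref{e:defofA} and using $|f(y)-f(x_B)| \leq \|f\|_{\mathrm{Lip}}\, r_B$ on $B$ gives the trivial bound $\Omega_{f,p}(B) \leq \|f\|_{\mathrm{Lip}}$ for every ball $B$. For $p \geq 2$ this yields
\[
\Omega_{f,p}(3B_I)^p \;\leq\; \|f\|_{\mathrm{Lip}}^{p-2}\; \Omega_{f,p}(3B_I)^2,
\]
and pulling the Lipschitz factor outside the sum finishes the argument (producing $\|f\|_{\mathrm{Lip}}^p$ on the right; I read the $\|f\|_{\mathrm{Lip}}^2$ in the statement as a typo for $\|f\|_{\mathrm{Lip}}^p$, since the $L^2$ discretization cannot produce $\|f\|_{\mathrm{Lip}}^2$ on the right once one puts a $p$-th power on the left unless $p=2$).

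The only genuine obstacle is the comparison $\Omega_{f,p}(3B_I) \lesssim_{d,p} \Omega_{f,p}(B(x,r))$, which is elementary but must be done carefully to track how the enlargement factor and the $(r_{B'}/r_B)^{1+d/p}$ rescaling interact; everything else is bookkeeping with Fubini and the trivial Lipschitz upper bound on $\Omega_{f,p}$.
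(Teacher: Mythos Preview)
Your discretization argument (steps one and two) is correct and is exactly the ``standard'' argument the paper alludes to; the paper itself gives no details beyond that remark. One small imprecision: from the containment $3B_I \subseteq B(x,Cr)$ you get $\Omega_{f,p}(3B_I) \lesssim_{d,p} \Omega_{f,p}(B(x,Cr))$, not $\Omega_{f,p}(B(x,r))$. This is harmless---after averaging, substitute $s=Cr$, and the boxes $I \times [C\ell(I),2C\ell(I)]$ still have bounded overlap in $\R^d \times (0,\infty)$---but as written the comparison inequality is not quite the one you justified.

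On the exponent issue, you correctly spotted an inconsistency, but the fix goes the other way. Looking at how the corollary is actually invoked in the paper (in the proof of Claim 2 of Theorem \ref{Thm3} and in the proof of Lemma \ref{l:bi-lip-BWGL}), both applications use $\Omega_{f,p}(\cdot)^2$ on the left and $\|f\|_{\mathrm{Lip}}^2$ on the right, which is precisely what your step two produces by direct discretization of Theorem \ref{ThmDor}. So the typo is in the exponent on $\Omega$ in the stated corollary (it should read $\Omega_{f,p}(3B_I)^2$), not in the power of $\|f\|_{\mathrm{Lip}}$. Your step three is therefore unnecessary for the paper's purposes, though the inequality you derive there (with $\|f\|_{\mathrm{Lip}}^p$ on the right) is of course also valid.
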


Later on, it will be useful for us to bound $\Omega_{f,\infty}$ in terms of $\Omega_{f,p},$ in analogy with Lemma \ref{lemma:betap_betainfty}. This will be possible if $f$ is bi-Lipschitz on its image. 

\begin{lem}\label{l:omega_infty}
Let $2 \leq p < \infty$ and $f: \R^d \rightarrow \R^n$ be a function which is bi-Lipschitz onto its image, with bi-Lipschitz constant $L$. Then, for $B \subseteq \R^d,$ 
\[ \Omega_{f,\infty}(\tfrac{1}{2}B) \lesssim_L \Omega_{f,1}(B)^\frac{1}{d+1} \lesssim_p \Omega_{f,p}(B)^\frac{1}{d+1}. \] 
\end{lem}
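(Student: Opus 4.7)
\emph{Proof plan.} The second inequality is immediate from Jensen's inequality applied to the convex function $x \mapsto x^p$: for any affine $A$ and $p \geq 1$,
\[ \fint_B \frac{|f(y)-A(y)|}{r_B}\,dy \leq \left( \fint_B \left(\frac{|f(y)-A(y)|}{r_B}\right)^p dy \right)^{1/p}. \]
Taking the infimum over $A$ gives $\Omega_{f,1}(B) \leq \Omega_{f,p}(B)$ with implicit constant $1$, and raising both sides to the power $1/(d+1)$ gives the second claim.

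For the first inequality the strategy mimics the proof of Lemma \ref{lemma:betap_betainfty}, with ``lower content regularity'' replaced by the built-in $d$-dimensional Lebesgue measure on $B \subseteq \R^d$. Write $\omega_1 = \Omega_{f,1}(B)$ and choose an affine $A(y) = a + T(y - x_B)$, with $T: \R^d \to \R^n$ linear, that nearly attains the infimum, so that $\fint_B |f - A|/r_B \, dy \leq 2\omega_1$. The first step is to bound $\|T\|$ in terms of $L$. Fix a unit vector $v \in \R^d$ and set $r = r_B/2$. A Fubini argument on pairs $(y_1, y_1 + rv)$ with $y_1 \in B \cap (B - rv)$ produces some $y_1$ for which $|f(y_1) - A(y_1)| + |f(y_1 + rv) - A(y_1 + rv)| \lesssim_d \omega_1 r_B$; combined with $f$ being $L$-Lipschitz, this yields $|T(v)| \leq L + C_d \omega_1$. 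Since $\omega_1 \leq L$ (take $A \equiv f(x_B)$), we conclude $\|A\|_{\mathrm{Lip}} \lesssim_d L$.

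Now let $M = \sup_{y \in \frac{1}{2}B} |f(y) - A(y)|/r_B$; note $\Omega_{f,\infty}(\tfrac{1}{2}B) \leq M$. Pick $y_0 \in \tfrac{1}{2}B$ with $|f(y_0) - A(y_0)| \geq M r_B/2$ and set $C_L := L + \|A\|_{\mathrm{Lip}} \lesssim_L 1$. For any $y \in B(y_0, s)$ the triangle inequality gives
\[ |f(y) - A(y)| \geq \frac{M r_B}{2} - C_L |y - y_0| \geq \frac{M r_B}{4} \qquad \text{whenever } s \leq \frac{Mr_B}{4C_L}. \]
Suppose first that $s := Mr_B/(4C_L) \leq r_B/2$, so $B(y_0, s) \subseteq B$. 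Integrating,
\[ 2\omega_1 \geq \fint_B \frac{|f-A|}{r_B}\,dy \geq \frac{M}{4}\left(\frac{s}{r_B}\right)^d \gtrsim_L M^{d+1}, \]
which rearranges to $M \lesssim_L \omega_1^{1/(d+1)}$. In the remaining case $M > 2C_L$, the very same inequality on $B(y_0, r_B/2) \subseteq B$ gives $|f - A| \geq Mr_B/4$ there, hence $M \lesssim_d \omega_1$; combining with $\omega_1 \lesssim L$ and considering separately $\omega_1 \leq 1$ and $\omega_1 > 1$ yields $M \lesssim_L \omega_1^{1/(d+1)}$ in this case too. This gives $\Omega_{f,\infty}(\tfrac{1}{2}B) \leq M \lesssim_L \Omega_{f,1}(B)^{1/(d+1)}$.

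The main obstacle is the estimate $\|A\|_{\mathrm{Lip}} \lesssim_L 1$ in the first step: an arbitrary $L^1$-optimal affine map could a priori be very steep, but the averaging argument exploits the fact that $f$ itself is Lipschitz to force the optimizer into being comparable. Once that is in hand, the remainder is the standard ``spread out the pointwise deviation to a whole ball and integrate'' trick, exactly as in Lemma \ref{lemma:betap_betainfty}, with the bi-Lipschitz hypothesis only used via the Lipschitz upper bound.
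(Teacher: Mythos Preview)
Your proof is correct and takes a genuinely different route from the paper. The paper instead works with $\tau \coloneqq \max_{y \in \frac{1}{2}B} \dist(f(y), A(\R^d))$, the maximal distance from $f$ to the \emph{image plane} of a near-optimizer $A$ for $\Omega_{f,1}(B)$; since $\dist(\,\cdot\,, A(\R^d))$ is $1$-Lipschitz on $\R^n$, the Lipschitz bound on $f$ alone gives $\dist(f(z), A(\R^d)) > \tau/2$ on a ball of radius $\sim \tau/L$ about the maximizer, and integrating yields $\tau \lesssim_L \Omega_{f,1}(B)^{1/(d+1)}$ without ever needing to control $\|A\|_{\mathrm{Lip}}$. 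You instead track $|f(y)-A(y)|$ directly, which forces the preliminary averaging step bounding $\|A\|_{\mathrm{Lip}}$, after which the propagation and integration are completely transparent. One thing your route buys: the inequality $\Omega_{f,\infty}(\tfrac{1}{2}B) \lesssim M$ is immediate from the definition, whereas the paper's closing assertion ``$\Omega_{f,\infty}(\tfrac{1}{2}B) = \tau$'' is not literally justified as written (distance to the plane bounds $|f-A|$ from below, not above, so passing from $\tau$ back to $\Omega_{f,\infty}$ would require an additional remark). Both arguments, as you observe, use only the upper Lipschitz bound on $f$; the second inequality via Jensen is handled identically in the paper.
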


\begin{proof}
We can assume without loss of generality that $r_B=1.$ Let us begin with the left-most inequality. Let $\ve > 0$ and $A :\R^d \rightarrow H$ be an affine function so that 
\[ \fint_B |f(y) - A(y)| \, dx  \leq   \Omega_{f,1}(B) + \ve. \] 
Let $y \in \tfrac{1}{2}B$ be the point so that $\tau \coloneqq \dist(f(y),A(\R^d))$ is maximal. By the triangle inequality, $B(f(y),\tau/2) \subseteq \{x \in \R^n : \dist(x,A(\R^d)) > t \}$ for all $0 < t < \tau/2.$ Also, using that $f$ is bi-Lipschitz, for each $z \in B(y,\tau/2L) \subseteq B$ we know $f(z) \in B(f(y),\tau/2).$ Hence,
\[ B(y,\tau/2L) \subseteq \{x \in B : \dist(f(x),A(\R^d)) > t\} \subseteq \{x \in B : |f(y) - A(y)| > t \} \] 
for all $0 < t < \tau/2$, and so, 
\begin{align}
	1 \lesssim_L \frac{\mathscr{H}^d(B(y,\tau/2L))}{\tau^d} &\lesssim \frac{1}{\tau^{d+1}} \int_0^\frac{\tau}{2} \mathscr{H}^d(\{ x \in B :|f(y) - A(y)| > t \}) \, dt \\
	&\leq \frac{1}{\tau^{d+1}} \int_B |f(y) - A(y)| \, dx \leq \frac{1}{\tau^{d+1}}\left[ \Omega_{f,1}(B) + \ve \right].  
\end{align}
Taking $\ve \rightarrow 0$ and rearranging gives the first inequality since
\[ \Omega_{f,\infty}(\tfrac{1}{2}B) = \tau \lesssim \Omega_{f,1}(B)^\frac{1}{d+1}. \]    
We now prove the second inequality. The proof is similar to the proof of Lemma \ref{c:beta}. Let $\ve > 0$ and let $A : \R^d \rightarrow \R^n$ be an affine function so that
\[  \left(\fint_B |f(y) - A(y)|^p \, dx \right)^\frac{1}{p}  \leq   \Omega_{f,p}(B) + \ve .\]
The function $x \mapsto x^p$ is convex, so,  by Jensen's inequality, 
\begin{align}
\Omega_{f,1}(B) \leq \fint_B |f(y) - A(y)| \, dx \lesssim \left( \fint_B |f(y) - A(y)|^p \, dx \right)^\frac{1}{p} \leq  \Omega_{f,p}(B) + \ve.
\end{align}
Since $\ve >0$ was arbitrary, this finishes the proof. 
\end{proof}

If $\Sigma$ is a bi-Lipschitz image of $\R^d$, we can give an upper bound for the $\beta$-numbers of $\Sigma$ by a corresponding $\Omega$-number. 

\begin{lem}\label{BetaBound}
Let $2 \leq p < \infty,$ $f: \R^d \rightarrow \R^n$ be an $L$-bi-Lipschitz function and $\Sigma = f(\R^d).$ Let $x \in \Sigma, \ r > 0,$ and let $B$ be a ball in $\R^d$ that contains $f^{-1}(\Sigma \cap B(x,r)).$ Then
\begin{align}
\beta_\Sigma^{d,p}(B(x,r)) \lesssim \left( \frac{\mathscr{H}^d(I)}{r^d}\right)^\frac{1}{p}\Omega_{f,p}(B)  .
\end{align}
\end{lem}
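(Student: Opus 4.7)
The plan is to compare the $\beta$-number on $\Sigma$ with the Dorronsoro-type coefficient $\Omega_{f,p}(B)$ by using the parametrization $f$ to transfer the problem from $\Sigma$ to the ball $B \subseteq \R^d$, where Choquet integrals can be compared to ordinary Lebesgue integrals. The candidate plane for the $\beta$-number will be (a $d$-plane containing) the image of a best affine approximation of $f$ on $B$.

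More precisely, given $\eta > 0$, pick an affine map $A \colon \R^d \to \R^n$ with
\[
\left( \fint_B |f(y) - A(y)|^p \, dy \right)^{\frac{1}{p}} \leq r_B \bigl(\Omega_{f,p}(B) + \eta\bigr),
\]
and let $P$ be a $d$-plane containing $A(\R^d)$. Since $A(z) \in P$ for every $z \in \R^d$, we have $\dist(f(z),P) \leq |f(z) - A(z)|$ for every $z \in B$. Combined with the assumption $B \supseteq f^{-1}(\Sigma \cap B(x,r))$, this gives, for every $t > 0$,
\[
\{y \in \Sigma \cap B(x,r) : \dist(y,P) > t\} \subseteq f\bigl(\{z \in B : |f(z) - A(z)| > t\}\bigr).
\]

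The key step is then to pass from the Choquet content on $\Sigma$ to Lebesgue measure on $B$. Because $f$ is $L$-Lipschitz, $\mathscr{H}^d_\infty(f(E)) \leq L^d \mathscr{H}^d_\infty(E)$ for any set $E$, and because $B \subseteq \R^d$, we have $\mathscr{H}^d_\infty \lesssim_d \mathscr{L}^d$ on subsets of $B$. Chaining these estimates on each super-level set and integrating in $t$ against $t^{p-1} \, dt$ (the layer-cake / Fubini formula used in the second form of the definition of $\beta$) yields
\[
\int_{\Sigma \cap B(x,r)} \dist(y,P)^p \, d\mathscr{H}^d_\infty(y) \lesssim_{d,L} \int_B |f(z) - A(z)|^p \, dz.
\]
Dividing by $r^{d+p}$, applying the definition of $\beta_\Sigma^{d,p}(B(x,r)) \leq \beta_\Sigma^{d,p}(B(x,r),P)$, and inserting the chosen $A$ gives
\[
\beta_\Sigma^{d,p}(B(x,r))^p \lesssim_{d,L} \frac{r_B^p \, \mathscr{L}^d(B)}{r^{d+p}} \, \bigl(\Omega_{f,p}(B) + \eta\bigr)^p.
\]
Letting $\eta \to 0$, taking $p$-th roots, and using $\mathscr{L}^d(B) \sim_d \mathscr{H}^d(B)$ produces the estimate; in the typical application where $B = B_I$ for a dyadic cube $I$, so $\mathscr{H}^d(I) \sim_d \mathscr{H}^d(B)$ and $r_B \lesssim_L r$ (since $f^{-1}(\Sigma \cap B(x,r))$ has diameter at most $2Lr$), one recovers exactly the stated bound.

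The main obstacle is the passage from a Choquet integral over $\Sigma$ (which lives in the high-dimensional ambient $\R^n$) to a genuine Lebesgue integral over $B \subseteq \R^d$: this is precisely where the bi-Lipschitz property is used, first to push super-level sets forward by $f^{-1}$, and then to invoke the bound $\mathscr{H}^d_\infty \lesssim_d \mathscr{L}^d$ in $\R^d$. The rest of the argument is a routine manipulation of the definitions of $\beta$ and $\Omega$.
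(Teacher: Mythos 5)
Your proof is correct and follows the same overall strategy as the paper's: fix the near-optimal affine map $A$, use $\dist(f(z),P)\le|f(z)-A(z)|$, and transfer the Choquet integral over $\Sigma$ to a Lebesgue integral over $B\subseteq\R^d$ via the parametrization $f$. The route you take for the transfer is slightly different: the paper first replaces $\mathscr{H}^d_\infty$ by $\mathscr{H}^d$ on $\Sigma$ (using Ahlfors regularity, though only $\mathscr{H}^d_\infty\le\mathscr{H}^d$ is actually needed), converts the layer-cake to a genuine integral, and applies the area formula with the Jacobian bound $J_f\lesssim L^d$; you instead push the super-level sets through $f^{-1}$ directly and use the elementary facts $\mathscr{H}^d_\infty(f(E))\le L^d\mathscr{H}^d_\infty(E)$ and $\mathscr{H}^d_\infty\lesssim_d\mathscr{L}^d$ in $\R^d$. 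Your version is a bit more elementary (no change-of-variables and no appeal to the Ahlfors-regularity black box), and is perfectly valid.

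One thing worth flagging: you correctly keep track of the factor $(r_B/r)^p$, which the paper's last line silently drops. Indeed the paper writes
\[
\int_B\Bigl(\frac{|f(z)-A(z)|}{r}\Bigr)^p\,dz=\mathscr{H}^d(B)\,\Omega_{f,p}(B)^p,
\]
which is only correct if $r_B=r$. As stated, the lemma (and its proof) implicitly assume $r_B\sim r$, which is automatic in the applications since $B$ is always chosen so that $r_B\sim\diam f^{-1}(\Sigma\cap B(x,r))\lesssim_L r$. Your remark at the end makes this explicit, and your interpretation of the (clearly misprinted) $\mathscr{H}^d(I)$ as $\mathscr{H}^d(B)$ matches what the paper's proof actually establishes.
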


\begin{proof}
	Let $A$ be an affine function which infimises \eqref{e:defofA} for $f$, $p$ and $B$. Since $f$ is bi-Lipschitz, $\Sigma$ is Ahlfors $d$-regular (with constant depending on $L$) and so $\mathscr{H}^d(E) \sim_L \mathscr{H}^d_\infty(E)$ for all $E \subseteq \Sigma.$ Hence, 
	\begin{align}
		\beta^{d,p}_\Sigma(B(x,r))^p &\lesssim \frac{1}{r^d} \int_0^1 \mathscr{H}^d_\infty(\{y \in \Sigma \cap B(x,r) : \dist(y,A(\R^d) > tr \} ) t^{p-1} \, dt \\
		&\lesssim \frac{1}{r^d} \int_0^1 \mathscr{H}^d(\{y \in \Sigma \cap B(x,r) : \dist(y,A(\R^d) > tr \} ) t^{p-1} \, dt \\
		&= \frac{1}{r^d} \int_{\Sigma \cap B(x,r)} \left( \frac{\dist(y,A(\R^d))}{r} \right)^p \, d\mathscr{H}^d(y) \\
		&= \frac{1}{r^d} \int_{f^{-1}(\Sigma \cap B(x,r))} \left( \frac{\dist(f(z),A(\R^d))}{r} \right)^p J_f(z) \, dz \\
		&\lesssim_L \frac{1}{r^d} \int_{B} \left( \frac{|f(z) - A(z)|}{r} \right)^p  \, dz \\
		&= \frac{\mathscr{H}^d(B)}{r^d} \Omega_{f,p}(B)^p.
	\end{align} 
\end{proof}

We now have all the ingredients to finish the proof of Theorem \ref{Thm3}.

\begin{proof}[Proof of Theorem \ref{Thm3}]
We use the collected facts to prove \eqref{e:dTSP3'}. Notice, by \eqref{e:partition}, we have
\begin{align}
\sum_{Q \in \mathscr{D}} \beta_\Sigma^{d,p}(C_0B_Q)^2\ell(Q)^d = \sum_{N \geq 0} \sum_{S \in \mathscr{F}^N}\sum_{Q \in S} \beta_\Sigma^{d,p}(C_0B_Q)^2\ell(Q)^d.
\end{align}
We begin by bounding the sum over cubes in a particular stopping-time region. Let $N \geq 0$ and $S \in \mathscr{F}^N.$ For $Q \in S,$ let $x_Q^\prime \in \Sigma^N$ be the closest point to $x_Q$ and let $B^\prime_Q = B(x_Q^\prime , \ell(Q)).$ By Lemma \ref{DistSigma}, $C_0B_Q \subseteq 2C_0B_Q'$, so Lemma \ref{betaest} implies
\begin{align*}
\sum_{Q \in S}\beta_\Sigma^{d,p}(C_0B_Q)^2\ell(Q)^d &\lesssim \sum_{Q \in S}\beta_{\Sigma^N}^{d,p}(2C_0B_Q^\prime)^2\ell(Q)^d \\
&\hspace{-4em} + \sum_{Q \in S}\left(\frac{1}{\ell(Q)^d}\int_{\Sigma \cap 2C_0B_Q} \left( \frac{\text{dist}(y,\Sigma^N)}{\ell(Q)}\right)^p \, d\mathscr{H}_\infty^d(y) \right)^\frac{2}{p}\ell(Q)^d \\
&= I_1 + I_2. 
\end{align*}
The proof now splits into two claims.
\bigbreak
\noindent
\textbf{Claim 1:} 
\begin{align}
I_2 \lesssim \ve \ell(Q(S))^d. 
\end{align}

\begin{proof}[Proof of Claim 1]
By Lemma \ref{JensenPhi}, we may assume that $p \geq 2.$ Let $y \in \Sigma.$ Assume first that there exists $R \in \text{Stop}(N)$ such that $y \in R.$ By Lemma \ref{DistSigma}, and using that fact that $R \in \text{Up}(N)$ (since $\text{Stop}(N) \subseteq \text{Up}(N)$), we have
\begin{align}\label{e:deR}
\text{dist}(y,\Sigma^N) \stackrel{\eqref{e:dSS^N}}{\lesssim} \ve d_{\text{Up}(N)}(y) \leq \ve \left( \ell(R) + \dist(y,R) \right) = \ve \ell(R).
\end{align}
On the other hand, if there is no $R \in \text{Stop}(N)$ such that $y \in R,$ then by Lemma \ref{l:Azzam-Stop}(3) we have $d_{\text{Up}(N)}(y) = 0.$ In particular, $\text{dist}(y,\Sigma^N) = 0$ by Lemma \ref{d=0}. Using the above, the bounded overlap of the cubes in $\text{Stop}(N)$ (Lemma \ref{l:Azzam-Stop}(2)), and Lemma \ref{l:subsum}, we get  
\begin{align*}
I_2 &\leq \sum_{Q \in S}\left(\frac{1}{\ell(Q)^d}\int_{\Sigma \cap 2C_0B_Q} \sum_{\substack{R \in \text{Stop}(N) \\ R \cap 2C_0B_Q \not= \emptyset}} \mathds{1}_R(y) \left( \frac{\text{dist}(y,\Sigma^N)}{\ell(Q)}\right)^p \, d\mathscr{H}_\infty^d(y) \right)^\frac{2}{p}\ell(Q)^d \\
&\lesssim \sum_{Q \in S}\left(\frac{1}{\ell(Q)^d} \sum_{\substack{R \in \text{Stop}(N) \\ R \cap 2C_0B_Q \not= \emptyset}}\int_{R}  \left( \frac{\text{dist}(y,\Sigma^N)}{\ell(Q)}\right)^p \, d\mathscr{H}_\infty^d(y) \right)^\frac{2}{p}\ell(Q)^d \\
&\stackrel{\eqref{e:deR}}{\lesssim} \ve \sum_{Q \in S} \left(\sum_{\substack{R \in \text{Stop}(N) \\ R \cap 2C_0B_Q \not= \emptyset}}\frac{\ell(R)^{d+p}}{\ell(Q)^{d+p}} \right)^\frac{2}{p} \ell(Q)^d \lesssim \ve \sum_{Q \in S}\sum_{\substack{R \in \text{Stop}(N) \\ R \cap 2C_0B_Q \not= \emptyset}} \frac{\ell(R)^{d\frac{2}{p}+2}}{\ell(Q)^{d(\frac{2}{p}-1) +2}},
\end{align*}
where the final inequality follows from the fact that we have assumed $p \geq 2.$

 Our next step is to swap the order of summation. First, for fixed $R \in \text{Stop}(N)$ and $k \in \Z,$ we show
\begin{align}\label{vol1}
\#\mathscr{D}_{R,k} = \#\{Q \in \mathscr{D}_k \cap S : R \cap 2C_0B_Q \not= \emptyset \} \lesssim 1.
\end{align}
Indeed, let $R \in \text{Stop}(N)$ and $k \in \Z.$ For any $Q \in \mathscr{D}_{R,k}$ we have $Q \in \text{Up}(N)$ since $Q \in S \in \mathscr{F}^N$. Combining this with Lemma \ref{l:Azzam-Stop}(4), and the fact that $R \cap 2C_0B_Q \not=\emptyset$, we get
\begin{align}\label{e:RleqQ}
\ell(R) \sim d_{\text{Up}(N)}(R) \leq \ell(Q) + \dist(Q,R) \leq 3C_0\ell(Q).
\end{align}
Thus, there exists a constant $C >0$ such that $c_0B_Q \subseteq B(x_R,C\ell(Q)) \coloneqq B$. Since $\Sigma$ is $(\ve,d)$-Reifenberg flat, we can find a $d$-plane $P$ through $x_B$ such that $\dist(y,P) \lesssim \ve \ell(Q)$ for all $y \in \Sigma \cap B.$ In particular, 
\[\text{dist}(x_Q,P) \leq \tfrac{c_0\ell(Q)}{2}\]
for $\ve$ small enough. Since the balls $\{c_0B_Q\}_{Q \in \mathscr{D}_{R,k}}$ are disjoint, the above implies that they satisfy the conditions of Lemma \ref{ENV}, which gives
\begin{align*}
\#\mathscr{D}_{R,k} (5\rho^k)^d \lesssim \sum_{Q \in \mathscr{D}_{R,k}}r_{c_0B_Q}^d \lesssim r_B^d \lesssim (5\rho^k)^d
\end{align*}
and proves \eqref{vol1}.

We return to estimating $I_2.$ By our restrictions on $p$ we have $p < 2d/(d-2),$ or equivalently, $d(2/p -1) +2 >0.$ Combining this, \eqref{vol1}, and \eqref{e:RleqQ}, we can swap the order of integration and sum over a geometric series to get,
\begin{align*}
I_2 &\stackrel{\eqref{e:RleqQ}}{\lesssim} \ve \sum_{\substack{R \in \text{Stop}(N) \\ R \cap 2C_0B_{Q(S)} \not = \emptyset}} \sum_{\substack{k \in \Z \\ 5\rho^k \gtrsim \ell(R)}} \sum_{\substack{Q \in \mathscr{D}_k \cap S \\ R \cap 2C_0B_Q \not= \emptyset}}  \frac{\ell(R)^{d\frac{2}{p}+2}}{\ell(Q)^{d(\frac{2}{p}-1) +2}} \\
&\stackrel{\eqref{vol1}}{\lesssim} \ve \sum_{\substack{R \in \text{Stop}(N) \\ R \cap 2C_0B_{Q(S)} \not = \emptyset}} \ell(R)^{d\frac{2}{p}+2} \sum_{\substack{k \in \Z \\ 5\rho^k \gtrsim \ell(R)}} \rho^{-k [ d(\frac{2}{p}-1) +2]} \\
&\lesssim \ve \sum_{\substack{R \in \text{Stop}(N) \\ R \cap 2C_0B_{Q(S)} \not = \emptyset}}\ell(R)^d.
\end{align*}
Now, suppose $R \in \text{Stop}(N)$ is such that $R \cap 2C_0B_{Q(S)} \not= \emptyset.$ Let $x^\prime_R$ be the point in $\Sigma^N$ closest to $x_R,$ and let $P_S = P_{Q(S)}$ (recall the definition of $P_Q$ from the beginning of Section \ref{Sec:StopTime}). Since 
\begin{align*}
|x_R - x_R^\prime| \lesssim \ve d_{\text{Up}(N)}(x_R) \leq \ve \ell(R) \stackrel{\eqref{e:RleqQ}}{\lesssim} C_0\ve\ell(Q(S)),
\end{align*}
taking $C_2$ (the constant appearing in the statement of Lemma \ref{Graph2}) larger than $4C_0$ and $\ve > 0$ small enough, we have
\[B(x_R^\prime,\ell(R)) \subseteq D(x_{Q(S)},P_S,C_2\ell(Q(S))). \]
Let $\Gamma_S$ be the Lipschitz graph from Lemma \ref{Graph2}. Since $\Gamma_S$ is Ahlfors regular and the balls  $\{C_1B_R : R \in \text{Stop}(N)\}$ have bounded overlap by Lemma \ref{l:Azzam-Stop}(2), we get
\begin{align*}
\ve \sum_{\substack{R \in \text{Stop}(N) \\ R \cap 2C_0B_{Q(S)} \not = \emptyset}}\ell(R)^d &\lesssim \ve \sum_{\substack{R \in \text{Stop}(N) \\ R \cap 2C_0B_{Q(S)} \not = \emptyset}}\mathscr{H}^d(B(x_R^\prime, \ell(R)) \cap \Gamma_S) \\
&\lesssim \ve \sum_{\substack{R \in \text{Stop}(N) \\ R \cap 2C_0B_{Q(S)} \not = \emptyset}}\mathscr{H}^d(D(x_{Q(S)},P_S,C_2\ell(Q(S))) \cap \Gamma_S) \\
&\lesssim \ve \ell(Q(S))^d. 
\end{align*}
This proves Claim 1. 
\end{proof}
We turn our attention to estimating $I_1.$
\bigbreak
\noindent
\textbf{Claim 2:}
\begin{align}
I_1 \lesssim \alpha^2 \ell(Q(S))^d.
\end{align} 
\begin{proof}[Proof of Claim 2]
Let $P_S$ be as above. Let $\mathscr{D}_{P_S}$ be the usual dyadic grid for $P_S$, and let $\pi_{S}$ denote the orthogonal projection onto $P_S.$ For each $Q \in S,$ let $I_Q$ be the minimal cube from $\mathscr{D}_{P_S}$ which contains $\pi_{S}(x_Q)$ and satisfies $\ell(I_Q) \geq 8C_0 \ell(Q).$ In particular, this implies 
\begin{align}\label{e:QI_Q}
\ell(Q) \sim \ell(I_Q).
\end{align} Recall that $x_Q'$ was defined to be the closest point to $x_Q$ in $\Sigma^N.$ Since the orthogonal projection onto a $d$-planes are 1-Lipschitz, we can use this, Lemma \ref{DistSigma}, and the fact that $Q \in \text{Up}(N)$ to get
\[ |\pi_{S}(x_Q) - \pi_{S}(x_Q^\prime)| \leq |x_Q - x_Q^\prime| \lesssim \ve d_{\text{Up}(N)}(x_Q) \lesssim \ve \ell(Q). \]
So, for $\ve$ small enough, we have 
\[\pi_{S}(2C_0B_Q^\prime) \subseteq 3I_Q \subseteq 3B_{I_Q}. \]
Using this with Lemma \ref{BetaBound} and the fact that for $C_2 \geq 4C_0$ we have $2C_0B_Q' \subseteq C_2B_{Q(S)}$ for all $Q \in S,$ we obtain
\begin{align}\label{FirstOmega}
\sum_{Q \in S} \beta_{\Sigma^N}^{d,p}(2C_0B_Q^\prime)^2\ell(Q)^d  &\lesssim \sum_{Q \in S}\Omega_{A_S,p}(3B_{I_Q})^2\ell(I_Q)^d \\
&\lesssim \sum_{I \in \mathscr{D}_{P_S}} \Omega_{A_S,p}(3B_I)^2\ell(I)^d \# \{Q \in S : I_Q = I\}. 
\end{align}
Next, we show
\begin{align}\label{e:I_Qbound}
	\# \{Q \in S : I_Q = I\} \lesssim_d 1. 
\end{align}
	Indeed, fix $I \in \mathscr{D}_{P_S}$ and suppose $Q,R \in S$ are such that $I_Q = I_R = I.$ By \eqref{e:QI_Q} we have $\ell(Q) \sim \ell(R).$ Using that $\pi_S$ is bi-Lipschitz on $D(x_{Q(S)},P_S,C_2\ell(Q))$ (see Lemma \ref{Graph2}), along with Lemma \ref{DistSigma}, and the fact that $\pi_S(x_Q),\pi_S(x_R) \in I$, we have
\begin{align}
|x_Q - x_R| &\leq |x_Q^\prime - x_R^\prime| + \text{dist}(x_Q,\Sigma^N) + \text{dist}(x_R,\Sigma^N) \\
 &\lesssim |\pi_S(x_Q') - \pi_S(x_R')| + \text{dist}(x_Q,\Sigma^N) + \text{dist}(x_R,\Sigma^N) \\
&\lesssim |\pi_S(x_Q) - \pi_S(x_R)| + 2\text{dist}(x_Q,\Sigma^N) + 2\text{dist}(x_R,\Sigma^N) \\
&\lesssim \ell(I) + \ve d_{\text{Up}(N)}(Q)+ \ve d_{\text{Up}(N)}(R) \\
&\lesssim \ell(I).
\end{align}
Thus, there exists some $p \in \Sigma$ and $C>0$ such that $x_Q \in B(p,C\ell(I))$ for all $Q \in S$ such that $I_Q = I.$ Since $\Sigma$ is $(\ve,d)$-Reifenberg flat, there is some $d$-plane $P$ through $p$ such that 
\[ \text{dist}(x_Q,P) \lesssim \ve \ell(I) \]
for all cubes in $ \{Q \in S : I_Q = I\}$. Since these cubes have comparable sizes and the balls $c_0B_Q$ are disjoint, another application of Lemma \ref{ENV} gives \eqref{e:I_Qbound}.

Now, continuing on from \eqref{FirstOmega}, we use \eqref{e:I_Qbound}, Corollary \ref{CorDor} and the fact that $\supp(A_S) \subseteq P_S \cap C_2B_{Q(S)}$, to get
\begin{align}
\sum_{Q \in S} \beta_{\Sigma^N}^{d,p}(2C_0B_Q^\prime)^2\ell(Q)^d  &\lesssim \sum_{I \in \mathscr{D}_{P_S}} \Omega_{A_S,p}(3B_I)^2\ell(I)^d \lesssim \norm{A_S}^2_{\text{Lip}} \ell(Q(S))^d  \lesssim \alpha^2 \ell(Q(S))^d,
\end{align}
which completes the proof of Claim 2.
\end{proof}
We combine Claim 1 and Claim 2 to get  
\begin{align*}
\sum_{Q\subseteq Q_0}\beta_{\Sigma}^{d,p}(C_0B_Q)^2\ell(Q)^d &\lesssim \sum_{N \geq 0}\sum_{\substack{S \in \mathscr{F}^N \\ Q(S) \subseteq Q_0}}\sum_{Q \in S} \beta_{\Sigma}^{d,p}(C_0B_Q)^2\ell(Q)^d \\
&\lesssim (\ve + \alpha^2) \sum_{N \geq 0} \sum_{\substack{S \in \mathscr{F}^N \\ Q(S) \subseteq Q_0}} \ell(Q(S))^d \\
&= (\ve + \alpha^2)\left( \ell(Q_0)^d + \sum_{N \geq 0} \sum_{Q \in \text{Stop}(N)} \ell(Q)^d \right) \\
& \lesssim \mathscr{H}^d(Q_0),
\end{align*}
where in the last inequality we used Lemma \ref{l:Azzam-Stop}(5) and the lower regularity of $\Sigma.$ This finishes the proof of \eqref{e:dTSP3'}, and hence, the proof of Theorem \ref{Thm3}.  

\end{proof}

\section{Estimating curvature for general sets}\label{s:Thm1}
In this section we prove the Theorem \ref{Thm1}. We start by making some reductions, which are proven in the appendix. The first allows us to assume $A > 6C_0.$  

\begin{lem}\label{l:reduce-constants1} 
	Suppose that for any $\tilde{C}_0 > 1$ and $\tilde{A} > \max\{6\tilde{C}_0,10^5\}$ there exists $\ve > 0$ so that \eqref{e:dTSP1} holds with constant $\tilde{C}_0,\tilde{A}$ and $\tilde{\ve}.$ Then, for any $C_0 > 1$ and $A > 10^5$ there exists $\ve > 0$ so that \eqref{e:dTSP1} holds with constant $C_0,A$ and $\ve.$
\end{lem}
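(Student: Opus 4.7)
The plan is to split into the cases $A > 6C_0$ and $A \leq 6C_0$. In the first case, the hypothesis applies directly with $(\tilde{C}_0, \tilde{A}) = (C_0, A)$, and I would take $\ve = \tilde{\ve}$. The substantive case is $A \leq 6C_0$, in which the assumption $A > 10^5$ forces $C_0 > 10^5/6$. I would set $\tilde{C}_0 = C_0$ (so the $\beta$-sums on the two sides agree verbatim) and $\tilde{A} = 7 C_0$, so that both $\tilde{A} > 6 \tilde{C}_0$ and $\tilde{A} > 7 \cdot 10^5/6 > 10^5$. The hypothesis then provides $\tilde{\ve} > 0$ with
\[ \beta_{E, C_0, d, p}(Q_0) \lesssim \mathscr{H}^d(Q_0) + \BWGL(Q_0, \tilde{A}, \tilde{\ve}), \]
so the whole task reduces to bounding $\BWGL(Q_0, \tilde{A}, \tilde{\ve})$ by $\mathscr{H}^d(Q_0) + \BWGL(Q_0, A, \ve)$ for a suitable $\ve > 0$.

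The key step is to transfer BWGL from scale $\tilde{A}$ down to scale $A$ by passing to an ancestor cube. Let $k$ be the smallest positive integer with $\rho^k \leq (A-1)/\tilde{A}$ and set $\ve := \tilde{\ve} \tilde{A} \rho^k / A$. The centering bound $|x_R - x_{R^{(k)}}| \leq \ell(R^{(k)}) = \rho^{-k} \ell(R)$, combined with the choice of $k$, gives the containment $\tilde{A} B_R \subseteq A B_{R^{(k)}}$. Since both suprema defining $d_{\tilde{A} B_R}(E, P) \geq \tilde{\ve}$ are taken over subsets of the corresponding sets for $d_{A B_{R^{(k)}}}(E, P)$, for every $d$-plane $P$ one obtains
\[ d_{A B_{R^{(k)}}}(E,P) \geq \frac{\tilde{\ve} \tilde{A} \ell(R)}{A \ell(R^{(k)})} = \frac{\tilde{\ve} \tilde{A} \rho^k}{A} = \ve, \]
so $R \in \BWGL(\tilde{A}, \tilde{\ve})$ forces $R^{(k)} \in \BWGL(A, \ve)$.

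To sum, I would split $\BWGL(Q_0, \tilde{A}, \tilde{\ve}) = \sum_R \ell(R)^d$ according to whether $\ell(R) \leq \rho^k \ell(Q_0)$ or $\ell(R) > \rho^k \ell(Q_0)$. For cubes $R$ in the top $k$ generations below $Q_0$, lower content $d$-regularity and Theorem \ref{cubes}(2) give $\ell(R)^d \lesssim \mathscr{H}^d(R)$; disjointness of $\mathscr{D}_j$ at each level combined with $\sum \mathscr{H}^d(R) \leq \mathscr{H}^d(Q_0)$ yields an overall contribution $\lesssim_k \mathscr{H}^d(Q_0)$. For the deep cubes, I would group by the common ancestor $R' = R^{(k)} \in \BWGL(A, \ve)$ and apply the same lower regularity bound to the depth-$k$ descendants of each $R'$, giving $\sum_{R^{(k)} = R'} \ell(R)^d \lesssim \mathscr{H}^d(R')$. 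The main obstacle is the final accounting step: reconciling $\sum_{R'} \mathscr{H}^d(R')$ with $\BWGL(Q_0, A, \ve) = \sum_{R'} \ell(R')^d$ when the ancestor cubes $R'$ may be nested. In Hilbert space, absent an a priori packing bound on depth-$k$ descendants (upper regularity is unavailable), this bookkeeping must be handled carefully, for instance by a Carleson-type or stopping-time argument over the ancestor hierarchy so that nested $R'$'s are charged only to their minimal portion.
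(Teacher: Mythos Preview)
Your reduction to bounding $\BWGL(Q_0,\tilde A,\tilde\ve)$ by $\mathscr{H}^d(Q_0)+\BWGL(Q_0,A,\ve)$ is where the approach stalls, and you have identified the right obstacle. The ancestor argument correctly gives $R^{(k)}\in\BWGL(A,\ve)$, but summing $\ell(R)^d$ over all $R$ with a given ancestor $R'=R^{(k)}$ requires a bound on the number of depth-$k$ descendants of $R'$. In Hilbert space with only lower content regularity this number is not controlled: lower regularity gives $\sum_{R^{(k)}=R'}\ell(R)^d\lesssim\mathscr{H}^d(R')$, but there is no upper regularity to convert $\mathscr{H}^d(R')$ back to $\ell(R')^d$. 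The Carleson/stopping-time fix you suggest does not address this, because the problem is not the nesting of the $R'$ but the possible blow-up of $\mathscr{H}^d(R')/\ell(R')^d$; a stopping-time decomposition of the $R'$ still leaves you with $\sum\mathscr{H}^d(R')$ over a disjoint family, which only yields $\mathscr{H}^d(Q_0)$ once, not the Carleson sum $\sum\ell(R')^d$. Crucially, $R'\in\BWGL(A,\ve)$ means $E$ is \emph{not} flat near $R'$, so the packing lemma (Lemma~\ref{ENV}) cannot be invoked at that scale.

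The paper avoids this entirely by reversing the roles: instead of increasing $A$ to $\tilde A$, it keeps $A$ fixed and shrinks $C_0$ to $\kappa C_0$ with $A>6\kappa C_0$. Then $\BWGL(Q_0,A,\ve)$ appears unchanged on both sides, and the work is to compare the $\beta$-sums. One passes from $\beta_E^{d,p}(C_0B_Q)$ to $\beta_E^{d,p}(\kappa C_0 B_{Q^{(K)}})$ via containment and Lemma~\ref{lemma:monotonicity}, and the multiplicity of $Q\mapsto Q^{(K)}$ is controlled by Lemma~\ref{ENV} precisely because one restricts attention to cubes whose ancestor (or its child) is \emph{not} in $\BWGL(A,\ve)$, hence flat. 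Cubes whose relevant ancestor is in $\BWGL(A,\ve)$ are absorbed directly into the BWGL sum. The moral: shift the parameter that does not force you to renegotiate BWGL across scales, and use the flatness of non-BWGL cubes for the packing argument.
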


The second observation allows us to prove Theorem \ref{Thm1} for subsets of Euclidean space, as long as the constants independent of the ambient dimension.

\begin{lem}\label{l:main-red2}
	Suppose for any $n \geq 2$, $1 \leq d < n,$ $1 \leq p < p(d),$ $C_0 > 1$, and $A > \max\{6C_0,10^5\}$, that there exists $\ve > 0$ so that \eqref{e:dTSP1} holds with constants $d,p,C_0,A,$ and $\ve$, for any lower content $d$-regular set $E \subseteq \R^n$, with constants independent of $n$. Then, there exists $C \geq 1$ so that \eqref{e:dTSP1} holds with constant $d,p,C_0,2A$, and $C\ve,$ for all lower content $d$-regular sets $E \subseteq H.$ 
\end{lem}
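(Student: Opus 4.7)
Fix a lower content $d$-regular set $E \subseteq H$ with Christ-David cubes $\mathscr{D}$ and $Q_0 \in \mathscr{D}$. We may assume that $\mathscr{H}^d(Q_0) + \BWGL(Q_0, 2A, C\ve) < \infty$, as otherwise the conclusion is trivial. The strategy is a finite-dimensional reduction combined with a limiting argument: for fixed large $N$ we will project the problem into a finite-dimensional subspace $V \subseteq H$, apply the hypothesised Euclidean version of Theorem \ref{Thm1}, and then let $N \to \infty$. By monotone convergence it suffices to bound, with constants independent of $N$, the truncated quantity
\[
\ell(Q_0)^d + \sum_{\substack{Q \subseteq Q_0 \\ \ell(Q) \geq \rho^N \ell(Q_0)}} \beta_E^{d,p}(C_0 B_Q)^2 \ell(Q)^d.
\]

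\textbf{Setting up the Euclidean problem.} For each $Q$ in the finite collection $\mathscr{D}_{\leq N} = \{Q \subseteq Q_0 : \ell(Q) \geq \rho^N \ell(Q_0)\}$, pick a $d$-plane $P_Q \subseteq H$ with $\beta_E^{d,p}(C_0 B_Q, P_Q) \leq 2 \beta_E^{d,p}(C_0 B_Q)$. Let $V = V_N$ be a finite-dimensional affine subspace of $H$ containing $x_{Q_0}$, every centre $x_Q$, and every plane $P_Q$. Write $\pi = \pi_V : H \to V$ for the orthogonal projection, identify $V \cong \R^m$, and set $\tilde E = \pi(E) \subseteq V$. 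We augment $\tilde E$ inside $V$ on scales below $\rho^N \ell(Q_0)$ by grafting small pieces of the $d$-discs $P_Q \cap C_0 B_Q$ at each $Q \in \mathscr{D}_{\leq N}$; this ensures that $\tilde E$ is lower content $d$-regular in $V$ with a constant depending only on $c$, and contributes negligibly to both $\mathscr{H}^d$ and the BWGL sum at scales $\gtrsim \rho^N \ell(Q_0)$. Let $\tilde{\mathscr{D}}$ be the Christ-David cubes of $\tilde E$ in $V$, and let $\tilde Q_0 \in \tilde{\mathscr{D}}$ be a top cube of size comparable to $\ell(Q_0)$ containing $\pi(Q_0)$; for each $Q \in \mathscr{D}_{\leq N}$ there is a natural corresponding $\tilde Q \in \tilde{\mathscr{D}}$ with $x_{\tilde Q} = x_Q$ (since $x_Q \in V$) and $\ell(\tilde Q) \sim \ell(Q)$.

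\textbf{Transferring $\beta$-numbers and BWGL.} Since $P_Q \subseteq V$, Pythagoras gives $\dist(y, P_Q)^2 = \dist(y, V)^2 + \dist(\pi(y), P_Q)^2$, and combining this with Lemma \ref{betaest} applied to $E_1 = E$, $E_2 = \tilde E$ yields
\[
\beta_E^{d,p}(C_0 B_Q) \lesssim \beta_{\tilde E}^{d,p}(2 C_0 B_{\tilde Q}) + \mathrm{err}(Q),
\]
where $\mathrm{err}(Q)$ is a Choquet average of $\dist(y, V) \leq \dist(y, P_Q)$ and hence controlled by $\beta_E^{d,p}$ at a comparable scale. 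After squaring and summing, a standard absorption (using Lemma \ref{lemma:monotonicity} together with the fact that cubes have bounded overlap at each scale) gives $\sum_{Q \in \mathscr{D}_{\leq N}} \beta_E^{d,p}(C_0 B_Q)^2 \ell(Q)^d \lesssim \sum_{\tilde Q \subseteq \tilde Q_0} \beta_{\tilde E}^{d,p}(C_0 B_{\tilde Q})^2 \ell(\tilde Q)^d$. For BWGL, note that if $\tilde E$ is $\ve$-far bilaterally from every $d$-plane $\tilde P \subseteq V$ inside $A B_{\tilde Q}$, then $E$ is $C\ve$-far from the same $\tilde P$ (viewed as a $d$-plane in $H$) inside $2A B_Q$, because $|y - \pi(y)| = \dist(y, V)$ is controlled by the $\beta$-error; taking contrapositives, $\BWGL_{\tilde E}(\tilde Q_0, A, \ve) \leq \BWGL_E(Q_0, 2A, C\ve)$. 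Finally $\mathscr{H}^d(\tilde Q_0) \lesssim \mathscr{H}^d(Q_0)$ as $\pi$ is $1$-Lipschitz.

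\textbf{Conclusion and main obstacle.} Applying the Euclidean version of Theorem \ref{Thm1} to $\tilde E \subseteq V \cong \R^m$ (with constants independent of $m$) gives
\[
\sum_{\tilde Q \subseteq \tilde Q_0} \beta_{\tilde E}^{d,p}(C_0 B_{\tilde Q})^2 \ell(\tilde Q)^d \lesssim \mathscr{H}^d(\tilde Q_0) + \BWGL_{\tilde E}(\tilde Q_0, A, \ve).
\]
Combining with the comparisons of the previous step and letting $N \to \infty$ yields \eqref{e:dTSP1} for $E \subseteq H$. The main obstacle is ensuring that $\tilde E$ is lower content $d$-regular in $V$ with a constant that does not degenerate with $N$: a bare projection can collapse $E$ in the directions of $V^\perp$ and destroy lower regularity. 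The crucial move is to bake the finitely many near-optimal planes $P_Q$ into $V$ so that $\pi$ is nearly an isometry on $E \cap C_0 B_Q$ at the relevant scales; the residual errors are then bounded via Lemma \ref{betaest} and absorbed into the $\beta$-sum using the scale-monotonicity of $\beta_E^{d,p}$.
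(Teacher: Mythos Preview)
Your overall strategy—truncate to finitely many scales, pass to a finite-dimensional subspace, apply the Euclidean hypothesis, and let the truncation go to infinity—is the same as the paper's. The key difference is in how the Euclidean proxy set $\tilde E$ is built, and this is where your argument has a genuine gap.

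\textbf{The absorption does not close.} Your error term $\mathrm{err}(Q)$ is the Choquet average of $\dist(y,V)$ over $E\cap 2C_0B_Q$, and you bound it by $\dist(y,P_Q)$, hence by $\beta_E^{d,p}$ at the same scale and location. After squaring and summing you therefore obtain
\[
\sum_{Q} \beta_E^{d,p}(C_0B_Q)^2\ell(Q)^d \;\lesssim\; \sum_{\tilde Q} \beta_{\tilde E}^{d,p}(2C_0B_{\tilde Q})^2\ell(\tilde Q)^d \;+\; C\sum_{Q} \beta_E^{d,p}(2C_0B_Q)^2\ell(Q)^d,
\]
and the last sum is at least as large as the left-hand side (monotonicity in Lemma~\ref{lemma:monotonicity} goes the wrong way for you here). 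There is no small parameter in front of it, so nothing can be absorbed. The ``bounded overlap at each scale'' observation does not help: it lets you compare sums at comparable scales, but the implicit constant is not $<1$.

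\textbf{How the paper avoids this.} The paper does \emph{not} project $E$. Instead it chooses $k^*$ slightly below the truncation level $k$ (so that $\rho^{k^*}\le 5\ve\rho^k$), lets $V$ be spanned only by the cube centres $\{x_Q:Q\in\mathscr D^{k^*}(Q_0)\}$, and defines $\tilde E\subset V$ as a union of small $d$-simplices $T_Q$ of diameter $\sim\rho^{k^*}$ placed at those centres. The point is that now $\dist(y,\tilde E)\lesssim \rho^{k^*}$ \emph{uniformly} for $y\in E$ in the relevant region, independently of any $\beta$-number. The error term from Lemma~\ref{betaest} then sums (after restricting to non-BWGL cubes and using Lemma~\ref{ENV} to control overlaps) to something bounded by $\mathscr H^d(Q_0)$, not by the $\beta$-sum itself. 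Lower regularity of $\tilde E$, the measure bound $\mathscr H^d(\tilde Q_0)\lesssim\mathscr H^d(Q_0)$, and the BWGL comparison are all proved directly from this explicit simplex construction.

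Your secondary difficulties—ensuring $\pi(E)$ is lower regular, and transferring BWGL when the $b\beta$-witnessing planes are not forced into $V$—are real but could perhaps be patched by enlarging $V$. The absorption issue, however, is structural: as long as your error is of order $\beta_E$ at the same scale, the argument is circular.
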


Let $n \geq 2,$ $1 \leq d < n, \ 1 \leq p < p(d), \ C_0 > 1$ and $A > \max\{6C_0, 10^5\}.$ Let $E \subseteq \R^n$ be lower content $d$-regular with regularity constant $c > 0$. Let $X_k$ be a sequence of maximally $\rho^k$-separated nets and let $\mathscr{D}$ be the Christ-David for $E$ from Theorem \ref{cubes}. Let $Q_0 \in \mathscr{D}$ and assume without loss of generality that $0 \in Q_0 \in \mathscr{D}_0$. By the previous two lemmas, it remains to find $\ve > 0$ so that 
\begin{align}\label{e:dTSP1'}
	1 + \sum_{\substack{Q \in \mathscr{D} \\ Q \subseteq Q_0}} \beta^{d,p}_E(C_0B_Q)\ell(Q)^d \lesssim \mathscr{H}^d(Q_0) + \text{BWGL}(Q_0,A ,\ve)
\end{align}
with constant independent of $n$ (and $E$). This is our goal for the remainder of the section. First, we may assume 
\[\mathscr{H}^d(Q_0) + \BWGL(Q_0,A,\ve) < \infty.\] 
Second, since $E$ is lower content $d$-regular, we have $1 \lesssim \mathscr{H}^d(Q_0),$ so we are left to bound the second term. The rough idea for this is as follows. We begin by running a stopping-time argument where we stop when the set $E$ fails to be flat at some particular scale. In each of these stopping-time regions $S,$ we shall produce a surface $\Sigma^{Q(S)}$ by Theorem \ref{DT} that is Reifenberg flat, and well-approximates $E$ inside $S$. By Lemma \ref{betaest}, we push the $\beta$-numbers estimate for $E$ onto a $\beta$-number estimate for $\Sigma^{Q(S)}$, which we control by Theorem \ref{Thm3}. The error term is controlled by $\text{BWGL}(Q_0,A,\ve).$

Let us begin. Let $\mathscr{G} = \mathscr{D} \setminus \BWGL(A,\ve),$ i.e. those cubes $Q \in \mathscr{D}$ for which $E$ is approximately flat at the scale of $Q$. In particular, if for a ball $B$, we define 
\begin{align}\label{e:vartheta}
	 {b\beta}_E^d(B) = \inf\{ d_B(E,P) : P \mbox{ is a $d$-plane } \}, 
\end{align}
then  
\[ \mathscr{G} = \{Q \in \mathscr{D} : {b\beta}_E^d(AB_Q) < \ve\}. \]

\begin{defn}
For $Q \in \mathscr{G}$,  we define a stopping-time region $S_Q$ as follows. First, add $Q$ to $S_Q.$ Then, inductively, add cubes $R$ to $S_Q$ if
\begin{enumerate}
\item$ R^{(1)} \in S_Q$, 
\item ${b\beta}_E^d(AB_{R^\prime}) < \ve$ for each sibling $R^\prime$ of $R$. 
\end{enumerate}
\end{defn}

We start by stating some results from \cite{azzam2018analyst}. The following is a combination of \cite[Lemma 10.1]{azzam2018analyst} and the discussion before.    

\begin{lem}\label{l:DTworks}
	Let $A > 10^5$ and $Q \in \mathscr{G}$. For $k \geq 0$, set $s(k)$ denote the integer such that 
	\[5\rho^{s(k)} \leq r_k \leq 5\rho^{s(k) -1},\]
	and
	\begin{align}
		\mathscr{C}_k^Q = \{x_R : R \in S_Q \cap \mathscr{D}_{s(k)}\}.
	\end{align}
	Let $\{x_{j,k}\}_{j \in J_k^Q}$ be a maximally $r_k$-separated net in $\mathscr{C}_k^Q$. For $j \in J_k^Q$, let $R_j \in S_Q \cap \mathscr{D}_{s(k)}$ be the cube so that $x_{j,k} = x_{R_j},$ then let $B_{j,k} = B(x_{R_j},r_k)$ and $P_{j,k} = P_{R_j}.$ Then, the triple $(P_Q,\{B_{j,k}\},\{P_{j,k}\}), \ k \geq 0, \ j \in J_k^Q,$ defines a CCBP.
\end{lem}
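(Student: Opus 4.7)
The lemma amounts to verifying the five defining properties of Definition \ref{d:CCBP} for the triple at hand. The separation \eqref{e:csep} is immediate, since $\{x_{j,k}\}_{j \in J_k^Q}$ is by construction a maximally $r_k$-separated net. For the containment \eqref{e:cinV^2}, fix $k \geq 1$ and $j \in J_k^Q$ and let $R_j^{\uparrow} \in \mathscr{D}_{s(k-1)}$ denote the ancestor of $R_j$ at scale $s(k-1)$; provided $s(k-1) > k(Q)$, we have $R_j^\uparrow \subseteq Q$, and because $S_Q$ is a stopping-time region closed under taking ancestors within $Q$, $R_j^\uparrow \in S_Q$, so $x_{R_j^\uparrow} \in \mathscr{C}_{k-1}^Q$. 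By maximality of the net there is $i \in J_{k-1}^Q$ with $|x_{i,k-1} - x_{R_j^\uparrow}| \leq r_{k-1}$, and combined with $|x_{R_j} - x_{R_j^\uparrow}| \leq \ell(R_j^\uparrow) \leq r_{k-1}$, the triangle inequality yields $x_{j,k} \in 2B_{i,k-1} \subseteq V^2_{k-1}$.

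For the four remaining conditions \eqref{e:cdist0}--\eqref{e:c2r_k}, by Remark \ref{r:ve_k} it suffices to establish the pointwise bound $\ve_k(x_{j,k}) < \ve$, together with analogous comparisons to $P_0 = P_Q$ needed for conditions \eqref{e:cdist0} and \eqref{e:c2}. Fix $k$, $j \in J_k^Q$, an index $l$ with $|l-k| \leq 2$, $i \in J_l^Q$, and a point $x \in 100B_{j,k} \cap 100B_{i,l}$. The cubes $R_i, R_j \in S_Q$ satisfy $\ell(R_i) \sim \ell(R_j) \sim r_k \sim r_l$, and their proximity at $x$ forces $\dist(R_i,R_j) \lesssim r_k$. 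Every cube $T \subseteq Q$ containing $R_i$ or $R_j$ again belongs to $S_Q \subseteq \mathscr{G}$ by the stopping-time property, so $b\beta_E^d(AB_T) < \ve$. The bilateral smallness gives $\sup_{y \in E \cap AB_T} \dist(y,P_T) \lesssim \ve A \ell(T)$, which combined with the crude bound $\mathscr{H}^d_\infty(E \cap AB_T) \lesssim (A\ell(T))^d$ yields $\beta_E^{d,1}(MB_T) \lesssim \ve$ for a suitable $M \leq A$. Applying Lemma \ref{AngControl} with this $M$ and a $\Lambda$ determined by the relative geometry of $R_i,R_j$ delivers $\angle(P_{R_i},P_{R_j}) \lesssim \ve$.

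The remaining step, and the principal obstacle, is converting this angle estimate into a bilateral Hausdorff distance bound on the \emph{large} ball $B(x_{i,l}, 10^4 r_l)$, whose radius exceeds by a factor of $10^4$ the balls $AB_{R_i}, AB_{R_j}$ on which the flatness hypothesis originally lives. This is precisely the role of the hypothesis $A > 10^5$: once $A$ dominates $10^4$ by a large constant, each plane $P_{R_\bullet}$ passes within $O(\ve r_l)$ of the shared witness $x$ even on the inflated ball, and a standard small-angle rotation argument transports the angle bound to $B(x_{i,l},10^4 r_l)$ with a multiplicative loss that is absorbed by shrinking the initial $\ve$. The same line of reasoning, with $P_0 = P_Q$ in place of one of the $P_{R}$, handles conditions \eqref{e:cdist0} and \eqref{e:c2} and completes the verification.
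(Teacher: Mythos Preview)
The paper itself does not prove this lemma; it simply cites \cite[Lemma~10.1]{azzam2018analyst} and the discussion preceding it. Your sketch follows that argument and is correct in outline.

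One small point worth tightening: Lemma~\ref{AngControl}, as stated, yields an angle bound between the $\beta_E^{d,1}$-\emph{minimising} planes on $MB_{R_i}$ and $MB_{R_j}$, whereas the planes $P_{R_j}$ entering the CCBP must pass through $x_{R_j}$ and are chosen to witness the bilateral condition $b\beta_E^d(AB_{R_j})<\ve$. You therefore owe one extra line explaining why these two families of planes agree up to $O(\ve)$ on $AB_{R_j}$ (both are $\ve$-close to $E$ there, and lower regularity pins the plane down). Alternatively, and this is closer to what is actually done in \cite{azzam2018analyst}, the bilateral hypothesis lets you bypass Lemma~\ref{AngControl} entirely: once $A>10^5$ forces $B(x_{i,l},10^4 r_l)$ inside $AB_{R_i}$ and (after a short chain of ancestors of $R_j$ within $S_Q$ when $l<k$) inside a ball on which $P_{R_j}$ is bilaterally $\ve$-close to $E$, the estimate $d_{x_{i,l},10^4 r_l}(P_{R_i},P_{R_j})\lesssim\ve$ follows by the triangle inequality through $E$, with no separate angle-to-distance conversion needed. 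Either route closes the argument.
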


\begin{rem}\label{r:B_j,k}
	For $j \in J_k^Q$ let $R_j$ be as above. It is not difficult to see that $\rho A B_{j,k} \subseteq AB_{R_j},$ and so ${b\beta}_E^d(\rho AB_{j,k}) \lesssim \ve.$ 
\end{rem}
For $Q \in \mathscr{G},$ let $\sigma_k^Q,\Sigma_k^Q,\sigma^Q$ and $\Sigma^Q$ be the functions and surfaces obtained by Theorem \ref{DT}. Additionally, for $x \in H$ and $R \in \mathscr{D}$, let 
\[ d_Q(x) = d_{S_Q}(x) \quad \text{and} \quad d_Q(R) = d_{S_Q}(R). \]

\begin{lem}[{\cite[Lemma 10.2]{azzam2018analyst}}]\label{6:4}
	Let $Q \in \mathscr{G}.$ If $d_Q(x) = 0,$ then $x \in E \cap \Sigma^Q$. 
\end{lem}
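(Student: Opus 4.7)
The plan is to unpack the meaning of $d_Q(x) = 0$ and then match the resulting sequence of cubes with the nets $\{x_{j,k}\}_{j \in J_k^Q}$ of the CCBP from Lemma \ref{l:DTworks}, so that membership of $x$ in $\Sigma^Q$ follows from property (1) of Theorem \ref{DT}.

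First I would translate the hypothesis. By the definition of $d_{S_Q}$, there is a sequence $R_m \in S_Q$ with $\ell(R_m) + \dist(x,R_m) \to 0$. Since $x_{R_m} \in R_m \subseteq E$ and $|x - x_{R_m}| \leq \dist(x,R_m) + \ell(R_m) \to 0$, the fact that $E$ is closed gives $x \in E$. This takes care of the first half of the conclusion.

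For the second half, the plan is to produce, for each sufficiently large $k$, a net point $x_{j_k,k}$ close to $x$. Given $k$, I would choose $R_m$ so that $\ell(R_m) + \dist(x,R_m) < r_k/10$ and $\ell(R_m) < 5\rho^{s(k)+1}$; this forces $R_m$ to sit strictly below level $s(k)$ in the Christ--David tree. Let $R_m'$ be the unique ancestor of $R_m$ in $\mathscr{D}_{s(k)}$. For $k$ large enough that $s(k) \geq k(Q)$, we have $R_m \subseteq R_m' \subseteq Q$, so by property (2) of Definition \ref{StoppingTime}, $R_m' \in S_Q$. In particular $x_{R_m'} \in \mathscr{C}_k^Q$. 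Then by the maximality of $\{x_{j,k}\}_{j \in J_k^Q}$ there is $j_k \in J_k^Q$ with $|x_{j_k,k} - x_{R_m'}| < r_k$, and combining the three triangle-inequality pieces
\[ |x - x_{j_k,k}| \leq \dist(x,R_m) + \ell(R_m) + \ell(R_m') + r_k \lesssim r_k. \]

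Therefore $x$ lies in $\overline{\bigcup_{k \geq K}\{x_{j,k}\}_{j \in J_k^Q}}$ for every $K$, i.e.\ $x \in E_\infty$, and Theorem \ref{DT}(1) yields $x \in \Sigma^Q$. The only delicate point is the bookkeeping in Step two: we must pick $R_m$ small enough that its level exceeds $s(k)$ so that the ancestor $R_m'$ lands in $\mathscr{D}_{s(k)}$, and we must take $k$ large enough that this ancestor is still contained in $Q(S_Q) = Q$ so that the stopping-time closure under ancestors keeps $R_m' \in S_Q$. Both are easily arranged once $k$ is large, so there is no genuine obstacle.
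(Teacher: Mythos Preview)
Your proof is correct. The paper does not give its own argument here; it simply cites \cite[Lemma 10.2]{azzam2018analyst}. Your route---extracting a sequence $R_m \in S_Q$ with $\ell(R_m)+\dist(x,R_m)\to 0$, passing to the ancestor $R_m' \in S_Q \cap \mathscr{D}_{s(k)}$ via the coherence property of stopping-time regions, matching it to a net point $x_{j_k,k}$, and then invoking $E_\infty \subseteq \Sigma^Q$ from Theorem~\ref{DT}(1)---is exactly the standard argument and is how the cited lemma is proved. One cosmetic remark: the conclusion $x \in E$ strictly speaking gives $x \in \overline{E}$, but in every application in this paper (e.g.\ Claim~1 of Proposition~\ref{ThmPart1}) the point $x$ is already taken in $E$, so this is harmless.
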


\begin{lem}[{\cite[Lemma 10.3]{azzam2018analyst}}]\label{l:SigmaQ}
	Suppose $A \geq C_1$. For $x \in \Sigma^Q \cap C_1B_Q$, let $k = k_Q(x)$ be the maximal $k$ such that $x \in V_{k-1}^{11}.$ Then 
	\begin{align}\label{SigmaQ}
		\dist(x,E) \lesssim \ve r_k \sim \ve d_Q(x)
	\end{align}
	and there is a $C\ve$-Lipschitz graph $\Gamma_x^Q$ over a $d$-plane $P_{x}^Q$ such that 
	\begin{align}
		B(x,r_k) \cap \Sigma^Q = B(x,r_k) \cap \Sigma^Q_k = B(x,r_k) \cap \Gamma_x^Q. 
	\end{align}
\end{lem}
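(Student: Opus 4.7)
The plan is to proceed in three stages: first, establish the scale comparison $r_k \sim d_Q(x)$; second, prove the key ``freezing'' observation that $\sigma_m$ acts as the identity on a ball around $x$ for every $m \geq k$; third, deduce the two conclusions directly from Theorem \ref{DT}. For the scale comparison, the upper bound $d_Q(x) \lesssim r_k$ is immediate: $x \in V_{k-1}^{11}$ produces $j \in J_{k-1}^Q$ with $|x - x_{j,k-1}| \leq 11 r_{k-1}$, and the associated cube $R_j \in S_Q$ (with $x_{j,k-1} = x_{R_j}$) has $\ell(R_j) \sim r_{k-1} \sim r_k$. For the lower bound, I would fix $R \in S_Q$ with $\ell(R) < r_k$ and pass to its ancestor $R'$ at scale $s(k)$; since $R \subseteq R' \subseteq Q$ (using $r_k \lesssim \ell(Q)$, which follows from the upper bound together with $x \in C_1 B_Q$), the stopping-time condition in Definition \ref{StoppingTime}(2) forces $R' \in S_Q$. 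The centre $x_{R'}$ is within $r_k$ of some scale-$k$ net point $x_{i,k}$, and the maximality of $k$ (namely $x \notin V_k^{11}$) yields $|x - x_{i,k}| > 11 r_k$, whence $\dist(x, R) \geq |x - x_{R'}| - \ell(R') \gtrsim r_k$.

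The same ancestor argument run at each deeper scale $m \geq k$ shows $|x - x_{j,m}| > 9 r_k$ for every $j \in J_m^Q$; comparing to $r_m \leq r_k$ then yields that $B(x, r_k)$ is disjoint from $V_m^{10}$ (up to a harmless constant loss in the radius, which I would absorb into $C$). Since $\sigma_m$ is the identity off $V_m^{10}$ by Theorem \ref{DT}(6), each $\sigma_m$ with $m \geq k$ is the identity on $B(x, r_k)$. Combined with the quantitative convergence $|f_{m+1}(z) - f_m(z)| \lesssim \ve r_m$ on $\Sigma_m^Q$ from Theorem \ref{DT}(10), the iterate $f_k(z)$ of any $y = f(z) \in \Sigma^Q \cap B(x, r_k)$ lies within $O(\ve r_k)$ of $y$ and hence inside a slightly smaller ball where all subsequent $\sigma_m$'s act as the identity; this upgrades the easy inclusion $\Sigma_k^Q \cap B(x, r_k) \subseteq \Sigma^Q \cap B(x, r_k)$ to the equality $\Sigma^Q \cap B(x, r_k) = \Sigma_k^Q \cap B(x, r_k)$.

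The remaining two conclusions then come from Theorem \ref{DT}. For the distance estimate, property (7) applied at scale $k-1$ tells us that $\Sigma_{k-1}^Q$ coincides with a $C\ve$-Lipschitz graph over $P_{j,k-1}$ on the cylinder $D(x_{j,k-1}, P_{j,k-1}, 49 r_{k-1})$; together with property (10) this yields $\dist(x, P_{j,k-1}) \lesssim \ve r_{k-1}$. The projection of $x$ onto $P_{j,k-1}$ lies in $P_{j,k-1} \cap AB_{R_j}$ for $A$ sufficiently large, and the hypothesis $R_j \in \mathscr{G}$, i.e.\ ${b\beta}_E^d(AB_{R_j}) < \ve$, provides a point of $E$ within $\lesssim \ve r_{k-1}$ of that projection, so $\dist(x, E) \lesssim \ve r_k$. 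For the Lipschitz graph, Theorem \ref{DT}(8) applied at $x_{j,k-1} \in \Sigma_{k-1}^Q$ produces the required $\Gamma_x^Q$ over a plane $P_x^Q$ agreeing with $\Sigma_{k-1}^Q$ on $B(x_{j,k-1}, 19 r_{k-1}) \supseteq B(x, r_k)$; combined with the freezing step, this yields the triple equality $\Sigma^Q \cap B(x, r_k) = \Sigma_k^Q \cap B(x, r_k) = \Gamma_x^Q \cap B(x, r_k)$.

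I expect the main obstacle to be the freezing step, specifically (a) the radius bookkeeping in showing $B(x, r_k) \cap V_m^{10} = \emptyset$ for all $m \geq k$, and, more subtly, (b) the upgrade of the easy inclusion $\Sigma_k^Q \cap B(x, r_k) \subseteq \Sigma^Q \cap B(x, r_k)$ to the reverse one. A priori a point of $\Sigma^Q$ in this ball could be produced by an iterate $f_k(z)$ lying outside the ball, so one must invoke the rapid rate of convergence of $f_k \to f$ on $\Sigma^Q$ (rate $\ve r_k$, rather than the trivial $r_k$ from \eqref{e:uniform}) via Theorem \ref{DT}(10) to pin $f_k(z)$ inside a region where the remaining $\sigma_m$'s are all the identity.
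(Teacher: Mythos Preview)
Your overall strategy is sound and matches the standard argument (this lemma is cited from \cite{azzam2018analyst} and not proved in the paper, but your outline is essentially what one finds there): the scale comparison $r_k\sim d_Q(x)$ via the stopping-time structure, the ``freezing'' observation that $B(x,r_k)\cap V_m^{10}=\emptyset$ for $m\ge k$, the identification $\Sigma^Q\cap B(x,r_k)=\Sigma_k^Q\cap B(x,r_k)$, and then the appeal to Theorem~\ref{DT} for the distance bound and the Lipschitz graph. Two places need tightening.

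First, in the graph step you apply Theorem~\ref{DT}(8) ``at $x_{j,k-1}\in\Sigma_{k-1}^Q$'', but the net point $x_{j,k-1}$ lies in $E$, not in $\Sigma_{k-1}^Q$, and in any case (8) at scale $k-1$ describes $\Sigma_{k-1}^Q$, not $\Sigma_k^Q$. The clean fix is to first check $x\in\Sigma_k^Q$ (this follows from your freezing plus the convergence estimate applied at the single point $y=x$, where the troublesome $r_k$ term from $|y-x|$ vanishes and one gets $|f_k(z)-x_{j,k}|>11r_k-C\ve r_k>10r_k$ cleanly), and then apply Theorem~\ref{DT}(8) at $x\in\Sigma_k^Q$ to obtain $\Gamma_x^Q$ with $\Sigma_k^Q\cap B(x,19r_k)=\Gamma_x^Q\cap B(x,19r_k)$.

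Second, the issue you flag as obstacle (b) is real: for a general $y\in\Sigma^Q\cap B(x,r_k)$ your estimate gives $|f_k(z)-x_{j,k}|>(10-C\ve)r_k$, which does not quite exclude $f_k(z)\in V_k^{10}$. Your proposed cure via Theorem~\ref{DT}(10) alone does not close this gap at the single borderline scale $m=k$. One way out is to run your contradiction argument scale by scale: if $m^*\ge k$ is the largest index with $f_{m^*}(z)\in V_{m^*}^{10}$, then for $m^*>k$ the ancestor estimate $|x-x_{j,m^*}|>9r_k$ combined with $r_{m^*}\le r_k/10$ gives a genuine contradiction; the only delicate case is $m^*=k$, which can be handled either by the graph description of $\Sigma_k^Q$ near $x$ just obtained (a point $f_k(z)$ with $|f_k(z)-y|\lesssim\ve r_k$ and $y\in\Gamma_x^Q$ must itself lie on $\Gamma_x^Q$, forcing $f_k(z)=y$ once one knows $\Sigma^Q$ is locally graphical), or, more simply, by proving the triple equality on $B(x,(1-C\ve)r_k)$ first and then absorbing the loss into the implicit constants in the rest of the paper.
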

As in Section \ref{Sec:StopTime}, and unlike what is done in \cite{azzam2018analyst}, we will need to perform a smoothing procedure to extend each of these stopping-time regions. We choose a constant
\[ 0< \tau \leq \tau_0 =  \frac{1}{4(1+\rho^{-1})} \]
to be fixed small enough. Then, for each $Q \in \mathscr{G},$ define
\[ \text{Stop}(Q) = \{R \in \mathscr{D} : R \ \text{is maximal so that there exists $\tilde{R} \in \text{Child}(R^{(1)})$ with } \ \ell(\tilde{R}) < \tau d_Q(\tilde{R}) \}. \] 
We also define
\[ S_Q' = \{R \subseteq Q : R \not\subset T \mbox{ for any } T \in \text{Stop}(Q) \} \supseteq S_Q. \] 
Because $\Stop(Q)$ is defined in terms of siblings, each $S_Q'$ defines a stopping-time region in the sense of Definition \ref{StoppingTime}. Let us state and prove some facts about $\text{Stop}(Q)$ and $S_Q'.$

\begin{lem}\label{6:1.5}
	Let $Q \in \mathscr{G}$. Let $C_1 >1$, $\tau_1 = \min\{\tau_0, (8+4C_1)^{-1} \}$ and suppose $0 < \tau < \tau_1.$ If $Q,R \in \emph{Stop}(Q)$ are such that $C_1B_Q \cap C_1B_R \not=\emptyset$ then 
	\begin{align}\label{e:6sim}
		\ell(Q) \sim \ell(R). 
	\end{align}
	Furthermore, the collection of balls $\{C_1B_R\}_{R \in \Stop(Q)}$ has bounded overlap with constant independent of $n$.
\end{lem}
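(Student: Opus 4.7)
The plan is to first derive two-sided bounds $\ell(R)/(2\tau) \leq d_Q(R) \leq C_\rho \ell(R)/\tau$ valid for every $R \in \Stop(Q)$, and then to use Lemma \ref{LipCubes} for part (1) and Lemma \ref{ENV} for part (2).

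For the two-sided bound, I would derive the lower bound directly from the defining stopping condition: some sibling $\tilde R$ of $R$ satisfies $d_Q(\tilde R) > \ell(R)/\tau$, and applying Lemma \ref{LipCubes} to the pair $R, \tilde R$ (which have vanishing mutual distance, being siblings) gives $d_Q(R) \geq d_Q(\tilde R) - 4\ell(R) \geq \ell(R)/(2\tau)$ once $\tau < 1/8$. The upper bound comes from maximality: since $R^{(1)} \notin \Stop(Q)$, every sibling of $R^{(1)}$ (itself included) satisfies $\ell \geq \tau d_Q$, so $d_Q(R^{(1)}) \leq \rho^{-1}\ell(R)/\tau$, and a second application of Lemma \ref{LipCubes} transports this estimate to $R$. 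Given $Q', R' \in \Stop(Q)$ with $C_1 B_{Q'} \cap C_1 B_{R'} \neq \emptyset$, so $\dist(Q', R') \leq C_1(\ell(Q') + \ell(R'))$, combining the two-sided bounds with Lemma \ref{LipCubes} yields an inequality of the form $[1/(2\tau) - (2+C_1)]\ell(R') \leq [(2+C_1) + C_\rho/\tau]\ell(Q')$. The hypothesis $\tau < (8+4C_1)^{-1}$ makes the left-hand coefficient at least $1/(4\tau)$, forcing $\ell(R') \lesssim_{C_1,\rho} \ell(Q')$; symmetry finishes part (1).

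For part (2), fix $x \in H$ and let $\mathscr{D}_x = \{R \in \Stop(Q) : x \in C_1 B_R\}$. By part (1) each $\ell(R) \sim r := \sup_{R \in \mathscr{D}_x} \ell(R)$, so the pairwise disjoint balls $\{c_0 B_R\}$ fit inside $B(x, (c_0+C_1)r)$. Lemma \ref{ENV} will give $\#\mathscr{D}_x \lesssim_d 1$ as soon as I exhibit a single $d$-plane $P$ with $\dist(x_R, P) < c_0 \ell(R)/2$ for every $R \in \mathscr{D}_x$. To produce $P$, I would pick $R_0 \in \mathscr{D}_x$ and use the parent estimate $d_Q(R_0^{(1)}) \leq \rho^{-1}\ell(R_0)/\tau$ to select $T \in S_Q$ with $\ell(T) + \dist(R_0, T) \lesssim_\rho \ell(R_0)/\tau$. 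A triangle inequality then gives $|x_R - x_T| \lesssim \ell(R_0)/\tau$ for every $R \in \mathscr{D}_x$, hence $x_R \in AB_T$ once $A$ is sufficiently large. Since $T \in S_Q \subseteq \mathscr{G}$ admits a plane $P_T$ with ${b\beta}_E^d(AB_T, P_T) < \ve$, I obtain $\dist(x_R, P_T) \leq \ve A \ell(T) \lesssim \ve A \ell(R_0)/\tau$, which for $\ve$ small enough (depending on $A, \tau, c_0, C_1$) is bounded by $c_0 \ell(R)/2$; Lemma \ref{ENV} then yields the bounded overlap with constant depending only on $d$ and $C_1$.

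The main obstacle will be arranging $\ell(T) \sim \ell(R_0)/\tau$ --- too small a $T$ leaves $AB_T$ missing some $x_R$, while too large a $T$ causes the approximation error $\ve A \ell(T)$ to beat the target $c_0 \ell(R)/2 \sim c_0 r/2$. I expect to settle this by a short case split based on whether the minimiser of $\ell(\cdot) + \dist(R_0, \cdot)$ in $S_Q$ has $\ell(T) \gtrsim \ell(R_0)/\tau$ or instead has $\dist(R_0, T) \gtrsim \ell(R_0)/\tau$ (one must hold, by the lower bound $d_Q(R_0) \geq \ell(R_0)/(2\tau)$); in the latter case, passing to the ancestor of $T$ in $S_Q$ at scale $\sim \ell(R_0)/\tau$ gives a cube close enough to $R_0$ (and hence to every $R \in \mathscr{D}_x$) for the argument to go through with uniform constants.
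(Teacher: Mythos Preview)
Your proposal is correct and follows the natural adaptation of the paper's referenced argument (the paper omits the proof, pointing to Lemma~\ref{l:Azzam-Stop}(2), whose proof uses Reifenberg flatness of $\Sigma$; you correctly replace this with the ${b\beta}$ condition on a cube $T\in S_Q$, which is exactly the adaptation one needs). Two small remarks: first, siblings do \emph{not} have vanishing mutual distance---they are disjoint---but since both lie in the common parent, $\dist(R,\tilde R)\leq 2\rho^{-1}\ell(R)$, and your inequality $d_Q(R)\geq d_Q(\tilde R)-4\ell(R)$ becomes $d_Q(R)\geq d_Q(\tilde R)-(4+2\rho^{-1})\ell(R)$, which suffices once $\tau$ is small depending also on $\rho$; second, the ``obstacle'' you flag is real but the case-split you sketch resolves it (in the edge case where the ancestor at scale $\sim \ell(R_0)/\tau$ would exceed $Q$, take $T=Q$ itself and observe $\dist(R_0,Q)\leq \dist(R_0,T_0)\lesssim \ell(R_0)/\tau$ via the near-minimiser $T_0\subseteq Q$, so the same estimate $|x_R-x_Q|\lesssim \ell(R_0)/\tau$ holds and $A$ large enough absorbs the constant).
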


\begin{rem}
	The proof of Lemma \ref{6:1.5} is essentially the same as the proof of Lemma \ref{l:Azzam-Stop}(2). We omit the details.
\end{rem}

\begin{lem}\label{l:ell-sim-d}
	Let $Q \in \mathscr{G}$ and $R \in \Stop(Q).$ Then $\ell(R) \sim \tau d_Q(R)$.
\end{lem}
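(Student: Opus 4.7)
The plan is to unpack what membership in $\Stop(Q)$ encodes and then use the Lipschitz-type estimate for $d_Q$ from Lemma \ref{LipCubes} in two directions. By definition, $R \in \Stop(Q)$ gives us two pieces of information: (existence) there is some sibling $\tilde R \in \mathrm{Child}(R^{(1)})$ with $\ell(\tilde R) < \tau d_Q(\tilde R)$, where $\ell(\tilde R)=\ell(R)$; and (maximality) no strict ancestor of $R$ satisfies the defining condition, so in particular $R^{(1)} \notin \Stop(Q)$, which forces $\ell(R^{(1)}) \geq \tau d_Q(R^{(1)})$, i.e.\ $\rho^{-1}\ell(R) \geq \tau d_Q(R^{(1)})$.

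For the bound $\tau d_Q(R) \lesssim \ell(R)$, I will apply Lemma \ref{LipCubes} with $Q$ replaced by $R$ and $Q'$ replaced by $R^{(1)}$; since $R \subseteq R^{(1)}$ we have $\dist(R,R^{(1)})=0$ and $\ell(R^{(1)}) = \rho^{-1}\ell(R)$, giving
\[ d_Q(R) \leq (2+2\rho^{-1})\ell(R) + d_Q(R^{(1)}). \]
Multiplying by $\tau$ and using the maximality estimate $\tau d_Q(R^{(1)}) \leq \rho^{-1}\ell(R)$ from above yields $\tau d_Q(R) \leq \tau(2+2\rho^{-1})\ell(R) + \rho^{-1}\ell(R)$; since $\tau < \tau_0 = \frac{1}{4(1+\rho^{-1})}$ this gives $\tau d_Q(R) \leq (\tfrac12 + \rho^{-1})\ell(R) \lesssim \ell(R)$.

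For the reverse bound $\ell(R) \lesssim \tau d_Q(R)$, apply Lemma \ref{LipCubes} to the sibling $\tilde R$ and $R$: since $\tilde R, R \subseteq R^{(1)}$ we have $\dist(\tilde R, R) \leq 2\ell(R^{(1)}) = 2\rho^{-1}\ell(R)$, so
\[ d_Q(\tilde R) \leq (4+2\rho^{-1})\ell(R) + d_Q(R). \]
Substituting into $\ell(R)=\ell(\tilde R) < \tau d_Q(\tilde R)$ and rearranging,
\[ \bigl(1-\tau(4+2\rho^{-1})\bigr)\ell(R) < \tau d_Q(R). \]
The constraint $\tau < \tau_0 = \frac{1}{4(1+\rho^{-1})}$ makes the coefficient on the left at least $\frac{1}{2(1+\rho^{-1})} > 0$, yielding $\ell(R) \leq 2(1+\rho^{-1})\tau d_Q(R) \lesssim \tau d_Q(R)$. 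There is no real obstacle here: the two estimates are purely a bookkeeping exercise that relies on the restricted range of $\tau$ to keep every error term absorbed, and together they give $\ell(R) \sim \tau d_Q(R)$.
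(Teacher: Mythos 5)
Your proof is correct and follows essentially the same route as the paper: extract the two consequences of $R \in \Stop(Q)$ (a sibling $\tilde R$ with $\ell(\tilde R) < \tau d_Q(\tilde R)$ and, by maximality, $\ell(R^{(1)}) \geq \tau d_Q(R^{(1)})$), transfer $d_Q$ between these cubes via Lemma \ref{LipCubes}, and absorb the error terms using $\tau < \tau_0$. The numerical constants you obtain differ slightly from the paper's (which gets $\tfrac{\rho}{2}\tau d_Q(R) \leq \ell(R) \leq 2\tau d_Q(R)$), but both yield the claimed $\ell(R)\sim\tau d_Q(R)$ with constants independent of $\tau$.
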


\begin{proof}
	Let $\tilde{R} \in S$ be the sibling of $R$ which satisfies $\ell(\tilde{R}) < \tau d_Q(\tilde{R})$. Then, by Lemma \ref{LipCubes}, the fact that $R,R' \subseteq B(x_{R^{(1)}}, \ell(R)/\rho),$ and our assumption on $\tau,$ we have  
	\begin{align}
		\ell(R) &= \ell(\tilde{R}) < \tau d_Q(\tilde{R}) \leq \tau \left(\ell(\tilde{R}) + \dist(\tilde{R},R) + \ell(R) + d_Q(R) \right) \\
		&\leq \tau \left( (2+2\rho^{-1})\ell(R) + d_Q(R) \right) \\
		&\leq \frac{1}{2}\ell(R) + \tau d_Q(R).
	\end{align}
	Rearranging implies $\ell(R) \leq 2\tau d_Q(R).$ Now for the reverse direction. By maximality, we know $\ell(R^{(1)}) \geq \tau d_Q(R^{(1)}).$ Hence, by Lemma \ref{LipCubes} and our choice of $\tau,$ 
	\begin{align}
		\frac{1}{\rho} \ell(R) &= \ell(R^{(1)}) \geq \tau d_S(R^{(1)}) \overset{\eqref{Tr}}{\geq} \tau  \left(d_S(R) - 2\ell(R) - 2\ell(R^{(1)}) \right) \\
		&= \tau d_S(R) - 2(1+\rho^{-1})\tau \ell(R) \geq  \tau d_S(R) - \frac{1}{\rho} \ell(R).
	\end{align}
	Rearranging implies $\ell(R) \geq \tfrac{\rho}{2}\tau d_Q(R).$
\end{proof}

\begin{lem}\label{l:findcube}
	Let $Q \in \mathscr{G}.$ For each $R \in S_Q'$ there exists $T \in S_Q$ such that 
	\begin{align}\label{RsimT}
		\tau \ell(T) \lesssim \ell(R) \leq \ell(T) \quad \text{and} \quad \tau \dist(R,T) \lesssim \ell(R) .
	\end{align}
\end{lem}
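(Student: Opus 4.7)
The plan splits into two cases. If $R \in S_Q$, take $T = R$; this immediately verifies \eqref{RsimT}. The substantive case is $R \in S_Q' \setminus S_Q$, where $R \subsetneq Q$ since $Q \in S_Q$. For this case, the strategy is to first establish the key estimate $d_Q(R) \lesssim \ell(R)/\tau$, and then extract $T$ from an almost-minimiser of $d_Q(R)$ in $S_Q$, replacing it by an ancestor if needed to guarantee $\ell(T) \geq \ell(R)$.

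The heart of the argument is to show that the parent $R^{(1)}$ fails the stopping condition outright, i.e.\ that $\ell(R^{(1)}) \geq \tau d_Q(R^{(1)})$. Suppose instead that $\ell(R^{(1)}) < \tau d_Q(R^{(1)})$. Then $R^{(1)}$ witnesses its own stopping condition (take $\tilde{R} = R^{(1)}$ in the definition of $\Stop(Q)$), so it lies in the collection of cubes satisfying the criterion defining $\Stop(Q)$. Let $R^{**} \supseteq R^{(1)}$ be a maximal such cube; then $R^{**} \in \Stop(Q)$ and $R \subsetneq R^{(1)} \subseteq R^{**}$, so $R \subsetneq R^{**}$, contradicting $R \in S_Q'$. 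Hence $d_Q(R^{(1)}) \leq \ell(R^{(1)})/\tau = \ell(R)/(\rho\tau)$. Applying Lemma \ref{LipCubes} with $Q$ replaced by $R$ and $Q'$ by $R^{(1)}$ (using $\dist(R, R^{(1)}) = 0$) yields
\[ d_Q(R) \leq 2\ell(R) + 2\rho^{-1}\ell(R) + d_Q(R^{(1)}) \lesssim \ell(R)/\tau. \]

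With this in hand, choose $T_0 \in S_Q$ with $\ell(T_0) + \dist(R, T_0) \leq 2 d_Q(R) \lesssim \ell(R)/\tau$. If $\ell(T_0) \geq \ell(R)$, set $T = T_0$; otherwise let $T$ be the smallest ancestor of $T_0$ with $\ell(T) \geq \ell(R)$, so that $\ell(R) \leq \ell(T) < \rho^{-1}\ell(R)$, and $\tau\dist(R, T) \leq \tau\dist(R, T_0) \lesssim \ell(R)$ since $T \supseteq T_0$. The cube $T$ belongs to $S_Q$ because $S_Q$ is closed under taking ancestors within $Q$, and the bound $\ell(T) < \rho^{-1}\ell(R) \leq \ell(Q)$ forces $T \subseteq Q$. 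The main obstacle is the key step above: one must promote the weak fact $R^{(1)} \notin \Stop(Q)$ to the stronger assertion that $R^{(1)}$ fails the stopping criterion itself, and this relies crucially on the strict-containment formulation of $S_Q'$ to rule out any maximal ancestor of $R^{(1)}$ slipping into $\Stop(Q)$.
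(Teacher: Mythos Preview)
Your proof is correct and takes a genuinely different route from the paper's.

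The paper argues as follows in the nontrivial case $R\in S_Q'\setminus S_Q$: it first manufactures a cube $\tilde R\in\Stop(Q)$ with $\tilde R\subseteq R$ and $x_R\in\tilde R$ (by descending to a sufficiently small cube $R'\ni x_R$ and checking that every sibling of $R'$ satisfies the stopping inequality), then invokes Lemma~\ref{l:ell-sim-d} to get $\ell(R)\geq\ell(\tilde R)\sim\tau d_Q(\tilde R)$, and finally extracts $T$ from an almost-minimiser of $d_Q(\tilde R)$, passing to an ancestor if necessary.

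Your argument bypasses both the construction of $\tilde R$ and Lemma~\ref{l:ell-sim-d}. You observe directly that $R^{(1)}$ must fail the stopping inequality: if $\ell(R^{(1)})<\tau d_Q(R^{(1)})$ held, then $R^{(1)}$ itself would witness the stopping criterion, and by maximality some $R^{**}\supseteq R^{(1)}$ would lie in $\Stop(Q)$, forcing $R\subsetneq R^{**}$ and contradicting $R\in S_Q'$. (The edge case $R^{(1)}=Q$ is harmless, since $d_Q(Q)\leq\ell(Q)$ rules out $\ell(Q)<\tau d_Q(Q)$.) This immediately yields $d_Q(R)\lesssim\ell(R)/\tau$ via Lemma~\ref{LipCubes}, and the extraction of $T$ proceeds exactly as in the paper. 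Your approach is shorter and uses only the definition of $S_Q'$ together with the coherence of the cube structure; the paper's route is more constructive but needs the auxiliary Lemma~\ref{l:ell-sim-d}.
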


\begin{proof}
	If $R \in S_Q$ then the lemma is trivial, so suppose $R \in S_Q' \setminus S_Q.$ We first claim that we can find $\tilde{R} \in \Stop(Q)$ so that $\tilde{R} \subseteq R$ and $x_R \in \tilde{R}$. To see this, let $R' \subseteq R$ be so that $x_R \in R'$ and $\ell(R') < \tfrac{\tau c_0}{2} \ell(R),$ and let $R''$ be a sibling of $R'$ (possibly $R'$ itself). For any $y \in R''$ (assuming $\tau$ to be small enough), we have
	\begin{align}
		 |y - x_R| &\leq |y - x_{R''}| + |x_{R''} - x_{R'}| + |x_{R'} - x_R| \leq \ell(R'') + \rho^{-1}\ell(R') + \ell(R')  \\
		&= (2+\rho^{-1}) \ell(R') \leq \frac{c_0}{2} \tau (2+\rho^{-1}) \ell(R) \leq \frac{c_0}{2}\ell(R)
	\end{align}
	and so $R'' \subseteq \tfrac{c_0}{2}B_R.$ This implies that $\dist(R'',T) \geq \tfrac{c_0}{2}\ell(R)$ for all $T \in S_Q$ with $\ell(T) \leq \ell(R).$ Thus, 
	\[ \tau d_Q(R'') \geq  \frac{\tau c_0}{2}  \ell(R)  > \ell(R'').  \]
	Hence, by maximality, we can find $\tilde{R} \in \Stop(Q)$ so that $R' \subseteq \tilde{R} \subseteq R$ and $x_R \in \tilde{R}$ as required. 
	
	Let us continue with the estimates. Let $\tilde{R} \in \Stop(Q)$ be the cube from above and let $\tilde{T} \in S_Q$ be so that $d_Q(\tilde{R}) \sim  \ell(\tilde{T}) + \dist(\tilde{R},\tilde{T})$. By Lemma \ref{l:ell-sim-d},
	\[\ell(R) \geq \ell(\tilde{R}) \sim  \tau d_Q(\tilde{R}) \sim \tau \left( \ell(\tilde{T}) + \dist(\tilde{R},\tilde{T}) \right) \geq \tau \left( \ell(\tilde{T}) + \dist(R,\tilde{T}) \right), \] 
	so the first and third inequalities are satisfied by $\tilde{T}.$ If $\ell(R) \leq \ell(\tilde{T})$, we set $T = \tilde{T}.$ Otherwise, we set $T$ to be the ancestor of $\tilde{T}$ so that $\ell(T) = \ell(R).$ 
\end{proof}

\begin{lem}\label{6:3}
	Let $Q \in \mathscr{G}.$ If $x \in E$ is such that $x$ is not contained in any cube $R$ from $\emph{Stop}(Q)$, then $d_Q(x) = 0.$
\end{lem}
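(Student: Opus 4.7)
The plan is to mirror the proof of Lemma~\ref{l:Azzam-Stop}(3), now with $\Stop(Q)$ in place of $\Stop(N)$ and $d_Q$ in place of $d_{\text{Up}(N)}$. Throughout, I tacitly assume $x \in Q$: if instead $x \in E \setminus Q$, then $d_Q(x) \geq \dist(x,Q) > 0$, so the stated hypothesis that $x$ lies in no $\Stop(Q)$-cube is only meaningful under this restriction. Because $E \subseteq \R^n$, Theorem~\ref{cubes}(4) provides, for each integer $k \geq k(Q)$, a cube $R_k \in \mathscr{D}_k$ with $x \in R_k$, and Theorem~\ref{cubes}(1) then forces $R_k \subseteq Q$. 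Note $\ell(R_k) = 5\rho^k \to 0$ as $k \to \infty$.

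The key claim will be $\ell(R_k) \geq \tau d_Q(R_k)$ for every such $k$. Granting this, and using $\dist(x,T) \leq \dist(R_k,T) + \diam(R_k) \leq \dist(R_k,T) + 2\ell(R_k)$ for any $T \in S_Q$, I will obtain
\[
 d_Q(x) \;\leq\; d_Q(R_k) + 2\ell(R_k) \;\leq\; (2+\tau^{-1})\,\ell(R_k) \;\xrightarrow{k \to \infty}\; 0,
\]
forcing $d_Q(x) = 0$.

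To establish the key claim, I will unpack the maximality clause in the definition of $\Stop(Q)$. Since $x \in R_k$ and $x$ belongs to no cube from $\Stop(Q)$, no ancestor $R' \supseteq R_k$ can lie in $\Stop(Q)$ either. If some ancestor $R' \supseteq R_k$ did admit a child $\tilde R' \in \text{Child}((R')^{(1)})$ with $\ell(\tilde R') < \tau d_Q(\tilde R')$, then the topmost such ancestor would by definition be a $\Stop(Q)$-cube containing $x$, a contradiction. Hence the defining scale condition fails at every $R' \supseteq R_k$; specializing $R' = R_k$ and taking $\tilde R' = R_k$ itself (a child of $R_k^{(1)}$) yields $\ell(R_k) \geq \tau d_Q(R_k)$. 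The only substantive step is this careful parsing of the maximality clause; once that is done the estimate is immediate, so I do not expect any serious obstacle.
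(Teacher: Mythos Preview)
Your proof is correct and follows essentially the same argument as the paper's: both take a shrinking sequence of cubes $R_k \ni x$, observe that none can be contained in a $\Stop(Q)$-cube (hence $\ell(R_k) \geq \tau d_Q(R_k)$ by maximality), and conclude $d_Q(x) \leq (2+\tau^{-1})\ell(R_k) \to 0$. The only difference is cosmetic: your preliminary restriction to $x \in Q$ is unnecessary (and the claim $\dist(x,Q) > 0$ for $x \in E \setminus Q$ can fail at boundary points); the paper simply runs the argument for arbitrary $x \in E$ without any case distinction, since the maximality reasoning nowhere uses $R_k \subseteq Q$.
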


\begin{proof}
	Let $R_j \in \mathscr{D}$ be a sequence of cubes such that $R_j \ni x$ and $\ell(R_j) \downarrow 0.$ Since $x$ is not contained in any cube from $\text{Stop}(Q)$, each $R_j$ is not contained in any cube from $\text{Stop}(Q).$ It follows that $\ell(R_j) \geq \tau d_Q(R_j)$ otherwise $R_j \in \Stop(Q).$ For each $j$, let $x_j \in R_j$ be so that $d_Q(x_j) \leq d_Q(R_j) + j^{-1}.$ Then  
	\begin{align}
		d_Q(x) \leq |x - x_{j}| + d_Q(x_j) \leq |x - x_{j}| + d_Q(R_j) + j^{-1}  \leq (2 + \tau^{-1})\ell(R_j) + j^{-1}  \rightarrow 0,
	\end{align}
	and so $d_Q(x) = 0.$ 
\end{proof}

\begin{lem}\label{6:1}
Let $Q \in \mathscr{G}$ and $R \in S_Q'.$ Then $\dist(x_R, \Sigma^Q) \lesssim \frac{\ve}{\tau}\ell(R).$
\end{lem}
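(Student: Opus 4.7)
The plan is to find, for each $R \in S_Q'$, a scale $k$ and a net point $x_{j,k}$ at scale $r_k \sim \ell(R)/\tau$ (or $\sim \ell(R)$ when $R \in S_Q$) with $|x_R - x_{j,k}| \lesssim r_k$, and then combine two facts: since the associated cube $R_j \in S_Q \subseteq \mathscr{G}$, the set $E$ is $\ve$-flat in a ball around $x_{R_j}$, so $x_R \in E$ lies within $\lesssim \ve r_k$ of the plane $P_{j,k} = P_{R_j}$; and by Theorem~\ref{DT}(7), $\Sigma_k^Q$ is a $C\ve$-Lipschitz graph over $P_{j,k}$ near $x_{j,k}$. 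Together these yield a point on $\Sigma_k^Q$ within $\lesssim \ve r_k$ of $x_R$, and Theorem~\ref{DT}(10) then lets us pass from $\Sigma_k^Q$ to $\Sigma^Q$.

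First I would find a cube $T' \in S_Q$ with $\ell(T') \sim \min\{\ell(R)/\tau,\ell(Q)\}$ and $|x_R - x_{T'}| \lesssim \ell(T')$. For $R \in S_Q$ one may take $T'$ to be $R$ itself or a suitable ancestor of $R$ in $S_Q$ (such ancestors remain in $S_Q$ by Definition~\ref{StoppingTime}(2)); for $R \in S_Q'\setminus S_Q$, Lemma~\ref{l:findcube} supplies $T \in S_Q$ with $\ell(T), \dist(R,T) \lesssim \ell(R)/\tau$, and one then passes to an ancestor $T'$ of $T$ at scale $\sim \ell(R)/\tau$ (or sets $T' = Q$ if $\ell(R)/\tau>\ell(Q)$). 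Letting $k$ be the integer with $s(k) = k(T')$, so $r_k \sim \ell(T')$, the fact that $x_{T'} \in \mathscr{C}_k^Q$ together with maximality of the net $\{x_{j,k}\}_{j \in J_k^Q}$ gives some $x_{j,k}$ with $|x_{T'}-x_{j,k}|<r_k$. By the triangle inequality $|x_R - x_{j,k}| \leq Cr_k$ with $C$ independent of $\tau$, so that $x_R \in AB_{R_j}$ for $A$ large enough, where $R_j \in S_Q$ is the cube with $x_{R_j} = x_{j,k}$ and $\ell(R_j) \sim r_k$. The condition ${b\beta}_E^d(AB_{R_j})<\ve$ (see also Remark~\ref{r:B_j,k}) now gives $\dist(x_R,P_{j,k}) \lesssim \ve r_k$.

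By Theorem~\ref{DT}(7), $\Sigma_k^Q \cap D(x_{j,k},P_{j,k},49r_k)$ coincides with the graph of a $C\ve$-Lipschitz map $A_{j,k}:P_{j,k} \cap 49B_{j,k} \to P_{j,k}^\perp$ with $|A_{j,k}(x_{j,k})|\lesssim \ve r_k$. Since $|\pi_{j,k}(x_R)-x_{j,k}|\leq |x_R-x_{j,k}|\lesssim r_k$, the projection $\pi_{j,k}(x_R)$ lies in $P_{j,k}\cap 49B_{j,k}$, and the point $y \coloneqq \pi_{j,k}(x_R)+A_{j,k}(\pi_{j,k}(x_R))$ belongs to $\Sigma_k^Q$ and satisfies
\[ |x_R-y| \leq \dist(x_R,P_{j,k}) + |A_{j,k}(\pi_{j,k}(x_R))| \lesssim \ve r_k. \]
Theorem~\ref{DT}(10) then gives $\dist(y,\Sigma^Q)\lesssim \ve r_k$, so $\dist(x_R,\Sigma^Q)\lesssim \ve r_k \sim (\ve/\tau)\ell(R)$.

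The main difficulty is the scale choice in the first step: to ensure that $x_R$ lies in a \emph{fixed} dilate of $B_{R_j}$ independent of $\tau$, one must pass from the cube $T$ supplied by Lemma~\ref{l:findcube} to the ancestor $T'$ at scale $\sim\ell(R)/\tau$. If one used $T$ directly, $|x_R-x_T|$ could be comparable to $\ell(T)/\tau$, which is too large relative to $\ell(T)$ for the flatness condition defining $\mathscr{G}$ to apply. The enlargement of the working scale from $\ell(R)$ up to $\ell(R)/\tau$ is precisely what produces the factor $1/\tau$ in the final bound.
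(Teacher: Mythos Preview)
Your argument is correct and follows the same route as the paper's. The one cosmetic difference is the scale at which you invoke the CCBP data: the paper works directly with the cube $T$ supplied by Lemma~\ref{l:findcube} (so $r_k\sim\ell(T)$) and absorbs the resulting estimate $|x_R-x_{j,k}|\lesssim \tau^{-1}r_k$ by taking $A$ large enough so that $x_R\in\rho A B_{j,k}$, whereas you pass to an ancestor at scale $\sim\ell(R)/\tau$ to make that constant $\tau$-free; the paper also cites Theorem~\ref{DT}(11) rather than (7) to locate the nearby point on $\Sigma_k^Q$, but either does the job.
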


\begin{proof}
By Lemma \ref{l:findcube}, we can find $T \in S_Q$ such that 
\[  \tau \ell(T) \lesssim \ell(R) \leq \ell(T) \quad \text{and}\quad \tau\text{dist}(R,T) \lesssim \ell(T).\] Recall the definitions of $s(k)$, $\mathscr{C}_k^Q$, $J_{k}^Q$, $B_{j,k}$ and $P_{j,k},$ from Lemma \ref{l:DTworks}. Let $k$ be such that $T \in \mathscr{D}_{s(k)}.$ This means
\[ \ell(T) \sim r_k. \] 
Since $\{x_{j,k}\}_{j \in J_k^Q}$ is a maximal net in $\mathscr{C}_k^Q$, there exists $j \in J_k^Q$ such that $x_T \in B_{j,k}.$ It follows that  
\begin{align}
|x_R - x_{j,k}| &\leq |x_R - x_T| + |x_T - x_{j,k}| \leq \ell(R) + \dist(R,T) + \ell(T) + |x_T - x_{j,k}| \\
&\lesssim  (2+C\tau^{-1})\ell(T) + r_k \lesssim r_k,
\end{align}
so taking $A$ large enough implies $x_R \in \rho A B_{j,k}$. Using this with the fact that ${b\beta}_E^d(\rho A B_{j,k}) \lesssim \ve$ (see Remark \ref{r:B_j,k}) gives
\begin{align}
|x_R - \pi_{j,k}(x_R)| = \text{dist}(x_R, P_{j,k}) \lesssim \ve r_k . 
\end{align}
By Lemma \ref{DT} (11), there exists $y \in \Sigma_k^Q$ such that $|\pi_{j,k}(x_R) - y| \lesssim \ve r_k$. Moreover, this satisfies $\text{dist}(y,\Sigma^Q) \lesssim \ve r_k$, by Lemma \ref{DT} (10). Putting everything together, we have
\begin{align}
\text{dist}(x_R, \Sigma^Q) \leq |x_R - \pi_{j,k}(x_R)| + |\pi_{j,k}(x_R) - y| + \text{dist}(y,\Sigma^Q)  \lesssim \ve r_k \lesssim \frac{\ve}{\tau}\ell(R).
\end{align}
\end{proof}

The following is similar to \cite[Lemma 10.4]{azzam2018analyst}. In \cite{azzam2018analyst}, the sum on the right-hand side of \eqref{e:difficult} is taken over the cubes in $\min(S_Q)$, but their proof contains a mistake. We were not able to prove the estimate stated in \cite{azzam2018analyst}, but the following is sufficient. Additionally, we now have constants independent of $n$.

\begin{lem}\label{l:difficult}
For $Q \in \mathscr{G},$
\begin{align}\label{e:difficult}
\mathscr{H}^d(\Sigma^Q \cap C_1B_Q) \lesssim \sum_{\substack{R \in \emph{Stop}(Q)\\ R \subseteq Q}} \ell(R)^d + \mathscr{H}^d(\{ x \in \overline{Q} : d_Q(x) = 0\}).
\end{align}
\end{lem}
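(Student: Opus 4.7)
The plan is to split $\Sigma^Q \cap C_1 B_Q$ according to whether $d_Q$ vanishes. Let
\[ X_0 = \{x \in \Sigma^Q \cap C_1B_Q : d_Q(x) = 0\}, \qquad X_+ = \{x \in \Sigma^Q \cap C_1 B_Q : d_Q(x) > 0\}. \]
For $X_0$, Lemma \ref{6:4} gives $X_0 \subseteq E$, and $d_Q(x)=0$ forces $x$ to be a limit of points lying in cubes of $S_Q \subseteq Q$, hence $X_0 \subseteq \{x \in \overline{Q}: d_Q(x) = 0\}$ and $\mathscr{H}^d(X_0)$ is absorbed by the second term on the right-hand side of \eqref{e:difficult}. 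All the real work lies in bounding $\mathscr{H}^d(X_+)$ by the sum over $\Stop(Q)$.

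For $X_+$, I will fix a small absolute constant $c$ and set $r(x) = c\tau d_Q(x)$ so that $5r(x) \leq r_{k_Q(x)}$, with $k_Q(x)$ as in Lemma \ref{l:SigmaQ}. A standard $5r$-covering lemma produces a disjoint sub-family $\{B(x_i, r_i)\}$ of $\{B(x, r(x))\}_{x \in X_+}$ whose dilates $\{B(x_i, 5r_i)\}$ cover $X_+$. Lemma \ref{l:SigmaQ} then identifies $\Sigma^Q \cap B(x_i, 5r_i)$ with a piece of a $C\ve$-Lipschitz graph over a $d$-plane, giving $\mathscr{H}^d(\Sigma^Q \cap B(x_i, 5r_i)) \lesssim r_i^d$.

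Next, I will attach to each $x_i$ a cube $R_i \in \Stop(Q)$ with $R_i \subseteq Q$, $\ell(R_i) \sim r_i$, and $\dist(x_i, R_i) \lesssim \ve\, d_Q(x_i)$. The estimate $\dist(x_i, E) \lesssim \ve d_Q(x_i)$ from Lemma \ref{l:SigmaQ} yields a point $y_i \in E$ with $|x_i - y_i| \lesssim \ve d_Q(x_i)$; taking $\ve$ small enough forces $d_Q(y_i) \sim d_Q(x_i) > 0$, so the contrapositive of Lemma \ref{6:3} places $y_i$ in some $R_i \in \Stop(Q)$. Since $\Stop(Q) \subseteq S_Q'$, we have $R_i \subseteq Q$; Lemma \ref{l:ell-sim-d} combined with $d_Q(R_i) \sim d_Q(y_i) \sim d_Q(x_i)$ then gives $\ell(R_i) \sim \tau d_Q(x_i) \sim r_i$.

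The main obstacle is showing that the assignment $i \mapsto R_i$ is $O_d(1)$-to-one: this is where the Euclidean volume-packing argument must be replaced by Lemma \ref{ENV}. If $R_i = R_j = R$, then $r_i \sim r_j \sim \ell(R)$ and the $x_i$ with $R_i = R$ all lie in a ball of radius $O(\ell(R))$ around a point of $R$, yet disjointness of the Vitali balls forces $|x_i - x_j| \geq r_i + r_j \gtrsim \ell(R)$. By Theorem \ref{DT}(9), $\Sigma^Q$ is $C\ve$-Reifenberg flat, so these $x_i$ lie within $C\ve \ell(R)$ of a single $d$-plane, and Lemma \ref{ENV} applied to the disjoint balls $B(x_i, r_i/2)$ bounds the multiplicity by a constant depending only on $d$. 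Assembling everything,
\[ \mathscr{H}^d(X_+) \leq \sum_i \mathscr{H}^d(\Sigma^Q \cap B(x_i, 5r_i)) \lesssim \sum_i r_i^d \lesssim \sum_i \ell(R_i)^d \lesssim \sum_{\substack{R \in \Stop(Q)\\ R \subseteq Q}} \ell(R)^d, \]
which together with the bound on $\mathscr{H}^d(X_0)$ yields \eqref{e:difficult}.
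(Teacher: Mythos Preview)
Your treatment of $X_0$ is fine, and the Vitali-covering plus Lemma~\ref{ENV} multiplicity argument on $X_+$ is the right idea. The gap is in the step where you claim $R_i \subseteq Q$. You write ``Since $\Stop(Q) \subseteq S_Q'$, we have $R_i \subseteq Q$'', but $\Stop(Q) \subseteq S_Q'$ is false: by definition $\Stop(Q)$ consists of maximal cubes \emph{anywhere} in $\mathscr{D}$, not just inside $Q$, whereas every cube of $S_Q'$ is contained in $Q$. Your $y_i$ is the nearest point of $E$ to $x_i \in \Sigma^Q \cap C_1B_Q$, and Lemma~\ref{l:SigmaQ} only gives $|x_i - y_i| \lesssim \ve\, d_Q(x_i)$; since $x_i$ may sit in $C_1B_Q$ at distance $\sim d_Q(x_i)$ from $Q$, nothing forces $y_i \in Q$, and hence the $\Stop(Q)$-cube through $y_i$ need not be contained in $Q$. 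As the right-hand side of \eqref{e:difficult} sums only over $R \in \Stop(Q)$ with $R \subseteq Q$, this is a genuine problem, not just a cosmetic one.

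This is exactly why the paper splits $X_+$ further, setting
\[
E_1 = \{x \in X_+ : \dist(x,Q) < \delta\, r_{k_Q(x)}\},\qquad E_2 = \{x \in X_+ : \dist(x,Q) \geq \delta\, r_{k_Q(x)}\}.
\]
On $E_1$ one takes $z_j$ to be the nearest point of $Q$ (not of $E$), so the $\Stop(Q)$-cube through $z_j$ is automatically inside $Q$, and then your Vitali/ENV argument goes through. On $E_2$ the nearest point of $E$ may lie outside $Q$, and the paper does \emph{not} attempt to assign a $\Stop(Q)$-cube at all; instead it finds, for each $x_i \in E_2$, an ancestor cube $R_i \in S_Q$ with $\ell(R_i) \sim d_Q(x_i)$, shows via Lemma~\ref{ENV} that the balls $\tfrac{c_0}{8}B_{R_i}$ have bounded overlap in $\Sigma^Q$, and then uses that $\Sigma^Q \cap \bigcup_i \tfrac{c_0}{8}B_{R_i} \subseteq E_0 \cup E_1$ to feed $E_2$ back into the already-controlled pieces. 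Your argument would be complete if you replaced the incorrect $\Stop(Q) \subseteq S_Q'$ step by this two-part decomposition.
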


\begin{proof}
Let $k_Q(x)$ be as in the statement of Lemma \ref{l:SigmaQ}. Let $\delta >0$ and set
\begin{align*}
E_1 = \{x \in \Sigma^Q \cap C_1B_Q : d_Q(x) >0 \ \text{and} \ \text{dist}(x,Q) < \delta r_{k_Q(x)} \}. 
\end{align*}
Our first goal is to bound $\mathscr{H}^d(E_1).$ Let $\{x_j\}_{j \in J}$ be a maximal net in $E_1$ such that 
\[|x_i - x_j| \geq \max\{r_{k_Q(x_i)},r_{k_Q(x_j)}\}\]
and let $B_j = B(x_j,2 r_{k_Q(x_j)})$ for $j \in J.$ In this way, the collection of balls $\tfrac{1}{4}B_j, \ j \in J,$ are disjoint and 
\begin{align*}
E_1 \subseteq \bigcup_{j \in J} B_j.
\end{align*}
Let $j \in J,$ $k_j = k_Q(x_j)$, and let $z_j$ be the point in $Q$ closest to $x_j.$ We claim that there exists $Q_j \in \text{Stop}(Q)$ so that $Q_j \subseteq Q,$ $z_j \in Q_j$, and 
\begin{align}\label{CompR}
	\ell(Q_j) \sim r_{k_j}.
\end{align} 
Let us see why this is true. By definition, we have
\begin{align}\label{e:deltak_j}
 |x_j -z_j| = \dist(x_j,Q)  < \delta r_{k_j}. 
\end{align}
Then, by \eqref{SigmaQ} and the fact that $d_Q$ is 1-Lipschitz, we have
\[ r_{k_j} \sim d_Q(x_j) \leq d_Q(z_j) +  |x_j - z_j|  < d_Q(z_j) + \delta r_{k_j}, \]
so for $\delta>0$ small enough 
\begin{align}\label{e:d_Q(z_J)}
	d_Q(z_j) \gtrsim r_{k_j}.
\end{align}
Now, Lemma \ref{6:3} implies the existence of a cube $Q_j \in \Stop(Q)$ so that $z_j \in Q_j$. Notice that $Q_j \subseteq Q$ since $z_j \in Q \cap Q_j$. We are left to show \eqref{CompR}. Let $T_j \in S_Q$ be the cube from Lemma \ref{l:findcube} for $Q_j$. By \eqref{e:d_Q(z_J)}, and the fact that $z_j \in Q_j,$ we have 
\begin{align}
r_{k_j} \lesssim d_Q(z_j) \leq \ell(T_j) + \dist(z_j,T_j) \leq \ell(T_j) + 2\ell(Q_j) + \dist(Q_j,T_j) \lesssim  \ell(Q_j).	
\end{align}
For the reverse direction, using Lemma \ref{l:ell-sim-d}, \eqref{CompR}, \eqref{e:deltak_j}, and the fact that $z_j \in Q_j$, we get
\begin{align}
	\ell(Q_j) \lesssim d_Q(Q_j) \leq d_Q(z_j) \leq |x_j - z_j| + d_Q(x_j) \lesssim r_{k_j}.
\end{align}
 
Now that we have defined $Q_j$, we control how often $Q_j = R$ for some fixed cube $R$. Let $R \in \text{Stop}(Q)$ and let
\begin{align*}
\mathscr{C}_R = \{j \in J : Q_j = R\}.
\end{align*}
By \eqref{e:deltak_j} and \eqref{CompR}, for $i,j \in \mathscr{C}_R$ and $\delta >0$ small enough, we have
\[|x_j - x_i| \leq |x_j - z_j| + |z_j - z_i| + |z_i-x_i| \leq \delta r_{k_j} + 2\ell(R) + \delta r_{k_i} \leq 3 \ell(R). \]
Pick some $j^* \in \mathscr{C}_R,$ and set $p = x_{j^*}$. Then $p \in \Sigma^Q$ and the above inequality implies $x_j \in B(p,3\ell(R))$ for all $j \in \mathscr{C}_R$. Using this with \eqref{CompR} and the fact that $\Sigma^Q$ is $C\ve$-Reifenberg flat, there exists a $d$-plane $P$ so that
\[\text{dist}(x_j,P) \leq 3 \ve \ell(R) \leq \tfrac{1}{2}r_{B_j/4}.\]
Hence, we can apply Lemma \ref{ENV} to the disjoint balls $\tfrac{1}{4}B_j, \ j \in J$, and get
\begin{align}\label{e:ENVhard}
\sum_{j \in \mathscr{C}_R} r_{k_j}^d \sim \sum_{j \in \mathscr{C}_R} r_{B_j/4}^d \lesssim \ell(R)^d.
\end{align}
Combining this, with the local Lipschitz description of $\Sigma^Q$ inside $B_j$ from Lemma \ref{l:SigmaQ}, we have
\begin{align}
\mathscr{H}^d(E_1) &\leq \sum_{j \in J} \mathscr{H}^d(E_1 \cap B_j) \leq \sum_{j \in J} \mathscr{H}^d(\Sigma^Q \cap B_j) \lesssim \sum_{j \in J} r_{k_j}^d \\
& \leq \sum_{\substack{R \in \text{Stop}(Q)\\ R \subseteq Q}} \sum_{j \in \mathscr{C}_R} r_{k_j}^d \lesssim \sum_{\substack{R \in \text{Stop}(Q)\\ R \subseteq Q}} \ell(R)^d. 
\end{align}
Now, set 
\begin{align*}
E_2 = \{x \in \Sigma^Q \cap C_1B_Q : d_Q(x) >0 \ \text{and} \ \text{dist}(x,Q) \geq \delta r_{k_Q(x)} \}.
\end{align*}
We will control $\mathscr{H}^d(E_2).$ Let $\{x_i \}_{i \in I}$ be a maximal net in $E_2$ such that 
\begin{align}
|x_i - x_j| \geq \max \{ r_{k_Q(x_i)}, r_{k_Q(x_j)} \}
\end{align}
and define $B_i = B(x_i , r_{k_Q(x_i)}).$ Let $Q_j \in S_Q$ be a cube such that 
\begin{align}\label{e:Q_jd_Q}
\ell(Q_j) + \text{dist}(x_j,Q_j) < 2d_Q(x_j),
\end{align}
and let $R_j \in S_Q$ be the maximal ancestor of $Q_j$ for which 
\[\ell(R_j) < 2d_Q(x_j).\]
By \eqref{e:Q_jd_Q}, we have $\dist(x_j,Q_j) < 2d_Q(x_j),$ which, since $Q_j \subseteq R_j,$ implies
\begin{align}\label{e:maxan1}
\text{dist}(x_j,R_j) < 2d_Q(x_j).
\end{align}
Additionally,
\begin{align}\label{e:maxan}
\ell(R_j) \sim d_Q(x_j) \sim r_{k_j}.
\end{align}
Indeed, the second is true by \eqref{SigmaQ}. For the first equivalence, the forward inequality is clear by definition. If $R_j \not= Q$ then the backwards inequality follows from maximality, if $R_j = Q$ then the backwards inequality follows from the fact that $x_j \in C_1B_Q$, since
\[ d_Q(x_j) \leq \ell(Q) + \dist(x_j,Q) \lesssim \ell(Q).\]
Now, for each $i \in I$, let 
\[B_i^\prime = \tfrac{c_0}{8}B_{R_i}.\]
We will show that the balls $\{B_i^\prime\}_{i \in I}$ have bounded overlap in $\Sigma^Q \cap C_1B_Q.$ Indeed, let $x \in \Sigma^Q \cap C_1B_Q$ and define 
\begin{align}
I(x) = \{i \in I : x \in B_i^\prime \}.
\end{align}
Suppose $i,j \in I(x).$ We begin by showing $r_{B_j^\prime} \sim r_{B_i^\prime}$. Assume without loss of generality that $\ell(R_i) \leq \ell(R_j).$ We are left to show $\ell(R_j) \lesssim \ell(R_i).$ Let $x_i^\prime$ be the point in $E$ closest to $x_i.$ Recalling that $x_i \in Q \subseteq \Sigma_Q \cap C_1B_Q$, we have
\begin{align}\label{e:distE}
	|x_i - x_i^\prime| = \text{dist}(x_i,E)  \stackrel{\eqref{SigmaQ}}{\lesssim} \ve r_{k_i} \stackrel{\eqref{e:maxan}}{\sim} \ve \ell(R_i).
\end{align}
So, for $\ve >0$ small enough, 
\begin{align}\label{distQ}
	\text{dist}(x_i^\prime, Q) \geq \text{dist}(x_i,Q) - |x_i - x_i^\prime| \geq \delta r_{k_i} - C\ve r_{k_i} \gtrsim \delta r_{k_i} >0.
\end{align}
Let $x^\prime$ be the point in $E$ closest to $x$. Since $x \in \Sigma^Q \cap B_i' \subseteq \Sigma^Q \cap C_1B_Q,$ 
\begin{align} \label{distE}
|x'- x| = \text{dist}(x,E) \stackrel{\eqref{SigmaQ}}{\lesssim} \ve d_Q(x) \leq  \ve ( | x - x_i| + d_Q(x_i) ) \stackrel{\eqref{e:maxan}}{\lesssim} \ve \ell(R_i).
\end{align}
The above estimate also holds true for the index $j$. Taking $\ve$ small enough, and using \eqref{distE} for $j,$ we have 
\[ |x' - x_{R_j}| \leq |x' - x| + |x - x_{R_j}| \leq C\ve \ell(R_j) + \frac{c_0}{8}\ell(R_j) \leq \frac{c_0}{2}\ell(R_j) \] 
which implies 
\[x^\prime \in E \cap \frac{c_0}{2}B_{R_j} \subseteq Q.\]
Using this, the fact that $x_i' \in E \setminus Q$ (by \eqref{distQ}), \eqref{e:distE} and \eqref{distE}, we get
\begin{align}
\frac{c_0}{2}\ell(R_j) & \leq \text{dist}(x_i^\prime, E \cap \tfrac{c_0}{2}B_{R_j}) \leq |x_i^\prime - x'|  \leq |x_i^\prime - x_i| + |x_i - x| + |x - x'| \\
& \lesssim \ell(R_i).
\end{align}
This proves $r_{B_i'} \sim r_{B_j'}.$ 

We will now apply Lemma \ref{ENV}. Let $R = \sup_{i \in I(x)} r_{k_i}.$ Since the balls $\tfrac{1}{4}B_i^\prime$, $i \in I(x),$ have comparable radii, we can find $i^* \in I(x)$ and a constant $C >0$ such that $\tfrac{1}{4}B_i^\prime \subseteq B(x_{i^*},CR)$ for all $i \in I(x)$. Furthermore, since $\Sigma^Q$ is Reifenberg flat, there exists a plane $P$ through $x_{i^*}$ such that $\text{dist}(x_{B_i^\prime},P) \lesssim r_{B_i^\prime}/2$ for all $i \in I(x).$ Finally, since the balls $\tfrac{1}{4}B_i'$ are disjoint, we can apply Lemma \ref{ENV} to get 
\begin{align}\label{eee}
	I(x) \lesssim_d 1 
\end{align}
as required. 

Combining \eqref{e:maxan} with \eqref{eee}, and the fact that $\Sigma^Q$ is lower regular (since it is Reifenberg flat, see for example the proof of Lemma \ref{al:lowerreg}), we obtain
\begin{align}
\sum_{i \in I} r_{k_i}^d \lesssim \sum_{i \in I} \ell(R_i)^d \lesssim \mathscr{H}^d( \Sigma^Q \cap B_i^\prime) \lesssim \mathscr{H}^d\left(\Sigma^Q \cap \bigcup_{i \in I} B_i^\prime \right).
\end{align}
Suppose $x \in \Sigma^Q \cap B_i'$ for some $i \in I.$ Recall by \eqref{distE} that
\begin{align}
\dist(x,Q) = \text{dist}(x,E) \lesssim \ve d_Q(x) \sim \ve r_{k_Q(x)}.
\end{align}
This and the fact that $E \cap B_i' \subseteq R_i \subseteq Q$ imply, for $\ve$ small enough, that $x$ belongs to either $E_1$ or $E_0$, where
\[ E_0 = \{x \in E \cap C_1B_Q : d_Q(x) = 0 \} \subseteq \{x \in \overline{Q} : d_Q(x) = 0 \}. \]
We conclude the proof by noticing
\begin{align}
\mathscr{H}^d(E_2) &\leq \sum_{i \in I} \mathscr{H}^d( E \cap B_i) \leq \sum_{i \in I} \mathscr{H}^d(\Sigma^Q \cap B_i) \lesssim \sum_{i \in I} r_{k_i}^d \\
&\lesssim  \mathscr{H}^d\left(\Sigma^Q \cap \bigcup_{i \in I} B_i^\prime \right) \leq \mathscr{H}^d(E_0 \cup E_1) \\
&\lesssim \sum_{\substack{R \in \text{Stop}(Q)\\ R \subseteq Q}} \ell(R)^d + \mathscr{H}^d(E_0).
\end{align}

\end{proof}

Let ${ \Top}_0$ be the maximal collection of cubes from $\mathscr{G}$ contained in $Q_0.$ Then, supposing ${\Top}_k$ has been defined for some $k \geq 0,$ define ${\min}_k$ to be the cubes $R$ so that $R \in \min(S_Q)$ for some $Q \in {\Top}_k.$ We let ${\Stop}_k$ be the cubes $R$ so that $R \subseteq Q$ and $R \in \Stop(Q)$ for some $Q \in {\Top}_k$, and let ${\Top}_{k+1}$ be the maximal collection of cubes from $\mathscr{G}$ contained in those cubes from ${\Stop}_k.$ Finally, we define
\[ \Top = \bigcup_{k \geq 0} {\Top}_k, \quad \min = \bigcup_{k \geq 0} {\min}_k \quad \text{and} \quad \Stop = \bigcup_{k \geq 0} {\Stop}_k . \]

\begin{lem}\label{l:goodlem}
	We have 
	\[ \sum_{Q \in \Stop} \ell(Q)^d \lesssim \mathscr{H}^d(Q_0) + \BWGL(Q_0,A,\ve) \] 
	and 
	\[ \sum_{Q \in \Top} \mathscr{H}^d(\Sigma^Q \cap C_1B_Q) \lesssim \mathscr{H}^d(Q_0) + \BWGL(Q_0,A,\ve). \] 
\end{lem}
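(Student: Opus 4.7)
The second estimate reduces to the first via Lemma \ref{l:difficult}. Summing that lemma over $Q \in \Top$ gives
\[ \sum_{Q \in \Top} \mathscr{H}^d(\Sigma^Q \cap C_1B_Q) \lesssim \sum_{R \in \Stop} \ell(R)^d + \sum_{Q \in \Top} \mathscr{H}^d(Z_Q), \]
where $Z_Q = \{ x \in \overline Q : d_Q(x) = 0 \}$. The sets $Z_Q$ form a pairwise disjoint family of subsets of $\overline{Q_0}$: by Lemma \ref{6:3}, any $x$ with $d_Q(x) = 0$ lies in no $\Stop(Q)$-cube, hence in no Top-descendant of $Q$, while top cubes at a common level are automatically disjoint since they are maximal $\mathscr{G}$-cubes sitting inside disjoint $\Stop$-cubes from the previous generation. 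Thus $\sum_Q \mathscr{H}^d(Z_Q) \leq \mathscr{H}^d(Q_0)$, and the second estimate follows once the first is in hand.

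For the first estimate, I partition $\Stop = \Stop^B \sqcup \Stop^G$ according to whether a cube lies in $\BWGL(A,\ve)$ or in $\mathscr{G}$. A short check, based on maximality of cubes in each $\Stop(Q)$ and disjointness of top cubes, shows that a fixed cube of $\mathscr{D}$ belongs to $\Stop_k$ for at most one $k$, so
\[ \sum_{R \in \Stop^B} \ell(R)^d \leq \BWGL(Q_0,A,\ve). \]
For the good part, I charge each $R \in \Stop(Q)$ to its smallest $S_Q$-ancestor $T_R$. Since $R \subseteq Q \in S_Q$ while $R \notin S_Q$, such a $T_R$ exists, and the fact that no child of $T_R$ remains in $S_Q$ places $T_R \in \min(S_Q)$. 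By Lemma \ref{l:ell-sim-d} together with $R \subseteq T_R$, one gets $\ell(R) \lesssim \tau \ell(T_R)$.

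Since $T_R \in S_Q \subseteq \mathscr{G}$, the set $E \cap AB_{T_R}$ lies within distance $C\ve A \ell(T_R)$ of a $d$-plane $P_{T_R}$. Choosing $\ve$ small enough relative to $\tau/A$, the disjoint balls $\{c_0 B_R : R \in \Stop^G,\, T_R = T\}$ have centres satisfying $\dist(x_R,P_T) < c_0 \ell(R)/2$, so Lemma \ref{ENV} applied inside the enclosing ball of radius $\sim \ell(T)$ yields the packing bound
\[ \sum_{R \in \Stop^G,\, T_R = T} \ell(R)^d \lesssim \ell(T)^d . \]
Each $T \in \min(S_Q)$ has at least one child $T^* \in \BWGL(A,\ve)$ (otherwise every child of $T$ would still lie in $S_Q$, contradicting minimality), and $\ell(T) \sim \ell(T^*)$. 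The assignment $(Q,T) \mapsto T^*$ is injective into $\{T^* \subseteq Q_0 : T^* \in \BWGL(A,\ve)\}$: a fixed $T$ can lie in $\min(S_Q)$ for at most one $Q \in \Top$ because any strictly descendant top cube $Q'$ of $Q$ sits inside a $\Stop(Q)$-cube and hence below $\min(S_Q)$, so $T \in S_Q$ forbids $T \in S_{Q'}$. Summing over $Q$ and $T$ gives $\sum_{R \in \Stop^G} \ell(R)^d \lesssim \BWGL(Q_0,A,\ve)$, which combined with the BWGL part completes the proof. The principal obstacle is the packing estimate above: it is precisely the step where the finite-dimensional argument via a volume comparison fails, and the hierarchy $\ve \ll \tau/A$ is what allows Lemma \ref{ENV} to serve as its Hilbert-space substitute.
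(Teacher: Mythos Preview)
Your overall architecture is close to the paper's, but the packing step for $\Stop^G$ has a genuine gap. You assign each $R \in \Stop^G \cap \Stop(Q)$ to its smallest $S_Q$-ancestor $T_R \in \min(S_Q)$ and then try to prove
\[
\sum_{R:\,T_R=T}\ell(R)^d \lesssim \ell(T)^d
\]
via Lemma~\ref{ENV}, using the plane $P_T$ from ${b\beta}^d_E(AB_T)<\ve$. For Lemma~\ref{ENV} you need $\dist(x_R,P_T)<\tfrac{c_0}{2}\ell(R)$, and since the flatness only gives $\dist(x_R,P_T)\le \ve A\,\ell(T)$, you are implicitly assuming a lower bound $\ell(R)\gtrsim \tau\,\ell(T)$. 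But Lemma~\ref{l:ell-sim-d} only gives $\ell(R)\sim\tau\,d_Q(R)\le \tau\,\ell(T)$; there is no matching lower bound. Indeed, if $T'\in\min(S_Q)$ is a neighbouring minimal cube with $\ell(T')\ll\ell(T)$ (nothing forbids this), then for $x\in T$ with $\dist(x,T')\le\ell(T')$ one has $d_Q(x)\le 2\ell(T')$, and the $\Stop(Q)$-cube through $x$ satisfies $\ell(R)\lesssim\tau\,\ell(T')\ll\tau\,\ell(T)$. So ``$\ve$ small relative to $\tau/A$'' cannot force $\dist(x_R,P_T)<\tfrac{c_0}{2}\ell(R)$, and the ENV argument collapses.

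The paper sidesteps this entirely: instead of packing $\Stop(Q)$-cubes upward into a single $T\in\min(S_Q)$, it goes \emph{downward} via Lemma~\ref{l:Stop_k}, using lower content regularity to cover each $\Stop_k$-cube by $\Top_{k+1}$-cubes (up to $\mathscr{H}^d$-null, Lemma~\ref{l:coverKae}) and then each of those by $\min_{k+1}$-cubes plus the zero set $\{d_Q=0\}$. Summing in $k$ yields $\sum_{\Stop}\ell(R)^d\lesssim\sum_{\min}\ell(T)^d+\sum_{\Top}\mathscr{H}^d(d(Q))$, and the two right-hand terms are handled exactly as you do (BWGL child of each minimal cube; essential disjointness of the $d(Q)$). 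Your packing bound $\sum_{T_R=T}\ell(R)^d\lesssim\ell(T)^d$ is in fact true, but proving it requires routing through the surface $\Sigma^Q$ (project the $x_R$ to $\Sigma^Q$ using Lemma~\ref{6:1}, invoke bounded overlap from Lemma~\ref{6:1.5}, and use upper regularity of $\Sigma^Q$), not a direct application of Lemma~\ref{ENV} with the plane $P_T$.

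A smaller point: the sets $Z_Q=\{x\in\overline{Q}:d_Q(x)=0\}$ are not literally pairwise disjoint, only $\mathscr{H}^d$-almost disjoint (boundaries of cubes can overlap); the paper handles this in Lemma~\ref{l:d(Q)} using $\mathscr{H}^d(\partial Q)=0$. Your conclusion $\sum_Q\mathscr{H}^d(Z_Q)\le\mathscr{H}^d(Q_0)$ is correct once this is taken into account.
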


Before proving Lemma \ref{l:goodlem}, we need some preliminary results.

\begin{lem}\label{l:coverKae}
	Let $K$ be the set of points in $Q_0$ which are contained in infinitely many cubes $Q \subseteq Q_0$ so that $Q \in \BWGL(A,\ve).$ Then $\mathscr{H}^d(K) = 0.$  
\end{lem}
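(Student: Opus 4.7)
The plan is to exploit the standing assumption $\BWGL(Q_0,A,\ve) < \infty$ (made at the very start of the proof of Theorem \ref{Thm1}) together with the dyadic structure of $\mathscr{D}$. The idea is that each point $x \in K$ lies in arbitrarily small cubes belonging to $\BWGL(A,\ve)$, so for any $N$ we can cover $K$ by a disjoint family of such small cubes; summing diameters then reduces to controlling the tail of the convergent series defining $\BWGL(Q_0,A,\ve)$.

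In more detail, fix $N \in \N$ large. For each $x \in K$, the collection
\[ \mathcal{F}_x(N) = \{ Q \in \mathscr{D} : x \in Q \subseteq Q_0, \; Q \in \BWGL(A,\ve), \; \ell(Q) < \rho^N \} \]
is nonempty (by the definition of $K$, since $x$ lies in infinitely many such cubes), and is totally ordered by inclusion because any two cubes in $\mathscr{D}$ sharing a point are nested. Since $\ell(Q) \in \{5\rho^k : k \in \Z\}$ is bounded above by $\rho^N$ on $\mathcal{F}_x(N)$, the collection has a largest element $Q_x \in \mathcal{F}_x(N)$. A routine verification using nesting of dyadic cubes and the maximality of $Q_x$ shows that distinct $Q_x$'s are pairwise disjoint: if $Q_x \cap Q_y \ne \emptyset$ and, say, $Q_x \subseteq Q_y$, then $Q_y \in \mathcal{F}_x(N)$ and so $\ell(Q_y) \leq \ell(Q_x)$, forcing $Q_x = Q_y$.

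Now $\{Q_x : x \in K\}$ is a cover of $K$ by pairwise disjoint cubes $Q \subseteq Q_0$ with $Q \in \BWGL(A,\ve)$ and $\diam(Q) \leq 2\ell(Q) < 2\rho^N$. Consequently, using the definition of Hausdorff content,
\[ \mathscr{H}^d_{2\rho^N}(K) \;\leq\; \sum_{x \in K,\; Q_x \text{ distinct}} \diam(Q_x)^d \;\leq\; 2^d \sum_{\substack{Q \subseteq Q_0 \\ Q \in \BWGL(A,\ve) \\ \ell(Q) < \rho^N}} \ell(Q)^d. \]
The right-hand side is the tail of the convergent series $\BWGL(Q_0,A,\ve) = \sum_{Q \subseteq Q_0,\; Q \in \BWGL(A,\ve)} \ell(Q)^d < \infty$, hence tends to $0$ as $N \to \infty$. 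Since $2\rho^N \to 0$ as well, letting $N \to \infty$ gives $\mathscr{H}^d(K) = 0$.

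There is no real obstacle here; the only thing to be careful about is to confirm that the family $\mathcal{F}_x(N)$ admits a maximum (not just a supremum) so that the maximal cubes $Q_x$ are well-defined and yield a disjoint subcover. This is immediate from the fact that dyadic side lengths form a discrete set bounded above by $\rho^N$. Everything else is a direct application of the definition of Hausdorff content and the finiteness hypothesis already in force.
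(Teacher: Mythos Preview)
Your proof is correct and follows essentially the same approach as the paper: cover $K$ by maximal $\BWGL(A,\ve)$ cubes of small scale and bound the Hausdorff content by a tail of the convergent series $\BWGL(Q_0,A,\ve)$. The only cosmetic difference is that you index the covers by a size threshold $\ell(Q) < \rho^N$, whereas the paper peels off maximal $\BWGL$ cubes layer by layer (so that the $k$-th layer has $\ell(Q) \leq 5\rho^k$); both lead to the same tail estimate.
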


\begin{proof}
	Let $\mathscr{C}_1$ be the maximal collection of cubes from $\BWGL(A,\ve)$ contained in $Q_0.$ Then, assuming $\mathscr{C}_k$ has been defined for some $k \geq 1,$ let $\mathscr{C}_{k+1}$ be the maximal collection of cubes in $\BWGL(A,\ve)$ which are strictly contained in the cubes from $\mathscr{C}_k.$ By maximality each $\mathscr{C}_k$ forms a cover for $K$. By construction, any $Q \in \BWGL(A,\ve)$ is contained in $\mathscr{C}_k$ for at most one $k$, and if $Q \in \mathscr{C}_k$, then $\ell(Q) \leq 5\rho^{k}.$ Thus, since $\BWGL(Q_0,A,\ve) < \infty$ (recall this from the beginning of the section), it follows that 
	\[ \lim_{k \rightarrow \infty} \sum_{Q \in \mathscr{C}_k} \ell(Q)^d = 0. \] 
	Hence, 
	\[ \mathscr{H}^d(K) = \lim_{k \rightarrow \infty} \mathscr{H}^d_{5\rho^k}(K) \leq \lim_{k \rightarrow \infty} \sum_{Q \in \mathscr{C}_k} \ell(Q)^d = 0. \] 
\end{proof}

\begin{lem}\label{l:Stop_k} Let $k\geq 0.$ Then 
	\[ \sum_{Q \in {\Stop}_k} \ell(Q)^d \lesssim \sum_{Q \in {\min}_{k+1}} \ell(Q)^d + \sum_{Q \in{\Top}_{k+1}} \mathscr{H}^d( \{x \in Q : d_Q(x) = 0\}). \]
\end{lem}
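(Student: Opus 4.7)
The plan is to estimate $\ell(R)^d$ for each $R \in \Stop_k$ individually, splitting on whether $R \in \mathscr{G}$ or $R \in \BWGL(A,\ve)$. The engine will be an auxiliary ``flux'' inequality that holds for every $Q \in \mathscr{G}$:
\[
\sum_{T \in \Stop(Q)} \ell(T)^d \;\lesssim\; \sum_{A \in \min(S_Q)} \ell(A)^d. \qquad (\star)
\]
To prove $(\star)$, first note that $\Stop(Q) \cap S_Q = \emptyset$: if $T \in S_Q$ then $d_Q(T) \leq \ell(T)$, which together with $\tau < 1$ contradicts the small-sibling condition $\ell(\tilde T) < \tau d_Q(\tilde T)$ defining $\Stop(Q)$. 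Walking up the ancestor chain of any $T \in \Stop(Q)$ produces a unique lowest $S_Q$-ancestor $A_T$, and since the child of $A_T$ on the path to $T$ fails to be in $S_Q$ we must have $A_T \in \min(S_Q)$; moreover the cubes in $\min(S_Q)$ are pairwise disjoint. For fixed $A \in \min(S_Q)$ the subcollection $\{T \in \Stop(Q) : A_T = A\}$ consists of disjoint cubes inside $A$ with $\ell(T) \sim \tau d_Q(T) \leq \tau \ell(A)$ by Lemma~\ref{l:ell-sim-d}. Lemma~\ref{6:1} lets us replace each $x_T$ by a nearby $y_T \in \Sigma^Q$ satisfying $|x_T - y_T| \lesssim (\ve/\tau)\ell(T)$; for $\ve/\tau$ small the balls $B(y_T, c\ell(T))$ stay pairwise disjoint and lie in $B(x_A, C\ell(A))$, and Ahlfors $d$-regularity of the Reifenberg flat surface $\Sigma^Q$ (upper regularity from the Lipschitz graph description in Theorem~\ref{DT}(8), lower regularity from Theorem~\ref{DT}(13)) yields
\[
\sum_{T : A_T = A} \ell(T)^d \;\lesssim\; \sum_T \mathscr{H}^d(\Sigma^Q \cap B(y_T, c\ell(T))) \;\leq\; \mathscr{H}^d(\Sigma^Q \cap B(x_A, C\ell(A))) \;\lesssim\; \ell(A)^d.
\]
Summing over $A \in \min(S_Q)$ proves $(\star)$.

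With $(\star)$ available I treat $R \in \Stop_k$ in two cases. If $R \in \mathscr{G}$ then $R \in \Top_{k+1}$; applying Lemma~\ref{l:difficult} with $Q = R$, the flux inequality $(\star)$ for $R$, and lower Ahlfors regularity of $\Sigma^R$ near $x_R$ (using Lemma~\ref{6:1} and Theorem~\ref{DT}(13) to pass to a base point on $\Sigma^R$) gives
\[
\ell(R)^d \;\lesssim\; \mathscr{H}^d(\Sigma^R \cap C_1 B_R) \;\lesssim\; \sum_{A \in \min(S_R)} \ell(A)^d + \mathscr{H}^d(\{x \in R : d_R(x) = 0\}),
\]
and $\min(S_R) \subseteq \min_{k+1}$ by definition. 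If instead $R \in \BWGL(A,\ve)$, set $\mathscr{T}_R = \{Q' \in \Top_{k+1} : Q' \subseteq R\}$. Since $\mathscr{H}^d(K) = 0$ by Lemma~\ref{l:coverKae}, and $E$ is partitioned by the cubes of $\mathscr{D}_j$ at each scale $j$ by Theorem~\ref{cubes}(4), any $x \in E \cap R \setminus K$ has an innermost $\BWGL$-ancestor strictly inside $R$, whose child containing $x$ is then a maximal $\mathscr{G}$-cube in $R$ and so belongs to $\mathscr{T}_R$. Hence $E \cap R \subseteq K \cup \bigcup_{Q' \in \mathscr{T}_R} Q'$, and lower content regularity of $E$ combined with countable subadditivity of $\mathscr{H}^d_\infty$ and the trivial bound $\mathscr{H}^d_\infty(Q') \leq \diam(Q')^d \lesssim \ell(Q')^d$ gives
\[
\ell(R)^d \;\lesssim\; \mathscr{H}^d_\infty(E \cap R) \;\leq\; \sum_{Q' \in \mathscr{T}_R} \mathscr{H}^d_\infty(Q') \;\lesssim\; \sum_{Q' \in \mathscr{T}_R} \ell(Q')^d,
\]
and then each $\ell(Q')^d$ is controlled by the previous case since $\mathscr{T}_R \subseteq \Top_{k+1}$.

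Summing the pointwise bounds over $R \in \Stop_k$ and observing that each $A \in \min_{k+1}$ and each $Q' \in \Top_{k+1}$ is contained in a unique $R \in \Stop_k$, both families on the right appear with multiplicity one, giving the claimed inequality. The main technical obstacle is the flux inequality $(\star)$: in infinite dimension one cannot rely on Euclidean volume packing of the balls $B(x_T, c\ell(T))$ against a single affine plane, so the packing problem must be transferred onto the Reifenberg flat surface $\Sigma^Q$ via Lemma~\ref{6:1} and controlled by the Ahlfors regularity inherited from the Reifenberg parametrisation.
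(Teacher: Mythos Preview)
Your argument is essentially correct, but it is far more elaborate than what is needed, and there is a small slip in Case~2.

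The paper's proof uses nothing beyond the lower content regularity of $E$. For each $Q\in\Stop_k$, Lemma~\ref{l:coverKae} shows that the cubes $\{R\in\Top_{k+1}:R\subseteq Q\}$ cover $Q$ up to $\mathscr{H}^d$-measure zero, so $\ell(Q)^d\lesssim\mathscr{H}^d_\infty(Q)\lesssim\sum_R\ell(R)^d$. Then for each such $R\in\Top_{k+1}$, every $x\in E\cap c_0B_R\subseteq R$ either lies in some $T\in\min(S_R)\subseteq\min_{k+1}$ or satisfies $d_R(x)=0$; lower regularity again gives $\ell(R)^d\lesssim\sum_T\ell(T)^d+\mathscr{H}^d(\{x\in R:d_R(x)=0\})$. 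Summing finishes the proof. No surfaces $\Sigma^R$, no Lemma~\ref{l:difficult}, no flux inequality, no packing on a Reifenberg surface.

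Your Case~2 is already the paper's first step, so you had the direct idea; you simply did not apply it in Case~1 as well. Instead you route Case~1 through $\ell(R)^d\lesssim\mathscr{H}^d(\Sigma^R\cap C_1B_R)$, then Lemma~\ref{l:difficult}, then your flux inequality $(\star)$. This works (your packing argument for $(\star)$ is fine once one notes $|x_T-x_{T'}|\geq c_0\max\{\ell(T),\ell(T')\}$ since $x_T\in E\setminus T'$ forces $x_T\notin c_0B_{T'}$), but Lemma~\ref{l:difficult} is one of the harder lemmas in the section, and invoking it here only to recover a bound that follows in two lines from lower content regularity of $E$ is overkill. Your closing remark that ``in infinite dimension one cannot rely on Euclidean volume packing'' is misplaced: no packing is needed at all, since the $\min_{k+1}$-cubes inside a fixed $\Top_{k+1}$-cube already form a disjoint cover of most of it.

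The slip in Case~2: the child of the \emph{innermost} $\BWGL$-ancestor of $x$ inside $R$ need not be a \emph{maximal} $\mathscr{G}$-cube in $R$ (e.g.\ if the chain above $x$ alternates $\mathscr{G},\BWGL,\mathscr{G},\ldots$). The correct object is the \emph{largest} $\mathscr{G}$-cube containing $x$ and contained in $R$; this exists because $x\notin K$, and its parent is either in $\BWGL$ or not contained in any $\Stop_k$-cube, so it lies in $\Top_{k+1}$.
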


\begin{proof}
Let $Q \in {\Stop}_k.$ By Lemma \ref{l:coverKae}, the cubes $ \{R \in \Top_{k+1} : R \subseteq Q \}$ cover $Q$ up to a set of $\mathscr{H}^d$-measure zero. Using this with the fact that $E$ is lower content $d$-regular, we have 
	\[ \ell(Q)^d \lesssim \mathscr{H}^d_\infty(Q) \lesssim \sum_{\substack{R \in {\Top}_{k+1} \\ R \subseteq Q}} \ell(R)^d. \] 
	Let $R \in {\Top}_{k+1}$ be so that $R \subseteq Q$ and let $x \in E \cap c_0B_R.$ One of two things can happen, either $x \in T$ for some $T \in \min(S_R)$ (which by definition means that $T \in \min_{k+1}$), or $d_R(x) = 0.$ Again, since $E$ is lower content $d$-regular, this implies 
	\begin{align}
		\ell(R)^d \lesssim \mathscr{H}^d_\infty(E \cap c_0B_R) &\lesssim \sum_{\substack{T \in {\min}_{k+1} \\ T \subseteq R}} \ell(T)^d + \mathscr{H}_\infty^d(\{x \in R : d_R(x) = 0\})  \\
		& \leq \sum_{\substack{T \in {\min}_{k+1} \\ T \subseteq R}} \ell(T)^d + \mathscr{H}^d(\{x \in R : d_R(x) = 0\}).
	\end{align} 
	Using the above and taking the sum over all $R \in {\Top}_{k+1}$ so that $R \subseteq Q$, we have 
	\[ \ell(Q)^d \lesssim \sum_{\substack{ R \in {\min}_{k+1} \\ R \subseteq Q}} \ell(R)^d + \sum_{\substack{R \in {\Top}_{k+1} \\ R \subseteq Q}} \mathscr{H}^d(\{x \in R : d_R(x) = 0\}). \] 
	Since the cubes in ${\Stop}_k$ are disjoint, taking the sum over ${\Stop}_k$ finishes the proof. 
\end{proof}

\begin{lem}\label{l:d(Q)}
	For $Q \in \Top$ define 
	\[d(Q) = \{x \in \overline{Q} : d_Q(x) = 0 \}.\]
	If $Q,Q' \in \Top$ are such that $Q \not=Q',$ then $\mathscr{H}^d( d(Q) \cap d(Q')) = 0.$
\end{lem}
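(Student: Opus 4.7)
The plan is to split based on the relative position of $Q$ and $Q'$. Since any two cubes in $\mathscr{D}$ are either nested or disjoint, and since cubes within a single $\Top_k$ are pairwise disjoint (they are maximal $\mathscr{G}$-cubes inside pairwise disjoint cubes from $\Stop_{k-1}$), any two distinct $Q, Q' \in \Top$ fall into exactly one of: (i) one is properly contained in the other, or (ii) $Q \cap Q' = \emptyset$.

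For the nested case, say $Q \subsetneq Q'$ with $Q \in \Top_k$ and $Q' \in \Top_{k'}$ ($k > k'$), the first step is to identify an $R \in \Stop(Q')$ with $Q \subseteq R$. This follows by iterating the construction: there are cubes $Q = Q^{(0)} \subseteq S_1 \subseteq Q^{(1)} \subseteq S_2 \subseteq \cdots \subseteq Q^{(k-k')} \in \Top_{k'}$ with $S_j \in \Stop_{k-j}$ and $Q^{(j)} \in \Top_{k-j}$; since $Q \subseteq Q^{(k-k')}$ and $Q \subseteq Q'$ with both cubes in $\Top_{k'}$, disjointness of $\Top_{k'}$ forces $Q^{(k-k')} = Q'$, and then $R := S_{k-k'} \in \Stop(Q')$ satisfies $Q \subseteq R$. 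Lemma~\ref{l:ell-sim-d} gives $d_{Q'}(R) \sim \ell(R)/\tau > 0$, and since $d_{Q'}(x) \geq d_{Q'}(R)$ for every $x \in R$ by definition of the infimum, the $1$-Lipschitz continuity of $d_{Q'}$ propagates this positive lower bound to all of $\overline R \supseteq \overline Q$. I then conclude $\overline Q \cap d(Q') = \emptyset$, and hence $d(Q) \cap d(Q') \subseteq \overline Q \cap d(Q') = \emptyset$.

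The main obstacle is the disjoint case $Q \cap Q' = \emptyset$, where $d(Q) \cap d(Q') \subseteq \overline Q \cap \overline{Q'}$ can be non-empty but should have vanishing $\mathscr{H}^d$-measure. For this I would use Lemma~\ref{6:4} to get $d(Q) \cap d(Q') \subseteq E \cap \Sigma^Q \cap \Sigma^{Q'}$, and then combine (a) the partition property of Christ-David cubes in $\R^n$ (Theorem~\ref{cubes}(4)), so that as sub-sets of $E$ the cubes $Q$ and $Q'$ meet only on a topological boundary; (b) the $C\varepsilon$-Reifenberg flatness and lower regularity of the approximating surfaces $\Sigma^Q$ and $\Sigma^{Q'}$ from Theorem~\ref{DT}; and (c) a density argument based on the lower content $d$-regularity of $E$ applied along the common boundary. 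The delicate point, which is the main technical hurdle, is to rule out that the two approximating Reifenberg-flat surfaces can coincide on a positive $\mathscr{H}^d$-measure subset of $E$; once this is excluded the intersection is forced into a set of strictly smaller Hausdorff dimension, yielding the claim.
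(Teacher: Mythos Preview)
Your treatment of the nested case is correct and in fact slightly cleaner than the paper's. The paper instead finds $R \in \min(S_{Q'})$ with $Q \subseteq R$ and argues that $d(Q) \cap d(Q') \subseteq \partial Q$, then invokes \cite[Lemma 10.5]{azzam2018analyst} to conclude $\mathscr{H}^d(\partial Q) = 0$. Your route via $R \in \Stop(Q')$ and Lemma~\ref{l:ell-sim-d} gives the stronger conclusion $d(Q) \cap d(Q') = \emptyset$ directly, and avoids the boundary lemma entirely in this case.

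The disjoint case, however, is where you go astray. You flag it as ``the main obstacle'' and propose an argument through the surfaces $\Sigma^Q$, $\Sigma^{Q'}$ and their Reifenberg flatness, culminating in the need to ``rule out that the two approximating Reifenberg-flat surfaces can coincide on a positive $\mathscr{H}^d$-measure subset of $E$.'' This is both unnecessary and unworkable: nothing prevents $\Sigma^Q$ and $\Sigma^{Q'}$ from agreeing on large sets (think of $E$ nearly planar near the common boundary), so the hurdle you identify is not the right one. The correct argument is far more elementary. If $Q \cap Q' = \emptyset$ then
\[
d(Q) \cap d(Q') \subseteq \overline{Q} \cap \overline{Q'} \subseteq \partial Q \coloneqq \{x \in \overline{Q} : B(x,r) \cap (E \setminus Q) \neq \emptyset \text{ for all } r > 0\},
\]
since any $x \in \overline{Q} \cap \overline{Q'}$ is approximated by points of $Q' \subseteq E \setminus Q$. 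One then cites \cite[Lemma 10.5]{azzam2018analyst} to obtain $\mathscr{H}^d(\partial Q) = 0$. That is the entire disjoint case; no appeal to $\Sigma^Q$, lower regularity, or density arguments is needed.
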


\begin{proof}
	If $Q,Q' \in {\Top}_k$ for some $k$ then by construction $Q \cap Q' = \emptyset$ and so $d(Q) \cap d(Q') \subseteq \partial Q,$ where  $\partial Q = \{x \in \overline{Q} : B(x,r) \cap E\setminus Q \not= \emptyset \ \text{for all} \ r>0 \}$. By \cite[Lemma 10.5]{azzam2018analyst} we have $\mathscr{H}^d(\partial Q) = 0$ which finishes the proof in this case. Suppose then that $Q \in {\Top}_k$ and $Q' \in {\Top}_m$, and assume without loss of generality that $k > m.$ It follows that $Q \subseteq R$ for some $R \in \min(S_{Q'}).$ Hence, 
	\[ d(Q) \cap Q^o \subseteq R^o \subseteq Q' \setminus d(Q'). \] 
	This implies $d(Q) \cap d(Q') \subseteq \partial Q$ and the lemma follows again by \cite[Lemma 10.5]{azzam2018analyst}. 
\end{proof}

\begin{proof}[Proof of Lemma \ref{l:goodlem}]
	The second inequality is an easy consequence of the first inequality and Lemma \ref{l:difficult}. Let us prove the first inequality. Since each $Q \in \min$ is the minimal cube of some stopping-time region, it has a child, which we denote by $Q'$, so that $Q' \in \BWGL(A,\ve).$ Using Lemma \ref{l:Stop_k}, Lemma \ref{l:d(Q)}, and taking the sum over all $k \geq 0,$ we get 
	\begin{align} \sum_{Q \in \Stop} \ell(Q)^d &\lesssim \sum_{Q \in \min} \ell(Q)^d + \sum_{Q \in \Top} \mathscr{H}^d( d(Q)) \lesssim \sum_{Q \in \min} \ell(Q')^d + \sum_{Q \in \Top} \mathscr{H}^d( d(Q)) \\ 
	&\lesssim \BWGL(Q_0,A,\ve) + \mathscr{H}^d(Q_0) ,
	\end{align}
as required. 
\end{proof}

Let 
\[ \mathscr{F} = \{S_Q' : Q \in \Top\}. \] 
For $S \in \mathscr{F},$ set 
\[\Sigma^S = \Sigma^{Q(S)}, \ \Stop(S)= \Stop(Q(S)) \ \text{and} \ d_S = d_{Q(S)}. \] 
The following finishes the proof of \eqref{e:dTSP1'}.

\begin{prop}\label{ThmPart1}
We have 
\[ \sum_{Q \in \mathscr{D}} \beta_E^{d,p}(C_0B_Q) \ell(Q)^d \lesssim \mathscr{H}^d(Q_0) + \BWGL(Q_0,A,\ve) \]	
\end{prop}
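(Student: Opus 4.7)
The plan is to combine the stopping-time decomposition of $\mathscr{D}$ induced by $\Top$ with the approximation surfaces $\Sigma^Q$ provided by Lemma \ref{l:DTworks} and Theorem \ref{DT}, in a manner parallel to the proof of Theorem \ref{Thm3} but one level up. Since every cube $Q \subseteq Q_0$ belongs to exactly one $S \in \mathscr{F}$ (by construction of $\Top$, $\Stop$ and the extensions $S_Q'$), we have the partition
\[ \sum_{Q \subseteq Q_0} \beta^{d,p}_E(C_0 B_Q)^2 \ell(Q)^d = \sum_{S \in \mathscr{F}} \sum_{Q \in S} \beta^{d,p}_E(C_0 B_Q)^2 \ell(Q)^d. \]
For each $Q \in S$, Lemma \ref{6:1} gives a point $x_Q' \in \Sigma^S$ with $|x_Q - x_Q'| \lesssim (\ve/\tau)\ell(Q)$, so $C_0 B_Q \subseteq 2 C_0 B_{Q'}$ where $B_{Q'} = B(x_Q', \ell(Q))$. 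Applying Lemma \ref{betaest} with $E_1 = E$ and $E_2 = \Sigma^S$ (both lower content $d$-regular inside $C_1 B_{Q(S)}$, the latter by Theorem \ref{DT}(13)) splits the sum into a main term
\[ \mathrm{I}(S) = \sum_{Q \in S} \beta^{d,p}_{\Sigma^S}(2C_0 B_{Q'})^2 \ell(Q)^d \]
plus an error term
\[ \mathrm{II}(S) = \sum_{Q \in S} \left(\frac{1}{\ell(Q)^d}\int_{E \cap 2C_0 B_Q} \left(\frac{\dist(y,\Sigma^S)}{\ell(Q)}\right)^p d\mathscr{H}^d_\infty(y)\right)^{2/p} \ell(Q)^d. \]

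For $\mathrm{I}(S)$, since $\Sigma^S$ is a $C\ve$-Reifenberg flat surface (Theorem \ref{DT}(9)), Theorem \ref{Thm3} (applied to a cube in the Christ-David decomposition of $\Sigma^S$ containing the relevant balls, together with Lemma \ref{lemma:monotonicity} to compare scales) yields
\[ \mathrm{I}(S) \lesssim \mathscr{H}^d(\Sigma^S \cap C_1 B_{Q(S)}). \]
Summing over $S \in \mathscr{F}$ and invoking the second bound in Lemma \ref{l:goodlem} gives $\sum_{S} \mathrm{I}(S) \lesssim \mathscr{H}^d(Q_0) + \BWGL(Q_0, A, \ve)$.

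For $\mathrm{II}(S)$, the argument mirrors Claim 1 in the proof of Theorem \ref{Thm3}: by Lemma \ref{6:3}, any $y \in E$ with $d_{Q(S)}(y) > 0$ lies in some $R \in \Stop(S)$, in which case Lemma \ref{6:1} applied to $R$ gives $\dist(y, \Sigma^S) \lesssim (\ve/\tau) \ell(R)$; if $d_{Q(S)}(y) = 0$, Lemma \ref{6:4} gives $y \in \Sigma^S$ so the integrand vanishes. Using the bounded overlap of $\{C B_R\}_{R \in \Stop(S)}$ (Lemma \ref{6:1.5}), the subadditivity of Choquet integration from Lemma \ref{l:subsum}, and the constraint $p < p(d)$ (so that $d(2/p - 1) + 2 > 0$, allowing the geometric sum to converge) — together with a cardinality bound
\[ \#\{Q \in S \cap \mathscr{D}_k : R \cap 2C_0 B_Q \neq \emptyset\} \lesssim_d 1 \]
obtained from Lemma \ref{ENV} applied to the disjoint balls $\{c_0 B_Q\}$ which lie close to a plane approximating $E$ at the scale of $R$ (using that $Q \in \mathscr{G}$) — one arrives at
\[ \mathrm{II}(S) \lesssim \ve \sum_{\substack{R \in \Stop(S) \\ R \subseteq Q(S)}} \ell(R)^d. \]
Summing over $S \in \mathscr{F}$ and applying the first bound in Lemma \ref{l:goodlem} finishes the proof.

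The main obstacle will be the swap-of-sums computation for $\mathrm{II}(S)$: specifically verifying that the cardinality estimate above holds uniformly in the ambient dimension via Lemma \ref{ENV}, and confirming that the resulting geometric series converges under the hypothesis $p < p(d)$. The Reifenberg flatness of $E$ at the scale of $Q \in \mathscr{G}$ (i.e.\ the fact that $Q \notin \BWGL(A,\ve)$) is what makes Lemma \ref{ENV} applicable here, in the same way $(\ve,d)$-Reifenberg flatness of $\Sigma$ was used in the analogous step of Theorem \ref{Thm3}.
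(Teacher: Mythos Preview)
Your outline follows the paper's approach: decompose over $S \in \mathscr{F}$, apply Lemma \ref{betaest}, control the main term via Theorem \ref{Thm3} applied to $\Sigma^S$, and control the error term by the Stop-cube geometry together with Lemma \ref{l:goodlem}. Two points need correction, however.

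First, it is not true that every $Q \subseteq Q_0$ lies in some $S \in \mathscr{F}$: cubes in $\BWGL(A,\ve)$ sitting strictly between a $\Stop$ cube and the next $\Top$ layer may lie in no $S$. The paper peels off the $\BWGL$ contribution first (it is trivially bounded by $\BWGL(Q_0,A,\ve)$) and only claims that cubes in $\mathscr{G}$ are covered by $\bigcup_{S \in \mathscr{F}} S$. Second, and more substantively, your justification for the cardinality bound $\#\{Q \in S \cap \mathscr{D}_k : R \cap 2C_0B_Q \neq \emptyset\} \lesssim 1$ appeals to ``$Q \in \mathscr{G}$'' to find an approximating plane for $E$; but $S = S'_{Q(S)}$ is the \emph{extended} stopping-time region, and cubes in $S'_{Q(S)} \setminus S_{Q(S)}$ need not belong to $\mathscr{G}$. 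The paper instead uses Lemma \ref{6:1} to place each $x_Q$ within $C(\ve/\tau)\ell(Q)$ of $\Sigma^S$, and then uses the Reifenberg flatness of $\Sigma^S$ (Theorem \ref{DT}(9)) to supply the plane for Lemma \ref{ENV}. Relatedly, the Stop cubes $R$ appearing in $\mathrm{II}(S)$ satisfy $R \cap 2C_0B_{Q(S)} \neq \emptyset$, not $R \subseteq Q(S)$, so the first bound of Lemma \ref{l:goodlem} does not apply directly; the paper converts $\sum_R \ell(R)^d$ to $\mathscr{H}^d(\Sigma^S \cap C_1B_{Q(S)})$ via lower regularity of $\Sigma^S$ and bounded overlap (Lemma \ref{6:1.5}), and then applies the \emph{second} bound of Lemma \ref{l:goodlem}.
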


\begin{proof}
By construction, for each $Q \in \mathscr{G},$ there exists $S \in \mathscr{F}$ so that $Q \in S.$ Hence
\[ \sum_{Q \in \mathscr{D}} \beta_E^{d,p}(C_0B_Q) \ell(Q)^d \lesssim \sum_{S \in \mathscr{F}} \sum_{Q \in S} \beta_E^{d,p}(C_0B_Q) \ell(Q)^d + \BWGL(Q_0,A,\ve). \]   
It remains to bound the first term. First, by Lemma \ref{betaest}, we have
\begin{align}
\sum_{S \in \mathscr{F}} \sum_{Q \in S} \beta_E^{d,p}(C_0B_Q) \ell(Q)^d &\lesssim \sum_{S \in \mathscr{F}} \sum_{Q \in S} \beta_{\Sigma^S}^{d,p}(2C_0B_Q)\ell(Q)^d \\
& \hspace{-5em} +  \sum_{S \in \mathscr{F}} \sum_{Q \in S}\left( \frac{1}{\ell(Q)^d} \int_{E\cap 2C_0B_Q} \left(\frac{\text{dist}(x,\Sigma^S)}{\ell(Q)}\right)^p \, d\mathscr{H}^d_\infty(x)\right)^\frac{2}{p}\ell(Q)^d \\
&= I_1 + I_2. 
\end{align}
The proof now split into two claims. 
\bigbreak
\noindent
\textbf{Claim 1:} 
\begin{align}
I_2 \lesssim \mathscr{H}^d(Q_0) + \BWGL(Q_0,A,\ve). 
\end{align}
\begin{proof}[Proof of Claim 1]
Let $S \in \mathscr{F}.$ By Lemma \ref{JensenPhi}, we may assume $p > 2$ (as in the proof of Theorem \ref{Thm3}), and we take $\tau$ small enough so that Lemma \ref{6:1} and Lemma \ref{6:1.5} hold. By Lemma \ref{6:3}, if $x  \in E \cap 2C_0B_Q$ is not contained in any cubes from $\Stop(S)$ then $d_S(x) = 0.$ This implies $\text{dist}(x,\Sigma^S) = 0,$ by Lemma \ref{6:4}. Suppose instead that $x \in R$ for some $R \in \Stop(S).$ Then, by Lemma \ref{6:1}, 
\begin{align}
\text{dist}(x,\Sigma^S) \leq |x - x_R| + \text{dist}(x_R , \Sigma^S) \lesssim 2\ell(R). 
\end{align} 
Using Lemma \ref{l:subsum} and using our assumption that $p\geq2$ we may write
\begin{align}
I_2 \lesssim \sum_{S \in \mathscr{F}} \sum_{Q \in S} \left( \sum_{\substack{R \in \Stop(S) \\ R \cap 2C_0B_Q \not= \emptyset}} \frac{\ell(R)^{d+p}}{\ell(Q)^{d+p}}\right)^\frac{2}{p} \ell(Q)^d  \lesssim \sum_{S \in \mathscr{F}} \sum_{Q \in S} \sum_{\substack{R \in \Stop(S) \\ R \cap 2C_0B_Q \not= \emptyset}}\frac{\ell(R)^{d\frac{2}{p} + 2 }}{\ell(Q)^{d(\frac{2}{p} - 1) + 2}}.
\end{align}
For fixed $R \in \Stop(S)$, we will show
\begin{align}\label{e:Stop(S)<1}
	\{Q \in \mathscr{D}_k \cap S : R \cap 2C_0B_Q \not= \emptyset\} \lesssim 1.
	\end{align}
First, by Lemma \ref{6:1} (2), if we choose $\ve$ small enough there exists a point $x_R' \in \Sigma^S$ such that
\[ |x_R - x_R'| \leq \ell(R). \] 
Suppose $Q \in \mathscr{D}_k \cap S$ is such that $R \cap 2C_0B_Q \not= \emptyset.$ Since 
\[ \ell(R) < \tau d_S(R) \leq \tau(\ell(Q) + \text{dist}(Q,R)) \lesssim \tau\ell(Q), \]
we get
\[ |x_Q - x_R'| \leq |x_Q - x_R| + |x_R - x_R'| \leq \ell(R) + 2C_0\ell(Q) +\ell(R) \leq C\ell(Q). \]
In particular, this implies that $x_Q \in B(x_R',C\ell(Q)).$ Since $\Sigma_S$ is $(\ve,d)$-Reifenberg flat, we can find a plane $P$ through $x_R'$ such that 
\[d_{x_R',C\ell(Q)}(P,\Sigma_S) \lesssim \ve.\]
Let $x_Q^\prime$ be the point in $\Sigma_S$ which is closest to $x_Q$. Then
\begin{align*}
\text{dist}(x_Q , P) \lesssim |x_Q - x_Q^\prime| + \text{dist}(x_Q^\prime,P) \lesssim \ve \ell(Q).
\end{align*}
So, for $\ve$ small enough, we have $\text{dist}(x_Q,P) \leq c_0\ell(Q)/2.$ Then, \eqref{e:Stop(S)<1} follows from Lemma \ref{ENV} by the standard argument. 

We return to estimating $I_2.$ By assumption $p \leq 2d/(d-2),$ which is equivalent to $d(2/p -1) +2 >0.$ Combining this, with the fact that \eqref{e:Stop(S)<1}, we can swap the order of integration and sum over a geometric series to get 
\begin{align}
I_2 &= \sum_{S \in \mathscr{F}}\sum_{\substack{R \in \Stop(S) \\ R \cap 2C_0B_{Q(S)}\not=\emptyset}}\sum_{\substack{Q \in S \\ R \cap 2C_0B_Q \not=\emptyset}} \frac{\ell(R)^{d\frac{2}{p} + 2 }}{\ell(Q)^{d(\frac{2}{p} - 1) + 2}} \lesssim \sum_{S \in \mathscr{F}}\sum_{\substack{R \in \Stop(S) \\ R \cap 2C_0B_{Q(S)}\not=\emptyset}} \ell(R)^d \\
&\lesssim \sum_{S \in \mathscr{F}} \sum_{\substack{R \in \Stop(S) \\ R \cap 2C_0B_{Q(S)}\not=\emptyset}} \mathscr{H}^d(\Sigma^S \cap c_0B_R) \lesssim \sum_{S \in \mathscr{F}} \mathscr{H}^d(\Sigma^S \cap C_1B_{Q(S)}) \\
&\lesssim \mathscr{H}^d(Q_0) + \BWGL(Q_0,A,\ve).
\end{align}
This completes the proof of Claim 1.
\end{proof}
\noindent
\textbf{Claim 2:}
\[ I_1 \lesssim \mathscr{H}^d(Q_0) + \BWGL(Q_0,A,\ve).  \]
\begin{proof}[Proof of Claim 2]
Fix $S \in \mathscr{F}$ and let $k \in \Z$ be such that $Q(S) \in \mathscr{D}_k.$ Recall, this means that 
\[ \ell(Q(S)) = 5\rho^k. \]  
For $n \geq k$ let $\mathscr{X}_n$ be a maximal $\rho^n$-separated net for $\Sigma^S$, and let $\mathscr{D}^S$ be the Christ-David cubes from Theorem \ref{cubes} for $\Sigma^S$ with respect to the nets $\mathscr{X}_n.$ For $Q \in S$, let $x_Q'$ be the point in $\Sigma^S$ closest to $x_Q$ and let $\tilde{Q} \in \mathscr{D}^S$ be the cube so that $x_Q' \in \tilde{Q}$ and $\ell(Q) = \ell(\tilde{Q}).$ By Lemma \ref{6:1}, for any $y \in 2C_0B_Q,$ we have
\begin{align}
|y - x_{\tilde{Q}}| &\leq |y-x_Q| + |x_Q - x_Q'| + |x_Q' - x_{\tilde{Q}}|  \\
& \leq 2C_0\ell(Q) + C\frac{\ve}{\tau}\ell(Q) + \ell(\tilde{Q}) \\
&\leq 3C_0\ell(\tilde{Q})
\end{align} 
for $\ve$ small enough with respect to $\tau.$ Hence, $2C_0B_Q \subseteq 3C_0B_{\tilde{Q}}.$  Since points in $\{x_Q : Q \in S \cap \mathscr{D}_k\}$ are maximally separated and lie close to $\Sigma_S$ (which is Reifenberg flat), Lemma \ref{ENV} implies the map $Q \rightarrow \tilde{Q}$ is at most $C$-to-$1$. Let $\{R_i\}_{i \in I}$ be the cubes $R \in \mathscr{D}^S$ which contains some $\tilde{Q}$ obtained above and so that $\ell(Q) = \ell(Q(S))$. The centres of these cubes are all maximally separated and contained in the ball $C_1B_{Q(S)}$, so $\#I \lesssim 1$ by Lemma \ref{ENV}. Using this with Lemma \ref{lemma:monotonicity} and Theorem \ref{Thm3} gives
\begin{align}
\sum_{Q \in S} \beta^{d,p}_{\Sigma^S}(2C_0B_Q)^2\ell(Q) \lesssim \sum_{i \in I} \sum_{\substack{Q \in \mathscr{D}^S \\ Q \subseteq R_i}}\beta_{\Sigma_S}^{d,p}(3C_0B_Q)^2\ell(Q)^d \lesssim \sum_{i \in I} \mathscr{H}^d(R_i) \lesssim \mathscr{H}^d(\Sigma^S \cap C_1B_{Q(S)}). 
\end{align}
Finally, we can apply Lemma \ref{l:goodlem} to get
\begin{align}
I_1 \lesssim \sum_{S  \in \mathscr{F}} \mathscr{H}^d(\Sigma^S \cap C_1B_{Q(S)}) \lesssim \mathscr{H}^d(Q_0) + \BWGL(Q_0,A,\ve).
\end{align}
This completes the proof of Claim 2. 
\end{proof}
Now, Claim 1 and Claim 2 finish the proof of the lemma. 
\end{proof}

\section{Estimating measure}\label{s:Thm2}

In this section we prove Theorem \ref{Thm2}. Let $1 \leq d < \infty, \ 1 \leq p < \infty$, $\ C_0 > 2\rho^{-1}$, $A > 1$ and $\ve > 0$. Let $E \subseteq H$ be lower content $d$-regular, with regularity constant $c$. Let $X_k$ be a sequence of maximally $\rho^k$-separated nets and let $\mathscr{D}$ be the Christ-David cubes for $E$ from Theorem \ref{cubes}. Let $Q_0 \in \mathscr{D}$ and assume without loss of generality that $0 \in Q_0 \in \mathscr{D}_0$. Since we cannot assume $\mathscr{H}^d(E)$ to be finite, we do not know that a maximally separated net in $E$ has a finite number of points. Thus, we cannot reduce to the Euclidean case, as we did in the previous sections. Although, now that we have Theorem \ref{DT} for subsets of Hilbert space, the proof of \eqref{e:dTSP1} is almost identical to that written in \cite[Sections 11-12]{azzam2018analyst}. We note that we must give a direct proof of Lemma \ref{l:bi-lip-BWGL}, which for Euclidean space is a corollary of a result of David and Semmes. This requires some work. Since the rest of the construction will be useful for the proof of Theorem \ref{t:BWGL}, we fill in some details. 

Recall that we are trying to show  
\begin{align}\label{e:Section6'} \mathscr{H}^d(Q_0) + \BWGL(Q_0,A,\ve) \lesssim \ell(Q_0)^d + \sum_{\substack{Q \in \mathscr{D} \\ Q \subseteq Q_0}} \beta_E^{d,p}(C_0B_Q)^2\ell(Q)^d.
\end{align}
By the following (which is proven in the appendix), it suffices to prove the same inequality holds with $C_0$ replaced by some larger constant $M$ and setting $p =1.$ 

\begin{lem}
	Let $\ve > 0$, $A \geq 1$ and $M \geq C_0$, and suppose
	\begin{align}\label{e:Section6} \mathscr{H}^d(Q_0) + \BWGL(Q_0,A,\ve) \lesssim \ell(Q_0)^d + \sum_{\substack{Q \in \mathscr{D} \\ Q \subseteq Q_0}} \beta_E^{d,1}(MB_Q)^2\ell(Q)^d.
	\end{align}
Then, \eqref{e:Section6'} holds with constant $\ve,A$ and $C_0$.  
\end{lem}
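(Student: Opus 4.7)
The plan is to establish the quantitative comparison
\[ \sum_{Q \subseteq Q_0} \beta_E^{d,1}(MB_Q)^2 \ell(Q)^d \lesssim \ell(Q_0)^d + \sum_{Q \subseteq Q_0} \beta_E^{d,p}(C_0 B_Q)^2 \ell(Q)^d, \]
since then \eqref{e:Section6'} follows by combining this with \eqref{e:Section6} (the left-hand sides of \eqref{e:Section6} and \eqref{e:Section6'} are identical).

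The first step is to pass from exponent $1$ to exponent $p$ at the same scale. Lemma \ref{c:beta} gives $\beta_E^{d,1}(MB_Q) \lesssim \beta_E^{d,p}(MB_Q)$ pointwise, so it suffices to prove the corresponding inequality with $\beta_E^{d,p}(MB_Q)$ on the left. The second step handles the change of ball size from $M$ to $C_0$, which is delicate because Lemma \ref{lemma:monotonicity} runs in the opposite direction. Choose $k_0 = k_0(M, C_0, \rho)$ to be the smallest nonnegative integer with $1 + M \rho^{k_0} \leq C_0$ (possible since $C_0 > 2\rho^{-1}$ and $M \geq C_0$). For every $Q \subseteq Q_0$ with $k(Q) \geq k(Q_0) + k_0$, let $R_Q$ denote its $k_0$-th ancestor; then $|x_Q - x_{R_Q}| \leq \ell(R_Q)$ and $\ell(Q) = \rho^{k_0} \ell(R_Q)$ together force $MB_Q \subseteq C_0 B_{R_Q}$. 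Lemma \ref{lemma:monotonicity} now yields
\[ \beta_E^{d,p}(MB_Q) \lesssim_{M, C_0, \rho, d, p} \beta_E^{d,p}(C_0 B_{R_Q}). \]
For the finitely many ``top'' generations with $k(Q_0) \leq k(Q) < k(Q_0) + k_0$, bound $\beta_E^{d,p}(MB_Q)$ by an absolute constant and use the lower content $d$-regularity of $E$ to absorb their total $\ell(Q)^d$-mass into $\ell(Q_0)^d$.

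Swapping the order of summation then gives
\[ \sum_{Q} \beta_E^{d,p}(MB_Q)^2 \ell(Q)^d \lesssim \ell(Q_0)^d + \sum_R \beta_E^{d,p}(C_0 B_R)^2 \Bigl( \sum_{Q \,:\, R_Q = R} \ell(Q)^d \Bigr), \]
and what remains is the packing estimate $\sum_{Q:\, R_Q = R} \ell(Q)^d \lesssim_{k_0} \ell(R)^d$. The balls $B(x_Q, c_0 \ell(Q)) \subseteq Q \subseteq R$ are mutually disjoint with common radius $c_0 \rho^{k_0} \ell(R)$, and lower content $d$-regularity gives each of them content $\gtrsim \ell(Q)^d$; combined with $\mathscr{H}^d_\infty(2B_R \cap E) \lesssim \ell(R)^d$, this yields the required packing bound.

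The main obstacle is precisely this last packing step: in infinite-dimensional Hilbert space one cannot simply count $k_0$-descendants by an ambient-dimension argument, and Hausdorff content is not additive on disjoint unions. The argument must therefore rely on the structural properties of Christ-David cubes on lower content $d$-regular sets, either via a careful Hausdorff-content bookkeeping as above, or by splitting the $R$'s according to whether $R \in \BWGL(A,\ve)$ (absorbing the ``bad'' $R$'s into the BWGL term, which is already on the left-hand side of \eqref{e:Section6'}) and applying Lemma \ref{ENV} on the remaining flat scales. Once $T_R \lesssim \ell(R)^d$ is established, the reindexed sum telescopes into $\sum_R \beta_E^{d,p}(C_0 B_R)^2 \ell(R)^d$ as desired.
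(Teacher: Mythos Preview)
Your overall strategy is right: reduce to controlling $\sum_{Q\subseteq Q_0}\beta_E^{d,p}(MB_Q)^2\ell(Q)^d$ by the same sum with $C_0$ in place of $M$, via the $k_0$-th ancestor map $Q\mapsto R_Q$ and Lemma~\ref{lemma:monotonicity}. You also correctly identify the obstacle as the packing estimate $\#\{Q:R_Q=R\}\lesssim 1$ in infinite dimensions. However, neither of your two proposed resolutions actually closes the gap.

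The Hausdorff-content argument fails for exactly the reason you state: content is not superadditive on disjoint sets, so $\sum_Q \mathscr{H}^d_\infty(E\cap c_0B_Q)\le \mathscr{H}^d_\infty(E\cap 2B_R)$ is false in general. Your second idea, splitting the $R$'s by $\BWGL(A,\ve)$-membership, does not work either. For $R\notin\BWGL(A,\ve)$ you do get flatness and can apply Lemma~\ref{ENV}, but for $R\in\BWGL(A,\ve)$ you have no bound on $\#\{Q:R_Q=R\}$, so the contribution $\sum_{Q:R_Q=R}\beta_E^{d,p}(MB_Q)^2\ell(Q)^d$ need not be $\lesssim\ell(R)^d$; absorbing into $\BWGL(Q_0,A,\ve)$ on the left of \eqref{e:Section6'} therefore does not give a usable inequality.

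The paper's fix is to split not by $\BWGL$-membership of $R$, but by the size of $\beta_E^{d,p}(C_0B_\cdot)$, which lives on the \emph{right} side of \eqref{e:Section6'}. Fix a small threshold $\delta>0$. Cubes $Q$ with $\beta_E^{d,p}(C_0B_Q)>\delta$ are handled trivially by $\ell(Q)^d\le\delta^{-2}\beta_E^{d,p}(C_0B_Q)^2\ell(Q)^d$. For the remaining cubes, split again according to whether the $k_0$-th ancestor $Q^{(k_0)}$ has $\beta_E^{d,p}(C_0B_{Q^{(k_0)}})\le\delta$ or not. When it does, Lemma~\ref{lemma:betap_betainfty} gives $\beta^d_{E,\infty}(\tfrac{C_0}{2}B_{Q^{(k_0)}})\lesssim\delta^{1/(d+1)}$, producing a $d$-plane close to all the centres $x_Q$, and Lemma~\ref{ENV} yields $\#\{Q:Q^{(k_0)}=R\}\lesssim_d 1$. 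When the ancestor has large $\beta$, one re-assigns $Q$ to the \emph{smallest} ancestor $Q^*$ with $\beta_E^{d,p}(C_0B_{Q^*})>\delta$; a child of $Q^*$ then has small $\beta$, again supplying flatness (via $C_0\ge 2\rho^{-1}$ so $B_{Q^*}\subseteq C_0B_{\text{child}}$) and the $C$-to-$1$ bound from Lemma~\ref{ENV}. In both cases the re-indexed sum is dominated by $\sum_R\beta_E^{d,p}(C_0B_R)^2\ell(R)^d$. The top $k_0$ generations are handled by the same dichotomy applied to $Q_0$ itself.
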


\begin{rem}\label{r:setM}
	From here well shall set $M = \max\{50A,C_0\}.$
\end{rem}

Our goal for the rest of the section is to prove \eqref{e:Section6} for $\ve,A$ and $M$. From here onwards, when we write $\mathscr{D}$ we mean those cubes $Q \in \mathscr{D}$ so that $Q \subseteq Q_0.$ Our first task is to split $\mathscr{D}$ into stopping-time regions. For each $Q \in \mathscr{D}$ such that $E \cap C_0B_Q \not= \emptyset$, we define a stopping-time region $S_Q$ as follows. 

Let $\lambda > 0.$ Begin by adding $Q$ to $S_Q$ and inductively on scales, add cubes $R$ to $S_Q$ if each of the following holds,
\begin{enumerate}
\item $R^{(1)} \in S_Q$.
\item Every sibling $R^\prime$ of $R$ satisfies
\[\sum_{R^\prime \subseteq T \subseteq Q} \beta_E^{d,1}(MB_T)^2 < \lambda^2.\]
\end{enumerate}

\begin{rem}
If $\beta_F^{d,1}(MB_Q) \geq \lambda$ then $S_Q = \{Q\}.$
\end{rem}

We partition $\{Q \in \mathscr{D} : E \cap MB_Q \not=\emptyset\}$ into a collection of stopping-time regions as follows. First, add $S_{Q_0}$ to $\mathscr{S}.$ Then, if $S$ has been added to $\mathscr{S},$ and if $Q \in \text{Child}(R)$ for some $R \in \min(S)$ such that $E \cap MB_Q \not= \emptyset,$ add $S_Q$ to $\mathscr{S}.$ Let $\mathscr{S}$ be the collection of stopping-time regions obtained by repeating this process indefinitely. Note that 
\[\sum_{Q \in \mathscr{D}} \beta_E^{d,1}(MB_Q)^2 \ell(Q)^d = \sum_{S \in \mathscr{S}} \sum_{Q \in S} \beta_E^{d,1}(MB_Q)^2 \ell(Q)^d.\]

For each $S \in \mathscr{S}$ which is not a singleton (i.e. $S \not= \{Q\}$) we construct a bi-Lipschitz surface which well approximates $E$ inside $S$. The following is essentially \cite[Lemma 11.3]{azzam2018analyst}, however, the precise statement below is \cite[Lemma 3.7]{hyde2020analyst}.

\begin{lem}\label{Sigma}
There exists $\lambda >0$ small enough so that for each $S \in \mathscr{S}$ which is not a singleton, there is a surface $\Sigma_{S}$ satisfying the follow. First, $\Sigma_S = f_S(\R^d) \cap MB_{Q(S)}$, where $f_S : \R^d \rightarrow H$ is $(1+C\lambda)$-bi-Lipschitz map onto its image. Second, for each $R \in S$ and $y \in F \cap (M^\frac{1}{3}/4)B_R$, 
\begin{align}\label{Closeness}
\emph{dist}(y,\Sigma_{S}) \lesssim \lambda^\frac{1}{d+1}\ell(R).
\end{align}
Finally, for each ball $B$ centred on $\Sigma_S$ and contained in $MB_{Q(S)}$, we have
\begin{align}\label{ee:lowerreg}
 \frac{\omega_d}{2}r_B^d \leq \mathscr{H}^d( \Sigma_{S} \cap B) \lesssim r_B^d.
\end{align}
\end{lem}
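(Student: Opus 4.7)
The plan is to realise $\Sigma_S$ as the image of a Reifenberg parametrization produced by Theorem \ref{DT}, with the bi-Lipschitz control coming from Theorem \ref{LipDT}. To this end, I build a CCBP in the sense of Definition \ref{d:CCBP} adapted to the stopping-time region $S$. Take the base plane $P_0$ to be a plane through $x_{Q(S)}$ that nearly minimises $\beta_E^{d,1}(MB_{Q(S)},\cdot)$. For each scale $k \geq 0$ (after rescaling so that $\ell(Q(S)) \sim 1$), let $\{x_{j,k}\}_{j \in J_k}$ be a maximal $r_k$-separated net in $\{x_R : R \in S,\ \ell(R) \sim r_k\}$ when that set is non-empty; for $r_k$ smaller than $\ell(R)$ for every $R \in \min(S)$, take $\{x_{j,k}\}$ to be a net inside those minimal cubes and attach to each $x_{j,k}$ the plane of the enclosing cube in $\min(S)$. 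In either case, the plane $P_{j,k}$ assigned to $x_{j,k}$ is a plane through $x_{j,k}$ nearly minimising $\beta_E^{d,1}(MB_{R_j},\cdot)$ for the associated cube $R_j$.

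The coherence axioms (2)--(5) of Definition \ref{d:CCBP} reduce to pairwise closeness of nearby planes $P_{i,k},\, P_{j,l}$, which is exactly what Lemma \ref{AngControl} supplies: for any two cubes $Q,R$ contained in a common ancestor $T \subseteq Q(S)$ with $\beta_E^{d,1}(MB_T) < \ve$, one has $\angle(P_Q,P_R) \lesssim \ve$. By the definition of $S$, every $T \in S$ satisfies $\beta_E^{d,1}(MB_T) \leq \lambda$, so for $M$ large enough (Remark \ref{r:setM}) the local angle function $\ve_k(x_{j,k})$ from Remark \ref{r:ve_k} is bounded by $C\lambda$ uniformly in $k$ and $j$. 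For $\lambda$ small, Theorem \ref{DT} then produces a bijection $f_S$ from $P_0 \cong \R^d$ onto a $C\lambda$-Reifenberg flat surface $\Sigma_S$ satisfying all of the listed properties.

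For the bi-Lipschitz upgrade I invoke Theorem \ref{LipDT}. The required summability $\sum_{k \geq 0} \ve_k'(f_k(z))^2 \leq M'$ is controlled, via Lemma \ref{AngControl}, by sums of the form $\sum_{T \in S,\ T \ni x} \beta_E^{d,1}(MB_T)^2$ for some $x \in E$ associated to $z$; the stopping condition defining $S$ bounds such sums by $\lambda^2$ uniformly. A quantitative inspection of the proof of Theorem \ref{LipDT} shows that the bi-Lipschitz constant behaves as $1 + O(\sqrt{M'})$ when $M'$ is small, giving the $(1+C\lambda)$-bi-Lipschitz estimate claimed.

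The approximation bound \eqref{Closeness} follows by combining two ingredients. For $y \in E \cap (M^{1/3}/4)B_R$, the lower content regularity of $E$ and Lemma \ref{lemma:betap_betainfty} give $\dist(y,P_R) \lesssim \lambda^{1/(d+1)}\ell(R)$, where $P_R$ is the plane attached to $R$ in the CCBP and the exponent loss comes from the passage from $\beta^{d,1}$ to $\beta_\infty^d$; meanwhile, $P_R$ is within $C\lambda\,\ell(R)$ of $\Sigma_S$ by items (7) and (9) of Theorem \ref{DT}. Summing yields \eqref{Closeness}. Finally, \eqref{ee:lowerreg} is immediate from $f_S$ being $(1+C\lambda)$-bi-Lipschitz onto its image: the upper bound is the standard Lipschitz-image estimate, and the lower bound $\omega_d/2$ arises from the Euclidean area formula corrected by the factor $(1+C\lambda)^{-d} \geq \tfrac{1}{2}$ for $\lambda$ small. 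The main technical obstacle is ensuring that the CCBP conditions hold uniformly across all scales--especially when extending below $\min(S)$--and that the square-summability of the $\beta$-numbers at cube centres transfers to summability of $\ve_k'^2$ along the orbits $\{f_k(z)\}$ along which Theorem \ref{LipDT} demands control.
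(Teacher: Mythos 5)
The paper does not supply its own proof of this lemma; it cites Lemma 11.3 of Azzam--Schul and Lemma 3.7 of \cite{hyde2020analyst}. Your reconstruction follows the same architecture as those proofs: build a CCBP from cube centres and nearly-minimising planes, verify coherence by angle control, invoke Theorem \ref{DT} for the surface and Theorem \ref{LipDT} for the bi-Lipschitz upgrade. So the route is correct.

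There is, however, a step that does not quite close as written. To apply Theorem \ref{LipDT} you need $\sum_{k} \ve_k'(f_k(z))^2 \lesssim \lambda^2$, and you propose to obtain this from Lemma \ref{AngControl}. But the conclusion of Lemma \ref{AngControl} is a sup-type bound: if every $T$ containing $Q$ or $R$ satisfies $\beta_E^{d,1}(MB_T) < \ve$, then $\angle(P_Q,P_R) \lesssim \ve$. Applied inside $S$ with $\ve = \lambda$, this yields only the uniform estimate $\ve_k'(f_k(z)) \lesssim \lambda$ for every $k$, and squaring and summing over the unbounded number of scales in $S$ gives no finite bound. What is actually needed is the sharper pointwise estimate $\angle(P_Q,P_R) \lesssim \sum_T \beta_E^{d,1}(MB_T)$ where the sum runs over a \emph{bounded} number of cubes $T$ at scales between $\ell(Q)$ and $\ell(R)$; for the pairs of planes appearing in $\ve_k'$ (scales within two of each other, overlapping balls) this involves $O(1)$ cubes near scale $r_k$. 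That stronger estimate is exactly what the telescoping argument in the \emph{proof} of Lemma \ref{AngControl} supplies, after which Cauchy--Schwarz and bounded overlap of the relevant cubes give $\sum_k \ve_k'^2 \lesssim \sum_{T\in S} \beta_E^{d,1}(MB_T)^2 \lesssim \lambda^2$. You should cite that intermediate estimate rather than the lemma's statement, since the latter alone is insufficient.

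Two smaller inaccuracies: the quantitative inspection of Theorem \ref{LipDT} gives a per-scale distortion that is second order in $\ve_k'$, so the bi-Lipschitz constant comes out as $1 + O(\sum_k \ve_k'^2) = 1 + O(\lambda^2)$ rather than $1+O(\sqrt{M'})$ (either suffices for $1+C\lambda$); and the lower Ahlfors bound picks up a factor $(1+C\lambda)^{-2d}$, one $L^{-d}$ from shrinking the preimage ball and another from the measure contraction of $f_S^{-1}$, not $(1+C\lambda)^{-d}$. Both are harmless but worth getting right.
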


The following lemma is \cite[Lemma 11.4]{azzam2018analyst}. The same proof works now that we have Theorem \ref{DT} for subsets of $H.$ 

\begin{lem}\label{l:control-beta}
We have 
\[ \sum_{S \in \mathscr{S}} \ell(Q(S))^d \lesssim \ell(Q_0)^d + \sum_{Q \in \mathscr{D}} \beta^{d,1}_E(MB_Q)^2 \ell(Q)^d. \] 
\end{lem}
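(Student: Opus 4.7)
The plan is to group tops of regions by their enclosing parent region, extract a $\beta$-number lower bound on $\ell(Q(S))^d$ from the failure of the stopping-time condition at each minimal cube, and exploit the bi-Lipschitz surface $\Sigma_{S'}$ from Lemma \ref{Sigma} to carry out a packing argument for the minimal cubes.

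Every top $Q(S)$ with $S \neq S_{Q_0}$ is a child of some minimal cube $N \in \min(S')$ for a uniquely determined $S' \in \mathscr{S}$, so
\[
\sum_{S \in \mathscr{S}} \ell(Q(S))^d \;\leq\; \ell(Q_0)^d \;+\; \sum_{S' \in \mathscr{S}} \sum_{N \in \min(S')} \sum_{R \in \mathrm{Child}(N)} \ell(R)^d.
\]
To collapse the innermost sum I show $\#\mathrm{Child}(N) \lesssim_d 1$: membership of $N$ in $S'$ forces $\beta_E^{d,1}(MB_N) < \lambda$, so Lemma \ref{lemma:betap_betainfty} gives $\beta_{E,\infty}^d(\tfrac{M}{2}B_N) \lesssim \lambda^{1/(d+1)}$, and since the children centers lie in $E \cap B_N$ and are $\gtrsim \rho\ell(N)$-separated, they sit within $c_0\rho\ell(N)/4$ of some $d$-plane for $\lambda$ small; Lemma \ref{ENV} then bounds their number. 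The degenerate case $N = Q(S')$ with $S' = \{N\}$ a singleton is handled by borrowing flatness from the region enclosing $N$'s own parent cube.

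Next, the stopping-time failure at $N \in \min(S')$ supplies a child $R'_N$ of $N$ with $\sum_{R'_N \subseteq T \subseteq Q(S')} \beta_E^{d,1}(MB_T)^2 \geq \lambda^2$, so
\[
\lambda^2\, \ell(N)^d \;\leq\; \rho^{-d}\,\beta_E^{d,1}(MB_{R'_N})^2\, \ell(R'_N)^d \;+\; \sum_{N \subseteq T \subseteq Q(S')} \beta_E^{d,1}(MB_T)^2\, \ell(N)^d.
\]
Summing the first term over $(S',N)$ yields at most $\rho^{-d}\sum_{Q \in \mathscr{D}} \beta_E^{d,1}(MB_Q)^2 \ell(Q)^d$, since distinct minimal cubes produce distinct witnesses $R'_N$. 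For the second term I swap the order of summation and apply the packing estimate $\sum_{N \in \min(S'),\,N \subseteq T} \ell(N)^d \lesssim \ell(T)^d$, valid for every $T \in S'$, after which the total collapses to $\sum_{Q \in \mathscr{D}} \beta_E^{d,1}(MB_Q)^2 \ell(Q)^d$. Dividing through by $\lambda^2$ and combining with the children-count bound finishes the proof.

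The principal obstacle is this packing estimate $\sum_{N \in \min(S'),\,N \subseteq T} \ell(N)^d \lesssim \ell(T)^d$. For non-singleton $S'$, Lemma \ref{Sigma} places each center $x_N$ within $O(\lambda^{1/(d+1)}\ell(N))$ of $\Sigma_{S'}$, and the disjointness of minimal cubes forces $|x_N - x_{N'}| \gtrsim \max\{\ell(N),\ell(N')\}$, a separation that transfers to the nearest-point projections $y_N \in \Sigma_{S'}$. The balls $B(y_N, c_0\ell(N)/8) \cap \Sigma_{S'}$ are then pairwise disjoint and contained in a bounded dilate of $B_T$ sitting inside $MB_{Q(S')}$, and the upper Ahlfors regularity from \eqref{ee:lowerreg} yields $\sum_N \ell(N)^d \lesssim \mathscr{H}^d(\Sigma_{S'} \cap CB_T) \lesssim \ell(T)^d$; the singleton case is immediate.
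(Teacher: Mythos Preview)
Your overall strategy is the standard one (indeed the paper simply cites \cite[Lemma 11.4]{azzam2018analyst} without reproducing the argument): decompose tops by parent region, extract a $\beta$-lower bound from the stopping condition at each minimal cube, and pack minimal cubes on the surface $\Sigma_{S'}$. The second and third steps, including the packing estimate $\sum_{N\in\min(S'),\,N\subseteq T}\ell(N)^d\lesssim\ell(T)^d$ via \eqref{ee:lowerreg}, are carried out correctly.

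There is, however, a real gap in your bound $\#\mathrm{Child}(N)\lesssim_d 1$. The non-degenerate case is fine: if $N\neq Q(S')$ then the stopping condition indeed forces $\beta_E^{d,1}(MB_N)<\lambda$, and Lemma~\ref{ENV} applies. But in the degenerate case $S'=\{N\}$ you propose to ``borrow flatness from the region enclosing $N$'s own parent cube''. This works only when that region $S''$ is itself \emph{not} a singleton, since only then does $N^{(1)}\neq Q(S'')$ give $\beta_E^{d,1}(MB_{N^{(1)}})<\lambda$. If $S''$ is also a singleton---and nothing prevents an arbitrarily long chain of singleton regions above $N$, all the way to $Q_0$---then iterating the borrowing yields flatness only at scale $\ell(N^{(k)})=\rho^{-k}\ell(N)$, which is useless for separating points that are only $\rho\ell(N)$ apart. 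In infinite-dimensional $H$, where a cube of a lower content regular set can genuinely have unboundedly many children, this step fails as written.

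The fix is short. For each minimal cube $N$, either some child $R_0$ of $N$ satisfies $\beta_E^{d,1}(MB_{R_0})<\lambda/2$, or every child $R$ has $\beta_E^{d,1}(MB_R)\geq\lambda/2$. In the first case, since $B_N\subseteq(M/2)B_{R_0}$ (as $M\geq 4\rho^{-1}$), Lemma~\ref{lemma:betap_betainfty} gives a $d$-plane within $C\lambda^{1/(d+1)}\ell(N)$ of every child centre, and Lemma~\ref{ENV} bounds $\#\mathrm{Child}(N)$; your argument then proceeds unchanged. In the second case one has directly
\[
\sum_{R\in\mathrm{Child}(N)}\ell(R)^d\;\leq\;\frac{4}{\lambda^2}\sum_{R\in\mathrm{Child}(N)}\beta_E^{d,1}(MB_R)^2\ell(R)^d,
\]
and since children of distinct minimal cubes are distinct cubes of $\mathscr{D}$, summing over all such ``bad'' $N$ contributes at most $4\lambda^{-2}\sum_{Q\in\mathscr{D}}\beta_E^{d,1}(MB_Q)^2\ell(Q)^d$ to the right-hand side.
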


Let
\[ G = \left\{x \in Q_0 : \sum_{Q \in \mathscr{D}} \beta^{d,1}_E(MB_Q) \mathds{1}_Q(x) < \infty \right\}. \]
This is the collection of points in $Q_0$ which are only contained in a finite number of stopping-time regions. Thus, for each $x \in G$ there exists a minimal stopping-time region $S$ so that $x \in Q(S)$. This implies $x$ is contained in arbitrarily small cubes from $S$, hence $x \in \Sigma_S$ by \eqref{Closeness}. In particular, one has 
\[ G \subseteq \bigcup_{S \in \mathscr{S}} \Sigma_S. \] 
Let $E_0 = Q_0 \setminus G$ (the collection of points in $Q_0$ which are contained in an infinite number of stopping-time regions). The following gives the required bound for $\mathscr{H}^d(Q_0)$ in \eqref{e:Section6}. 

\begin{lem}
We have $\mathscr{H}^d(E_0) = 0$ and 
\begin{align}\label{e:upper-measure}
\mathscr{H}^d(Q_0) \lesssim \ell(Q_0)^d + \sum_{Q \in \mathscr{D}} \beta^{d,1}_E(MB_Q)^2\ell(Q)^d.
\end{align}
\end{lem}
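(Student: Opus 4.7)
The plan is to decompose $Q_0 = E_0 \cup G$, show $\mathscr{H}^d(E_0) = 0$ by a tail-decay argument on $\sum_{S \in \mathscr{S}} \ell(Q(S))^d$, and then bound $\mathscr{H}^d(G)$ by summing the $d$-measures of the bi-Lipschitz surfaces $\Sigma_S$.

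For $\mathscr{H}^d(E_0) = 0$: every $x \in E_0$ lies in infinitely many top cubes $Q(S)$, as noted in the text. By Theorem \ref{cubes}(1), any two cubes in $\mathscr{D}$ containing $x$ are nested, so $\{Q(S) : x \in Q(S)\}$ forms a strictly decreasing chain with $\ell(Q(S_k)) \downarrow 0$. By Lemma \ref{l:control-beta} the series $\sum_{S \in \mathscr{S}} \ell(Q(S))^d$ is finite, so for any $\varepsilon > 0$ one can choose $\delta > 0$ with
\[ \sum_{\substack{S \in \mathscr{S} \\ \ell(Q(S)) < \delta}} \ell(Q(S))^d < \varepsilon. \]
Selecting, for each $x \in E_0$, any top cube in its chain of side length below $\delta$ and then extracting the maximal such top cubes yields a pairwise disjoint cover of $E_0$ by sets of diameter at most $2\delta$, so $\mathscr{H}^d_{2\delta}(E_0) \le 2^d \varepsilon$. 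Letting $\delta \to 0$ gives $\mathscr{H}^d(E_0) = 0$.

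For the measure bound: since $Q_0 = E_0 \cup G$, we have $\mathscr{H}^d(Q_0) = \mathscr{H}^d(G)$. The text has established $G \subseteq \bigcup_{S \in \mathscr{S}} \Sigma_S$, where each $\Sigma_S = f_S(\mathbb{R}^d) \cap MB_{Q(S)}$ with $f_S$ a $(1+C\lambda)$-bi-Lipschitz map. Hence $\mathscr{H}^d(\Sigma_S) \lesssim_M \ell(Q(S))^d$, either by the bi-Lipschitz bound applied to the preimage (which lies in a Euclidean ball of radius $\lesssim_M \ell(Q(S))$) or by covering $\Sigma_S$ with boundedly many sub-balls to which the upper bound in \eqref{ee:lowerreg} applies. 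Summing over $\mathscr{S}$ and invoking Lemma \ref{l:control-beta},
\[ \mathscr{H}^d(Q_0) \le \sum_{S \in \mathscr{S}} \mathscr{H}^d(\Sigma_S) \lesssim \sum_{S \in \mathscr{S}} \ell(Q(S))^d \lesssim \ell(Q_0)^d + \sum_{Q \subseteq Q_0} \beta^{d,1}_E(MB_Q)^2 \ell(Q)^d. \]

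The main obstacle I expect is handling singleton stopping-time regions $S = \{Q\}$, for which no surface $\Sigma_S$ is defined, and reconciling the two descriptions of $G$ (finiteness of the $\beta$-sum at $x$ versus membership in only finitely many stopping-time regions). Both concerns are resolved by the construction of $\mathscr{S}$: the generation rule ensures that if $x \in G$ has minimal containing region $S$, then $S$ must be non-singleton (since $x \in E$ forces a further descendant region to appear below any singleton region), so the small-scale cubes of $S$ together with \eqref{Closeness} place $x$ on $\Sigma_S$.
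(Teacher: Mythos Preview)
Your proof is correct and follows the paper's approach: both use the inclusion $G \subseteq \bigcup_{S \in \mathscr{S}} \Sigma_S$ established just before the lemma, bound $\mathscr{H}^d(\Sigma_S) \lesssim \ell(Q(S))^d$ (the paper via \eqref{ee:lowerreg}), and sum via Lemma~\ref{l:control-beta}. For $\mathscr{H}^d(E_0)=0$ the paper simply cites \cite[Corollary~11.7]{azzam2018analyst}, whereas you supply the explicit tail-of-series covering argument (which tacitly assumes the right-hand side of \eqref{e:upper-measure} is finite, but that is harmless since the inequality is trivial otherwise).
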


\begin{proof}
	The fact that $\mathscr{H}^d(E_0) = 0$ follows from \cite[Corollary 11.7]{azzam2018analyst}. Since $\Sigma_S$ is contained in the ball $MB_{Q(S)}$, \eqref{ee:lowerreg} and Lemma \ref{l:control-beta} imply  
	\begin{align}
		\mathscr{H}^d(Q_0) \lesssim \mathscr{H}^d\left( \bigcup_{S \in \mathscr{S}} \Sigma_S \right) \lesssim \sum_{S \in \mathscr{S}} \ell(Q(S))^d \lesssim \ell(Q_0)^d + \sum_{Q \in \mathscr{D}} \beta_E^{d,1}(MB_Q)^2 \ell(Q)^d.
	\end{align}
\end{proof}
	
To finish the proof of \eqref{e:Section6}, we are left to show 
\begin{align}\label{e:bound-BWGL}
	\text{BWGL}(Q_0,A,\ve) \lesssim \ell(Q_0)^d + \sum_{Q \in \mathscr{D}} \beta^{d,1}_E(MB_Q)^2 \ell(Q)^d.
\end{align} 
Let us introduce some notation. Recall the definition of $\beta_{E,\infty}^d$. Now, for a ball $B$ centred on $E$ and a $d$-plane $L$, define the similar quantity 
\[ \eta_{E,\infty}^d(B,L) = \frac{1}{r_B} \sup_{x \in L \cap B} \dist(x,E) \] 
and 
\[ \eta_{E,\infty}^d(B) = \inf \{  \eta_{E,\infty}^d(B,L) : L \mbox{ is a $d$-plane } \}. \] 
Recall that if $Q \in \BWGL(A,\ve)$ then ${b\beta}_E^d(AB_Q) \geq \ve$, with ${b\beta}_E^d$ as in \eqref{e:vartheta}. For such a $Q$, \cite[Lemma 12.6]{azzam2018analyst} implies there exists a constant $C$ (independent of $Q$) so that either $\beta^{d}_{E,\infty}(AB_Q) \geq \ve/C$ or $\eta_{E,\infty}^d(AB_Q) \geq \ve/C.$ Hence, 
\begin{align}
	\BWGL(Q_0,A,\ve) &\leq \sum \{ \ell(Q)^d : Q \in \mathscr{D}, \ \beta^d_{E,\infty}(AB_Q) \geq \ve/C \}  \\
	&\hspace{2em} + \sum \{ \ell(Q)^d : Q \in \mathscr{D}, \ \eta^d_{E,\infty}(AB_Q) \geq \ve/C \}
\end{align}
By Lemma \ref{lemma:betap_betainfty}, we have the following for the first sum;  
\begin{align}
	\sum \{ \ell(Q)^d : Q \in \mathscr{D}, \ \beta^d_{E,\infty}(AB_Q) \geq \ve/C \} &\leq \sum\{ \ell(Q)^d : Q \in \mathscr{D}, \  \beta^{d,1}_E(2AB_Q) \geq c \ve^{d+1} \} \\
	&\leq \frac{1}{(c\ve^{d+1})^2} \sum_{Q \in \mathscr{D}} \beta^{d,1}_E(2AB_Q)^2\ell(Q)^d \\
	&\lesssim  \sum_{Q \in \mathscr{D}} \beta^{d,1}_E(MB_Q)^2\ell(Q)^d,
\end{align}
where the final inequality follows from Lemma \ref{lemma:monotonicity} and Remark \ref{r:setM}, since $2AB_Q \subseteq MB_Q.$ The second sum is dealt with below.  
\begin{lem}\label{l:eta-bound}
	Let $S \in \mathscr{S}.$ Then 
	\[ \sum \{ \ell(Q)^d : Q \in \mathscr{D}, \ \eta^d_{E,\infty}(AB_Q) \geq \ve/C \} \lesssim \ell(Q_0)^d + \sum_{Q \in \mathscr{D}} \beta^{d,1}_E(MB_Q)^2\ell(Q)^d. \] 
\end{lem}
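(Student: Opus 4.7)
The plan is to split the sum according to the partition $\mathscr{S}$, separating for each stopping-time region $S \in \mathscr{S}$ the top cube $Q(S)$ from the non-top cubes $Q \in S$ with $Q \neq Q(S)$. For the top cubes, Lemma \ref{l:control-beta} immediately gives the desired estimate
\[ \sum_{S \in \mathscr{S}} \ell(Q(S))^d \lesssim \ell(Q_0)^d + \sum_{Q \in \mathscr{D}} \beta^{d,1}_E(MB_Q)^2\ell(Q)^d. \]
So the essential task is to show that, for $\lambda$ (and hence $\ve$) chosen sufficiently small, no non-top cube $Q \in S$ can satisfy $\eta^d_{E,\infty}(AB_Q) \geq \ve/C$, i.e.\ all non-top cubes contribute nothing to the sum on the left-hand side.

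Fix a non-top $Q \in S$. By the stopping-time condition, $\beta^{d,1}_E(MB_Q) < \lambda$, and since $E$ is lower content $d$-regular, Lemma \ref{lemma:betap_betainfty} produces a $d$-plane $L$ with
\[ \sup_{y \in E \cap AB_Q} \dist(y,L) \lesssim \lambda^{1/(d+1)}\ell(Q), \]
where we used $M/2 \geq 25A$ from Remark \ref{r:setM}. This controls $\beta_{E,\infty}^d(AB_Q)$ but not $\eta_{E,\infty}^d(AB_Q)$; what remains is to show that every $x \in L \cap AB_Q$ is also within $C\lambda^{1/(d+1)}\ell(Q)$ of $E$.

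For this I will use the bi-Lipschitz surface $\Sigma_S = f_S(\R^d) \cap MB_{Q(S)}$ from Lemma \ref{Sigma}. Since \eqref{Closeness} places $E$ very close to $\Sigma_S$ inside $AB_Q$ and since the net points $\{x_R : R \in S\} \subseteq E$ lie on (or close to) $\Sigma_S$ and are Hausdorff-dense in $\Sigma_S \cap Q(S)$, one deduces that $E$ and $\Sigma_S$ are bilaterally $C\lambda^{1/(d+1)}\ell(Q)$-close in $AB_Q$; combined with the one-sided plane estimate this gives $d_{AB_Q}(\Sigma_S,L) \lesssim \lambda^{1/(d+1)}$. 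Because $\Sigma_S$ is a boundaryless topological $d$-manifold inside $MB_{Q(S)}$ that is $C\lambda^{1/(d+1)}$-Reifenberg flat on scale $\ell(Q)$ (via Theorem \ref{DT}(9)), a projection argument using Lemma \ref{ENV} and the bi-Lipschitz parametrization $f_S$ shows $\pi_L(\Sigma_S \cap AB_Q) \supseteq L \cap (A - C\lambda^{1/(d+1)})B_Q$. Hence every $x \in L \cap \tfrac{A}{2}B_Q$ (say) lies within $C\lambda^{1/(d+1)}\ell(Q)$ of $\Sigma_S$, and then bilateral closeness of $\Sigma_S$ and $E$ yields $\dist(x,E) \lesssim \lambda^{1/(d+1)}\ell(Q)$. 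Choosing $\lambda$ so that $C\lambda^{1/(d+1)} < \ve/(CA)$ gives $\eta^d_{E,\infty}(AB_Q) < \ve/C$, the required contradiction.

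The main obstacle I anticipate is the covering step: verifying that the orthogonal projection $\pi_L$ restricted to $\Sigma_S \cap AB_Q$ is approximately surjective onto $L \cap AB_Q$. In the Euclidean setting this follows from degree theory or the David--Semmes theory of Ahlfors regular surfaces, but in infinite-dimensional Hilbert space I must replace volume arguments by Lemma \ref{ENV}-type packing estimates and exploit the explicit bi-Lipschitz parametrization $f_S$ from Lemma \ref{Sigma} together with the Reifenberg flatness of $\Sigma_S$. A secondary technical point is transferring the closeness of $\Sigma_S$ to $L$ back to closeness of $E$ to $L$ on both sides, which relies on the density of the centres $x_R$, $R \in S$, in $\Sigma_S \cap Q(S)$.
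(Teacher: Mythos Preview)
Your central claim---that for $\lambda$ small enough, every non-top cube $Q \in S$ satisfies $\eta^d_{E,\infty}(AB_Q) < \ve/C$---is false, and what you flag as a ``secondary technical point'' is in fact the main obstruction. Take $E = P \setminus B(z,r)$ for a $d$-plane $P$: this set is lower content $d$-regular with $\beta^{d,1}_E \equiv 0$, so the stopping-time region $S_{Q_0}$ contains every cube and stopping never occurs. Yet for any cube $Q$ with $x_Q$ on the boundary of the removed ball and $\ell(Q) \sim r$, one has $\eta^d_{E,\infty}(AB_Q) \sim 1/A$ regardless of how small $\lambda$ is. The step that breaks is your asserted bilateral $C\lambda^{1/(d+1)}\ell(Q)$-closeness of $E$ and $\Sigma_S$: the surface $\Sigma_S$, being a bi-Lipschitz image of $\R^d$, fills in the hole, but no centres $x_R$ lie there, so the centres $\{x_R : R \in S\}$ are only $\sim \ell(Q)$-dense in $\Sigma_S$ near the hole, not $\lambda^{1/(d+1)}\ell(Q)$-dense. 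The quantity $\beta^{d,1}_E$ simply carries no information about holes in $E$, so no choice of $\lambda$ can force $\eta^d_{E,\infty}$ to be small.

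The paper does not attempt a pointwise bound. Instead it controls the \emph{sum} by splitting each $S$ into two families: cubes where $\Sigma_S$ itself fails bilateral flatness, handled by the Carleson estimate of Lemma~\ref{l:bi-lip-BWGL} (Dorronsoro applied to the bi-Lipschitz map $f_S$), and cubes $Q$ where $\eta^d_{E,\infty}(AB_Q)$ is large but $\eta^d_{\Sigma_S,\infty}(AB_Q)$ is small, handled by \cite[Proposition~12.7]{azzam2018analyst}, which shows these cubes satisfy a packing condition $\sum \ell(Q)^d \lesssim \ell(Q(S))^d$. Your argument misses this second family entirely; they are precisely the cubes where $E$ has holes that $\Sigma_S$ papers over, and only a packing argument---not a pointwise one---can control them. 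Summing the two estimates over $S \in \mathscr{S}$ and invoking Lemma~\ref{l:control-beta} gives the lemma.
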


To prove Lemma \ref{l:eta-bound}, will need some preliminary lemmas. In \cite{azzam2018analyst}, the following is stated as a corollary of a more general result for UR sets in $\R^n$. This more general result is exactly Theorem \ref{Thm3} (for subsets of $\R^n$) and so we need a direct proof.

\begin{lem}\label{l:bi-lip-BWGL}
	Let $\delta > 0$ and $S \in \mathscr{S}$. Then 
	\[ \sum \{\ell(Q)^d : Q \in S, \ {b\beta}^d_{\Sigma_S}(AB_Q) \geq \delta \} \lesssim_{\delta} \ell(Q(S))^d. \] 
\end{lem}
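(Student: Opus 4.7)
The strategy is to pull the problem back to $\R^d$ via the bi-Lipschitz parametrisation $f_S \colon \R^d \to H$, control ${b\beta}^d_{\Sigma_S}(AB_Q)$ by the infinity $\Omega$-number of $f_S$ on a comparable ball $I_Q \subseteq \R^d$, and then invoke Dorronsoro's theorem (Corollary \ref{CorDor}) together with Lemma \ref{l:omega_infty}. For each $Q \in S$, apply \eqref{Closeness} (using that $M$ is large enough that $x_Q \in (M^{1/3}/4)B_Q$) to produce $y_Q \in \Sigma_S$ with $|x_Q - y_Q| \lesssim \lambda^{1/(d+1)}\ell(Q)$; set $z_Q = f_S^{-1}(y_Q)$ and $I_Q = B_{\R^d}(z_Q, KA\ell(Q))$ for a large constant $K$. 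Because $f_S$ is $(1+C\lambda)$-bi-Lipschitz, $f_S^{-1}(\Sigma_S \cap AB_Q) \subseteq \tfrac14 I_Q$ and $f_S(I_Q) \subseteq 2AB_Q$; moreover, at each fixed scale the centres $z_Q$ are $\sim \ell(Q)$-separated, so by Lemma \ref{ENV} the balls $\{I_Q\}_{Q\in S}$ have bounded overlap.

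Next I would compare ${b\beta}^d_{\Sigma_S}(AB_Q)$ with $\Omega_{f_S,\infty}(I_Q)$. Let $A_Q$ be an affine map almost infimising $\Omega_{f_S,\infty}(I_Q)$, so $\|f_S - A_Q\|_{L^\infty(I_Q)} \lesssim \Omega_{f_S,\infty}(I_Q)\cdot r_{I_Q}$, and set $P_Q = A_Q(\R^d)$. Since $f_S$ is $(1+C\lambda)$-bi-Lipschitz, a perturbation argument shows that $A_Q$ is $(1+C(\lambda + \Omega_{f_S,\infty}(I_Q)))$-bi-Lipschitz on $I_Q$; in particular $P_Q$ is a genuine $d$-plane. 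The inclusion $\Sigma_S \cap AB_Q \subseteq f_S(\tfrac14 I_Q)$ immediately gives
\[
\sup_{y \in \Sigma_S \cap AB_Q} \dist(y,P_Q) \lesssim A\ell(Q) \cdot \Omega_{f_S,\infty}(I_Q).
\]
For the reverse direction, given $w \in P_Q \cap AB_Q$ write $w = A_Q(z)$; the bi-Lipschitz lower bound on $A_Q$ forces $z \in I_Q$, whence $\dist(w,\Sigma_S) \leq |A_Q(z) - f_S(z)| \lesssim A\ell(Q) \cdot \Omega_{f_S,\infty}(I_Q)$. Normalising by $\diam(AB_Q) = 2A\ell(Q)$ yields ${b\beta}^d_{\Sigma_S}(AB_Q) \lesssim \Omega_{f_S,\infty}(I_Q)$.

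To conclude, I would multiply $f_S$ by a smooth cutoff supported in a ball $B^\ast \subseteq \R^d$ of radius comparable to $M\ell(Q(S))$ containing every $I_Q$, producing a Lipschitz compactly supported $\tilde f_S$ that agrees with $f_S$ up to an additive constant on $B^\ast$. Corollary \ref{CorDor} with $p = 2$ then gives $\sum_{I \in \mathscr{I}} \Omega_{\tilde f_S,2}(3B_I)^2 \ell(I)^d \lesssim \ell(Q(S))^d$. For each $Q \in S$ pick a dyadic cube $I_Q^\ast \in \mathscr{I}$ with $\ell(I_Q^\ast) \sim A\ell(Q)$ and $3B_{I_Q^\ast} \supseteq 2I_Q$; bounded overlap of the $I_Q$ ensures that $Q \mapsto I_Q^\ast$ has bounded multiplicity. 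Applying Lemma \ref{l:omega_infty} to the (essentially) bi-Lipschitz $\tilde f_S$ on $B^\ast$ gives $\Omega_{f_S,\infty}(I_Q) \lesssim \Omega_{\tilde f_S,2}(3B_{I_Q^\ast})^{1/(d+1)}$, and Chebyshev then produces
\[
\sum \{\ell(Q)^d : Q \in S,\ {b\beta}^d_{\Sigma_S}(AB_Q) \geq \delta \} \lesssim \delta^{-2(d+1)} \sum_{I \in \mathscr{I}} \Omega_{\tilde f_S,2}(3B_I)^2 \ell(I)^d \lesssim_\delta \ell(Q(S))^d.
\]
The main obstacle will be the bilateral nature of ${b\beta}$: controlling $\sup_{w \in P_Q \cap AB_Q} \dist(w,\Sigma_S)$ requires the best affine approximation $A_Q$ to be itself (nearly) injective with a genuine $d$-plane as image, which is not automatic and relies critically on the smallness of $\lambda$ together with the bi-Lipschitz character of $f_S$.
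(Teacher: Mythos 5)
Your strategy mirrors the paper's: parametrise $\Sigma_S$ by the bi-Lipschitz map $f_S$, transfer the bilateral flatness of $\Sigma_S$ to $\Omega$-numbers of $f_S$ on comparable balls in $\R^d$, and close with Corollary~\ref{CorDor} plus Chebyshev, using Lemma~\ref{ENV} for bounded multiplicity of $Q \mapsto I_Q$. The interesting difference is in the $\eta$-direction: where the paper proves surjectivity of $\pi_{\calA_Q}\circ f$ onto $\calA_Q(\R^d) \cap AB_Q$ via a degree-theory argument (Lemma~\ref{l:proj-covers}, then Lemma~\ref{l:f-in-Sigma}), you directly invert $A_Q$, take $z = A_Q^{-1}(w)$, and bound $\dist(w,\Sigma_S) \leq |A_Q(z) - f_S(z)| \leq \|f_S - A_Q\|_{L^\infty(I_Q)}$. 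When it applies, your argument is shorter.

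The gap is in the claim that ``$A_Q$ is $(1+C(\lambda + \Omega_{f_S,\infty}(I_Q)))$-bi-Lipschitz on $I_Q$.'' The perturbation only yields $|A_Q(y)-A_Q(z)| \geq (1+C\lambda)^{-1}|y-z| - C\,\Omega_{f_S,\infty}(I_Q)\, r_{I_Q}$, which is a useful lower bound only at scales $|y-z| \gtrsim \Omega_{f_S,\infty}(I_Q)\, r_{I_Q}$; if $\Omega_{f_S,\infty}(I_Q)$ is not small compared to $1$, nothing forces $A_Q$ to be injective, and your inversion $z = A_Q^{-1}(w)$, as well as the containment $z \in I_Q$, can fail outright. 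Consequently the stated inequality ${b\beta}^d_{\Sigma_S}(AB_Q) \lesssim \Omega_{f_S,\infty}(I_Q)$ does not come out of your argument uniformly over $Q \in S$. The paper sidesteps this by fixing a threshold $\alpha \ll \delta$ and splitting $S = S_{\geq} \cup S_{\leq}$: on $S_{\geq}$ (where $\Omega_{f,2}(8B_{I_Q}) \geq \alpha$) one just counts cubes via Chebyshev on the Dorronsoro sum, and only on $S_{\leq}$ are $A_Q$ and its inverse well controlled, so the rank-$d$ and bi-Lipschitz claims have content there. Your proof needs the same dichotomy; stated as a contrapositive, if $\Omega_{f_S,\infty}(I_Q) < \alpha(\lambda,d)$ then $A_Q$ is genuinely bi-Lipschitz and ${b\beta}^d_{\Sigma_S}(AB_Q) \lesssim \Omega_{f_S,\infty}(I_Q) < C\alpha$, so choosing $\alpha < \delta/(2C)$ forces $\{{b\beta}^d_{\Sigma_S}(AB_Q) \geq \delta\} \subseteq \{\Omega_{f_S,\infty}(I_Q) \geq \alpha\}$, and the Chebyshev/Dorronsoro step then closes. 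Your final sentence flags the issue but does not resolve it; the threshold split is what's missing.
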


The proof of Lemma \ref{l:bi-lip-BWGL} will be split into several lemmas. Fix $S \in \mathscr{S}.$ Let $\Sigma = \Sigma_S$ and let $f = f_S$ be the $(1+C\lambda)$-bi-Lipschitz from Lemma \ref{Sigma} so that $\Sigma = f(\R^d) \cap MB_{Q(S)}.$ Let $\mathscr{I}$ be the usual dyadic grid for $\R^d.$ For $I \in \mathscr{I},$ let $x_I$ be its centre, and define $B_I = B(x_I,\diam(I)).$ Now, for each $Q \in S$, let $x_Q'$ be the point in $\Sigma$ which is closest to $x_Q$ and let $B_Q' = B(x_Q',\ell(Q)).$ By \eqref{Closeness}, for $\lambda$ small enough, we have 
\[4AB_Q \subseteq 5AB_Q'. \]
Finally, define $I_Q$ to be the cube in $\mathscr{I}$ so that $f^{-1}(x_Q') \in I_Q$ and $\tfrac{1}{2}\diam(I_Q) <  6A\ell(Q) \leq \diam(I_Q).$ 

\begin{lem}\label{l:f-inclusion}
	Let $Q \in S.$ Then 
	\[  f^{-1}(\Sigma \cap 4AB_Q) \subseteq f^{-1}(\Sigma \cap 5AB_Q') \subseteq 4B_{I_Q}. \] 
\end{lem}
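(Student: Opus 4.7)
The first inclusion is immediate: the inclusion $4AB_Q \subseteq 5AB_Q'$ was already noted above the lemma statement (as a consequence of \eqref{Closeness} for $\lambda$ small enough), and taking preimages under $f$ preserves set inclusions. So the entire content is the second inclusion, and the plan is to verify this by a direct triangle inequality using only the definition of $I_Q$ and the bi-Lipschitz property of $f$.

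Concretely, let $y \in f^{-1}(\Sigma \cap 5AB_Q')$, so that $|f(y) - x_Q'| \leq 5A\ell(Q)$. Since $f$ is $(1+C\lambda)$-bi-Lipschitz,
\[ |y - f^{-1}(x_Q')| \leq (1+C\lambda)\, |f(y) - x_Q'| \leq (1+C\lambda)\, 5A\,\ell(Q). \]
By the defining property $6A\ell(Q) \leq \diam(I_Q)$, this gives
\[ |y - f^{-1}(x_Q')| \leq (1+C\lambda)\, \tfrac{5}{6}\,\diam(I_Q). \]
On the other hand, $f^{-1}(x_Q') \in I_Q$, so $|f^{-1}(x_Q') - x_{I_Q}| \leq \tfrac{1}{2}\diam(I_Q)$. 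Combining by the triangle inequality,
\[ |y - x_{I_Q}| \leq \left( (1+C\lambda)\tfrac{5}{6} + \tfrac{1}{2} \right)\diam(I_Q), \]
which, for $\lambda$ chosen sufficiently small, is bounded by $4\diam(I_Q) = r_{4B_{I_Q}}$. Hence $y \in 4B_{I_Q}$, as required.

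The argument uses no substantive new ideas beyond the bi-Lipschitz control on $f$ and the scale matching built into the definition of $I_Q$; there is no real obstacle. The only point that deserves a sentence in the write-up is the first inclusion, to remind the reader that the factors $4A$ versus $5A$ were arranged precisely so that $4AB_Q \subseteq 5AB_Q'$ holds once \eqref{Closeness} is invoked.
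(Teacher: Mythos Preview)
Your proof is correct and follows essentially the same approach as the paper: both argue the second inclusion by the triangle inequality $|y - x_{I_Q}| \leq |y - f^{-1}(x_Q')| + |f^{-1}(x_Q') - x_{I_Q}|$, bounding the first term via the bi-Lipschitz constant of $f$ and the second via $f^{-1}(x_Q') \in I_Q$. The only cosmetic difference is that the paper uses the cruder bound $|f^{-1}(x_Q') - x_{I_Q}| \leq \diam(I_Q)$ rather than your $\tfrac{1}{2}\diam(I_Q)$, arriving at $2\diam(I_Q)$ instead of your sharper constant.
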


\begin{proof}
	The first inclusion follows from the fact that $4AB_Q \subseteq 5AB_Q'.$ Now for the second inclusion. Take $z \in f^{-1}(\Sigma \cap 5AB_Q')$ and let $y \in \Sigma \cap 5AB_Q'$ so that $z = f^{-1}(y).$ By the definition of $I_Q$, and the fact that $f$ is $(1+C\lambda)$-bi-Lipschitz, we have 	
	\begin{align}
		|z - x_I| &\leq |z - f^{-1}(x_Q')| + |f^{-1}(x_Q') - x_I| \leq (1+C\ve)|y - x_Q'| + |f^{-1}(x_Q') - x_I| \\ 
		&\leq (1+C\lambda)5A\ell(Q) + \diam(I_Q) \leq 2\diam(I_Q) \leq 4\diam(I_Q).
	\end{align}
\end{proof}

\begin{lem}
	Let $I \in \mathscr{I}.$ Then 
	\begin{align}\label{e:S_I}
		S_I = \{Q \in S : I_Q = I \} \lesssim_d 1.
	\end{align}
\end{lem}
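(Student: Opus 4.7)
My plan is to use the fact that the definition of $I_Q$ pins down the side length of $Q$ within a factor of two, so the bound reduces to a single-scale Euclidean volume count through the bi-Lipschitz chart $f$.

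First, any $Q \in S_I$ satisfies $\tfrac{1}{2}\diam(I) < 6A\ell(Q) \leq \diam(I)$, hence $\ell(Q) \in (\diam(I)/(12A), \diam(I)/(6A)]$. Since $\ell(Q) = 5\rho^{k(Q)}$, the level $k(Q)$ realised by cubes of $S_I$ takes at most $\lesssim \log 2/\log \rho^{-1}$ integer values, a number depending only on $\rho$. It therefore suffices, for each such fixed level $k$, to bound $S_I^k \coloneqq \{Q \in S_I : k(Q) = k\}$ by a constant depending only on $d$ and $A$.

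For distinct $Q,R \in S_I^k$, the maximal $\rho^k$-separation of the net $X_k$ gives $|x_Q - x_R| \geq \rho^k = \ell(Q)/5$. Since $x_Q \in E \cap B_Q \subseteq E \cap (M^{1/3}/4)B_Q$ (valid once $M$ is taken large, cf.\ Remark \ref{r:setM}), the estimate \eqref{Closeness} applied with $R = Q$ yields $|x_Q - x_Q'| \lesssim \lambda^{1/(d+1)}\ell(Q)$, and likewise for $R$. Fixing $\lambda$ small enough (independently of $Q,R$) so that this approximation error cannot eat the net separation, I obtain $|x_Q' - x_R'| \gtrsim \ell(Q)$. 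Passing through the $(1+C\lambda)$-bi-Lipschitz inverse $f^{-1}$ then gives $|z_Q - z_R| \gtrsim \ell(Q)$, where $z_Q \coloneqq f^{-1}(x_Q')$.

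By the definition of $I_Q$, the points $z_Q$ for $Q \in S_I^k$ all lie in the dyadic cube $I \subseteq \R^d$, and so the balls $B(z_Q, c\ell(Q))$ (with a small absolute $c$) are pairwise disjoint and contained in a common Euclidean ball of radius $\sim \diam(I) \sim A\ell(Q)$. Comparing Lebesgue volumes in $\R^d$ gives $\#S_I^k \lesssim_d A^d$, and summing over the boundedly many scales $k$ completes the bound. The only delicate point is the separation transfer in the third paragraph: I must fix $\lambda$ small enough that the error $\lambda^{1/(d+1)}$ is dominated by the net separation constant $1/5$, but since $\lambda$ is a universal smallness parameter chosen once and for all, this poses no real obstruction.
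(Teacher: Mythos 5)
Your proof is correct and follows essentially the same route as the paper: pin down the scale of $Q$ (finitely many levels), use the separation of same-scale cube centres plus \eqref{Closeness} to separate the points $x_Q'$ in $\Sigma_S$, push through the bi-Lipschitz $f^{-1}$, and conclude by a Euclidean volume count inside $I$. The paper phrases the separation step via disjointness of the balls $c_0B_Q$ rather than the $\rho^k$-separation of $X_k$, but these are the same fact.
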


\begin{proof}
	First, since $\ell(I) \sim \ell(Q)$ for each $Q \in S_I,$ there are a bounded number of $k \in \Z$ for which $S_I \cap \mathscr{D}_k \not=\emptyset.$ Fix $k$ for which this intersection is non-empty. For any $Q_1,Q_2 \in S_I \cap  \mathscr{D}_k,$ since $c_0B_{Q_1} \cap c_0B_{Q_2} = \emptyset,$ we know by \eqref{Closeness} that $|x_{Q_1}' - x_{Q_2}'| \geq c_0\ell(Q_1)/2 \sim \ell(I).$ Then, since $f$ is $(1+C\lambda)$-bi-Lipschitz, this implies the preimages $f^{-1}(x_Q'), \  Q \in S_I \cap \mathscr{D}_k$, are a collection of approximately $\ell(I)$ separated points in $\R^d$ which are contained in a ball of radius approximately $\ell(I)$ (by definition of $I$). A standard volume argument in $\R^d$ implies $\#(S_I \cap \mathscr{D}_k)\lesssim_d 1,$ which finishes the proof. 
	
\end{proof}

Let $0 < \alpha \ll \delta.$ Let $S_{\geq}$ be those cubes $Q \in S$ so that $\Omega_{f,2}(8B_{I_Q}) \geq \alpha$ and let $S_\leq = S \setminus S_\geq.$ The definition of $\Omega_{f,2}$ can be found at the beginning of Section \ref{s:Dor}. Then, for each $Q \in S_\leq,$ choose an affine map $\calA_Q$ so that $\Omega_{f,2}(8B_{I_Q},\calA_Q) \leq 2\Omega_{f,2}(8B_{I_Q}).$

\begin{lem}
	Let $Q \in S_\leq.$ For $y \in 4B_{I_Q}$ we have 
	\begin{align}\label{e:fcloseA}
		|f(y) - \calA_Q(y)| \lesssim \alpha^\frac{1}{d+1}\diam(I).
	\end{align}
	Additionally, for $y,z \in \R^d$ we have 
	\begin{align}\label{e:lowerA}
		|\calA_Q(y) - \calA_Q(z)| \geq (1+C \lambda)^{-1}|y-z| - \frac{C\alpha^\frac{1}{d+1}}{\gamma}\diam(I), 
	\end{align}
	where $\gamma = \gamma_{y,z} = \min\{1,4\diam(I)/|y-z|\}$. 
\end{lem}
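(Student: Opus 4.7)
The plan is to bootstrap the $L^2$-average closeness $\Omega_{f,2}(8B_{I_Q},\calA_Q) \lesssim \alpha$ together with the bi-Lipschitz property of $f$ to a pointwise $L^\infty$ bound on $4B_{I_Q}$ (which is exactly \eqref{e:fcloseA}), and then use linearity of $\calA_Q$ to extend the lower bound \eqref{e:lowerA} from this ball out to all of $\R^d$.

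For \eqref{e:fcloseA}, first apply Lemma \ref{l:omega_infty} with $B = 8B_{I_Q}$ to obtain an affine map $\tilde A$ satisfying $\sup_{4B_{I_Q}} |f - \tilde A| \lesssim \alpha^{1/(d+1)}\diam(I)$, using $\Omega_{f,2}(8B_{I_Q}) \lesssim \alpha$ and the bi-Lipschitz hypothesis on $f$. Comparing $\tilde A$ to $\calA_Q$ via the triangle inequality in the $L^2$ average over $4B_{I_Q}$:
\[ \left(\fint_{4B_{I_Q}} |\tilde A - \calA_Q|^2\right)^{1/2} \lesssim_d \alpha^{1/(d+1)}\diam(I) + \alpha\diam(I) \lesssim \alpha^{1/(d+1)}\diam(I). \]
The main work is converting this $L^2$ bound into an $L^\infty$ bound on $4B_{I_Q}$: writing the affine map $\tilde A - \calA_Q$ in the form $A(y) = A(x_0) + L(y - x_0)$ with $x_0$ the centre of $4B_{I_Q}$, Pythagoras in $H$ gives $\fint_{B(x_0,r)} |A|^2 = |A(x_0)|^2 + c_d r^2 \|L\|_{\mathrm{HS}}^2$, so both $|A(x_0)|$ and $r\|L\|_{\mathrm{HS}}$ (hence $r\|L\|_{\mathrm{op}}$) are controlled. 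This promotes the $L^2$ estimate to $\sup_{4B_{I_Q}}|\tilde A - \calA_Q| \lesssim_d \alpha^{1/(d+1)}\diam(I)$, and the triangle inequality then yields \eqref{e:fcloseA}.

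For \eqref{e:lowerA}, write $\calA_Q(y) - \calA_Q(z) = L(y-z)$ for the linear part $L$ of $\calA_Q$. When $|y-z| \leq 4\diam(I)$ (so $\gamma = 1$), translate inside $4B_{I_Q}$ to find $y_0, z_0 \in 4B_{I_Q}$ with $z_0 - y_0 = y - z$, which is possible since $\diam(4B_{I_Q}) = 8\diam(I)$. Applying \eqref{e:fcloseA} at $y_0, z_0$ and the $(1+C\lambda)$-bi-Lipschitz lower bound on $f$:
\[ |L(y-z)| \geq |f(z_0) - f(y_0)| - 2C\alpha^{1/(d+1)}\diam(I) \geq (1+C\lambda)^{-1}|y-z| - C'\alpha^{1/(d+1)}\diam(I), \]
which is \eqref{e:lowerA} in this range. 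When $|y-z| > 4\diam(I)$, so $\gamma = 4\diam(I)/|y-z| < 1$, apply the previous case to the pair $\gamma(y-z)$ (of length exactly $4\diam(I)$) and divide through by $\gamma$ using $L(\gamma w) = \gamma L(w)$ to recover the claimed inequality. The only non-routine point in the whole argument is the $L^2 \to L^\infty$ promotion for affine maps into $H$, handled above via Pythagoras and $\|L\|_{\mathrm{op}} \leq \|L\|_{\mathrm{HS}}$.
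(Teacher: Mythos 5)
Your proof is correct and follows the same overall strategy as the paper for both parts. For \eqref{e:lowerA} your argument is essentially identical to the paper's: establish the lower bound for a pair of points in $4B_{I_Q}$ using \eqref{e:fcloseA} and the bi-Lipschitz lower bound on $f$, then extend to arbitrary $y,z$ by the translation/rescaling $w \mapsto \gamma(y-z) + x_I$ and linearity of $\calL$.

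For \eqref{e:fcloseA} you actually supply a detail the paper glosses over. The paper writes $|f(y)-\calA(y)| \leq \Omega_{f,\infty}(4B_I,\calA)\diam(I) \lesssim \Omega_{f,\infty}(4B_I)\diam(I)$ and then invokes Lemma \ref{l:omega_infty}; but $\calA_Q$ was chosen to nearly minimise $\Omega_{f,2}(8B_I,\cdot)$, not $\Omega_{f,\infty}(4B_I,\cdot)$, so the middle inequality (that this specific $\calA_Q$ is near-optimal for the $L^\infty$ quantity) is not automatic. Your proposal makes this precise: take a near-$L^\infty$-optimal affine $\tilde A$ from Lemma \ref{l:omega_infty}, bound $\tilde A - \calA_Q$ in $L^2$ over $4B_{I_Q}$ by the triangle inequality, and promote that $L^2$ bound on the affine difference to a sup-bound via the Pythagoras identity $\fint_{B(x_0,r)}|A|^2 = |A(x_0)|^2 + c_d r^2\|L\|_{\mathrm{HS}}^2$ together with $\|L\|_{\mathrm{op}} \le \|L\|_{\mathrm{HS}}$. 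This is the standard way to make such a comparison rigorous, the constant tracking ($\alpha \le \alpha^{1/(d+1)}$ absorbs the $L^2$ term into the $L^\infty$ term) is right, and it buys a cleaner justification of the step the paper leaves implicit.
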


\begin{proof}
	For brevity, let us write $I = I_Q$ and $\calA = \calA_Q.$ The first statement is a consequence of Lemma \ref{l:omega_infty}, since for any $y \in 4B_{I}$ we have
	\begin{align}
		|f(y) - \calA(y)| &\leq \Omega_{f,\infty}(4B_{I},\calA) \diam(I) \lesssim \Omega_{f,\infty}(4B_{I}) \diam(I) \\ &\lesssim \Omega_{f,2}(8B_{I})^\frac{1}{d+1}\diam(I) \lesssim \alpha^\frac{1}{d+1} \diam(I).  
	\end{align}	
	Let us focus on the second statement. We start with the points $y',z' \in 4B_{I}.$ Notice, for these points, we have
	\begin{align}\label{e:points4B_I}
		|\calA(y') - \calA(z')| &\geq |f(y') - f(z')| - |f(y') - \calA(y')| - |f(z') - \calA(z')| \\
		& \geq (1+C\lambda)^{-1}|y'-z'| - C\alpha^\frac{1}{d+1}\diam(I).
	\end{align} 	
	Now consider $y,z \in \R^d$ to be any arbitrary points. Let $\gamma = \gamma_{y,z} = \min\{1,4\diam(I)/|y-z|\}$ and let $\calL$ denote the linear part of $\calA$. Then 
	\begin{align}
		|\calA(y) - \calA(z)| &= |\calL(y-z)| = \frac{1}{\gamma}|\calL(\gamma\cdot(y-z) + x_I) - \calL(x_I))| = \frac{1}{\gamma}|\calA(\gamma\cdot(y-z) + x_I) - \calA(x_I)| \\
		&\geq \frac{1}{\gamma} \left( (1+C\lambda)^{-1}|\gamma\cdot(y-z)| - C\alpha^\frac{1}{d+1}\diam(I) \right) \\
		& \geq (1+C\lambda)^{-1}|y-z| -\frac{C\alpha^\frac{1}{d+1}}{\gamma}\diam(I),
	\end{align}	
	where the penultimate inequality follows from \eqref{e:points4B_I} since $x_I,x_I+ \gamma\cdot(y-z) \in 4B_I.$
	
\end{proof}

\begin{lem}\label{l:proj-covers}
	There exists $\alpha > 0$ small enough so that for any $Q \in S_\leq$ we have
	\begin{align}\label{e:proj-covers}
		\pi_{\calA_Q} \circ f( 4B_{I_Q} ) \supseteq \calA_Q( 3B_{I_Q} ) \supseteq \calA_Q ( \R^d ) \cap AB_Q,
	\end{align}
	where $\pi_{\calA_Q}$ denotes the orthogonal projection onto the $d$-plane $\calA_Q(\R^d).$  
\end{lem}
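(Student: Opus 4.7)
The plan is to prove the two inclusions separately. The right-hand inclusion $\calA_Q(3B_{I_Q}) \supseteq \calA_Q(\R^d) \cap AB_Q$ is essentially a one-sided algebraic consequence of the lower bound \eqref{e:lowerA}; the left-hand inclusion $\pi_{\calA_Q} \circ f(4B_{I_Q}) \supseteq \calA_Q(3B_{I_Q})$ is the main obstacle and requires a topological (degree-type) argument based on the pointwise closeness \eqref{e:fcloseA}. For the remainder write $I = I_Q$, $\calA = \calA_Q$, and let $\calL$ be its linear part. Throughout we use $\diam(I) \sim A\ell(Q)$ and, crucially, $6A\ell(Q) \leq \diam(I)$.

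For the right-hand inclusion, suppose $y \in \R^d$ satisfies $\calA(y) \in AB_Q$. Since $f^{-1}(x_Q') \in I \subseteq 4B_I$, \eqref{e:fcloseA} gives $|\calA(f^{-1}(x_Q')) - x_Q'| \lesssim \alpha^{1/(d+1)}\diam(I)$, and by \eqref{Closeness} we have $|x_Q - x_Q'| \lesssim \lambda^{1/(d+1)}\ell(Q)$. Combining these bounds yields
\[ |\calA(y) - \calA(f^{-1}(x_Q'))| \leq A\ell(Q) + C\lambda^{1/(d+1)}\ell(Q) + C\alpha^{1/(d+1)}\diam(I). \]
Now apply \eqref{e:lowerA} with $z = f^{-1}(x_Q')$ and split on the value of $\gamma$: if $|y - z| > 4\diam(I)$, the inequality simplifies (for $\alpha,\lambda$ small) to $|\calA(y) - \calA(z)| \geq c|y - z|$, which forces $|y - z| \lesssim A\ell(Q) < \diam(I)$, a contradiction. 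Hence $\gamma = 1$, and \eqref{e:lowerA} together with $\diam(I) \geq 6A\ell(Q)$ yields $|y - f^{-1}(x_Q')| \leq \diam(I)/5$ for $\alpha,\lambda$ small. Since $|f^{-1}(x_Q') - x_I| \leq \diam(I)/2$, this gives $y \in B_I \subseteq 3B_I$, proving the second inclusion.

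For the left-hand inclusion, the scaling trick applied to \eqref{e:lowerA} (take $|y - z| \to \infty$ to kill the error after dividing by $|y - z|$) shows that $\calL$ is injective with $|\calL(v)| \geq (1 - C(\alpha^{1/(d+1)} + \lambda))|v|$ for every $v \in \R^d$. In particular $\calL$ has a well-defined inverse $\calA^{-1} \colon \calA(\R^d) \to \R^d$ of operator norm at most $2$, say. Define $F \colon \overline{4B_I} \to \R^d$ by $F(y) = \calA^{-1}(\pi_{\calA}(f(y)))$. Since $\pi_{\calA}$ fixes the plane $\calA(\R^d)$, for every $y \in \overline{4B_I}$ we have
\[ |F(y) - y| = |\calA^{-1}(\pi_{\calA}(f(y)) - \calA(y))| \leq 2|f(y) - \calA(y)| \leq C\alpha^{1/(d+1)}\diam(I), \]
by \eqref{e:fcloseA}. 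Taking $\alpha$ small enough that $C\alpha^{1/(d+1)} < 1$, a standard Brouwer degree / winding-number argument (the continuous perturbation $F$ of the identity on the closed ball $\overline{4B_I} \subseteq \R^d$ by at most $\diam(I)$ surjects onto $\overline{B(x_I, 3\diam(I))} = 3B_I$) gives $F(\overline{4B_I}) \supseteq 3B_I$. Applying $\calA$ to both sides yields $\pi_{\calA} \circ f(4B_I) \supseteq \calA(3B_I)$, completing the proof. The main subtlety here is the degree argument, which requires the ambient target space to be identified with $\R^d$ via the inverse $\calA^{-1}$ (justified by the first step of the linear analysis), and the smallness of $\alpha$ to be chosen after $\lambda$ so that both error bounds are controlled.
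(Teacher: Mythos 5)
Your proposal is correct and takes essentially the same approach as the paper: for the second inclusion, a direct estimate combined with the lower bound \eqref{e:lowerA} and a case split on $\gamma$; for the first inclusion, a Brouwer degree argument exploiting the pointwise closeness \eqref{e:fcloseA}. Your formulation of the degree step --- showing $F = \calA^{-1} \circ \pi_{\calA} \circ f$ is a perturbation of the identity on $\overline{4B_{I_Q}}$ by less than $\diam(I_Q)$, hence surjects onto $3B_{I_Q}$ --- is a slightly cleaner packaging of the same topological fact that the paper proves by writing out the normalized boundary map $z \mapsto (\calA^{-1}(g(z)) - z_1)/\|\calA^{-1}(g(z)) - z_1\|$ and exhibiting the homotopies explicitly; and your second-inclusion estimate anchors at $f^{-1}(x_Q')$ where the paper anchors at $x_I$, an inessential variation. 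One small point worth spelling out in the final write-up: after obtaining $F(\overline{4B_{I_Q}}) \supseteq 3B_{I_Q}$, the passage to $\pi_{\calA} \circ f(4B_{I_Q}) \supseteq \calA(3B_{I_Q})$ is immediate since $\calA \circ F = \pi_{\calA} \circ f$ and, because $F(\partial(4B_{I_Q}))$ lies outside $3B_{I_Q}$, every preimage under $F$ of a point in $3B_{I_Q}$ actually lies in the open ball.
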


\begin{proof}
	
	We start with the second inclusion. Again, let us write $I = I_Q$ and $\calA = \calA_Q.$ Let $y_0 \in \calA(\R^d) \cap AB_Q$ and let $z_0 \in \R^d$ be such that $y_0 = \calA(z_0).$ We will be done with this part once we show $z_0  \in 3B_{I_Q}.$ By \eqref{e:fcloseA}, \eqref{e:lowerA}, and recalling $f$ is $(1+C\lambda)$-bi-Lipschitz and $f^{-1}(x_Q') \in I,$ we have 		 
	\begin{align}\label{e:top-upper}
		|z_0 - x_{I}| &\leq (1+C\lambda)\left[|y_0 - \calA(x_{I})| + \frac{C\alpha^\frac{1}{d+1}}{\gamma}\diam(I) \right] \\
		&\hspace{-2em} \leq (1+C\lambda)\left[ |y_0 - x_Q'| + |f(f^{-1}(x_Q')) - f(x_{I})| + |f(x_{I}) - \calA(x_{I})| + \frac{C\alpha^\frac{1}{d+1}}{\gamma}\diam(I) \right] \\
		& \leq 2A\ell(Q) + (1+C\lambda) \diam(I) + C\alpha^\frac{1}{d+1} \diam(I) + \frac{C\alpha^\frac{1}{d+1}}{\gamma}\diam(I) \\
		&\leq 2\diam(I) + \frac{C\alpha^\frac{1}{d+1}}{\gamma}\diam(I).
	\end{align}
	Suppose, first of all, that $z_0 \not\in 4B_{I}.$ In this case, $\gamma = 4\diam(I)/|z_0 - x_I|$, and \eqref{e:top-upper} implies $|z_0 - x_I| \leq 2\diam(I) + \tfrac{1}{3}|z_0-x_I|$ for $\alpha$ small enough. Rearranging, we get $|z_0-x_I| \leq 3\diam(I)$ which is a contradiction. It follows that $z_0 \in 4B_I$ and $\gamma = 1.$ Then, for $\alpha$ small enough, \eqref{e:top-upper} implies $|z_0 - x_I| \leq 3\diam(I)$ as required for the second inclusion.
	
	We turn our attention to the first inclusion, which requires some topological consideration. Let $y_1 \in \calA(3B_I)$ and let $z_1 \in 3B_I$ be so that $y_1 = \calA(z_1).$ Notice, since $f$ is bi-Lipschitz and $\calA$ well approximates $f,$ then $\text{Rank}(\calA) = d.$ In particular, $\calA$ is invertible. Now, define $g = \pi_\calA \circ f$ and for $z \in \partial = \partial(4B_{I_Q})$ consider the map 
	\begin{align}\label{e:map-g}
		z \mapsto \frac{\calA^{-1}(g(z)) - \calA^{-1}(y_1)}{||\calA^{-1}(g(z)) - \calA^{-1}(y_1)||} = \frac{\calA^{-1}(g(z)) - z_1}{||\calA^{-1}(g(z)) - z_1||} .
	\end{align} 
	Let us see why the above map is well-defined. Let $z \in \partial.$ By definition, $g(z)$ is the point in $\calA(\R^d)$ which is closest to $f(z).$ Using this, with Lemma \ref{l:omega_infty}, \eqref{e:fcloseA}, and the fact that $f$ is $(1+C\lambda)$-bi-Lipschitz, we have   
	\begin{align}
		\diam(I_Q) &\leq |z-z_1| \leq (1+C \lambda)|f(z) - f(z_1)| \lesssim |f(z) - g(z)| + |g(z) - y_1| + |y_1 - f(z_1)|  \\ 
		&\leq |f(z) - \calA(z)| + |g(z) - y_1| + |\calA(z_1) - f(z_1)| \lesssim |g(z) - y_1| + \alpha^\frac{1}{d+1}\diam(I_Q). 
	\end{align}
	Taking $\alpha$ small enough and rearranging implies $|g(z) - y_1| \gtrsim \diam(I_Q)$ and so the map is well-defined. Additionally, since 
	\[ |g(z) - \calA(z)| \leq |g(z) - f(z)| + |f(z) - \calA(z)| \leq 2|f(z) - \calA(z)| \lesssim \alpha^\frac{1}{d+1}\diam(I_Q), \] 
	we see that the segment $[g(z) , \calA(z)]$ does not contain the point $y_1$ for $\alpha$ small enough. Hence, the map in \eqref{e:map-g} is homotopic to the map defined by
	\begin{align}
		z \mapsto \frac{\calA^{-1}(\calA(z)) - z_1}{||\calA^{-1}(\calA(z)) - z_1||} = \frac{z-z_1}{||z- z_1||}. 
	\end{align}
	Moving $z_1$ to $x_{I}$ along the segment $[z_1,x_{I}]$ (observing this does not meet $\partial$), this map is homotopic to  
	\[ z \mapsto \frac{z-x_{I}}{||z-x_{I}||} \]
	which has degree equal to 1. Thus, the map defined in \eqref{e:map-g} has degree equal to 1. This implies $y_1 \in \pi \circ f(4B_{I_Q})$ since otherwise we could define a homotopy from \eqref{e:map-g} to a constant by setting
	\[h(z,t) = \frac{\calA^{-1}(g(tz + (1-t)x)) - \calA^{-1}(y_1)}{|| A^{-1}(g(tz + (1-t)x)) - \calA^{-1}(y_1) ||} \] 
	for $z \in \partial, \ 0 \leq t \leq 1,$ which is impossible. 
\end{proof}

\begin{lem}\label{l:f-in-Sigma}
	For each $Q \in S,$ we have 
	\[ f(4B_{I_Q}) \subseteq \Sigma. \] 
\end{lem}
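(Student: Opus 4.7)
The plan is to reduce the statement to showing $f(4B_{I_Q})\subseteq MB_{Q(S)}$, since by Lemma~\ref{Sigma} we have $\Sigma=f(\R^d)\cap MB_{Q(S)}$ and $f(4B_{I_Q})\subseteq f(\R^d)$ is automatic. Once the ball containment is established, the desired inclusion follows immediately.

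To verify the ball containment, I would bound $|f(z)-x_{Q(S)}|$ for $z\in 4B_{I_Q}$ by a chain of triangle inequalities through the intermediate points $x_{I_Q}$, $f^{-1}(x_Q')$, $x_Q'$, $x_Q$. Recall $B_{I_Q}=B(x_{I_Q},\diam(I_Q))$, that $I_Q$ was chosen so that $\diam(I_Q)<12A\ell(Q)$, and that $f^{-1}(x_Q')\in I_Q$ so $|f^{-1}(x_Q')-x_{I_Q}|\leq \tfrac12\diam(I_Q)$. Thus for $z\in 4B_{I_Q}$,
\[
|z-f^{-1}(x_Q')|\leq 4\diam(I_Q)+\tfrac12\diam(I_Q)\leq 54A\ell(Q),
\]
and using that $f$ is $(1+C\lambda)$-bi-Lipschitz with $\lambda$ small,
\[
|f(z)-x_Q'|\leq (1+C\lambda)\cdot 54A\ell(Q)\leq 55A\ell(Q).
\]
Now \eqref{Closeness} applied to $x_Q\in E\cap (M^{1/3}/4)B_Q$ gives $|x_Q-x_Q'|\lesssim \lambda^{1/(d+1)}\ell(Q)\leq \ell(Q)$, and $Q\subseteq Q(S)\subseteq B_{Q(S)}$ gives $|x_Q-x_{Q(S)}|\leq \ell(Q(S))$. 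Combining, and using $\ell(Q)\leq \ell(Q(S))$,
\[
|f(z)-x_{Q(S)}|\leq 55A\ell(Q)+\ell(Q)+\ell(Q(S))\leq CA\,\ell(Q(S))
\]
for an absolute constant $C$. The proof concludes by noting that the constant $M=\max\{50A,C_0\}$ fixed in Remark~\ref{r:setM} is chosen large enough compared to $A$ (enlarging $M$ by an absolute multiplicative constant if necessary, which does not affect any earlier argument) so that $|f(z)-x_{Q(S)}|\leq M\ell(Q(S))$, i.e. $f(z)\in MB_{Q(S)}$.

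There is no genuine obstacle here: the entire argument is a single bi-Lipschitz triangle-inequality estimate, and the only point requiring care is bookkeeping of the absolute constants so that the prefactor on $A\ell(Q(S))$ does not exceed $M$. The key quantitative inputs — $\diam(I_Q)\sim A\ell(Q)$, the $(1+C\lambda)$-bi-Lipschitz property of $f$, and the approximation bound \eqref{Closeness} — are all already in place from Lemma~\ref{Sigma} and the construction of $I_Q$.
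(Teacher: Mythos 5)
Your proof is correct and takes essentially the same approach as the paper: the paper's own argument bounds $|y - x_Q|$ via $|y - x_Q'| + |x_Q' - x_Q|$, using the $(1+C\lambda)$-bi-Lipschitz bound on $f$, the fact that $f^{-1}(x_Q') \in I_Q \subseteq 4B_{I_Q}$, the size of $I_Q$, and \eqref{Closeness}, arriving at $|y - x_Q| \lesssim A\ell(Q) \leq M\ell(Q)$ and then $f(4B_{I_Q}) \subseteq MB_Q \subseteq MB_{Q(S)}$. You instead aim directly at $|f(z) - x_{Q(S)}|$, folding in the extra $|x_Q - x_{Q(S)}| \leq \ell(Q(S))$ step; both versions hinge on the identical triangle-inequality chain and the same three quantitative inputs, and both require (as you correctly observe) that $M$ be a sufficiently large absolute multiple of $A$, which is exactly the role of the choice $M \geq 50A$ in Remark~\ref{r:setM}.
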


\begin{proof}
	Let $Q \in S$ and $y \in f(4B_{I_Q}).$ Clearly $f^{-1}(y) \in 4B_{I_Q}.$ Then, by \eqref{Closeness}, and the fact that $f^{-1}(x_Q') \in I_Q \subseteq 4B_{I_Q}$ and $f$ is $(1+C\lambda)$-bi-Lipschitz, we have 
	\begin{align}
		|y - x_Q| &\leq |y-x_Q'| + |x_Q' - x_Q| = |f(f^{-1}(y)) -f(f^{-1}(x_Q'))| + |x_Q' - x_Q| \\ 
		&\leq (1+C\lambda)|f^{-1}(y) - f^{-1}(x_Q')| + |x_Q' - x_Q| \\
		&\leq (1+C\lambda)\diam(4I_Q) + C\lambda^\frac{1}{d+1} \ell(Q) \\
		&\leq 50A\ell(Q) \leq M \ell(Q).
	\end{align}
	This implies that 
	\[f(4B_{I_Q}) \subseteq f(\R^d) \cap MB_Q \subseteq f(\R^d) \cap MB_{Q(S)} = \Sigma \]
	and completes the proof.  
\end{proof}

\begin{proof}[Proof of Lemma \ref{l:bi-lip-BWGL}] 
	
	Starting with $S_\geq,$ by \eqref{e:S_I} and Corollary \ref{CorDor}, we know 
	\begin{align}
		\sum \{ \ell(Q)^d : Q \in S_\geq, \  {b\beta}_{\Sigma}^d(AB_Q) \geq \delta \}  &\lesssim \sum \{ \ell(I_Q)^d : Q \in S_\geq\} \\ &\lesssim_d \sum \{ \ell(I)^d : I \in \mathscr{I}, \ \Omega_{f,2}(8B_I) \geq \alpha \} \\ 
		&\lesssim_\alpha \sum_{\substack{I \in \mathscr{I}  \\  I \subseteq I_{Q(S)}}} \Omega_{f,2}(8B_I)^2 \ell(I)^d \lesssim \ell(Q(S))^d.
	\end{align}
	We are left to bound sum over cubes in $S_\leq.$ For $Q \in S_\leq$, by Lemma \ref{lemma:betap_betainfty}, Lemma \ref{l:f-inclusion}, and Lemma \ref{BetaBound}, we know 
	\[ \beta_{\Sigma,\infty}^d(2AB_Q,\calA_Q(\R^d)) \lesssim \beta^{d,2}_{\Sigma}(4AB_Q,\calA_Q(\R^d))^\frac{1}{d+1} \lesssim \Omega_{f,2}(8B_{I_Q})^\frac{1}{d+1}. \]
	We wish to prove a similar bound for $\eta^d_{\Sigma_S,\infty}.$ Let $y \in  \calA(\R^d) \cap AB_Q$. By Lemma \ref{l:proj-covers} and Lemma \ref{l:f-in-Sigma}, we can find $z' \in 4B_{I_Q}$ and $z \in f(4B_{I_Q}) \subseteq \Sigma$ so that $z = f(z')$ and $y =\pi_{\calA_Q(\R^d)}(z)$. Since $y$ is the point in $\calA_Q(\R^d)$ which is closest to $z,$ and $z' \in 4B_{I_Q}$, we have 
	\[ |y - z| \leq |\calA_Q(z') - f(z')| \lesssim \alpha^\frac{1}{d+1} \diam(I) \] 
	and so $z \in \Sigma \cap 2AB_Q$ for $\alpha$ small enough. We also have
	\[ \dist(y,\Sigma) \leq |y-z| = \dist(z,\calA_Q(\R^d)) \] 
	which after taking supremum and using the fact that $z \in \Sigma \cap 2AB_Q$, implies 
	\[ \eta_{\Sigma,\infty}^d(AB_Q,\calA_Q(\R^d)) \leq \beta_{\Sigma,\infty}^d(2AB_Q,\calA_Q(\R^d)). \] 
	Combining the two, we get 
	\[ {b\beta}^d_\Sigma(AB_Q) \leq {b\beta}_\Sigma^d(AB_Q,\calA_Q(\R^d)) \lesssim \beta^d_{\Sigma,\infty}(2AB_Q,\calA_Q(\R^d)) \lesssim \Omega_{f,2}(8B_{I_Q})^\frac{1}{d+1}, \] 
	hence, 
	\begin{align}
		\sum\{ \ell(Q)^d : Q \in S_\leq, \  {b\beta}_{\Sigma}^d(AB_Q) \geq \delta \} &\leq \sum\{ \ell(Q)^d : Q \in S_\leq, \   \Omega_{f,2}(8B_{I_Q}) \geq c \delta^{d+1} \} \\
		&\lesssim_d \sum \{ \ell(I)^d : I \in \mathscr{I}, \  \Omega_{f,2}(8B_{I_Q}) \geq c \delta^{d+1} \} \\ 
		&\lesssim_\delta \sum_{\substack{I \in \mathscr{I}  \\  I \subseteq I_{Q(S)}}} \Omega_{f,2}(8B_I)^2 \ell(I)^d \lesssim \ell(Q(S))^d. 
	\end{align}
	This finishes the proof.
	
\end{proof}

\begin{proof}[Proof of Lemma \ref{l:eta-bound}] 
	For $S \in \mathscr{S}$ let $S_\ve$ be the set of cubes in $S$ so that $\eta_{E,\infty}^d(AB_Q) \geq \ve/C$ but $\eta_{\Sigma_S,\infty}^d(AB_Q) < \ve/3C.$ By \cite[Proposition 12.7]{azzam2018analyst}, 
\[ \sum_{Q \in S_\ve} \ell(Q)^d \lesssim \ell(Q(S))^d. \] 
Combining this with Lemma \ref{l:control-beta} and Lemma \ref{l:bi-lip-BWGL} (with $\delta = \ve/3C$), we have 
\begin{align}
\sum_{\substack{Q \in \mathscr{D} \\ \eta^d_{E,\infty}(AB_Q) \geq \ve/C}} \ell(Q)^d &= \sum_{S \in \mathscr{S}} \left[ \sum_{Q \in S_\ve} \ell(Q)^d + \sum_{\substack{Q \in S \\ \eta_{\Sigma_S,\infty}^d(AB_Q) \geq \ve/3C}} \ell(Q)^d \right] \lesssim \sum_{S \in \mathscr{S}} \ell(Q(S))^d \\
&\lesssim \ell(Q_0)^d + \sum_{Q \in \mathscr{D}} \beta^{d,1}_E(MB_Q)^2\ell(Q)^d. 
\end{align}
This finishes to proof of Lemma \ref{l:eta-bound} and hence the proof of \eqref{e:Section6}.
\end{proof} 

\section{Bilateral Weak Geometric Lemma}\label{s:BWGL}

	In this section we prove Theorem \ref{t:BWGL}. We start with the following direction, the proof of which is based on the arguments in \cite[Section 16]{david1991singular}. 
\begin{prop}\label{p:BWGL1}
	Suppose $E \subseteq H$ is Ahlfors $d$-regular and satisfies the $\BWGL$ (Definition \ref{d:BWGL}), then $E$ is \emph{UR} (Definition \ref{d:UR}).
\end{prop}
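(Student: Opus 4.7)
The plan is to upgrade BWGL to a Carleson $\beta$-sum bound via Theorem \ref{Thm1}, and then extract big pieces of Lipschitz images from the bi-Lipschitz surfaces produced by Lemma \ref{Sigma} together with the Carleson packing of tops from Lemma \ref{l:control-beta}.

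First I would combine Ahlfors $d$-regularity, the hypothesis BWGL, and Theorem \ref{Thm1} to derive, for every $Q_0 \in \mathscr{D}$,
\[
\ell(Q_0)^d + \sum_{Q \subseteq Q_0} \beta_E^{d,p}(C_0 B_Q)^2 \ell(Q)^d \lesssim \mathscr{H}^d(Q_0) + \BWGL(Q_0,A,\ve) \lesssim \ell(Q_0)^d.
\]
Thus $\beta_E^{d,p}$ is a Carleson measure, so the stopping-time apparatus of Section \ref{s:Thm2} is available: one partitions $\{Q \subseteq Q_0 : E \cap MB_Q \neq \emptyset\}$ into stopping-time regions $\mathscr{S}$ according to when the running $\beta$-sum first exceeds $\lambda^2$. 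For each non-singleton $S \in \mathscr{S}$, Lemma \ref{Sigma} furnishes a $(1+C\lambda)$-bi-Lipschitz $f_S : \R^d \to H$ with $\Sigma_S = f_S(\R^d) \cap MB_{Q(S)}$ well-approximating $E$ inside $S$, and the tops satisfy the Carleson packing $\sum_{S \in \mathscr{S}} \ell(Q(S))^d \lesssim \ell(Q_0)^d$ coming from Lemma \ref{l:control-beta}.

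Next I would argue, exactly as for the good set $G$ and estimate \eqref{e:upper-measure} in Section \ref{s:Thm2}, that after iterating the decomposition inside every stopped sub-cube, the resulting tree of stopping-time regions covers $\mathscr{H}^d$-almost every point of $Q_0$ by some $\Sigma_S$; the singleton regions, which correspond to cubes with $\beta_E^{d,1}(MB_Q) \geq \lambda$, are in turn controlled by BWGL via Lemma \ref{lemma:betap_betainfty}. Given this, for each $Q_0$ I would pick a single stopping-time region $S_\ast$ in the tree with $\ell(Q(S_\ast)) \sim \ell(Q_0)$ and $\mathscr{H}^d(z(S_\ast)) \gtrsim \ell(Q_0)^d$: since $z(S_\ast) \subseteq E \cap \Sigma_{S_\ast}$ and $f_{S_\ast}$ is $(1+C\lambda)$-Lipschitz with $\lambda$ a universal constant, restricting $f_{S_\ast}$ to a ball of radius comparable to $\ell(Q_0)$ gives the $L$-Lipschitz map required for BPLI, thereby proving UR.

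The main obstacle is the final extraction step. The natural Chebyshev bound $\sum_{R \in \min(S)} \ell(R)^d \lesssim \lambda^{-2}\ell(Q(S))^d$ inside a single region $S$ does not, by itself, beat the Ahlfors lower regularity of $E$, so one cannot straightforwardly conclude that $\mathscr{H}^d(z(S_{Q_0})) \gtrsim \ell(Q_0)^d$ for the very first region $S_{Q_0}$. I would handle this by a pigeonhole over the tree of iterated stopping regions: the total mass $\sum_S \mathscr{H}^d(z(S)) = \mathscr{H}^d(Q_0) \gtrsim \ell(Q_0)^d$ is distributed among tops which themselves Carleson-pack with constant $\lesssim 1$, and a standard averaging argument then locates at least one top whose side length is comparable to $\ell(Q_0)$ and whose good set carries a definite share of the mass, yielding the required witness map $f_{S_\ast}$.
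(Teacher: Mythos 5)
Your outline correctly assembles the right toolkit — Theorem \ref{Thm1} plus BWGL plus Ahlfors regularity gives a Carleson $\beta$-sum, the stopping-time regions $\mathscr{S}$ from Section \ref{s:Thm2} are available, and Lemma \ref{Sigma} hands you bi-Lipschitz surfaces $\Sigma_S$. You also correctly identify the obstacle: the Chebyshev bound $\sum_{R \in \min(S)} \ell(R)^d \lesssim \lambda^{-2}\ell(Q(S))^d$ inside one region does not beat lower regularity, so there is no reason a single $z(S)$ should be large. Up to this point you are following the paper closely.

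The proposed pigeonhole resolution, however, has a genuine gap. Pigeonholing over the tree of stopping regions with the constraint $\sum_S \ell(Q(S))^d \lesssim_\lambda \ell(Q_0)^d$ and $\sum_S \mathscr{H}^d(z(S)) \sim \ell(Q_0)^d$ will produce \emph{some} region carrying definite mass, but gives you no control on the scale of its top. To land simultaneously on a region with $\ell(Q(S_\ast)) \geq \theta\ell(Q_0)$ and $\mathscr{H}^d(z(S_\ast)) \gtrsim \ell(Q_0)^d$ you would need a \emph{uniform} lower bound on $\theta$, and the Carleson packing alone does not supply one: nothing in the hypotheses forbids a configuration where most of the mass at scale $r$ lives in good sets $z(S)$ whose tops satisfy $\ell(Q(S)) \ll r$, in which case any qualifying $S_\ast$ has $\mathscr{H}^d(z(S_\ast)) \lesssim \ell(Q(S_\ast))^d \ll r^d$ by Ahlfors upper regularity, so the extracted piece is too small. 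Since BPLI requires a definite fraction of $r^d$ \emph{for every} $(x,r)$ with uniform constants, a $\theta$ that may degenerate with $(x,r)$ does not close the argument.

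What the paper actually does is much more structural: it does not try to find a single good region but \emph{stitches together} the bi-Lipschitz charts from many stopping regions. In Lemma \ref{l:BWGL1} one removes a controlled bad set --- the boundary collars $\sigma(Q)$ of transition cubes (whose total measure is small by Lemma \ref{cubes}(5) and the Carleson packing \eqref{e:T}) and the set where the transition count $N(Q)$ exceeds a threshold $N$ (small by Chebyshev) --- and on the remaining set $F$ one builds a single $L$-bi-Lipschitz map by recursively placing images $g(Q)$ of transition cubes in $\R^d$ at scale $C_3^{-N(Q)}\ell(Q)$, using the maps $h_S$ of Lemma \ref{Sigma} inside each region and gluing via affine renormalizations $\phi_Q$. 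The bound on $N(Q)$ keeps the geometric distortion factor $C_3^{-N(Q)}$ bounded, and the removal of $\sigma(Q)$ ensures that points falling in different children remain separated, which is what makes the stitched map bi-Lipschitz. Proposition \ref{p:BWGL1} then follows by Kirszbraun extension of $f^{-1}$. This stitching construction is the essential content that your proposal is missing; the pigeonhole cannot substitute for it.
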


The main part of the proof of Proposition \ref{p:BWGL1} is to show the following.  

\begin{lem}\label{l:BWGL1}
	Let $\ve > 0.$ Then there exists $L \geq 1$ which depends on $\ve$ so that the following holds. Let $x \in E$ and $0 < r< \diam(E).$ Then there exists a set $F \subseteq E \cap B(x,r)$ so that $\mathscr{H}^d(E \cap B(x,r) \setminus F) \leq \ve r^d$ and an $L$-bi-Lipschitz map $f : F \rightarrow \R^d.$
\end{lem}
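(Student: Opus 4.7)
The plan is to combine Theorem \ref{Thm1} (which, given BWGL and Ahlfors regularity, promotes the bilateral flatness condition to a Carleson bound on $\beta$-numbers) with the bi-Lipschitz Reifenberg corona construction of Section \ref{s:Thm2}, and then iterate to reach the $(1-\ve)$ threshold.

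First I would reduce: by Ahlfors regularity it suffices to exhibit, for each Christ-David cube $Q_0 \in \mathscr{D}$ with $\ell(Q_0) \sim r$ and $x \in Q_0$, a set $F \subseteq Q_0$ of measure $\geq(1-\ve)\mathscr{H}^d(Q_0)$ that $L(\ve)$-bi-Lipschitz embeds into $\R^d$. Applying Theorem \ref{Thm1} with fixed choices of $A,C_0,p$ and a sufficiently small $\ve_0$, together with BWGL and Ahlfors regularity,
\[
\sum_{Q \subseteq Q_0} \beta_E^{d,1}(MB_Q)^2 \ell(Q)^d
\;\lesssim\;
\mathscr{H}^d(Q_0) + \BWGL(Q_0,A,\ve_0)
\;\leq\; C_1 \ell(Q_0)^d,
\]
where $C_1$ depends on the Ahlfors regularity and BWGL constants but not on $\ve$.

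Next, fix $\lambda > 0$ small enough for Lemma \ref{Sigma} to apply and run the $\beta$-number stopping of Section \ref{s:Thm2} starting at $Q_0$. This produces a top stopping-time region $S_0$ and a $(1+C\lambda)$-bi-Lipschitz Reifenberg surface $\Sigma_{S_0} = f_{S_0}(\R^d) \cap MB_{Q_0}$ containing $z(S_0) \cap MB_{Q_0}$. Interchanging sums using the stopping criterion gives
\[
\sum_{Q\in \min(S_0)}\ell(Q)^d \;\leq\; \lambda^{-2}\sum_{Q\subseteq Q_0}\beta_E^{d,1}(MB_Q)^2\ell(Q)^d \;\leq\; c_1 \ell(Q_0)^d,
\]
and Ahlfors upper-regularity then bounds $\mathscr{H}^d(\bigcup \min(S_0))$ by $c_1\ell(Q_0)^d$. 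This already delivers a bi-Lipschitz piece covering a definite proportion of $Q_0$ (i.e.\ BPBI with absolute constants). To promote this to proportion $(1-\ve)$, I would iterate the construction inside each $Q \in \min(S_0)$, obtaining at depth $N = N(\ve) \sim \log(1/\ve)$ a finite family of $(1+C\lambda)$-bi-Lipschitz Reifenberg pieces $\{\Sigma_i\}$ which together cover all of $Q_0$ except a set of measure $\leq c_1^N \ell(Q_0)^d \leq \ve \ell(Q_0)^d$.

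The main obstacle is assembling $\{\Sigma_i\}$ into a single $L(\ve)$-bi-Lipschitz map $f \colon F \to \R^d$. Each $\Sigma_i$ is bi-Lipschitz to a flat $d$-plane via $f_i^{-1}$, and the top cubes $\{Q_i\}$ of the iterated stopping-time regions are organized by a finite tree; by Lemma \ref{ENV}, siblings at each level of this tree satisfy a $d$-dimensional packing estimate. Using this combinatorial scaffolding, one embeds each local image $f_i^{-1}(\Sigma_i)$ into a disjoint scaled-and-translated region of a single $\R^d$ arranged to respect the spatial organization of the $Q_i$ in $H$. The delicate step is verifying bi-Lipschitzness across pairs of points lying in \emph{different} pieces, particularly when their $H$-distance is small compared with the ambient cube scale; since Hilbert space lacks the ``unit star'' machinery used in the Euclidean proof of David--Semmes (as flagged in the remark after Theorem \ref{t:BWGL}), this bookkeeping is the principal new work, and yields a bi-Lipschitz constant $L(\ve) \sim (1+C\lambda)^N$ which depends on $\ve$ through $N$ but not on $Q_0$.
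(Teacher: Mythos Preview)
Your strategy is exactly the paper's (following \cite[Section 16]{david1991singular}): Carleson $\beta$-bound via Theorem \ref{Thm1} and BWGL, the bi-Lipschitz corona of Section \ref{s:Thm2}, then stitching. But the iteration bound $c_1^N$ is not justified: your $c_1 \sim \lambda^{-2} C_1$ is large (recall $\lambda$ must be small for Lemma \ref{Sigma} to apply), so the one-step inequality $\sum_{Q \in \min(S_0)}\ell(Q)^d \leq c_1 \ell(Q_0)^d$ says nothing about a positive proportion being covered, and iterating a constant larger than $1$ gives no decay. The paper instead uses that Lemma \ref{l:control-beta} bounds the \emph{total} $\sum_{S \in \mathscr{S}} \ell(Q(S))^d \lesssim \ell(Q_0)^d$ over all stopping regions at once, and applies a Chebyshev argument: letting $T$ be the set of transition cubes (all tops and minimal cubes), the set of points lying in at least $N$ cubes of $T$ has $\mathscr{H}^d$-measure at most $N^{-1}\sum_{Q\in T}\mathscr{H}^d(Q) \lesssim N^{-1}r^d$, which is $\leq \ve r^d/2$ for $N$ large.

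For the cross-piece bi-Lipschitz estimate you flag as ``delicate'', you are missing the essential device. If $x,y$ lie in adjacent sibling transition cubes $Q_1,Q_2$, they can be arbitrarily close in $H$ while their images land in disjoint target cubes $g(Q_1),g(Q_2)\subseteq\R^d$ separated by $\sim C_3^{-N}\ell(Q_1)$; this destroys the Lipschitz upper bound. The paper's fix is to also excise from $F$, for every transition cube $Q$, the boundary layer $\sigma(Q)=\{x\in Q:\dist(x,E\setminus Q)\leq\eta\rho^{k(Q)}\}$. By the small-boundary property of Christ--David cubes (Lemma \ref{cubes}(5)) together with the packing $\sum_{Q\in T}\ell(Q)^d\lesssim r^d$, one has $\sum_{Q\in T}\mathscr{H}^d(\sigma(Q))\leq C\eta^{1/C}r^d\leq\ve r^d/2$ for $\eta$ small; and on what remains, $x\in Q_1\setminus\sigma(Q_1)$ with $y\notin Q_1$ forces $|x-y|\gtrsim\eta\ell(Q_1)$, which matches the $\R^d$-separation and yields the bi-Lipschitz estimate with constant depending on $N$ and $\eta$, hence on $\ve$.
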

Before proving Lemma \ref{l:BWGL1}, let us see why it implies Proposition \ref{p:BWGL1}.
\begin{proof}[Proof of Proposition \ref{p:BWGL1}]
	Let $A$ denote the Ahlfors regularity constant for $E$, set $\ve = (2A)^{-1}$ and let $L = L(\ve)$ be the constant so that Lemma \ref{l:BWGL1} holds. Let $x \in E$, $0 < r< \diam(E)$, and let $F$ and $f : F \rightarrow \R^d$ be the set and $L$-bi-Lipschitz map, respectively, from Lemma \ref{l:BWGL1} for $x$ and $r.$ Additionally, define the function $h : F \rightarrow \R^d$ by setting $h(y) = \tfrac{f(y) - f(x)}{L}.$ In this way, $h$ is $L^2$-bi-Lipschitz and $h(F) \subseteq B_d(0,r).$ Since $h$ is $L^2$-bi-Lipschitz, it is invertible with $L^2$-bi-Lipschitz inverse $h^{-1} : h(F) \rightarrow H.$ By the Kirszbraun extension theorem, we can extend $h^{-1}$ to a Lipschitz function from $\R^d$ to $H$ (whose Lipschitz constant depends only on $L$). Then,  
	\begin{align}
		\mathscr{H}^d(E \cap B(x,r) \cap h^{-1}(B_d(0,r))) &\geq \mathscr{H}^d(E \cap B(x,r) \cap F) \\
		&\geq \mathscr{H}^d(E \cap B(x,r) ) - \mathscr{H}^d(E \cap B(x,r) \setminus F) \\
		&\geq (2A)^{-1} r^d.
	\end{align} 
Since $A$ and $L$ are independent of $x$ and $r,$ this finishes the proof. 
\end{proof}
Let us now focus on proving Lemma \ref{l:BWGL1}. Let $\ve > 0$, $x \in E$ and $0 < r < \diam(E).$ Our first task is to define $F,$ which will be chosen by running a certain stopping-time argument, then choosing parts of $E \cap B(x,r)$ for which we do not stop too many times, and do not lie too near the boundaries of certain cubes. 

Let $C_0 > 1,$ $\mathscr{D}$ be the Christ-David cubes for $E$, $k$ be such that $5\rho^{k+1} < r \leq 5\rho^k,$ and $\{Q_{0,i}\}_{i \in I}$ be the cubes in $\mathscr{D}_k$ that intersect $B(x,r).$ For each $i \in I,$ using the stopping-time construction for the cubes contained in $Q_{0,i}$, outlined at the beginning of Section \ref{s:Thm2} (with $M = C_0$), we find a collection of stopping-time regions $\mathscr{S}_i$ so that the following holds. First, 
\begin{align}\label{e:S_i}
	\{Q \in \mathscr{D} : Q \subseteq Q_{0,i}\} = \bigcup_{S \in \mathscr{S}_i} S. 
\end{align} 
Second, by Lemma \ref{l:control-beta},
\[ \sum_{S \in \mathscr{S}_i } \ell(Q(S))^d \lesssim \ell(Q_{0,i})^d + \sum_{Q \subseteq Q_{0,i}} \beta^{d,1}_E(C_0B_Q)^2\ell(Q)^d \lesssim \mathscr{H}^d(Q_{0,i}) \lesssim \ell(Q_{0,i})^d, \]
where the penultimate inequality follows from Theorem \ref{Thm1} and the fact that $E$ satisfies the BWGL, and the final inequality follows from Ahlfors regularity. Finally, for each $S \in \mathscr{S}_i$ which is not a singleton, there is a surface $\Sigma_S$ satisfying the properties of Lemma \ref{Sigma}. 

Let $R_0$ be the union of the cubes $Q_{0,i}$ and
\[ \mathscr{S} = \bigcup_{i \in I} \{S : S \in \mathscr{S}_i\}. \]
Using the same terminology as \cite{david1991singular}, we call a cube $Q \subseteq R_0$ a \textit{transition cube} if it is the top cube, or a minimal cube of some $S \in \mathscr{S}$. Let $T$ denote the set of transition cubes. For $Q \in \mathscr{D},$ let $N(Q)$ denote the number of transition cubes containing $Q$ (this is the quantity $\ell(Q)$ in \cite{david1991singular}). Let $\eta > 0$ (which will be chosen small momentarily) and define 
\[ \sigma(Q) =  \{ x \in Q : \dist(x,E \setminus Q) \leq \eta \rho^k \}. \] 
Let $N \geq 1$ (which will be chosen large) and define $F \subseteq E \cap B(x,r)$ by setting 
\[ F = E \cap B(x,r) \cap \left(\bigcup_{Q \in T} \sigma(Q) \right)^c \cap\left( \bigcup_{\substack{Q \in T\\ N(Q) \geq N}} Q \right)^c. \]
Now that we have defined $F$, let us show that it takes up a large portion of $E \cap B(x,r).$ We first need the following. 

\begin{lem}
	We have 
	\begin{align}\label{e:T}
		\sum_{Q \in T} \ell(Q)^d \lesssim r^d. 
	\end{align}
\end{lem}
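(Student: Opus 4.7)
The plan is to split $T$ into the top cubes $\{Q(S) : S \in \mathscr{S}\}$ and the minimal cubes $\bigcup_{S \in \mathscr{S}} \min(S)$, and bound each collection separately, with the main input being the estimate
\[ \sum_{S \in \mathscr{S}_i} \ell(Q(S))^d \lesssim \ell(Q_{0,i})^d \]
that was already established in the paragraph preceding the lemma (via Lemma \ref{l:control-beta}, Theorem \ref{Thm1}, the BWGL hypothesis on $E$, and Ahlfors regularity). No new machinery seems to be required, so the lemma should reduce to routine bookkeeping.

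First I would handle the top cubes. Summing the displayed estimate over $i \in I$ and using that $\mathscr{S} = \bigcup_{i \in I} \mathscr{S}_i$ gives
\[ \sum_{S \in \mathscr{S}} \ell(Q(S))^d \lesssim \sum_{i \in I} \ell(Q_{0,i})^d. \]
The cubes $Q_{0,i}$ lie in $\mathscr{D}_k$, are pairwise disjoint (as subsets of $E$), all meet $B(x,r)$, and have $\ell(Q_{0,i}) \sim r$; hence $\bigcup_i Q_{0,i} \subseteq E \cap B(x,Cr)$ for some absolute $C$. By Ahlfors regularity,
\[ \sum_{i \in I} \ell(Q_{0,i})^d \sim \sum_{i \in I} \mathscr{H}^d(Q_{0,i}) = \mathscr{H}^d\Bigl(\bigcup_{i \in I} Q_{0,i}\Bigr) \leq \mathscr{H}^d(E \cap B(x,Cr)) \lesssim r^d. \]

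Next I would handle the minimal cubes. Fix $S \in \mathscr{S}$. The cubes in $\min(S)$ are pairwise disjoint and contained in $Q(S)$, so by Ahlfors regularity again,
\[ \sum_{R \in \min(S)} \ell(R)^d \sim \sum_{R \in \min(S)} \mathscr{H}^d(R) \leq \mathscr{H}^d(Q(S)) \lesssim \ell(Q(S))^d. \]
Summing over $S \in \mathscr{S}$ and combining with the top-cube estimate already proven yields
\[ \sum_{S \in \mathscr{S}} \sum_{R \in \min(S)} \ell(R)^d \lesssim \sum_{S \in \mathscr{S}} \ell(Q(S))^d \lesssim r^d. \]

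Adding the two contributions gives the desired bound. There is no genuine obstacle here; the only subtlety is the bookkeeping between the two types of transition cubes, but both reduce cleanly to Ahlfors regularity once the control $\sum_{S \in \mathscr{S}_i} \ell(Q(S))^d \lesssim \ell(Q_{0,i})^d$ is invoked.
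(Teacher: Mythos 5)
Your proof is correct and follows the same overall strategy as the paper's---split $T$ into top cubes and minimal cubes and bound each via the key estimate $\sum_{S \in \mathscr{S}_i} \ell(Q(S))^d \lesssim \ell(Q_{0,i})^d$. The only real difference is in the handling of the minimal cubes. The paper observes that every minimal cube of a stopping-time region has a child which is the top cube of some $S' \in \mathscr{S}$ (because those children spawn the next generation of stopping-time regions), so $\sum_{\min} \ell(R)^d \leq \rho^{-d}\sum_{S} \ell(Q(S))^d$ and the minimal-cube sum is absorbed into the top-cube sum for free. You instead use pairwise disjointness of $\min(S)$ inside $Q(S)$ together with Ahlfors regularity to get $\sum_{R \in \min(S)} \ell(R)^d \lesssim \ell(Q(S))^d$ for each $S$. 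Both are valid; the paper's reduction is slightly slicker because it avoids invoking Ahlfors regularity a second time, whereas yours is arguably more robust since it only needs disjointness and does not rely on the structural fact about how $\mathscr{S}$ is generated. Similarly, for the top cubes the paper packages the disjointness argument as $\#I \lesssim 1$ (doubling), which is equivalent to what you do directly with Ahlfors regularity. Either way the bookkeeping closes cleanly and your proof is complete.
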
 

\begin{proof}
	Since $E$ is doubling (by virtue of being Ahlfors $d$-regular), there are a bounded number of cubes in $R_0$, with constant depending only on $d$ and the Ahlfors regularity constant. In other words 
	\[ \#I \lesssim 1. \]
	Using this with \eqref{e:S_i}, and the fact that $r \sim \ell(Q_{0,i})$ for each $i \in I$, we have 
	\begin{align}\label{e:S}
		\sum_{S \in \mathscr{S}} \ell(Q(S))^d \lesssim \sum_{i \in I} \sum_{S \in \mathscr{S}_i} \ell(Q(S))^d \lesssim \sum_{i \in I} \ell(Q_{0,i})^d \lesssim r^d. 
	\end{align} 
	Since $T$ is the collection of all top cubes and minimal cubes from the stopping-time regions in $\mathscr{S}$, and every minimal cube has a child which is a top cube, \eqref{e:T} follows from \eqref{e:S}.
\end{proof}

\begin{lem}
	For $\eta$ small enough and $N$ large enough, $\mathscr{H}^d(E \cap B(x,r) \setminus F) \leq \ve r^d$.
\end{lem}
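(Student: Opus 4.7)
The plan is to show that the bad set $E \cap B(x,r) \setminus F$ is contained in the union
\[
\Bigl(\bigcup_{Q \in T} \sigma(Q)\Bigr) \cup \Bigl(\bigcup_{\substack{Q \in T \\ N(Q) \geq N}} Q\Bigr),
\]
and estimate each piece separately. For the first piece, the idea is to use Theorem \ref{cubes}(5), which applies because $E$ is Ahlfors $d$-regular. That gives $\mathscr{H}^d(\sigma(Q)) \lesssim \eta^{1/C} \ell(Q)^d$ for each transition cube $Q$, so summing and using \eqref{e:T},
\[
\mathscr{H}^d\Bigl(\bigcup_{Q \in T} \sigma(Q)\Bigr) \leq \sum_{Q \in T} \mathscr{H}^d(\sigma(Q)) \lesssim \eta^{1/C} \sum_{Q \in T} \ell(Q)^d \lesssim \eta^{1/C} r^d.
\]
Choose $\eta$ small enough so that this is at most $\tfrac{\ve}{2} r^d$.

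For the second piece, the idea is a Chebyshev-type counting argument based on the function $N(\cdot)$. Writing $\mathds{1}_Q$ for the indicator function of $Q$, we have
\[
\sum_{Q \in T} \mathds{1}_Q(y) = N(y) \quad \text{for every } y \in R_0,
\]
so by Fubini (monotone convergence) together with Ahlfors regularity and \eqref{e:T},
\[
\int_{R_0} N(y) \, d\mathscr{H}^d(y) = \sum_{Q \in T} \mathscr{H}^d(Q) \lesssim \sum_{Q \in T} \ell(Q)^d \lesssim r^d.
\]
Applying Chebyshev's inequality,
\[
\mathscr{H}^d\bigl(\{ y \in R_0 : N(y) \geq N \}\bigr) \leq \frac{1}{N}\int_{R_0} N(y) \, d\mathscr{H}^d(y) \lesssim \frac{r^d}{N}.
\]
Since any $y$ in $\bigcup_{Q \in T, N(Q)\geq N} Q$ satisfies $N(y) \geq N$, choosing $N$ large enough makes this at most $\tfrac{\ve}{2} r^d$. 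Adding the two estimates finishes the proof.

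I do not anticipate a genuine obstacle here; the only subtlety is making sure that the implicit constants coming from \eqref{e:T} and from Theorem \ref{cubes}(5) do not depend on $\ve$, so that $\eta$ and $N$ can be chosen purely in terms of $\ve$, $d$, and the Ahlfors regularity constant. Both constants indeed depend only on the ambient structural data, so the choices of $\eta = \eta(\ve)$ and $N = N(\ve)$ are legitimate.
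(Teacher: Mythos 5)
Your argument is correct and is essentially the same as the paper's: the same decomposition of the bad set into $\bigcup_{Q \in T}\sigma(Q)$ and $\bigcup_{N(Q)\ge N}Q$, the same appeal to the small-boundary property of Christ--David cubes for Ahlfors regular sets (item (5) of the cubes lemma — you cite it correctly; the paper's proof actually has a typo and points to item (4)), and the same Chebyshev-type count for the second piece, which you merely spell out a bit more explicitly via the integral $\int_{R_0} N(y)\, d\mathscr{H}^d(y)$. No gap.
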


\begin{proof}
	By Lemma \ref{cubes}(4) and \eqref{e:T}, for $\eta > 0$ small enough, we have 
	\[ \sum_{Q \in T} \mathscr{H}^d(\sigma(Q)) \leq C \eta^{\frac{1}{C}} \sum_{Q \in T} \mathscr{H}^d(Q) \leq C \eta^{\frac{1}{C}} \sum_{Q \in T} \ell(Q)^d \leq C\eta^\frac{1}{C} r^d \leq \ve r^d/2. \] 
	Similarly, for $L$ large enough, if 
	\[ A = \bigcup_{\substack{Q \in T \\ N(Q) \geq N }} Q \] 
	then 
	\[ \mathscr{H}^d(A) \leq N^{-1} \sum_{Q \in T} \mathscr{H}^d(Q) \leq C N^{-1} \sum_{Q \in T} \ell(Q)^d \leq C N^{-1}r^d \leq \ve r^d/2. \] 
	Combing the above estimates, we have 
	\begin{align}\label{e:bigFmeasure}
		\mathscr{H}^d(E \cap B(x,r) \setminus F) \leq \ve r^d.
	\end{align}
\end{proof}

The next step in the proof of Lemma \ref{l:BWGL1} is to construct the bi-Lipschitz map from $F$ to $\R^d$, which will be done by stitching together the maps associated to each stopping-time region $S \in \mathscr{S}$. The amount of stitching is controlled by the constant $N$. We begin by defining a preliminary map $g$ on the set of cubes $Q$ contained in $R_0$.

\begin{lem}
	 There exists a map $g$ from the set of transition cubes $T$ so that for each such $Q$, $g(Q)$ is a cube in $\R^d$ with $\diam(g(Q)) = C_3^{-N(Q)} \ell(Q)$ for some large constant $C_3 > 1$, and for any $Q,Q' \in T$ satisfying $Q \subseteq Q'$ then $g(Q) \subseteq \tfrac{1}{2}g(Q').$
\end{lem}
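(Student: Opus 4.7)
The plan is to construct $g$ by induction on $N(Q)$, exploiting that $Q \subsetneq Q'$ with both in $T$ forces $N(Q) > N(Q')$. For the base case, the top cubes $Q_{0,i}$ have $N = 1$; since $\ell(Q_{0,i}) \sim r$ and $\{Q_{0,i}\}_{i \in I}$ are disjoint subsets of $E \cap B(x,2r)$, Ahlfors $d$-regularity bounds $\#I$ by a constant, and I can pick pairwise disjoint cubes $g(Q_{0,i}) \subseteq \R^d$ of diameter $C_3^{-1}\ell(Q_{0,i})$.

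For the inductive step, assume $g$ has been defined on all $R \in T$ with $N(R) \leq n$. Given $Q \in T$ with $N(Q) = n+1$, let $P(Q) \in T$ be the minimal transition cube strictly containing $Q$; it is unique because the transition cubes above $Q$ are nested inside $\mathscr{D}$, and $N(P(Q)) = n$. I place $g(Q)$ as a subcube of $\tfrac{1}{2}g(P(Q))$ with $\diam(g(Q)) = C_3^{-(n+1)}\ell(Q)$, doing so simultaneously for all $Q \in T$ with a common value of $P(Q) = P$ so that the images $g(Q)$ are pairwise disjoint inside $\tfrac{1}{2}g(P)$. A single placement requires $C_3^{-(n+1)}\ell(Q) \leq \tfrac{1}{2}C_3^{-n}\ell(P(Q))$, i.e.\ $\ell(Q)/\ell(P(Q)) \leq C_3/2$, which holds for any $C_3 \geq 2$ since $\ell(Q) \leq \rho \ell(P(Q))$. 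Disjoint placement of all siblings is the point where $C_3$ must be taken genuinely \emph{large}: the in-$T$ children of $P$ are pairwise disjoint cubes inside $P$, so Ahlfors regularity yields $\sum_{P(Q') = P} \ell(Q')^d \lesssim \ell(P)^d$, and a standard volume comparison in $\R^d$ then shows the sibling images fit as disjoint subcubes of $\tfrac{1}{2}g(P)$ as soon as $C_3^d$ exceeds a constant depending only on $d$ and the Ahlfors constant.

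Transitivity of the containment property is automatic: if $Q \subsetneq Q'$ in $T$, iterating $P(\cdot)$ produces a chain $Q = R_0 \subsetneq R_1 = P(Q) \subsetneq \cdots \subsetneq R_k = Q'$ in $T$, and chaining the single-step inclusions gives
\[ g(R_0) \subseteq \tfrac{1}{2}g(R_1) \subseteq g(R_1) \subseteq \tfrac{1}{2}g(R_2) \subseteq \cdots \subseteq \tfrac{1}{2}g(R_k), \]
so $g(Q) \subseteq \tfrac{1}{2}g(Q')$. No genuine analytic difficulty enters; the content is purely a combinatorial tree embedding. The one real choice is $C_3$, which must be large enough to leave room for disjoint sibling placement in $\tfrac{1}{2}g(P(Q))$. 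Although the lemma as stated only demands the containment $g(Q) \subseteq \tfrac{1}{2}g(Q')$, I build in the disjointness now because it is exactly what will permit the construction of the bi-Lipschitz map $f : F \to \R^d$ in the subsequent step, where $g$ is used to assign each $\sigma(Q)^c$-point a target location.
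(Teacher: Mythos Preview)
Your argument does establish the lemma as literally stated: the inductive placement works, and the packing of the in-$T$ children of $P$ into $\tfrac{1}{2}g(P)$ is justified by the volume bound $\sum_{P(Q')=P}\ell(Q')^d \lesssim \ell(P)^d$ (via a standard dyadic packing once $C_3$ is large). One cosmetic point: the paper takes $N(Q_{0,i})=0$, whereas you take $N(Q_{0,i})=1$; this is just a convention on whether $N(Q)$ counts $Q$ itself, and it is immaterial.

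The real difference from the paper is in \emph{how} the minimal cubes of a stopping-time region $S$ are placed. You pack them arbitrarily (disjointly) inside $\tfrac{1}{2}g(Q(S))$. The paper instead uses the bi-Lipschitz surface $\Sigma_S=h_S(\R^d)\cap MB_{Q(S)}$ from Lemma~\ref{Sigma}: it pushes the nearest-point $z_R\in\Sigma_S$ through $h_S^{-1}$ and an affine rescaling $\phi$, and centres $g(R)$ at $\phi(h_S^{-1}(z_R))$. This buys the quantitative estimate of the next lemma,
\[
C_3^{-N(Q(S))-1}\bigl[\ell(R)+\ell(R')+\dist(R,R')\bigr]\ \le\ \dist(g(R),g(R'))\ \le\ 3C_3^{-N(Q(S))}\bigl[\ell(R)+\ell(R')+\dist(R,R')\bigr],
\]
which is exactly what makes $f(x)=\phi_{Q(x)}(h_{Q(x)}^{-1}(x))$ bi-Lipschitz on $F$. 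Your disjoint placement gives no lower bound on $\dist(g(R),g(R'))$ in terms of $\dist(R,R')$ (two far-apart minimal cubes could be assigned adjacent images), so the bi-Lipschitz step would fail with your $g$. In short: your proof of this lemma is fine, but the paper's more elaborate construction is not gratuitous---it is where the geometry of $S$ gets encoded, and you will need to revisit that when you move to the bi-Lipschitz map.
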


\begin{proof}
	We define $g$ inductively, starting with the cubes $Q_{0,i}.$ Since $\#I \lesssim 1,$ we can find $C > 1$ large enough so that we can set $\{g(Q_{0,i})\}_{i \in I}$ to be a collection of cubes in $\R^d$ with diameters $5\rho^k$ and mutual distances between $5\rho^k$ and $5C\rho^k$ (with $k$ as defined after the statement of Proposition \ref{p:BWGL1}). Since $N(Q_{i,0}) = 0$ for each $i \in I,$ the cubes $g(Q_{i,0})$ have the correct diameter.  
	
	We now define $g(Q)$ for the remaining transition cubes. Suppose $g(Q)$ has been defined for some $Q \in T_m = T \cap \mathscr{D}_m$ with $m \geq k.$ Assume first of all that $Q$ is a minimal cube of a stopping-time region. We will define $g(R)$ for each child $R$ of $Q$. By definition $N(R) = N(Q) + 1$ for each child $R$ of $Q$. Since $E$ is doubling, $Q$ has a bounded number of children (with constant depending only on $d$ and the Ahlfors regularity constant). Thus, for $C_3> 1$ large enough, we can choose $g(R)$ to be a collection of cubes with diameters $C_3^{-N(R)}\ell(R)$ contained in $\tfrac{1}{2}g(Q)$ and so that 
	\begin{align}\label{e:g-estimate}
	 	\dist(g(R),g(R')) \geq C_3^{-N(R)} \ell(R).
	 \end{align} 

	Assume now that $Q$ is the top cube of a stopping-time region $S \in \mathscr{S}$, which is not a singleton. We want to define $g(R)$ for the minimal cubes of $S$. Here, we deviate slightly from that written in \cite{david1991singular}. Let $h = h_S$ be the bi-Lipschitz map from Lemma \ref{Sigma} and let $\Sigma = \Sigma_S = h(\R^d) \cap MB_{Q}.$ Since $h$ is bi-Lipschitz with constant close to 1, $h^{-1}(\Sigma \cap B_Q)$ is contained in some ball of radius $2\ell(Q).$ Thus, since $\diam(g(Q)) = C_3^{-N(Q)}\ell(Q),$ we can find an affine map $\phi = \phi_Q$ so that
\begin{align}\label{e:in-g(Q)}
\phi(h^{-1}(\Sigma \cap B_Q)) \subseteq \frac{1}{3}g(Q)
\end{align}
and 
\begin{align}\label{e:phi-est} C_4^{-1}C_3^{-N(Q)}|p-q| \leq |\phi(p) - \phi(q)| \leq C_3^{-N(Q)}|p-q| 
\end{align}
for some constant $C_4 \geq 1$ and each $p,q \in \R^d.$ 

For $R \in \min(S)$, let $z_R \in \Sigma$ be the point closest to $x_R$ (the centre of $R$). Define $g(R)$ to be the cube centred at $\phi(h^{-1}(z_R))$ of size $C_3^{-N(Q)-1} \ell(R) = C_3^{-N(R)}\ell(R).$ Clearly then $g(R) \subseteq \tfrac{1}{2}g(Q).$ 
\end{proof}

Continuing with the assumption that $Q$ is the top cube of a stopping-time region $S$, which is not a singleton, similarly to \eqref{e:g-estimate}, we will need to estimate the relative distance between $g(R),g(R')$ for $R,R' \in \min(S)$. This will facilitate our forthcoming estimates on the function $f$ (still to be defined). 
 
\begin{lem}
For $R,R' \in \min(S)$ there is $C_3 > 1$ large enough so that 
\begin{align}\label{e:g-lip} 
	C_3^{-N(Q) -1}    [\ell(R) + \ell(R') + \dist(R,R')] &\leq \dist(g(R),g(R')) \\
	 &\hspace{2em}\leq   3C_3^{-N(Q)} [\ell(R) + \ell(R') + \dist(R,R')] .
\end{align}	
\end{lem}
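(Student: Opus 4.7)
The centres of $g(R)$ and $g(R')$ are $\phi(h^{-1}(z_R))$ and $\phi(h^{-1}(z_{R'}))$, where $z_R, z_{R'}\in\Sigma$ are the closest points on $\Sigma$ to $x_R, x_{R'}$, and the diameters of these cubes are $C_3^{-N(Q)-1}\ell(R)$ and $C_3^{-N(Q)-1}\ell(R')$. The plan is to propagate a comparison between $|x_R - x_{R'}|$ and $\ell(R)+\ell(R')+\dist(R,R')$ through each of the maps $x\mapsto z_x$, $h^{-1}$, and $\phi$, and then absorb the diameters of $g(R), g(R')$ into a large constant $C_3$.

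First I would establish that
\[
c_0 \big[\ell(R)+\ell(R')+\dist(R,R')\big] \;\lesssim\; |x_R - x_{R'}| \;\leq\; \ell(R)+\ell(R')+\dist(R,R'),
\]
for distinct $R,R'\in \min(S)$. The upper bound is the usual triangle inequality via arbitrary points in $R$ and $R'$. The lower bound uses the containment $B(x_R,c_0\ell(R))\subseteq R$ and the disjointness of $R,R'$: this gives $|x_R-x_{R'}|\geq c_0\ell(R)$ and symmetrically $\geq c_0\ell(R')$; combined with $|x_R-x_{R'}|\geq\dist(R,R')$ it yields the required lower bound.

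Next I would pass from $x_R,x_{R'}$ to $z_R,z_{R'}$ using \eqref{Closeness}, which gives $|x_R-z_R|\lesssim \lambda^{1/(d+1)}\ell(R)$ and similarly for $R'$. For $\lambda$ small enough, this error is absorbed and
\[
|z_R - z_{R'}| \;\sim\; \ell(R)+\ell(R')+\dist(R,R').
\]
Then I apply the bi-Lipschitz property of $h^{-1}$ (with constant $1+C\lambda$) and the estimate \eqref{e:phi-est} on $\phi$ to obtain
\[
C_4^{-1}C_3^{-N(Q)}\big[\ell(R)+\ell(R')+\dist(R,R')\big] \;\lesssim\; \bigl|\phi(h^{-1}(z_R)) - \phi(h^{-1}(z_{R'}))\bigr| \;\lesssim\; C_3^{-N(Q)}\big[\ell(R)+\ell(R')+\dist(R,R')\big].
\]

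Finally, the distance between the cubes $g(R),g(R')$ differs from the distance between their centres by at most $\tfrac{1}{2}\diam(g(R))+\tfrac{1}{2}\diam(g(R'))\leq C_3^{-N(Q)-1}[\ell(R)+\ell(R')]$. For the upper bound in \eqref{e:g-lip}, $\dist(g(R),g(R'))$ is dominated by the distance between the centres, so choosing $\lambda$ small yields the factor $3C_3^{-N(Q)}$. For the lower bound, I would write
\[
\dist(g(R),g(R')) \;\geq\; C_4^{-1}C_3^{-N(Q)}c\big[\ell(R)+\ell(R')+\dist(R,R')\big] - C_3^{-N(Q)-1}\big[\ell(R)+\ell(R')\big],
\]
factor out $C_3^{-N(Q)-1}$, and choose $C_3$ large enough (depending on $C_4$ and the constant from step one) that the coefficient of $\ell(R)+\ell(R')+\dist(R,R')$ is at least $1$. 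The main technical point will be ensuring that the same constant $C_3$ works simultaneously for \eqref{e:g-estimate}, \eqref{e:in-g(Q)}, \eqref{e:phi-est}, and the lower bound here; this is achieved because all the competing error terms scale like $C_3^{-N(Q)-1}$, while the main term scales like $C_3^{-N(Q)}$.
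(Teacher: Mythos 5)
Your plan is correct and follows essentially the same route as the paper: compare $|x_R-x_{R'}|$ to $\ell(R)+\ell(R')+\dist(R,R')$ via the containment/disjointness of the balls $c_0B_R$, transfer to $z_R,z_{R'}$ using \eqref{Closeness}, propagate through the bi-Lipschitz maps $h^{-1}$ and $\phi$ via \eqref{e:phi-est}, and absorb the diameters of $g(R),g(R')$ (which carry an extra factor $C_3^{-1}$) by choosing $C_3$ large. You also correctly write the smallness parameter as $\lambda$, where the paper's proof inconsistently writes $\ve$.
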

\begin{proof}
Let $R_1,R_2 \in \min(S).$ Since $x_{R_i} \in c_0B_{R_i}$ for each $i=1,2,$ and $c_0B_{R_1} \cap c_0B_{R_2} = \emptyset$, we can find a constant $C_5$ so that   
\begin{align}
	C_5^{-1} [ \ell(R_1) + \ell(R_2) + \dist(R_1,R_2) ] \leq |x_{R_1} - x_{R_2}| \leq \ell(R_1) + \ell(R_2) + \dist(R_1,R_2).
\end{align}
Recalling the definition of $z_R$ (just below \eqref{e:phi-est}), by Lemma \ref{Sigma} we know $|x_{R_i} - z_{R_i}| \leq C_6 \ve^\frac{1}{d+1} \ell(R_i)$ for some constant $C_6$ and for each $i =1,2.$ Recall also that, $h$ is $(1+C_7\ve)$-bi-Lipschitz for some constant $C_7.$ Choosing $\ve^\frac{1}{d+1} \leq \min\{(2C_5C_6)^{-1}, C_7^{-1}\},$ we have
\begin{align}
	(2C_5)^{-1}  [\ell(R_1) + \ell(R_2) + \dist(R_1,R_2)]  &\leq C_5^{-1} [\ell(R_1) + \ell(R_2) + \dist(R_1,R_2)] \\ 
	&\hspace{4em} - C_6 \ve^\frac{1}{d+1} \ell(R_1) - C_6 \ve^\frac{1}{d+1} \ell(R_2)   \\
	&\leq |x_{R_1} - x_{R_2}| - C_6 \ve^\frac{1}{d+1} \ell(R_1) - C_6 \ve^\frac{1}{d+1} \ell(R_2) \\
	&\leq |z_{R_1} - z_{R_2}| \leq (1+C_7\ve)|h^{-1}(z_{R_1}) - h^{-1}(z_{R_2})| \\
	&\leq 2 |h^{-1}(z_{R_1}) - h^{-1}(z_{R_2})|.
\end{align} 
After rearranging, this gives
\begin{align}\label{e:lower-h}
	|h^{-1}(z_{R_1}) - h^{-1}(z_{R_2})| \geq (4C_5)^{-1}  [\ell(R_1) + \ell(R_2) + \dist(R_1,R_2)].
\end{align}
Similarly, one can also show 
\begin{align}
	|h^{-1}(z_{R_1}) - h^{-1}(z_{R_2})| \leq 2  [\ell(R_1) + \ell(R_2) + \dist(R_1,R_2)].
\end{align}
Let $y_{R_1} \in g(R_1)$ and $y_{R_2} \in g(R_2)$ be points so that $|y_{R_1}-y_{R_2}| = \dist(g(R_1),g(R_2)).$ Recalling that $g(R_i)$ is centred at $\phi(h^{-1}(z_{R_i}))$ and $\diam(g(R_i)) = C_3^{-N(Q)-1} \ell(R_i)$, choosing $C_3 \geq 8C_4C_5$, and using \eqref{e:phi-est} and \eqref{e:lower-h}, we have
\begin{align}
	\dist(g(R_1),g(R_2)) &= |y_{R_1}-y_{R_2}|  \\
	 & \geq |\phi(h^{-1}(z_{R_1})) - \phi(h^{-1}(z_{R_2}))| - |y_{R_1}- \phi(h^{-1}(z_{R_1}))| - |y_{R_2} - \phi(h^{-1}(z_{R_2}))| \\
	 & \geq (4C_4C_5)^{-1}C_3^{-N(Q)} [\ell(R_1) + \ell(R_2) + \dist(R_1,R_2)] \\
	 &\hspace{4em} - C_3^{-N(Q)-1} \ell(R_1) - C_3^{-N(Q)-1}\ell(R_2) \\
	 & \geq C_3^{-N(Q)-1} [ \ell(R_1) + \ell(R_2) + \dist(R_1,R_2) ], 
\end{align}
The reverse inequality follows similarly;
\begin{align}
	\dist(g(R_1),g(R_2)) &= |y_{R_1}-y_{R_2}|  \\
	& \leq |\phi(h^{-1}(z_{R_1})) - \phi(h^{-1}(z_{R_2}))| + |y_{R_1}- \phi(h^{-1}(z_{R_1}))| + |y_{R_2} - \phi(h^{-1}(z_{R_2}))| \\
	& \leq 2C_3^{-N(Q)} [\ell(R) + \ell(R') + \dist(R,R')] + C_3^{-N(Q)-1} \ell(R) + C_3^{-N(Q)-1}\ell(R') \\
	& \leq 3C_3^{-N(Q)} [ \ell(R) + \ell(R') + \dist(R,R') ].
\end{align}

\end{proof}

Now that we have defined $g,$ we can use it to define a function $f$. By definition, for each $x \in F$ there is a minimal transition cube, which we denote by $Q(x),$ so that $x \in Q(x).$ It must be that $Q(x)$ is top cube of a stopping-time region $S(x)$ (which is not a singleton), otherwise $x$ would be contained in a smaller transition cube. Furthermore, $x$ is contained in arbitrarily small cubes from $S(x)$ and so $x \in F \cap \Sigma_{S(x)}$ by \eqref{Closeness}. Since $x \in \Sigma_{S(x)},$ $h_{Q(x)}^{-1}(x)$ makes sense, and we can define 
\[ f(x) = \phi_{Q(x)}(h^{-1}_{Q(x)}(x)). \] 
Notice, by \eqref{e:in-g(Q)}, $f(x) \in g(Q(x)).$ The proof of Lemma \ref{l:BWGL1} is finished once we show the following. 

\begin{lem}
	The map $f: F \rightarrow \R^d$ is $L$-bi-Lipschitz, where $L$ depends on $\ve.$ 
\end{lem}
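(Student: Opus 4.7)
The plan is to split on how $Q(x)$ and $Q(y)$ sit in the tree of transition cubes, using two ingredients throughout: the bi-Lipschitz bounds \eqref{e:phi-est} on each $\phi_Q$ combined with the $(1+C\lambda)$-bi-Lipschitz property of $h_Q$ from Lemma~\ref{Sigma}, and the separation/placement estimates \eqref{e:g-lip} for the cubes $g(R)$ in $\R^d$. The final constant $L$ will depend on $\varepsilon$ only through $\eta = \eta(\varepsilon)$ and $N = N(\varepsilon)$, together with $C_3$, which I would fix sufficiently large (depending on $\eta$) at the outset of the construction of $g$.

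Before splitting into cases I would record a decisive consequence of membership in $F$: if $|x-y| < \eta \rho^k$ then for every transition cube $Q$ we have $x \in Q$ iff $y \in Q$, and in particular $Q(x) = Q(y)$. Indeed, if some $Q \in T$ contained $y$ but not $x$, then $x \in E \setminus Q$ would give $\dist(y, E \setminus Q) \leq |x-y| < \eta \rho^k$, placing $y$ in $\sigma(Q)$ and hence outside $F$. Thus in every case where $Q(x) \neq Q(y)$ we automatically have $|x-y| \geq \eta \rho^k$, and this is what absorbs error terms of size $\ell(R) \leq 5\rho^k$ at the cost of a factor $\eta^{-1}$.

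In the easy case $Q(x) = Q(y) = Q$, both $f(x)$ and $f(y)$ are $\phi_Q \circ h_Q^{-1}$ applied to points of $\Sigma_{S(x)}$, so \eqref{e:phi-est} and Lemma~\ref{Sigma} give $|f(x)-f(y)| \sim C_3^{-N(Q)} |x-y|$ immediately. When $Q(x) \neq Q(y)$, I would let $Q^*$ denote the smallest transition cube containing both points. The generic sub-case is $Q^* = Q(S^*)$ with $Q(x), Q(y) \subsetneq Q^*$: minimality of $Q^*$ then forces distinct minimal cubes $R_x, R_y \in \min(S^*)$ with $Q(x) \subseteq R_x$, $Q(y) \subseteq R_y$, and the nesting $g(Q(x)) \subseteq \tfrac{1}{2} g(R_x)$ (and similarly for $y$), together with \eqref{e:g-lip}, control $|f(x)-f(y)|$ from both sides by $C_3^{-N(Q^*)} [\ell(R_x) + \ell(R_y) + \dist(R_x, R_y)]$. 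Comparing this with the triangle inequality $|x-y| \leq 2[\ell(R_x) + \ell(R_y) + \dist(R_x, R_y)]$ and the reverse bound $\ell(R_x) + \ell(R_y) + \dist(R_x, R_y) \lesssim_\eta |x-y|$ (which follows from $|x-y| \geq \eta \rho^k$, $\dist(R_x, R_y) \leq |x-y|$, and $\ell(R_\cdot) \leq 5\rho^k$) then delivers the bi-Lipschitz estimate in this sub-case.

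The hard part will be the remaining sub-case $Q^* = Q(x)$ (symmetrically $Q^* = Q(y)$), where $S^* = S(x)$ and there is no sibling cube to pair $Q(x)$ with. Here $f(x) = \phi_{Q^*}(h_{Q^*}^{-1}(x))$ with $x \in \Sigma_{S^*}$ (since $x \in F$ lies in arbitrarily small cubes of $S(x)$), while $f(y) \in \tfrac{1}{2} g(R_y)$, a cube concentric with $\phi_{Q^*}(h_{Q^*}^{-1}(z_{R_y}))$ of diameter $C_3^{-N(Q^*)-1} \ell(R_y)$, for the minimal cube $R_y \in \min(S^*)$ containing $Q(y)$. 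The key geometric input I would exploit is that $Q^*$ is the minimal transition cube containing $x$, so $x \notin R_y$ and hence $|x - x_{R_y}| \geq c_0 \ell(R_y)$; combined with $|x - x_{R_y}| \geq |x-y| - \ell(R_y)$ and Lemma~\ref{Sigma}, this yields $|x - z_{R_y}| \gtrsim |x-y|$ regardless of whether $|x-y|$ is smaller or larger than $\ell(R_y)$. Then \eqref{e:phi-est} gives $|f(x) - \phi_{Q^*}(h_{Q^*}^{-1}(z_{R_y}))| \gtrsim C_3^{-N(Q^*)} |x-y|$, and choosing $C_3$ large enough relative to $\eta^{-1}$ makes this dominate $\tfrac{1}{2}\diam(g(R_y)) \lesssim_\eta C_3^{-N(Q^*)-1} |x-y|$, producing the required lower bound on $|f(x)-f(y)|$. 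The matching upper bound follows from the same estimates together with $\ell(R_y) \leq 5|x-y|/\eta$. Taking $L = L(\eta, N, C_3) = L(\varepsilon)$ then completes the proof.
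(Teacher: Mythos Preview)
Your approach mirrors the paper's—both use that $x \in Q \setminus \sigma(Q)$, $y \notin Q$ forces $|x-y| \gtrsim \eta\,\ell(Q)$, together with the nesting $f(\cdot) \in g(Q(\cdot))$ and the separation bounds \eqref{e:g-lip}—but your case split has a gap. You take $Q^*$ to be the smallest transition cube containing both $x$ and $y$, and then handle only (a) $Q^* = Q(S^*)$ a top cube with $Q(x), Q(y) \subsetneq Q^*$, and (b) $Q^* \in \{Q(x),Q(y)\}$. But $Q^*$ can also be a \emph{minimal} cube $R$ of some region $S$: then $x,y$ lie in distinct children $T_x, T_y$ of $R$ (each a top cube, hence a transition cube), so $Q(x) \subseteq T_x \subsetneq R = Q^*$ and neither (a) nor (b) applies. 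This is precisely the situation treated in the paper's final paragraph (where the maximal separating cube $Q_1$ is a top cube and its parent $Q_2 = R$ is the minimal cube playing the role of your $Q^*$); it is handled there via \eqref{e:g-estimate} together with $|x-y| \sim_\eta \ell(R)$. The patch is short but necessary.

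Two smaller remarks. Your use of a single $\rho^k$ is misleading: the $\sigma$-condition gives $|x-y| > \eta\rho^{k(R_x)}$ and separately $|x-y| > \eta\rho^{k(R_y)}$, with possibly different generations; once this is written correctly your reverse bound $\ell(R_x)+\ell(R_y)+\dist(R_x,R_y) \lesssim_\eta |x-y|$ goes through. And in your hard sub-case, the paper avoids making $C_3$ depend on $\eta$ by estimating $\dist(f(x), g(R_y))$ directly in terms of $\ell(R_y) + \dist(x,R_y)$ (a one-point analogue of \eqref{e:g-lip}), though your version is also valid since $C_3$ may be taken as large as desired.
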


\begin{proof}
Let $x,y \in F$ be distinct points. Let $Q_1$ be the maximal cube contained in $R_0$ so that $x \in Q_1$ but $y \not \in Q_1.$ Assume to begin with that $Q_1$ is contained in some stopping-time region $S$, but that $Q_1$ is not the top cube of $S.$ By maximality, any cube containing $Q_1$ contains both $x$ and $y$, in particular, this holds true for $Q(S).$ Assume further that there are cubes $R_x,R_y \in \min(S)$ so that $x \in R_x$ and $y \in R_y.$ This implies, first, that 
\begin{align}\label{e:R_x-low}
	 |x-y| \leq \ell(R_x) + \ell(R_y) + \dist(R_x,R_y).
\end{align}  
Secondly, since $x \not \in \sigma(R_x)$ and $y \not\in \sigma(R_y)$ (by the definition of $F$), we also have 
\begin{align}\label{e:R_x-up} 
	|x-y| \gtrsim  \ell(R_x) + \ell(R_y) + \dist(R_x,R_y).
\end{align}
The estimates on $f$ then follow from the fact that $f(x) \in g(R_x),$ $f(y) \in g(R_y),$ $N(Q) \leq N$ and the estimates \eqref{e:g-lip}, \eqref{e:R_x-low}, and \eqref{e:R_x-up}, since
\begin{align}
	|x -y| &\leq  \ell(R_x) + \ell(R_y) + \dist(R_x,R_y) \lesssim_L \dist(g(R_x),g(R_y)) \leq |f(x) - f(y)| \\
	&\leq \dist(g(R_x),g(R_y)) + \diam(R_x) + \diam(R_y) \lesssim_N \ell(R_x) + \ell(R_y) + \dist(R_x,R_y) \\
	&\lesssim |x-y|.
\end{align} 
If it is the case that $x$ is not contained in any cube from $\min(S)$, then $f(x) = \phi_{Q(S)}(h^{-1}_{Q(S)}(x))$. By similar arguments to \eqref{e:g-lip}, it can be shown that
\[ C_3^{-N(Q) -1}    [ \ell(R_y) + \dist(x,R_y)] \leq \dist(x,g(R_y)) \leq   3C_3^{-N(Q)} [\ell(R_y) + \dist(x,R_y)], \]
and the estimates follow in the same way as above. Swapping the roles of $x$ and $y$, the same is true if $y$ is not contained in any cube from $\min(S).$ If neither $x$ nor $y$ are contained in minimal cubes, the estimates follow directly from an argument similar to that which established \eqref{e:g-lip}. 

Now we are left with the case that $Q_1$ is the top of a stopping-time region. Let $Q_2$ be the parent of $Q_1.$ It must be that $Q_2$ is a transition cube, and by maximality, $x,y \in Q_2.$ Since $x,y \in Q_2$, $x \in Q_1\setminus \sigma(Q_1)$ and $y \not\in Q_1$ we have $|x-y| \sim \ell(Q_1).$
To estimate the distance between $f(x)$ and $f(y)$, first note that since $x,y \in Q_2,$ we have $f(x),f(y) \in g(Q_2)$ and so 
\[|f(x) - f(y)| \leq \diam(g(Q_2)) \lesssim \ell(Q_2) \sim \ell(Q_1) \sim |x-y| .\]
Next, let $Q_1'$ be the sibling of $Q_1$ so that $y \in Q_1'.$ By \eqref{e:g-lip} we have
\[ |f(x) - f(y)| \geq \dist(g(Q_1),g(Q_1')) \gtrsim \ell(Q_1) \sim |x-y| \]
and this finishes the proof of the bi-Lipschitz estimates. 

\end{proof}

Now for the converse to Proposition \ref{p:BWGL1}. 

\begin{prop}\label{p:BWGL2}
	Suppose $E$ is \emph{UR}, then $E$ satisfies the $\BWGL.$ 
\end{prop}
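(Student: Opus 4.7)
By Theorem \ref{Thm2}, for every $Q_0 \in \mathscr{D}$ we have
\[ \BWGL(Q_0, A, \ve) \lesssim \ell(Q_0)^d + \sum_{Q \subseteq Q_0} \beta_E^{d,p}(C_0 B_Q)^2 \ell(Q)^d, \]
so it suffices to prove the Carleson-type estimate
\[ \sum_{Q \subseteq Q_0} \beta_E^{d,p}(C_0 B_Q)^2 \ell(Q)^d \lesssim \ell(Q_0)^d \]
with constants depending only on the UR data. This is the ``$\beta$-numbers form a Carleson measure'' characterisation of UR lifted to Hilbert space. The David--Semmes proof in $\R^n$ uses the unit star $\Sigma_0$, which (as recalled in the remark following Theorem \ref{t:BWGL}) has no analogue in infinite-dimensional $H$; I will replace that tool by a direct corona decomposition driven by BPLI.

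The plan is the following. First, upgrade BPLI to big pieces of \emph{bi-Lipschitz} images by restricting each Lipschitz $f_Q$ supplied by Definition \ref{d:UR} to a subset of $\R^d$ on which it is bi-Lipschitz with uniform distortion and which still covers a definite fraction of $E \cap B_Q$; this is a standard pigeonhole argument on the metric derivative of Lipschitz maps $\R^d \to H$. Fix $Q_0$ and build inductively a partition of $\{Q \subseteq Q_0\}$ into stopping-time regions $\{S\}_{S \in \mathscr{S}}$: each region carries a top cube $Q(S)$ and a bi-Lipschitz image $\Sigma_S = f_{Q(S)}(\R^d)$, and one stops at a subcube $R \subsetneq Q(S)$ as soon as $E$ fails to track $\Sigma_S$ at scale $\ell(R)$. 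Because every stopping step consumes a uniformly positive fraction of the Ahlfors measure of $Q(S)$, the top cubes satisfy the Carleson packing
\[ \sum_{S \in \mathscr{S}} \ell(Q(S))^d \lesssim \ell(Q_0)^d. \]

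Inside a single region $S$, the $\beta$-sum is handled exactly as in the proofs of Theorem \ref{Thm3} and Proposition \ref{ThmPart1}. Lemma \ref{betaest} replaces $\beta_E^{d,p}(C_0 B_Q)$ by $\beta_{\Sigma_S}^{d,p}(2 C_0 B_Q')$ plus an integral error measuring the distance from $E$ to $\Sigma_S$; the error sums to $O(\ell(Q(S))^d)$ by the same computation as Claim~1 of Section \ref{Sec:StopTime}, using the bounded overlap of the minimal cubes and the lower regularity of $\Sigma_S$. For the main term, Lemma \ref{BetaBound} bounds $\beta_{\Sigma_S}^{d,p}$ by $\Omega_{f_{Q(S)},p}$, and Corollary \ref{CorDor} (Dorronsoro's theorem, whose proof is insensitive to whether the target is $\R^n$ or $H$) gives
\[ \sum_{Q \in S} \beta_{\Sigma_S}^{d,p}(2 C_0 B_Q')^2 \ell(Q)^d \lesssim \|f_{Q(S)}\|_{\mathrm{Lip}}^2 \ell(Q(S))^d \lesssim \ell(Q(S))^d. \]
Summing both contributions over $S \in \mathscr{S}$ and invoking the top-cube packing yields the Carleson estimate, and hence BWGL.

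The principal obstacle is the corona decomposition: upgrading BPLI to bi-Lipschitz big pieces in $H$ and iterating it into a geometrically packed stopping-time family without the finite-dimensional unit-star argument. Once this combinatorial structure is in place, the analytic $\beta$-estimates within each region reduce to the machinery already developed in Sections \ref{Sec:StopTime} and \ref{s:Thm1}.
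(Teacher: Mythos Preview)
Your reduction via Theorem~\ref{Thm2} to the Carleson estimate for $\beta_E^{d,p}$ matches the paper's first step, and both approaches need big pieces of bi-Lipschitz images, which the paper obtains by citing \cite{schul2009bi} rather than a metric-derivative pigeonhole. From that point the routes diverge. The paper does \emph{not} build a corona by hand: it extends each bi-Lipschitz piece $f:A\to H$ to a bi-Lipschitz $g:\R^d\to H\times\R^m$ via Lemma~\ref{l:ext-bi-lip} (\cite[Proposition~17.4]{david1991singular}, dimension-free), notes that the full image $\tilde E=g(\R^d)$ satisfies BWGL by Lemma~\ref{l:bi-lip-BWGL} and hence the $(2,2)$-geometric lemma by Theorem~\ref{Thm1}, and then invokes the David--Semmes big-pieces transfer Lemma~\ref{l:geometriclemma} (\cite[Theorem~IV.1.3]{david1993analysis}, whose corona proof works verbatim in $H$ and does not use the unit star) to pass the $(2,2)$-geometric lemma to $E$.

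Your direct corona is in effect a re-derivation of that transfer lemma and is plausible, but the sketch has a gap: you set $\Sigma_S=f_{Q(S)}(\R^d)$ with $f_{Q(S)}$ the BPLI Lipschitz map, yet a merely Lipschitz image of $\R^d$ need not be lower content $d$-regular, so neither Lemma~\ref{betaest} (which requires lower regularity of both sets) nor Lemma~\ref{BetaBound} (which requires $f$ bi-Lipschitz) applies to it; restricting to the bi-Lipschitz subset produces holes instead. The repair is precisely the paper's extension Lemma~\ref{l:ext-bi-lip}. Once you use it, your corona and the cited Lemma~\ref{l:geometriclemma} have the same content, and the paper's packaging is simply the shorter way to record it.
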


We will need the following two lemmas. 

\begin{lem}\label{l:ext-bi-lip}
	Let $A \subseteq \R^d$ be compact and $f : A \rightarrow H$ be an $L$-bi-Lipschitz function. Then there exists constant $m$ and $M$ (depending only on $d$ and $L$), and an extension $g : \R^d \rightarrow \tilde{H} \coloneqq H \times \R^m$ of $f$ which is $M$-bi-Lipschitz.
\end{lem}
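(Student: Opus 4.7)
The strategy is to combine Kirszbraun's Lipschitz extension theorem with a Whitney decomposition of $\R^d\setminus A$, using the extra $\R^m$ coordinates as a helper function that restores bi-Lipschitz behaviour off $A$.

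First, since both $\R^d$ and $H$ are Hilbert spaces, Kirszbraun's theorem yields an $L$-Lipschitz extension $\tilde f:\R^d\to H$ of $f$. Next, take a standard Whitney decomposition $\{Q_i\}_{i\in I}$ of the open set $\R^d\setminus A$, with $\ell(Q_i)\sim\dist(Q_i,A)$ and the dilates $\tfrac{9}{8}Q_i$ having overlap bounded by a constant $N_1=N_1(d)$. By Euclidean doubling, the index set $I$ admits a partition into $N=N(d)$ color classes $I_1,\dots,I_N$ such that distinct cubes of the same color are separated by at least a fixed multiple of their own diameters. Pick an orthonormal basis $e_1,\dots,e_N$ of $\R^N$ and Lipschitz bumps $\psi_i$ with $\psi_i\equiv\ell(Q_i)$ on $Q_i$, $\supp\psi_i\subseteq\tfrac{9}{8}Q_i$, and $\mathrm{Lip}(\psi_i)\lesssim 1$. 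Define the helper map
\[
h(x)=\sum_{c=1}^{N}\sum_{i\in I_c}\psi_i(x)\,e_c \quad (x\in\R^d\setminus A), \qquad h|_A\equiv 0.
\]
Using the bounded overlap of the $\tfrac{9}{8}Q_i$ and the standard Whitney property that adjacent cubes have comparable sizes, $h$ is $C(d)$-Lipschitz on $\R^d$, vanishes on $A$, and satisfies $|h(x)|\sim\dist(x,A)$ for every $x$. With $m=N$ and a constant $K=K(L,d)$ to be chosen, set $g(x)=(\tilde f(x),K\,h(x))\in H\times\R^N$; then $g|_A=(f,0)$, so $g$ extends $f$ under the natural inclusion $H\hookrightarrow\tilde H$, and the upper Lipschitz bound on $g$ is immediate.

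The core of the argument is the lower bi-Lipschitz bound. Fix $x,y\in\R^d$ and let $x',y'\in A$ be respective nearest points. In the regime $\max(\dist(x,A),\dist(y,A))\le c|x-y|$ for a small $c=c(L)$, one has $|x'-y'|\ge\tfrac{1}{2}|x-y|$, and the bi-Lipschitz estimate for $f$ on $A$ together with $|\tilde f(x)-\tilde f(x')|\le L\dist(x,A)$ yields $|\tilde f(x)-\tilde f(y)|\gtrsim_L|x-y|$, so the bound comes from the $H$-coordinate alone. In the complementary regime $\dist(x,A)+\dist(y,A)\gtrsim_L|x-y|$, I extract the bound from $|h(x)-h(y)|$: if $x$ and $y$ lie in Whitney cubes of different colors, the contributions in the orthogonal basis $e_c$ cannot cancel and $|h(x)-h(y)|\gtrsim\max(|h(x)|,|h(y)|)\sim\dist(\cdot,A)\gtrsim|x-y|$; if $x$ and $y$ are in cubes sharing a color, the color-class separation forces $|x-y|\gtrsim\max(\ell(Q_i),\ell(Q_j))$, and a direct estimate on $\psi_i-\psi_j$ plus the comparability of $\ell(Q_i)$ with $\dist(x,A)$ again gives the bound.

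The main obstacle is choreographing this case analysis so that a single constant $K=K(L,d)$ works uniformly: the $K\,h$ term must dominate in exactly those configurations where $\tilde f$ fails to separate $x$ from $y$, without overwhelming the $H$-coordinate estimate in the opposite regime. Once the Whitney scales and the coloring are fixed, this reduces to a finite optimization producing constants $m=N(d)$ and $M=M(L,d)$ as required.
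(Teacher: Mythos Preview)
Your overall strategy (Kirszbraun extension plus a Whitney-based helper taking values in the extra $\R^m$ coordinates) is exactly the David--Semmes route the paper invokes, but the helper you build is too coarse and the lower bound fails.

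The problem is in your Sub-case~2b. You only treat the situation where $x$ and $y$ lie in \emph{distinct} same-color cubes, where the color separation indeed forces $|x-y|\gtrsim\ell(Q_i)$. You omit the case $Q_i=Q_j$. There your bump $\psi_i\equiv\ell(Q_i)$ is constant on $Q_i$, and on the core of $Q_i$ (the part not touched by any neighbouring $\tfrac{9}{8}Q_j$, which is a nonempty set of diameter $\sim\ell(Q_i)$) the entire helper $h$ is the constant vector $\ell(Q_i)e_c$. So for $x,y$ in that core one has $h(x)=h(y)$ and hence $|g(x)-g(y)|=|\tilde f(x)-\tilde f(y)|$. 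But Kirszbraun gives no injectivity whatsoever for $\tilde f$ off $A$: for instance, if $A=\{0\}$ and $f(0)=0$, the extension $\tilde f\equiv 0$ is admissible, and then $g$ collapses every such pair $x,y$. Your Sub-case~2a has a related issue: the claim $|h(x)-h(y)|\gtrsim\max(|h(x)|,|h(y)|)$ is false when $x,y$ sit in adjacent different-color cubes but are displaced along the shared face, since the cross-color bump contributions largely cancel.

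What is missing is that the helper must also record \emph{position} within each Whitney cube, not just its scale. In the David--Semmes construction the $c$-th block of $h$ is $\R^d$-valued (so $m=N d$, or $N(d{+}1)$ if one keeps a scale coordinate as well): roughly $\sum_{i\in I_c}\phi_i(x)\,(x-x_i)$ for a partition of unity $\{\phi_i\}$ subordinate to $\{\tfrac{9}{8}Q_i\}$. On each $Q_i$ this is a small Lipschitz perturbation of the affine map $x\mapsto x-x_i$, hence locally bi-Lipschitz, which handles the within-cube case; the coloring then prevents interference between far-apart same-color cubes exactly as you intended. With this correction the rest of your case analysis goes through and yields $m=m(d)$, $M=M(d,L)$.
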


\begin{rem}
	The above lemma is essentially \cite[Proposition 17.4]{david1991singular}. The proof is identical up to notational changes (swapping $\R^n$ for $H$). Additionally, the constant $M$ in \cite[Proposition 17.4]{david1991singular} is actually independent of $n$.  
\end{rem}

\begin{lem}\label{l:geometriclemma}
	Let $E \subseteq H$ and suppose there are constants $C$ and $\theta > 0$ so that for each $x \in E$ and $r \in (0,\diam E),$ there exists an Ahlfors $d$-regular set $\tilde{E}$ with regularity constant at most $C$, so that 
	\[ \mathscr{H}^d(E \cap B(x,r) \cap \tilde{E}) \geq \theta r^d, \]
	and so that $\tilde{E}$ satisfies the $(2,2)$-geometric lemma, meaning 
	\[ \sum_{\substack{R \in \mathscr{D}^{\tilde{E}} \\ R \subseteq Q}} \beta_{\tilde{E}}^{d,2}(C_0B_R)^2\ell(R)^d \lesssim \ell(Q)^d \] 
	for all $C_0 >1$ and $Q \in \mathscr{D}^{\tilde{E}}.$ Then $E$ also satisfies the $(2,2)$-geometric lemma. 	
\end{lem}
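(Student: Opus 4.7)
The plan is a corona/stopping-time argument in the spirit of David--Semmes for deducing a geometric lemma from a big-pieces hypothesis. Fix $Q_0 \in \mathscr{D}^E$; the goal is to show $S(Q_0) := \sum_{R \subseteq Q_0} \beta_E^{d,2}(C_0 B_R)^2 \ell(R)^d \lesssim \ell(Q_0)^d$ with constants independent of $Q_0$. I would first truncate the sum to scales $\ell(R) \geq \rho^N \ell(Q_0)$ so that $M(Q_0) := \sup_{R \subseteq Q_0} S(R)/\ell(R)^d$ is a priori finite, and then show $M(Q_0) \lesssim 1$ uniformly in $N$ before passing to the limit.

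Applying the big-pieces hypothesis to $B(x_{Q_0}, K\ell(Q_0))$ for a large fixed $K$ yields an Ahlfors $d$-regular $\tilde{E}$ with $\mathscr{H}^d(E \cap B_{Q_0} \cap \tilde{E}) \geq \theta \ell(Q_0)^d$. I would run a stopping-time by stopping at a cube $R \subseteq Q_0$ when $\mathscr{H}^d(R \cap \tilde{E}) < \theta' \ell(R)^d$ for some small $\theta' \ll \theta$, writing $\mathscr{S}$ for the non-stopped cubes and $\mathscr{F}$ for the minimal stopped ones. For $R \in \mathscr{S}$ the large intersection with $\tilde{E}$ produces $x_R' \in \tilde{E}$ with $|x_R - x_R'| \lesssim \ell(R)$, so Lemma \ref{betaest} gives
\[
\beta_E^{d,2}(C_0 B_R)^2 \lesssim \beta_{\tilde{E}}^{d,2}(C B(x_R',\ell(R)))^2 + \frac{1}{\ell(R)^{d+2}}\int_{E \cap C'B_R} \dist(y,\tilde{E})^2 \, d\mathscr{H}^d_\infty(y).
\]
Summing the $\beta_{\tilde{E}}$-terms weighted by $\ell(R)^d$ and matching each $R \in \mathscr{S}$ with a cube in $\mathscr{D}^{\tilde{E}}$ of comparable size in a bounded-to-one fashion (via an application of Lemma \ref{ENV} to the separated centers), the $(2,2)$-geometric lemma for $\tilde{E}$ bounds this contribution by $C \ell(Q_0)^d$.

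The error term is handled by swapping the integral and the sum: a point $y \in E$ with $\dist(y,\tilde{E}) \geq t \ell(R)$ lying in $C'B_R$ must sit inside some $T \in \mathscr{F}$ with $\ell(T) \gtrsim t \ell(R)$, and a geometric summation in the scales between $T$ and the top cube bounds the total error by $\sum_{T \in \mathscr{F}}\ell(T)^d + C\ell(Q_0)^d$. The remaining contribution from cubes below $\mathscr{F}$, grouped by their minimal ancestor, gives $\sum_{R \not\in \mathscr{S}} \beta_E^{d,2}(C_0 B_R)^2 \ell(R)^d \leq M(Q_0) \sum_{T \in \mathscr{F}} \ell(T)^d$.

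The decisive step is a measure-reduction bound $\sum_{T \in \mathscr{F}} \ell(T)^d \leq (1-\eta)\ell(Q_0)^d$ for some $\eta > 0$. Since $\mathscr{H}^d(T \cap \tilde{E}) < \theta'\ell(T)^d$ for each $T \in \mathscr{F}$, the part of $E \cap \tilde{E} \cap B_{Q_0}$ lying in stopped cubes has measure at most $\theta' \sum_T \ell(T)^d$; combined with $\mathscr{H}^d(E \cap \tilde{E} \cap B_{Q_0}) \geq \theta \ell(Q_0)^d$ and the Ahlfors regularity of $E$ (which is inherited from the big-pieces hypothesis in the context of Proposition \ref{p:BWGL2}, where the lemma is invoked) this forces the reduction once $\theta'$ is chosen small enough. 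Putting everything together yields $M(Q_0) \leq C + (1-\eta) M(Q_0)$, hence $M(Q_0) \leq C/\eta$ independently of $N$. The main obstacle is the error-term accounting across scales, which must exploit the quantitative scale-separation provided by the stopping-time to rule out any logarithmic loss.
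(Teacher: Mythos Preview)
The paper does not give a self-contained proof of this lemma; the remark immediately following it simply cites \cite[Theorem IV.1.3]{david1993analysis} and asserts that the proof in \cite[Section IV.1.4]{david1993analysis} carries over to Hilbert space. Your outline is precisely the David--Semmes corona argument from that section, so in substance you are reconstructing the same proof the paper defers to.

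One technical point worth flagging in your self-improvement step: the inequality $\sum_{T\in\mathscr{F}}\ell(T)^d\le(1-\eta)\ell(Q_0)^d$ does not follow directly, because passing between $\ell(T)^d$ and $\mathscr{H}^d(T)$ costs the Ahlfors regularity constant and can push the factor above $1$. What does hold cleanly is $\sum_{T\in\mathscr{F}}\mathscr{H}^d(T)=\mathscr{H}^d(Q_0)-\mathscr{H}^d(G)\le(1-\eta)\mathscr{H}^d(Q_0)$, using the exact additivity of measure over disjoint cubes together with your lower bound on $\mathscr{H}^d(G)$. So normalize $M(Q_0)=\sup_{R\subseteq Q_0}S(R)/\mathscr{H}^d(R)$ rather than $S(R)/\ell(R)^d$; the bootstrapping then closes as written, and since $\mathscr{H}^d(R)\sim\ell(R)^d$ the conclusion is the same. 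This is exactly how David--Semmes set it up.
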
 

\begin{rem}
	The above is a consequence of \cite[Theorem IV.1.3]{david1993analysis}, whose proof is found in \cite[Section IV.1.4]{david1993analysis} and remains valid for subsets of $H$. For the definition of the general $(p,q)$-geometric lemma, see \cite[Definition IV.1.2]{david1993analysis}. This is stated in terms of a Carleson measure condition, however, the above formulation is equivalent and is similar to how we formulated the BWGL, see after Definition \ref{d:BWGL}.  
\end{rem}

\begin{proof}[Proof of Proposition \ref{p:BWGL2}]
	Let $A \geq 1$ and $\ve >0$. We need to show $\BWGL(Q,A,\ve) < C(A,\ve)\ell(Q)^d$ for all $Q \in \mathscr{D}.$ Let $C_0 \geq 2\rho^{-1}.$ By Theorem \ref{Thm1}, with $p = 2,$ we know
	\[ \BWGL(Q,A,\ve) \lesssim_{A,\ve} \ell(Q)^d + \sum_{\substack{R \in \mathscr{D} \\ R \subseteq Q}} \beta^{d,2}_E(C_0B_R)^2\ell(R)^d \] 
	for all $Q \in \mathscr{D}.$ Thus, we are done if we can show that
	\begin{align}\label{e:letsshow0} 
		\sum_{\substack{R \in \mathscr{D} \\ R \subseteq Q}} \beta^{d,2}_E(C_0B_R)^2\ell(R)^d \lesssim \ell(Q)^d
	\end{align}
	for all $Q \in \mathscr{D}.$ This will follow from the fact that $E$ satisfies the $(2,2)$-geometric lemma, which we verify by Lemma \ref{l:geometriclemma}. First, by \cite[Corollary 1.2]{schul2009bi}, $E$ has \textit{big pieces of bi-Lipschitz images of} $\R^d,$ meaning there are constants $L,\theta>0$ such that for all $x \in E$ and $r \in (0,\diam E),$ there is a compact subset $A \subseteq \R^d$ and an $L$-bi-Lipschitz map $f:A \rightarrow H$ such that 
	\begin{align}\label{e:BPBLI}
		\mathscr{H}^d(E \cap B(x,r) \cap f(A)) \geq \theta r^d.
	\end{align} 
	Let $x \in E$, $r \in (0,\diam E)$, and let $A = A_{x,r}$ and $f = f_{x,r}$ be as above. By Lemma \ref{l:ext-bi-lip} (and identifying $\tilde{H}$ with $H$), we can extend $f$ to a bi-Lipschitz function $g: \R^d \rightarrow H$, and we set $\tilde{E} = g(\R^d).$ Clearly
	\[ \mathscr{H}^d(E \cap B(x,r) \cap \tilde{E}) \geq \theta r^d. \] 
	Additionally, $\tilde{E}$ satisfies the $(2,2)$-geometric lemma. To see this, notice that $\tilde{E}$ satisfies the BWGL by Lemma \ref{l:bi-lip-BWGL}, and so, by Theorem \ref{Thm1}, 
	\[ \sum_{\substack{Q \in \mathscr{D}^{\tilde{E}} \\ R \subseteq Q}} \beta_{\tilde{E}}^{d,2}(C_0B_R)^2\ell(R)^d \lesssim \mathscr{H}^d(Q) \lesssim \ell(Q)^d \]
	for all $C_0 >1$ and $Q \in\mathscr{D}^{\tilde{E}}.$ Equation \eqref{e:letsshow0} and hence the proposition now follows by Lemma \ref{l:geometriclemma}. 
\end{proof}

\appendix

\section{Parameterisation in Hilbert space}\label{a:DT}

In this section, we prove some of the main results in Theorem \ref{DT}. The main objective is to prove the local Lipschitz description of the surfaces $\Sigma_k$ -- Theorem \ref{DT} (7). We will prove this and all the results necessary preliminary results, which include Theorem \ref{DT} (10) and (11). In addition, we will prove Theorem \ref{DT}(13). The results Theorem \ref{DT} (2) and (4)-(6) are immediate from the definition in Section \ref{s:DT}. For the remaining points of Theorem \ref{DT} ((1),(3),(8),(9) and (12)), we direct the reader to the proofs contained in \cite{david2012reifenberg}, which are self-contained given the results we prove in this section. We shall on comment on this after we are finished in (7). 

\subsection*{Preliminaries} We collect a series of preliminary lemmas, to aid the proof of Theorem \ref{DT} (7). Included here is the proof of Theorem \ref{DT} (11).   

\begin{lem}\label{l:angAB}
Let $P,P'$ be $d$-planes in $H$, $x \in P$ and $B$ a ball centred on $x.$ Let $\ve >0$ and suppose 
\begin{align}\label{e:d_B1}
d_B(P,P') \leq \ve.
\end{align}
Let $P''$ be the plane parallel to $P'$ such that $x \in P''.$ Then for any $A >0$ we have 
\begin{align}\label{e:d_B2}
	d_{AB}(P,P'') \lesssim \ve
\end{align}
and
\begin{align}\label{e:d_B3} 
	d_{AB}(P,P') \lesssim \ve (1 + 1/A). 
\end{align}
\end{lem}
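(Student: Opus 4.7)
My plan is to reduce to a clean normalised setting and then extract the angle between $P$ and $P''$ from the hypothesis. First I would translate and rescale so that $x=0$ and $B = B(0,1)$; both $d_B$ and the conclusions are invariant under this (and $A$ is unaffected). Write $P' = w_0 + W'$, where $W' \subseteq H$ is the linear subspace parallel to $P'$ and $w_0 \in W'^{\perp}$ is the closest point of $P'$ to the origin, so that $|w_0| = \dist(0,P') \leq \ve$ because $0 \in P \cap B$. Then $P'' = W'$, and since $0 \in P$, $P$ itself is a linear subspace $W$.

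The main step is to show $\angle(P,P'') = d_{B(0,1)}(W,W') \lesssim \ve$. For $u \in W \cap B(0,1)$, the hypothesis gives $\dist(u,P') \leq \ve$; since $\dist(u,P') = |u_{W'^{\perp}} - w_0|$ and $|w_0| \leq \ve$, the reverse triangle inequality yields $\dist(u,W') = |u_{W'^{\perp}}| \leq 2\ve$. For the opposite direction take $u' \in W' \cap B(0,1)$; the annoying point is that $u' + w_0 \in P'$ may just fail to lie in $B(0,1)$. I would fix this by scaling: for $t \in (0,1-\ve]$ the point $tu' + w_0$ does lie in $P' \cap B(0,1)$, so $\dist(tu' + w_0, P) \leq \ve$, and decomposing in $W^\perp$ together with the bound $|(w_0)_{W^\perp}| \leq \ve$ (from $w_0 \in P' \cap B(0,1)$) gives $t|(u')_{W^\perp}| \leq 2\ve$; letting $t \uparrow 1-\ve$ and using $\ve$ small yields $\dist(u', W) \leq C\ve$.

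Once the angle bound is established, \eqref{e:d_B2} follows immediately from scale invariance: $W$ and $W'$ are linear subspaces, so the orthogonal projection $\pi_{W'^\perp}\colon W \to W^{\prime \perp}$ has operator norm $\lesssim \ve$, giving $\dist(y,P'') \leq A\ve$ for $y \in P \cap AB$, and symmetrically, so $d_{AB}(P,P'') \lesssim \ve$. For \eqref{e:d_B3} I would write $P' = P'' + w_0$ with $|w_0| \leq \ve$ and use $|\dist(y,P') - \dist(y,P'')| \leq |w_0|$: for $y \in P \cap AB$ this gives $\dist(y,P') \leq A\ve + \ve$; for $y' \in P' \cap AB$, the point $y' - w_0 \in P''$ has norm at most $A + \ve$, so by the operator norm bound $\dist(y',P) \leq (A+\ve)C\ve + \ve \lesssim A\ve + \ve$. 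Dividing by $A r_B = A$ produces the extra $\ve/A$ term in \eqref{e:d_B3}.

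The only mildly delicate point is the scaling trick in the second half of the angle computation; everything else is triangle inequalities and the linearity of orthogonal projection, so the whole argument is short.
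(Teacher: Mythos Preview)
Your proposal is correct and follows essentially the same approach as the paper's proof: reduce \eqref{e:d_B3} to \eqref{e:d_B2} via the translation $P' = P'' + w_0$, then use that $P$ and $P''$ share the point $x$ so that the angle is scale-invariant, and finally verify the bound at a single scale by passing through $P'$. Your scaling trick $t \leq 1-\ve$ plays exactly the role of the paper's choice to work in $\tfrac{1}{2}B$ rather than $B$ (to keep the intermediate points inside the relevant ball), and your use of orthogonal projections is just an algebraic repackaging of the paper's point-chasing; there is no substantive difference.
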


\begin{proof}
Let $A >0$ and $P''$ be as above. Since $P'$ and $P''$ are parallel, $x \in P''$ and $\dist(x,P') \leq \ve r_B$, we have 
\begin{align}\label{e:dP'P''} 
\dist(P',P'') \leq \ve r_B. 
\end{align}
So, for each $y \in P' \cap AB,$ we can find $z \in P'' \cap AB$ such that $|y-z| \lesssim \ve r_B.$ Similarly, for each $z \in P'' \cap AB$, we can find $y \in P' \cap AB$ such that $|y-z| \lesssim \ve r_B.$ This implies
\begin{align}
r_{AB}d_{AB}(P,P') \lesssim \ve r_B + r_{AB}d_{AB}(P,P'')
\end{align}
and so \eqref{e:d_B3} will follow once we show \eqref{e:d_B2}. Furthermore, since $x \in P \cap P''$ we have 
\[ d_{\frac{1}{2}B}(P,P'') = d_{AB}(P,P''),\] 
so it suffices to show 
\[ d_{\frac{1}{2}B}(P,P'') \lesssim \ve.\]
This will be the goal for the rest of the proof. Let $z \in P'' \cap \tfrac{1}{2}B.$ By \eqref{e:dP'P''} there exists $z' \in P'$ such that 
\begin{align}\label{e:zz'}
|z - z'| \leq \ve r_B.
\end{align}
It follows that $z' \in P' \cap (\tfrac{1}{2} + \ve)B$ which by \eqref{e:d_B1}. implies the existence of a point $y' \in P \cap B$ such that
\begin{align}\label{e:z'y'}
|z' - y'| \leq \ve r_B. 
\end{align}
By \eqref{e:z'y'}, since $z' \in (\tfrac{1}{2} + \ve)B,$ it actually follows that $y' \in P \cap (\tfrac{1}{2}+2\ve)B$. Since $B$ is centred on $x$ and $x \in P$ there exists a point $y \in P \cap \tfrac{1}{2}B$ such that
\begin{align}\label{e:y'y}
|y' - y| \leq 2\ve r_B.
\end{align}
Combing the above, we have
\begin{align}
\dist(z, P \cap \tfrac{1}{2}B) \leq |z - y| \leq |z - z'| + |z' - y'| + |y'-y| \leq 4\ve r_B.
\end{align}
Since $z$ was an arbitrary point in $P'' \cap \tfrac{1}{2}B$, this gives 
\[ \sup_{y \in P'' \cap \frac{1}{2}B} \dist(y, P \cap \tfrac{1}{2}B) \leq 4\ve r_B. \] 
Swapping the roles of $P$ and $P''$ we also get
\[ \sup_{y \in P \cap \frac{1}{2}B} \dist(y, P'' \cap \tfrac{1}{2}B) \leq 4\ve r_B \]
which finishes the proof. 
\end{proof}

\begin{lem}\label{l:proj}
Suppose $P,P'$ are two $d$-planes in $H$ which contain the origin and satisfy $d_{B(0,1)}(P,P') \leq \ve.$ Then for $y \in P'$, we have 
\begin{align}\label{e:proj}
|\pi^\perp_P(y)| \lesssim \ve |y|, 
\end{align}
where $\pi_P^\perp$ denotes the orthogonal projection on $P^\perp,$ the linear space orthogonal to $P$
\end{lem}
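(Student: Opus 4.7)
\medskip

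\noindent\textbf{Proof plan for Lemma \ref{l:proj}.} The idea is a one-line scaling argument, exploiting the fact that both $P$ and $P'$ are \emph{linear} subspaces (they pass through the origin) and that $\pi_P^\perp$ is therefore a linear, bounded operator on $H$.

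First I would unpack the normalisation in \eqref{e:LHD}. For the unit ball $B = B(0,1)$ we have $\diam(B) = 2$, so the prefactor $2/\diam(B)$ equals $1$ and the assumption $d_{B(0,1)}(P,P') \le \ve$ reduces to
\[ \sup_{z \in P' \cap B(0,1)} \dist(z,P) \le \ve \quad \text{and} \quad \sup_{z \in P \cap B(0,1)} \dist(z,P') \le \ve. \]
Since $0 \in P$, for any unit vector $z \in P'$ (so $z \in P' \cap \overline{B(0,1)}$, and by continuity of $\dist(\cdot,P)$ this is enough) we get $|\pi_P^\perp(z)| = \dist(z,P) \le \ve$.

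Next I would pass from unit vectors to general $y \in P'$ by scaling. Assume $y \ne 0$ (the case $y=0$ is trivial) and set $z = y/|y|$; then $z \in P' \cap \overline{B(0,1)}$. Because $P$ is a linear subspace, $\pi_P^\perp$ is a linear operator, so
\begin{equation*}
|\pi_P^\perp(y)| \;=\; |y|\,|\pi_P^\perp(z)| \;=\; |y|\,\dist(z,P) \;\le\; \ve\, |y|,
\end{equation*}
which is the desired inequality.

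There is essentially no obstacle here; the only micro-issue is whether the unit sphere is covered by $B(0,1)$ or only by its closure, which is immaterial since $\dist(\cdot,P)$ is $1$-Lipschitz and the supremum over the open unit ball agrees with the supremum over its closure. The key structural point to emphasise in the write-up is that linearity of $\pi_P^\perp$ (equivalently: the fact that $P$ contains the origin) is exactly what converts a uniform bound at scale $1$ into the scale-invariant bound $\lesssim \ve |y|$.
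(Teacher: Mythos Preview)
Your proof is correct and takes essentially the same approach as the paper: both exploit the scale-invariance coming from $0 \in P \cap P'$ to reduce to the unit scale. The paper phrases this by observing $d_{B(0,A)}(P,P') = d_{B(0,1)}(P,P')$ for all $A>0$ and then taking $A = |y|$, whereas you normalise the point $y$ to $y/|y|$ and use linearity of $\pi_P^\perp$; these are two sides of the same coin.
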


\begin{proof}
Let $B = B(0,1).$ Since $0 \in P \cap P'$ and $d_{B}(P,P') \leq \ve,$ we have $d_{AB}(P,P') \leq \ve$ for all $A >0.$ So, taking $A = |y|,$ we get
\begin{align}
|\pi^\perp_P(y)| = |y - \pi_P(y)| = \dist(y,P) \leq |y|d_{B(0,|y|)}(P,P') \leq \ve |y|.
\end{align}
\end{proof}

\begin{lem}\label{l:pcont}
Let $B$ be a ball and suppose $P,P'$ are $d$-planes in $X$. For each $\alpha >0$ there exists $\ve > 0$ such that if 
\[ d_B(P,P') \leq \ve \] 
then 
\[  P \cap (1-\alpha)B \subseteq \pi_P( P' \cap B). \] 
\end{lem}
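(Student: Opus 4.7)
\textbf{Proof plan for Lemma \ref{l:pcont}.} The conclusion is vacuous if $P \cap (1-\alpha)B = \emptyset$, so I assume otherwise and pick $p_0 \in P \cap (1-\alpha)B$. Translating by $-p_0$, I may assume $p_0 = 0$, so $P$ becomes a linear subspace and the (translated) center $x_B$ of $B$ satisfies $x_B \in P^\perp$ with $|x_B| \leq (1-\alpha)r_B$. The strategy is to parametrize $P'$ as $q_0 + V'$ for a linear subspace $V'$, to show $V'$ is nearly parallel to $V := P$ by appealing to Lemma \ref{l:angAB} and Lemma \ref{l:proj}, and then to solve $\pi_P(y) = x$ by one linear inversion.

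First I set $q_0 = \pi_{P'}(0)$ and note $|q_0| \leq \varepsilon r_B$ (since $0 \in P \cap B$ and $d_B(P,P') \leq \varepsilon$). Let $V' = P' - q_0$. Next I obtain the angle bound: the ball $B'' := B(0,\alpha r_B/2)$ is centred on $P$ and contained in $B$, so the normalization in \eqref{e:LHD} gives $d_{B''}(P,P') \leq 2\varepsilon/\alpha$. Since $0 \in P$ and $V'$ is parallel to $P'$ through $0$, Lemma \ref{l:angAB} yields $d_{AB''}(V,V') \lesssim \varepsilon/\alpha$ for every $A \geq 1$; choosing $A = 2/\alpha$ and rescaling, Lemma \ref{l:proj} applied to $V$ and $V'$ gives
\begin{equation}
|\pi_V^\perp(w)| \lesssim_\alpha \varepsilon \, |w| \qquad \text{for all } w \in V'.
\end{equation}
This forces $|\pi_V(w)| \geq (1 - C_\alpha \varepsilon)|w|$, so for $\varepsilon$ small depending on $\alpha$ the map $\pi_V|_{V'} : V' \to V$ is a linear isomorphism whose inverse has norm at most $1 + C_\alpha'\varepsilon$.

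Given $x \in P \cap (1-\alpha)B$, I solve $\pi_V(w) = x - \pi_P(q_0)$ for a unique $w \in V'$ and set $y := q_0 + w$; by construction $y \in P'$ and $\pi_P(y) = x$. It remains to check $y \in B$. Because $x_B \in P^\perp$, I decompose via Pythagoras:
\begin{equation}
|y - x_B|^2 = |\pi_P(y-x_B)|^2 + |\pi_P^\perp(y-x_B)|^2 = |x|^2 + |\pi_P^\perp(y) - x_B|^2.
\end{equation}
The angle bound gives $|\pi_P^\perp(y)| \leq |q_0| + |\pi_V^\perp(w)| \lesssim_\alpha \varepsilon r_B$, and expanding the square together with $|x|^2 + |x_B|^2 \leq (1-\alpha)^2 r_B^2$ reduces the required inequality $|y-x_B|^2 \leq r_B^2$ to an estimate of the form $|\pi_P^\perp(y)|^2 + 2|\pi_P^\perp(y)|\,|x_B| \leq (2\alpha - \alpha^2) r_B^2$, which holds provided $\varepsilon$ is chosen small enough depending on $\alpha$.

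The main obstacle is the angle bound in Step~2: Lemma \ref{l:angAB} requires a ball centred on $P$, whereas our ball $B$ need not be. Passing to the interior ball $B''$ centred on $P$ costs a factor $1/\alpha$ in the smallness parameter, but this loss is absorbed into the final choice $\varepsilon = \varepsilon(\alpha)$ and so is harmless.
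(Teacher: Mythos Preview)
Your argument is correct and takes a genuinely different route from the paper. The paper indicates (without giving details) that Lemma \ref{l:pcont} is proved by a topological/degree-theory argument parallel to that of Lemma \ref{l:proj-covers}: one shows the map $\pi_P|_{P'\cap B}$ is homotopic on a boundary sphere to an identity-type map, forcing its image to cover $P\cap(1-\alpha)B$. You instead exploit linearity directly: once the angle bound (via Lemmas \ref{l:angAB} and \ref{l:proj}) shows $\pi_V|_{V'}:V'\to V$ is an isomorphism with inverse of norm close to $1$, you solve for the preimage explicitly and verify by a Pythagorean computation that it lands in $B$. This is more elementary and self-contained, avoiding topology altogether, which is a genuine gain for this lemma. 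The paper's preference for the degree argument is natural only because that machinery is already set up for the nonlinear situation in Lemma \ref{l:proj-covers}.

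One small imprecision to fix: your opening ``pick $p_0\in P\cap(1-\alpha)B$'' reads as an arbitrary choice, but the subsequent claim that the translated centre $x_B$ lies in $P^\perp$ forces $p_0=\pi_P(x_B)$ specifically. State this explicitly; the point $\pi_P(x_B)$ does lie in $(1-\alpha)B$ because the assumption $P\cap(1-\alpha)B\neq\emptyset$ gives $\dist(x_B,P)\leq(1-\alpha)r_B$.
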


The proof of Lemma \ref{l:pcont} is similar to the topological argument used to prove Lemma \ref{l:proj-covers}, we shall omit the details. The following is Lemma \ref{DT} (11).

\begin{lem}\label{l:sig-pi}
	$\emph{(11)}$ holds.
\end{lem}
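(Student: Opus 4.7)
The plan is to exploit the partition of unity identity together with the vanishing of $\psi_k$. Since $y \in V_k^8$, Lemma \ref{l:spu} gives $\sum_{j \in J_k} \theta_{j,k}(y) = 1$ and hence $\psi_k(y) = 0$, so the definition of $\sigma_k$ yields
\[
\sigma_k(y) - \pi_{i,k}(y) = \sum_{j \in J_k} \theta_{j,k}(y)\bigl[\pi_{j,k}(y) - \pi_{i,k}(y)\bigr].
\]
Only indices $j$ with $y \in 10B_{j,k}$ contribute, since $\supp \theta_{j,k} \subseteq 10B_{j,k}$. For any such $j$, our chosen $i$ also satisfies $y \in 10B_{i,k}$, so $y \in 100B_{i,k} \cap 100B_{j,k}$, and by the definition of $\varepsilon_k(y)$ (taking $l=k$ in the supremum),
\[
d_{x_{i,k},\,10^4 r_k}(P_{j,k}, P_{i,k}) \leq \varepsilon_k(y).
\]

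Next, I will bound each $|\pi_{j,k}(y) - \pi_{i,k}(y)|$ via the split
\[
|\pi_{j,k}(y) - \pi_{i,k}(y)| \leq \dist(\pi_{j,k}(y), P_{i,k}) + |\pi_{i,k}(\pi_{j,k}(y)) - \pi_{i,k}(y)|.
\]
For the first term: since $\pi_{j,k}(y) \in P_{j,k}$ and $|\pi_{j,k}(y) - x_{i,k}| \leq |y - x_{i,k}| + \dist(y, P_{j,k}) \leq 10r_k + \dist(y, P_{j,k})$, the point $\pi_{j,k}(y)$ lies in $B(x_{i,k}, 10^4 r_k)$ as soon as $\dist(y, P_{j,k})$ is of order $r_k$. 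The local Hausdorff-distance inequality then gives
\[
\dist(\pi_{j,k}(y), P_{i,k}) \leq 10^4 r_k \cdot d_{x_{i,k},\,10^4 r_k}(P_{j,k}, P_{i,k}) \lesssim \varepsilon_k(y) r_k.
\]
For the second term, since $\pi_{i,k}$ is $1$-Lipschitz,
\[
|\pi_{i,k}(\pi_{j,k}(y)) - \pi_{i,k}(y)| \leq |\pi_{j,k}(y) - y| = \dist(y, P_{j,k}).
\]

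The main obstacle is therefore the \emph{flatness estimate} $\dist(y, P_{j,k}) \lesssim \varepsilon_k(y) r_k$ for $y \in \Sigma_k \cap 10B_{j,k}$, which is the heart of the David--Toro construction. I expect to prove it inductively on $k$: writing $y = \sigma_{k-1}(y')$ for some $y' \in \Sigma_{k-1}$, one applies the inductive flatness bound at level $k-1$ together with the coherence conditions \eqref{e:c2r_k} of the CCBP (and Lemma \ref{l:angAB}) to pass from a plane at level $k-1$ to $P_{j,k}$ at level $k$. Once this flatness estimate is established, both terms in the split are $\lesssim \varepsilon_k(y) r_k$, and summing against the partition of unity (using $\sum_j \theta_{j,k}(y) = 1$) yields $|\sigma_k(y) - \pi_{i,k}(y)| \lesssim \varepsilon_k(y) r_k$, as required.
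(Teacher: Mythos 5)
Your reduction to bounding $|\pi_{j,k}(y) - \pi_{i,k}(y)|$ via the partition of unity is exactly the paper's route, and the first term of your split, $\dist(\pi_{j,k}(y),P_{i,k}) \lesssim \ve_k(y)r_k$, is handled correctly. The gap is in the second term. You bound $|\pi_{i,k}(\pi_{j,k}(y))-\pi_{i,k}(y)|\le \dist(y,P_{j,k})$ using only that $\pi_{i,k}$ is $1$-Lipschitz. This throws away the key structure: $\pi_{j,k}(y)-y$ lies in the \emph{orthogonal complement} of the direction of $P_{j,k}$, and since $P_{i,k}$ and $P_{j,k}$ are within angle $\lesssim \ve_k(y)$ of each other (Lemma \ref{l:angAB}), the linear part of $\pi_{i,k}$ nearly annihilates such vectors. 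Concretely, writing $\Pi_{i,k}$ for the linear part of $\pi_{i,k}$, one has $\pi_{i,k}(\pi_{j,k}(y))-\pi_{i,k}(y)=\Pi_{i,k}(\pi_{j,k}(y)-y)$, and the dual of Lemma \ref{l:proj} (obtained by pairing $v\in P_{j,k}^\perp$ against $w=\Pi_{i,k}(v)\in P_{i,k}$ and using $\langle v,w\rangle=|w|^2$) yields $|\Pi_{i,k}(v)|\lesssim\ve_k(y)|v|$ for $v\perp P_{j,k}$. Then the \emph{trivial} bound $\dist(y,P_{j,k})\le |y-x_{j,k}|\le 10r_k$ suffices, and no flatness of $\Sigma_k$ is needed at all.

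Your proposed escape route, proving $\dist(y,P_{j,k})\lesssim\ve_k(y)r_k$ for $y\in\Sigma_k\cap 10B_{j,k}$, does not work. The paper's graph representation (Theorem \ref{DT}(7), \eqref{e:Anorm}--\eqref{e:Alip}) gives only $\dist(y,P_{j,k})\lesssim\ve\,r_k$ with the \emph{global} constant $\ve$, not $\ve_k(y)$; worse, (7) is proved \emph{after} (11) in the appendix's induction, so invoking anything like it here would be circular. And the stronger local estimate is in fact false: $\ve_k(y)$ only controls plane coherence over levels $k-2,\dots,k+2$, while the displacement of $\Sigma_k$ from $P_{j,k}$ accumulates contributions from every earlier level. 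One can have $\ve_k(y)=0$ while $\ve_{k-3}$ is of order $\ve$, so $\sigma_{k-3}$ has shoved the surface off the (nearby-level-) planes by $\sim\ve r_{k-3}=1000\,\ve r_k$, and two subsequent smoothed projections cannot reduce this below $\sim\ve r_k$. Your own sketch of the induction exposes this: passing from $\Sigma_{k-1}$ to $\Sigma_k$ produces $\dist(y,P_{j,k})\lesssim(\ve_{k-1}(y')+\ve_k(y))r_k$, and the $\ve_{k-1}(y')$ term does not close the induction.
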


\begin{proof}
	Let $y \in \Sigma_k \cap V_k^8$. By Lemma \ref{l:spu} this mean that 
	\[\sum_{j \in J_k} \theta_{j,k}(y) = 1.\]
	Then, if $i \in J_k$ is such that $y \in 10B_{i,k}$, we have 
	\begin{align}
		|\sigma_k(y) - \pi_{i,k}(y)| &= \big|y + \sum_{j \in J_k} \theta_{j,k}(y)[\pi_{j,k}(y)-y] - \pi_{i,k}(y) \big|   \\
		&=\big| \sum_{j \in J_k} \theta_{j,k}(y)[\pi_{j,k}(y) - \pi_{i,k}(y)] \big| \leq \sum_{j \in J_k} |\theta_{j,k}(y)| |\pi_{j,k}(y) - \pi_{i,k}(y)|. 
	\end{align}
	Since $\supp \theta_{j,k} \subseteq 10B_{j,k}$, if $j \in J_k$ is such that $\theta_{j,k}(y) \not=0$, then $y \in 10B_{j,k}$. In this case $|x_{j,k} - x_{i,k}| \leq 20r_k.$ This, along with the definition of $\ve_k(y)$, gives 
	\begin{align}\label{e:sig-pi1}
		d_{x_{i,k},100r_k}(P_{i,k},P_{j,k}) \leq \ve_k(y)
	\end{align}
	which implies
	\begin{align}\label{e:sig-pi2} 
		|\pi_{j,k}(y) - \pi_{i,k}(y)| \leq C\ve_k(y) r_k.
	\end{align}
	As shown in the proof of Lemma \ref{l:spu}, the balls $10B_{j,k}, \ j \in J_k$ have bounded overlap, hence
	\[ |\sigma_k(y) - \pi_{i,k}(y)| \leq \sum_{j \in J_k} |\theta_{j,k}(y)| C\ve_k(y) r_k \leq C \ve_k(y) r_k  \]
	as required. 
\end{proof}

\begin{lem}\label{l:sig-pi-lip}
	Let $k \geq 0$, $y,z \in \Sigma_k \cap V_k^8$ and $i \in J_k$ so that $y,z \in 10B_{i,k},$ then 
	\[ |(\sigma_k(y) - \sigma_k(z)) - (\pi_{i,k}(y) - \pi_{i,k}(z))| \lesssim \ve_k(y) |y-z|  \] 	
\end{lem}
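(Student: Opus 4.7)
The plan is to expand $\sigma_k(y) - \pi_{i,k}(y)$ as in the proof of Lemma \ref{l:sig-pi} and then take the difference at $y$ and $z$, exploiting the fact that a large part of what appears is an affine map applied to $y-z$ whose linear part has small operator norm. Since $y,z \in V_k^8$, Lemma \ref{l:spu}(2) gives $\sum_j \theta_{j,k}(y) = \sum_j \theta_{j,k}(z) = 1$, so, mimicking the proof of Lemma \ref{l:sig-pi},
\[ \sigma_k(w) - \pi_{i,k}(w) = \sum_{j \in J_k} \theta_{j,k}(w)\bigl[\pi_{j,k}(w) - \pi_{i,k}(w)\bigr] \qquad \text{for } w=y,z. \]
Subtracting and adding $\sum_j \theta_{j,k}(y)[\pi_{j,k}(z) - \pi_{i,k}(z)]$ decomposes the desired quantity as
\[ \sum_{j} \theta_{j,k}(y)\bigl[(\pi_{j,k}-\pi_{i,k})(y) - (\pi_{j,k}-\pi_{i,k})(z)\bigr] + \sum_{j} \bigl[\theta_{j,k}(y) - \theta_{j,k}(z)\bigr](\pi_{j,k}-\pi_{i,k})(z) \eqqcolon S_1 + S_2. \]

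For $S_1$, since the affine parts cancel in the difference, $(\pi_{j,k}-\pi_{i,k})(y) - (\pi_{j,k}-\pi_{i,k})(z) = (\pi_{j,k}^\ast - \pi_{i,k}^\ast)(y-z)$ where $\pi^\ast$ denotes the linear part (orthogonal projection onto the direction of the plane). When $\theta_{j,k}(y)\neq 0$, one has $y \in 10B_{j,k}\cap 10B_{i,k}$, so $|x_{i,k}-x_{j,k}|\leq 20r_k\leq 100r_k$ and therefore $d_{x_{i,k},100r_k}(P_{i,k},P_{j,k}) \leq \ve_k(y)$ by definition of $\ve_k(y)$. A standard fact about orthogonal projections in Hilbert space (deducible from Lemma \ref{l:angAB} and Lemma \ref{l:proj}) gives $\|\pi_{j,k}^\ast - \pi_{i,k}^\ast\| \lesssim \angle(P_{i,k},P_{j,k}) \lesssim \ve_k(y)$. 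Summing with $\sum_j \theta_{j,k}(y) = 1$ yields $|S_1| \lesssim \ve_k(y)|y-z|$.

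For $S_2$, Lemma \ref{l:spu}(3) gives $|\theta_{j,k}(y) - \theta_{j,k}(z)| \lesssim r_k^{-1}|y-z|$; note this contribution vanishes unless $y \in 10B_{j,k}$ or $z \in 10B_{j,k}$, which in either case forces $|x_{j,k}-x_{i,k}|\leq 20r_k$, so again $d_{x_{i,k},100r_k}(P_{i,k},P_{j,k}) \leq \ve_k(y)$. Since $z \in 10B_{i,k}\subseteq B(x_{i,k},100r_k)$, we get $|\pi_{j,k}(z) - \pi_{i,k}(z)| \lesssim \ve_k(y)\, r_k$ exactly as in \eqref{e:sig-pi2}. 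The bounded overlap of $\{10B_{j,k}\}_{j\in J_k}$ provided by Lemma \ref{l:spu} bounds the number of nonzero terms in $S_2$ by a constant depending only on $d$, hence $|S_2| \lesssim r_k^{-1}|y-z|\cdot \ve_k(y) r_k = \ve_k(y)|y-z|$, and combining gives the claim.

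The only mildly delicate step is the operator-norm bound $\|\pi_{j,k}^\ast - \pi_{i,k}^\ast\| \lesssim \ve_k(y)$, which really is the statement that the gap (in the sense of Kato) between two $d$-subspaces of $H$ controls the difference of their orthogonal projections; in our setup this is a direct consequence of Lemma \ref{l:proj} after translating $P_{i,k}$ and $P_{j,k}$ so that they contain the origin. Everything else is bookkeeping combined with what has already been established about the CCBP and the partition of unity $\{\theta_{j,k}\}$.
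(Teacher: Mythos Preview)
Your proof is correct and follows essentially the same approach as the paper: the identical decomposition into $S_1+S_2$, with $S_1$ handled via the smallness of the linear part of $\pi_{j,k}-\pi_{i,k}$ (the paper writes this out as $|\pi_{i,k}^\perp(\pi_{j,k}(y-z))| + |\pi_{i,k}(\pi_{j,k}^\perp(y-z))|$ and applies Lemma~\ref{l:proj}, which is exactly your operator-norm bound), and $S_2$ handled via the Lipschitz bound on $\theta_{j,k}$, \eqref{e:sig-pi2}, and bounded overlap. Your remark that $y\in 100B_{j,k}$ even when only $z\in 10B_{j,k}$ (since $|y-z|\le 20r_k$) is the one small point the paper glosses over.
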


\begin{proof}
	Using the fact that $y,z \in V_k^8$ and so $\sum_{j \in J_k} \theta_{j,k}(y) = 1$, with the analogous quantity holding for $z,$ we have 
	\begin{align}
		|(\sigma_{k}(y) - \sigma_k(z)) - (\pi_{i,k}(y) - \pi_{i,k}(z))| &\leq  \sum_{j \in J_k} |\theta_{j,k}(y)| | \pi_{j,k}(y-z) - \pi_{i,k}(y-z)|  \\
		&\hspace{4em} +  \sum_{j \in J_k} |\theta_{j,k}(y) - \theta_{j,k}(z)||\pi_{j,k}(z) - \pi_{i,k}(z)| \\
		&= S_1 + S_2.
	\end{align}
	We start by estimating $S_1.$ By \eqref{e:sig-pi1}, for each $j \in J_k$ such that $\theta_{j,k}(y) \not= 0,$ we have 
	\[d_{x_{i,k},100r_k}(P_{i,k},P_{j,k}) \leq \ve_k(y). \]
	Then, Lemma \ref{l:proj} implies 
	\begin{align}
		|\pi_{j,k}(y-z) - \pi_{i,k}(y-z)| &\leq |\pi_{j,k}(y-z) - \pi_{i,k}(\pi_{j,k}(y-z)) | + |\pi_{i,k}(\pi_{j,k}(y-z)) - \pi_{i,k}(y-z)| \\
		&=| \pi_{i,k}^\perp(\pi_{j,k}(y-z))| + | \pi_{i,k}(\pi_{j,k}^\perp(y-z))| \\
		&\lesssim \ve_k(y) |y-z|.
	\end{align}
	Since the ball $10B_{j,k}, \ j \in J_k,$ have bounded overlap, we conclude that $S_1 \lesssim \ve_k(y) |y-z|.$ We also see that $S_2 \lesssim \ve_k(y) |y-z|$ by the bounded overlap of the balls $10B_{j,k},$ \eqref{e:sig-pi2}, and Lemma \ref{l:tpu}, since 
	\begin{align}
		|\theta_{j,k}(y) - \theta_{j,k}(z)||\pi_{j,k}(z) - \pi_{i,k}(z)| \lesssim r_k^{-1}|y-z| \ve_k(z) r_k \lesssim \ve_k(z) |y-z|. 
	\end{align}

\end{proof}

\subsection*{Local Lipschitz description}

We can now begin the proof of Theorem \ref{DT}(7), which follows from the proposition below. This is the analogue of \cite[Proposition 5.1]{david2012reifenberg}.

\begin{prop}\label{p:DT}
There are constant $\{C_i\}_{i=1}^7$ such that the following holds for all $k \geq 0$ and $j \in J_k.$ First, there is a function $A_{j,k} : P_{j,k} \cap 49B_{j,k} \rightarrow P_{j,k}^\perp$ such that 
\begin{align}\label{e:Anorm}
|A_{j,k}(x_{j,k})| \leq C_1\ve r_k.,
\end{align}
\begin{align}\label{e:Alip}
|DA_{j,k}(z)| \leq C_2 \ve, \quad \text{for all} \ z \in P_{j,k} \cap 49B_{j,k} 
\end{align}
and if $\Gamma_{A_{j,k}}$ is the graph of $A_{j,k}$ over $P_{j,k}$ then 
\begin{align}\label{e:graphA}
\Sigma_{k} \cap D(x_{j,k},P_{j,k},49r_k) = \Gamma_{A_{j,k}} \cap D(x_{j,k},P_{j,k},49r_k).
\end{align}
Next, there is a function $F_{j,k} : P_{j,k} \cap 40B_{j,k} \rightarrow P_{j,k}^\perp$ such that
\begin{align}\label{e:Fnorm}
|F_{j,k}(x_{j,k})| \leq C_3\ve r_k,
\end{align}
\begin{align}\label{e:Flip}
|DF_{j,k}(z)| \leq C_4\ve  \quad \text{for} \ z \in P_{j,k} \cap 40B_{j,k},
\end{align}
\begin{align}\label{e:Flip1}
|DF_{j,k}(z)| \leq C_5\ve \quad \text{for} \ z \in P_{j,k} \cap 7B_{j,k},
\end{align}
and if $\Gamma_{F_{j,k}}$ is the graph of $F_{j,k}$ over $P_{j,k}$ then 
\begin{align}\label{e:graphF}
\Sigma_{k+1} \cap D(x_{j,k},P_{j,k},40r_k) = \Gamma_{F_{j,k}} \cap D(x_{j,k},P_{j,k},40r_k).
\end{align}
\end{prop}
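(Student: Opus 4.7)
The plan is to prove Proposition \ref{p:DT} by induction on $k$, following the structure of \cite[Proposition 5.1]{david2012reifenberg} but replacing all dimension-dependent ingredients with their Hilbert-space analogues. At each stage the three quantities to track are the height of $A_{j,k}$ at the centre (\eqref{e:Anorm}, \eqref{e:Fnorm}), the derivative bound on the full $49B_{j,k}$ domain (\eqref{e:Alip}, \eqref{e:Flip}), and the sharper derivative bound on the inner ball $7B_{j,k}$ (\eqref{e:Flip1}). The constants $C_1,\dots,C_5$ must be chosen in a specific order (roughly $C_1, C_3, C_5, C_2, C_4$) so that each estimate leaves enough slack for the next.

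For the base case $k=0$, $\Sigma_0 = P_0$, so $A_{j,0}$ is the affine function whose graph over $P_{j,0}$ is $P_0$. The CCBP condition \eqref{e:c2} combined with Lemma \ref{l:angAB} yields $\angle(P_{j,0},P_0) \lesssim \ve$, and $\dist(x_{j,0},P_0)\le\ve$ gives \eqref{e:Anorm}; the fact that $A_{j,0}$ is affine gives \eqref{e:Alip} with a constant of order $\ve$. For the inductive step, assume $A_{j,k}$ exists for all $j \in J_k$ satisfying \eqref{e:Anorm}-\eqref{e:graphA}. The core computation is to describe $\Sigma_{k+1} = \sigma_k(\Sigma_k)$ near a fixed $x_{j,k}$ as the graph of a function $F_{j,k}$ over $P_{j,k}$. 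I would parametrise $\Sigma_k \cap D(x_{j,k},P_{j,k},49r_k)$ by $z \mapsto z + A_{j,k}(z)$, and then compute $\sigma_k(z+A_{j,k}(z))$ via its definition $\sigma_k(y) = y + \sum_{i}\theta_{i,k}(y)[\pi_{i,k}(y)-y]$. Using Lemma \ref{l:sig-pi} and Lemma \ref{l:sig-pi-lip} (which replace David-Toro's Euclidean bump-function computations) one shows $\sigma_k$ is $(1+C\ve)$-bi-Lipschitz on $\Sigma_k \cap V_k^8$, and that $\pi_{P_{j,k}} \circ \sigma_k$ restricted to the graph of $A_{j,k}$ is a small perturbation of the identity on $P_{j,k}$. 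An inverse-function argument (using only the bi-Lipschitz estimate, not compactness) then exhibits $F_{j,k}$, and Lemma \ref{l:pcont} ensures that the inverted domain still contains $P_{j,k}\cap 40B_{j,k}$. The derivative estimates \eqref{e:Flip} and \eqref{e:Flip1} come from differentiating the formula for $\sigma_k$ and grouping terms by scale: the $C_4\ve$ bound uses the angle control over the larger ball, while the $C_5\ve$ bound on $7B_{j,k}$ uses the tighter CCBP inequalities \eqref{e:c100rk} and \eqref{e:c2r_k}.

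Once $F_{j,k}$ is in place for every $j \in J_k$, I would define $A_{j',k+1}$ for $j' \in J_{k+1}$ by locating some $i \in J_k$ with $|x_{i,k} - x_{j',k+1}| \le 2r_k$ (this exists by \eqref{e:cinV^2}), and then reparametrising the graph of $F_{i,k}$ over $P_{i,k}$ as a graph over $P_{j',k+1}$. The allowable change of plane comes from \eqref{e:c2r_k}, which gives $d_{x_{i,k},20r_k}(P_{i,k},P_{j',k+1}) \le \ve$; combining this with Lemma \ref{l:angAB} and Lemma \ref{l:pcont} lets us invert the map $\pi_{P_{j',k+1}}\restriction_{\Gamma_{F_{i,k}}}$ onto a neighbourhood of $P_{j',k+1}\cap 49B_{j',k+1}$. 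The derivative bound $C_2\ve$ in \eqref{e:Alip} is obtained from the chain rule applied to this change of plane, with $C_5$ providing the required slack: one picks up a factor like $C_5\ve + C\ve$, so $C_2$ must be chosen larger than $C_5 + C$.

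The main obstacle, and the reason the proof cannot simply be quoted from David-Toro, is verifying that every geometric inclusion ``the inverted domain contains the required ball'' holds without recourse to compactness or Brouwer's theorem in finite dimensions. The replacement is the topological degree argument encoded in Lemma \ref{l:pcont} and Lemma \ref{l:proj-covers}, which were set up precisely for this. A secondary issue is that the bounded-overlap constant for the balls $\{10B_{j,k}\}$ must be independent of dimension; this is exactly what Lemma \ref{l:spu} provides via Lemma \ref{ENV}, and it feeds into the derivative bounds of the partition of unity (Lemma \ref{l:tpu}\emph{(3)}) so that $|D^m \theta_{j,k}| \lesssim_d r_k^{-m}$ with a constant depending only on $d$. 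Once these ingredients are in place, the remaining estimates are essentially bookkeeping, and the proofs of the other items of Theorem \ref{DT}, namely (1), (3), (8), (9), (12), follow from the David-Toro arguments verbatim since they only use the local graph description just established together with properties already built into the CCBP.
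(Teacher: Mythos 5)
Your outline matches the paper's own two-stage induction (Stage 2: $A_{j,k}\Rightarrow F_{j,k}$ via the graph-plus-$\sigma_k$ computation; Stage 1: $F_{i,k}\Rightarrow A_{j',k+1}$ via change of plane), invoking the same Hilbert-space ingredients — Lemmas \ref{l:sig-pi}, \ref{l:sig-pi-lip}, \ref{l:angAB}, \ref{l:proj}, \ref{l:pcont}, the degree argument in place of Brouwer, and the bounded overlap coming from Lemma \ref{ENV} via Lemma \ref{l:spu}. One minor correction to the bookkeeping: in the paper the constants are fixed in the order $C_3, C_5$ first (absolute constants from the Stage 2 estimates on the inner ball $7B_{j,k}$), then $C_1 \gtrsim 1+C_3$ and $C_2 \gtrsim 1+C_5$ from Stage 1, and finally $C_4 \gtrsim C_1 + C_2 + 1$ from the Stage 2 estimate on $40B_{j,k}$, so $C_1$ cannot be chosen before $C_3$ as your ``roughly $C_1, C_3, C_5, C_2, C_4$'' suggests.
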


\begin{rem}\label{r:char}
For a given plane $P$, a ball $B$, and a differentiable function $f:P \rightarrow H$, it is not difficult to show that $|Df(z)| \leq C$ for all $z \in P \cap B$ if and only if $|f(x) - f(y)| \leq C|x-y|$ for all $x,y \in P \cap B.$ In the following proof, we will make use of the second characterization. 
\end{rem}

We prove Proposition \ref{p:DT} by induction. Since $\Sigma_0 = P_0$ it is clear that \eqref{e:Anorm}-\eqref{e:graphA} hold for $k=0$, which establishes the base case. The induction will be carried out in two stages.

\subsection*{Stage 1} In this section, we show the following.
\begin{lem}\label{l:Stage1}
	If \eqref{e:Fnorm}-\eqref{e:graphF} hold for $k$, then \eqref{e:Anorm}-\eqref{e:graphA} hold for $k+1.$
\end{lem}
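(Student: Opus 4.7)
The plan is to fix $j' \in J_{k+1}$ and construct $A_{j',k+1}$ by locally re-parametrising, over $P_{j',k+1}$, the Lipschitz graph $\Gamma_{F_{j,k}}$ coming from a nearby index $j \in J_k$. First I would invoke the CCBP condition \eqref{e:cinV^2} to find $j \in J_k$ with $x_{j',k+1} \in 2B_{j,k}$, so $|x_{j,k}-x_{j',k+1}| \leq 2r_k$. Since $r_{k+1}=r_k/10$, this gives $49B_{j',k+1} \subseteq B(x_{j,k},6.9r_k) \subseteq 7B_{j,k}$. By \eqref{e:c2r_k} combined with Lemma \ref{l:angAB}, the planes $P_{j,k}$ and $P_{j',k+1}$ satisfy $d_{x_{j,k},100r_k}(P_{j,k},P_{j',k+1}) \lesssim \ve$, and $x_{j',k+1} \in P_{j',k+1}$ is within $C\ve r_k$ of $P_{j,k}$.

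Next I would reparametrise. Consider the map $\Phi : P_{j,k}\cap 40B_{j,k} \to P_{j',k+1}$ given by $\Phi(z) = \pi_{j',k+1}(z + F_{j,k}(z))$, where $\pi_{j',k+1}$ is the orthogonal projection onto $P_{j',k+1}$. By Lemma \ref{l:proj} the angle estimate between the planes means the restriction of $\pi_{j',k+1}$ to $P_{j,k}$ differs from an isometry of $P_{j,k}$ onto $P_{j',k+1}$ by $O(\ve)$; combined with $|DF_{j,k}|\leq C_4\ve$ on $40B_{j,k}$ one obtains
\[ (1-C\ve)|z_1-z_2| \leq |\Phi(z_1)-\Phi(z_2)| \leq (1+C\ve)|z_1-z_2|, \]
so $\Phi$ is bi-Lipschitz. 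Using Lemma \ref{l:pcont} to check $\Phi$ covers $P_{j',k+1}\cap 49B_{j',k+1}$ (its image contains that set by a surjectivity/degree argument, since the boundary of $\Phi(P_{j,k} \cap 40 B_{j,k})$ is far away), define
\[ A_{j',k+1}(w) = \pi_{j',k+1}^\perp\bigl(\Phi^{-1}(w) + F_{j,k}(\Phi^{-1}(w))\bigr) \qquad \text{for } w \in P_{j',k+1}\cap 49B_{j',k+1}. \]
By construction its graph over $P_{j',k+1}$ agrees with the graph $\Gamma_{F_{j,k}}$, which by \eqref{e:graphF} equals $\Sigma_{k+1}$ inside the relevant cylinder; intersecting with $D(x_{j',k+1},P_{j',k+1},49r_{k+1})$ then yields \eqref{e:graphA} at level $k+1$.

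For the quantitative estimates: $|A_{j',k+1}(x_{j',k+1})|$ equals the distance from $x_{j',k+1}$ to $\Gamma_{F_{j,k}}$ measured along $P_{j',k+1}^\perp$, which is controlled by $|F_{j,k}(\pi_{j,k}(x_{j',k+1}))| + d_{x_{j,k},100r_k}(P_{j,k},P_{j',k+1})\cdot r_k \lesssim \ve r_k \sim \ve r_{k+1}$, giving \eqref{e:Anorm} with $C_1$ depending on $C_3,C_4$ and the angle control constants. For the Lipschitz bound \eqref{e:Alip}, I would differentiate the defining formula: $DA_{j',k+1} = \pi_{j',k+1}^\perp \circ (I + DF_{j,k}) \circ D\Phi^{-1}$, and combine the angle estimate $|\pi_{j',k+1}^\perp|_{P_{j,k}}| \lesssim \ve$ with $|DF_{j,k}|\leq C_4\ve$ and $|D\Phi^{-1}| \leq 1 + C\ve$ to conclude $|DA_{j',k+1}| \leq C_2\ve$, where $C_2$ is again an absolute (dimension-free) function of the previous constants.

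The main obstacle I anticipate is the bookkeeping of constants, specifically making sure that the $C_1,C_2$ for $A_{j',k+1}$ do not blow up when iterated: this requires carefully tracking that the angle bound used to pass from $P_{j,k}$ to $P_{j',k+1}$ is $O(\ve)$ uniformly (via \eqref{e:c2r_k} and Lemma \ref{l:angAB}), and that the bi-Lipschitz defect of $\Phi$ is likewise $O(\ve)$ so that $\Phi$ genuinely covers the ball $P_{j',k+1}\cap 49B_{j',k+1}$. The surjectivity of $\Phi$ onto that ball is the place where one must take $\ve$ small enough (the hypothesis of Lemma \ref{l:pcont}) to run a topological degree argument identical in spirit to the one used in Lemma \ref{l:proj-covers}.
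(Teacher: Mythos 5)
Your proposal has the right architecture and follows essentially the same route as the paper's Stage~1: exploit the CCBP conditions \eqref{e:cinV^2} and \eqref{e:c2r_k} to find a nearby index in $J_k$, show $P_{j,k}$ and $P_{j',k+1}$ are at angle $O(\ve)$, reparametrize the $F_{j,k}$-graph as a graph $A_{j',k+1}$ over $P_{j',k+1}$ via a map close to a projection, and run a topological argument for surjectivity. (The paper's map $f : P_{j',k+1} \to P_{j',k+1}$ is your $\Phi$ precomposed with the orthogonal projection $P_{j',k+1}\to P_{j,k}$, and indeed $\pi(f^{-1}(y)) = \Phi^{-1}(y)$, so the two constructions are the same up to coordinates.) That said, there are two gaps, one of which is fatal to the argument as written.

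The fatal one is in the constant bookkeeping for \eqref{e:Alip}. You bound $|DA_{j',k+1}|$ using $|DF_{j,k}| \leq C_4\ve$ from \eqref{e:Flip}, yielding $C_2 \gtrsim 1 + C_4$. But in Stage~2 (which you must also prove) the constant $C_4$ in \eqref{e:Flip} necessarily depends on $C_1$ and $C_2$: the paper gets $C_4 \geq C(C_1 + C_2 + 1)$. Substituting back gives $C_2 \geq C\bigl(1 + C(C_1 + C_2 + 1)\bigr)$, which is circular and has no solution once the absolute constant $C \geq 1$. This is precisely why the paper carries two Lipschitz bounds for $F_{j,k}$: \eqref{e:Flip} (with $C_4$, on $40B_{j,k}$) and the finer \eqref{e:Flip1} (with $C_5$, on $7B_{j,k}$, where $C_5$ is an absolute constant independent of $C_1, C_2, C_4$). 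You do observe the crucial inclusion $49B_{j',k+1} \subseteq 7B_{j,k}$ at the start, but then do not use it: one must check that for $w \in P_{j',k+1}\cap 49B_{j',k+1}$ the point $\Phi^{-1}(w)$ lands in $P_{j,k}\cap 7B_{j,k}$ (true for $\ve$ small, since $|\Phi^{-1}(w)-w|\lesssim \ve r_k$), so that the $C_5$-bound applies to $DF_{j,k}$ at those points. With this replacement one gets $C_2 \geq C(1+C_5)$, and the system $C_1 = C(1+C_3)$, $C_2 = C(1+C_5)$, $C_4 = C(C_1+C_2+1)$ (with $C_3, C_5$ absolute, fixed in Stage~2) closes without circularity.

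The minor one: your formula $A_{j',k+1}(w) = \pi_{j',k+1}^\perp\bigl(\Phi^{-1}(w) + F_{j,k}(\Phi^{-1}(w))\bigr)$ omits a constant. If $\pi_{j',k+1}$ denotes orthogonal projection onto the \emph{affine} plane $W=P_{j',k+1}$ and $\pi_{j',k+1}^\perp$ the linear projection onto $W^\perp$, then for $q$ on the $F$-graph one has $\pi_{j',k+1}(q) + \pi_{j',k+1}^\perp(q) = q + \pi_{j',k+1}^\perp(p)$ for any $p\in W$, not $q$ itself. So $w+A_{j',k+1}(w)$ is the graph point \emph{shifted} by the constant $\pi_{j',k+1}^\perp(p)$ (of size $\lesssim \ve r_k$), and $\Gamma_{A_{j',k+1}}$ is a small translate of $\Sigma_{k+1}$ rather than agreeing with it, so \eqref{e:graphA} fails as an equality. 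The fix is the subtraction $-\pi_{j',k+1}^\perp(p)$ the paper builds into $A$; compare the identity \eqref{e:AF}.
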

Let $j \in J_{k+1}.$ Our goal is to define a map $A_{j,k+1}$ so that \eqref{e:Anorm}-\eqref{e:graphA} hold. We need some preliminary results. First, by Definition \ref{d:CCBP} (1) there is $i \in J_k$ such that
\begin{align}\label{e:2B_ik}
 x_{j,k+1} \in 2B_{i,k}.
\end{align}
By translating we may assume $x_{i,k} = 0,$ so that $P_{i,k}$ is a linear subspace. To simplify notation we shall write $V = P_{i,k}$ and $W=P_{j,k+1}$. By Definition \ref{d:CCBP} (5), we have
\begin{align}\label{e:dVW}
d_{0,20r_k}(V,W) \leq \ve,
\end{align}
so we know that there exists $p \in W \cap B(0,20r_k)$ satisfying 
\begin{align}\label{e:p}
|p| \leq 20\ve r_k.
\end{align}
Let $W'$ denote the linear subspace parallel to $W$. By Lemma \ref{l:angAB} we have for any $A >0$, 
\begin{align}\label{e:dVW1}
d_{0,20Ar_k}(V,W') \lesssim \ve \quad \text{and} \quad d_{0,20Ar_k}(V,W) \lesssim \ve(1+1/A). 
\end{align}
Let $\pi$ denote the orthogonal projection onto $V$, $\pi_W$ denote the orthogonal projection on $W$, $\pi_W^\perp$ denote the orthogonal projection on $W^\perp$ (the linear subspace orthogonal to $W'$) and $\pi_{W'}$ denote the orthogonal projection onto $W'$. In this way, 
\begin{align}\label{e:defpiw}
\pi_{W}(x) = \pi_{W}^\perp(p) + \pi_{W'}(x).
\end{align}
Since $W^\perp$ and $W'$ are linear subspaces, and orthogonal projections on linear subspaces are bounded linear operator with norm at most 1, we have
\begin{align}\label{e:pi_W}
| \pi_{W}(y) | \leq |\pi_{W}^\perp(p)| + |\pi_{W'}(y)| \leq |p| + |y| \leq 20\ve r_k + |y|.
\end{align}
Let $F = F_{i,k}$ be the map satisfying \eqref{e:Fnorm}-\eqref{e:graphF} for $k$ and $i \in J_k$ and define a map $f: W \cap 39B_{i,k} \rightarrow W$ by setting
\[ f(y) = \pi_W ( \pi(y) + F(\pi(y))). \] 
See Figure \ref{f:Defnf}. We have the following.
\begin{figure}
  \centering
  \includegraphics[scale=0.8]{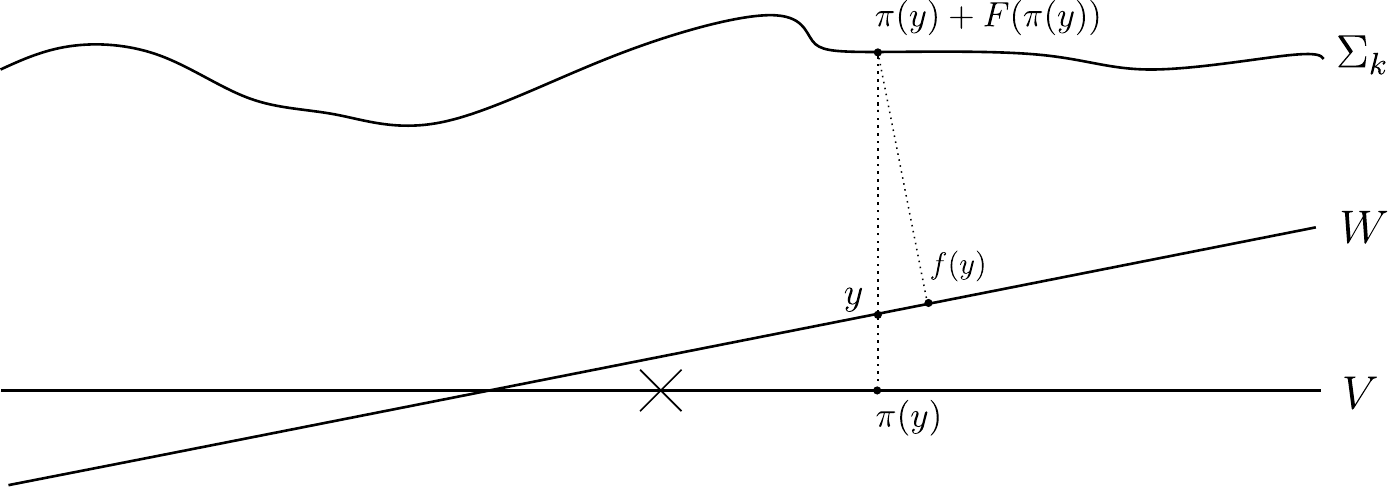}
\caption{Definition of $f$.}\label{f:Defnf}
\end{figure}
\bigbreak
\noindent
\begin{lem}
	 The map $f$ is well-defined and has a Lipschitz inverse $f^{-1}: W \supseteq U \rightarrow W \cap 39B_{i,k}$ such that
\begin{align}\label{e:fprop}
\emph{Lip}(f^{-1}) + ||f^{-1}|| \leq 3.
\end{align}
\end{lem}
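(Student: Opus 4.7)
The plan is to verify well-definedness first, then establish a quantitative bi-Lipschitz estimate for $f$ with constants of the form $1\pm C\varepsilon$, and finally invert. Well-definedness is immediate: for $y \in W \cap 39B_{i,k}$ we have $|y| \leq 39r_k$ and $|\pi(y)| \leq |y| < 40r_k$, so $\pi(y)$ lies in the domain of $F$, and $f(y) = \pi_W(\pi(y) + F(\pi(y)))$ makes sense as an element of $W$. Moreover, using $|F(\pi(y))| \leq |F(x_{i,k})| + |DF| \cdot |\pi(y)| \lesssim \varepsilon r_k$ by \eqref{e:Fnorm}--\eqref{e:Flip}, together with $|\pi_W(x_1)-\pi_W(x_2)| \leq |x_1-x_2|$ and $|\pi(y)-y| \lesssim \varepsilon r_k$ (obtained from \eqref{e:dVW1} and Lemma \ref{l:proj} applied to $y-p \in W'$ plus the estimate $|p|\leq 20\varepsilon r_k$ from \eqref{e:p}), one gets $|f(y)-y| \lesssim \varepsilon r_k$ on $W \cap 39B_{i,k}$.

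For the bi-Lipschitz estimate, the core identity is that $\pi_W$ on affine differences reduces to the linear projection $\pi_{W'}$, so for $y_1,y_2 \in W \cap 39B_{i,k}$ we have
\[
f(y_1)-f(y_2)=\pi_{W'}\!\bigl(u + F(\pi(y_1))-F(\pi(y_2))\bigr), \qquad u = \pi(y_1)-\pi(y_2) \in V.
\]
The upper bound $|f(y_1)-f(y_2)| \leq (1+C\varepsilon)|y_1-y_2|$ follows at once from $|u| \leq |y_1-y_2|$ and \eqref{e:Flip}. For the lower bound, since $u \in V$, Lemma \ref{l:proj} (applied via \eqref{e:dVW1} to the close planes $V$ and $W'$, both linear) gives $|\pi_{W'}^{\perp}(u)| \lesssim \varepsilon|u|$, so $|\pi_{W'}(u)| \geq (1-C\varepsilon)|u|$. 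On the other hand, $y_1-y_2 \in W'$ so Lemma \ref{l:proj} gives $|u| \geq (1-C\varepsilon)|y_1-y_2|$. Combining this with $|F(\pi(y_1))-F(\pi(y_2))| \leq C\varepsilon|u|$ yields $|f(y_1)-f(y_2)| \geq (1-C'\varepsilon)|y_1-y_2|$.

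Consequently, for $\varepsilon$ small enough, $f$ is injective and $(1-C'\varepsilon)$-to-$(1+C\varepsilon)$-bi-Lipschitz on $W \cap 39B_{i,k}$; define $U = f(W\cap 39B_{i,k})$ and let $f^{-1}:U \to W\cap 39B_{i,k}$ be the inverse. The Lipschitz constant satisfies $\operatorname{Lip}(f^{-1}) \leq (1-C'\varepsilon)^{-1} \leq 1+C''\varepsilon$, and the displacement bound $|f^{-1}(z)-z| = |y-f(y)| \lesssim \varepsilon r_k$ translates into the norm estimate on $f^{-1}$ (interpreted as distance-from-identity, scaled by $r_k$). Picking $\varepsilon_0$ small enough that both contributions are $\leq 1$, we obtain $\operatorname{Lip}(f^{-1}) + \|f^{-1}\| \leq 3$. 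The only delicate point is keeping the constants absolute: the estimate on $|\pi_{W'}(u)|$ from below must use the \emph{linear} angle estimate \eqref{e:dVW1}, not the affine one on $W$, because the latter contains the offset $|p|$ which would spoil the ratio when $|u|$ is small; this is exactly why the proof decomposes $W = W' + p$ from the start.
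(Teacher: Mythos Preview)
Your proof is correct and follows essentially the same route as the paper. The only organizational difference is that the paper packages the bi-Lipschitz estimate into a single inequality $|(f(y)-y)-(f(z)-z)| \leq (C+C_4)\varepsilon|y-z|$ (obtained by writing $f(y)-y = \pi_{W'}(\pi(y)-y+F(\pi(y)))$ and using \eqref{e:pi-yz} and \eqref{e:Flip}), from which both the upper and lower Lipschitz bounds for $f$ and $f^{-1}$ follow at once; you instead derive the upper and lower bounds separately via $|\pi_{W'}(u)| \geq (1-C\varepsilon)|u|$ for $u\in V$. Both arguments use exactly the same ingredients, and your closing remark about needing the linear estimate on $W'$ rather than the affine one on $W$ is precisely the point behind the paper's \eqref{e:pi-yz}.
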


\begin{proof}
We start by considering the action of $\pi$ on elements of $W \cap 39B_{i,k}$. Along the way, we show $f$ is well-defined. Let $y \in W \cap 39B_{i,k}$. Recall that since $x_{i,k} = 0$ we have $|y| \leq 39r_k.$ By \eqref{e:dVW1} (with $A = |y|/20r_k$) we have
\begin{align}\label{e:pi-y}
|\pi(y) - y| &= \dist(y,V) \leq |y|d_{B(0,|y|)}(V,W) \stackrel{\eqref{e:dVW1}}{\leq} C\ve|y|( 1 + 20r_k/|y|) \\
&= C\ve(|y| + 20r_k) \lesssim \ve r_k.
\end{align}
Hence, if $\ve$ is small enough $\pi(y) \in V \cap 40B_{i,k} = \text{Dom}(F)$ for all $y\in W \cap 39B_{i,k}$, and so $f$ is well-defined. Next, we establish some bi-Lipschitz estimates for $\pi$. Let $y,z \in W.$ Since $\pi$ is linear and $y-z \in W'$, by \eqref{e:dVW1} and Lemma \ref{l:proj}, we have
\begin{align}\label{e:pi-yz}
|(\pi(y) - \pi(z)) - (y-z)| = |\pi(y-z) - (y-z)| \leq C\ve |y-z|.
\end{align}
Now that we have information for $\pi,$ we can prove estimates similar to \eqref{e:pi-y} and \eqref{e:pi-yz} for $f$. Notice, if $y,z \in W \cap 39B_{i,k}$ then 
\begin{align}\label{e:fdist}
|f(y) -y| &= |\pi_W( \pi(y) - y + F(\pi(y)))| \\
&\leftstackrel{\eqref{e:pi_W}}{\leq} 20\ve r_k + |\pi(y) - y + F(\pi(y))| \\
&\leq 20\ve r_k +  |\pi(y) -y | + |F(\pi(y))| \stackrel{\eqref{e:pi-y} \atop \eqref{e:Fnorm}}{\leq} (C + C_3)\ve r_k
\end{align}
and
\begin{align}\label{bilipf}
|f(y) - y - (f(z) - z)| &= |\pi_{W'}( \pi(y) - \pi(z) - (y-z) +  F(\pi(y)) -  F(\pi(z)) )| \\
&\leq |\pi(y) - \pi(z) - (y-z)| + |F(\pi(y)) -  F(\pi(z))| \\
&\leftstackrel{\eqref{e:pi-yz} \atop \eqref{e:Flip}}{\leq} (C+ C_4) \ve |y-z|.
\end{align}
By \eqref{bilipf}, choosing $\ve$ small enough, we see that $f$ is bijection from $W \cap 39B_{i,k}$ onto $U = f(W \cap 39B_{i,k}) \subseteq W$. Thus, it has an inverse $f^{-1} : U \rightarrow W \cap 39B_{i,k}$. It is not difficult to show that \eqref{e:fdist} and \eqref{bilipf} imply \eqref{e:fprop} for $\ve$ small enough.
\end{proof}

Now, we can define the function $A: U \rightarrow W^\perp$ by setting 
\[ A(y) = \pi^\perp_W( \pi(f^{-1}(y)) + F( \pi(f^{-1}(y))) ) - \pi_W^\perp(p). \]
See Figure \ref{f:DefnA}.  
\begin{figure}
  \centering
  \includegraphics[scale=0.8]{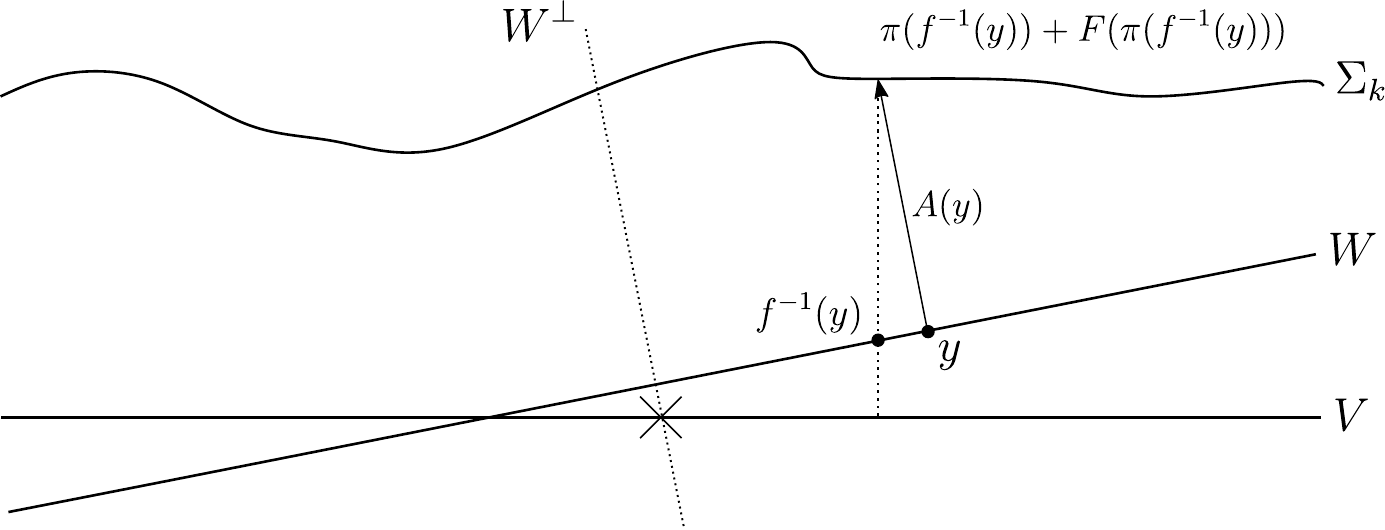}
\caption{Definition of $A$.}\label{f:DefnA}
\end{figure}
By the definition of $f$ (at the point $f^{-1}(y))$, we have 
\[y = \pi_W ( \pi(f^{-1}(y)) + F( \pi(f^{-1}(y)))).\]
Using this, and \eqref{e:defpiw}, for any $y \in U$ we have
\begin{align}\label{e:AF}
	\begin{split}
 y + A(y) &= \pi_W ( \pi(f^{-1}(y)) + F( \pi(f^{-1}(y)))) + \pi^\perp_W( \pi(f^{-1}(y)) + F( \pi(f^{-1}(y))) ) - \pi_W^\perp(p) \\
 &=  \pi_{W'} ( \pi(f^{-1}(y)) + F( \pi(f^{-1}(y)))) + \pi^\perp_W( \pi(f^{-1}(y)) + F( \pi(f^{-1}(y))) ) \\
 &=\pi(f^{-1}(y)) + F( \pi(f^{-1}(y))).
 \end{split}
\end{align}
Now that we have defined $A$, we will establish \eqref{e:Anorm}-\eqref{e:graphA}. We start with \eqref{e:graphA}, which is the following.
\begin{lem}\label{l:graphA}
	We have
\[ \Gamma_A \cap D(x_{j,k+1},W,49r_{k+1}) = \Sigma_{k+1} \cap D(x_{j,k+1},W,49r_{k+1}). \]
\end{lem}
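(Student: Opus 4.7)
The identity \eqref{e:AF} says that, on the appropriate domain, the graph of $A$ over $W$ is the same set as the graph of $F$ over $V$, just reparametrised through $f$: for every $y \in U$,
\[ y + A(y) = \pi(f^{-1}(y)) + F(\pi(f^{-1}(y))). \]
The inductive hypothesis \eqref{e:graphF} for the pair $(i,k)$ identifies $\Gamma_F$ with $\Sigma_{k+1}$ inside $D(x_{i,k},V,40r_k)$. So the strategy is to show that the $(j,k+1)$-diamond fits inside the $(i,k)$-diamond, then push back and forth between the two graph descriptions using \eqref{e:AF} and bijectivity of $f$. Both inclusions will then reduce to checking that the relevant base parameters ($z \in V$ on one side, $y \in U \subseteq W$ on the other) actually belong to the correct domains.

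\textbf{Step 1: Inclusion of diamonds.} The first task is to verify $D(x_{j,k+1},W,49r_{k+1}) \subseteq D(x_{i,k},V,40r_k)$. Any $q$ in the smaller diamond satisfies $|q - x_{j,k+1}| \leq 98 r_{k+1}$, and since $x_{j,k+1} \in 2B_{i,k}$ and $x_{i,k}=0$, we get $|q| \leq 12 r_k$. Writing $q = \pi(q) + \pi^\perp(q)$, the first piece clearly lies in $V \cap 40 B_{i,k}$; for the second, bounding $\dist(q,V) \leq \dist(q,W) + \sup_{w \in W \cap O(r_k)} \dist(w,V)$ and applying \eqref{e:dVW1} gives $|\pi^\perp(q)| \lesssim r_k$, well within $40 r_k$ once $\ve$ is small.

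\textbf{Step 2: The inclusion $\Gamma_A \cap D \subseteq \Sigma_{k+1} \cap D$.} Let $q = y + A(y)$ with $y \in U$ and $q \in D(x_{j,k+1},W,49r_{k+1})$. By \eqref{e:AF}, $q = z + F(z)$ where $z = \pi(f^{-1}(y))$. Since $F(z) \in V^\perp$ and $z \in V$, uniqueness of the $V \oplus V^\perp$ decomposition forces $z = \pi(q)$; the size estimate from Step~1 puts $z$ in $V \cap 40B_{i,k}$, and $|F(z)| \lesssim \ve r_k$ by \eqref{e:Fnorm}--\eqref{e:Flip}, so $q \in \Gamma_F \cap D(x_{i,k},V,40r_k)$. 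The inductive hypothesis \eqref{e:graphF} then yields $q \in \Sigma_{k+1}$.

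\textbf{Step 3: The reverse inclusion, where the main obstacle lies.} Given $q \in \Sigma_{k+1} \cap D(x_{j,k+1},W,49r_{k+1})$, Step~1 plus \eqref{e:graphF} produce $z \in V \cap 40B_{i,k}$ with $q = z + F(z)$, and again $z = \pi(q)$ so $|z| \lesssim r_k$. To recognise $q$ as a point on $\Gamma_A$, the plan is to solve $\pi(w) = z$ with $w \in W \cap 39B_{i,k}$, set $y = f(w) \in U$, and observe via \eqref{e:AF} that $y + A(y) = \pi(w) + F(\pi(w)) = z + F(z) = q$. The only nonformal point is the existence of such a $w$: this is exactly where Lemma~\ref{l:pcont} enters, applied to $V$ and $W$ in the ball $39B_{i,k}$ with the angle bound \eqref{e:dVW1}. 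Since $|z| \lesssim r_k \ll 39 r_k$, we get $z \in V \cap (1-\alpha)\cdot 39 B_{i,k} \subseteq \pi(W \cap 39B_{i,k})$ for suitably small $\alpha$, provided $\ve$ is small enough. This is the delicate step because it is topological rather than quantitative --- but once granted, the rest of the argument is algebra via \eqref{e:AF}, and the lemma is complete.
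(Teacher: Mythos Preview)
Your proposal is correct and follows essentially the same route as the paper: the diamond inclusion in your Step~1 is exactly the paper's auxiliary estimate \eqref{e:D}, your Step~3 is the paper's backward inclusion argument (including the use of Lemma~\ref{l:pcont} to solve $\pi(w)=z$ with $w\in W\cap 39B_{i,k}$, then passing through $f$ and \eqref{e:AF}), and your Step~2 just spells out in slightly more detail what the paper dismisses in one line as ``clear from \eqref{e:AF}''. The only cosmetic difference is that the paper records the sharper diamond inclusion into $D(0,V,35r_k)$ rather than $40r_k$, but either suffices.
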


Before we can show this, we need the following.
\begin{lem} We have 
\begin{align}\label{e:D}
D(x_{j,k+1},W,49r_{k+1}) \subseteq D(0,V,35r_k).
\end{align}
\end{lem}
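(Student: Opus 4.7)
The plan is to bound the norm of any point in $D(x_{j,k+1},W,49r_{k+1})$ by the triangle inequality, then decompose it via orthogonal projection onto $V$ to display it inside $D(0,V,35r_k)$. Given our normalization $x_{i,k} = 0$ and the inclusion $x_{j,k+1} \in 2B_{i,k}$ from \eqref{e:2B_ik}, we have $|x_{j,k+1}| \leq 2r_k$, and recalling $r_{k+1} = r_k/10$ we expect the $49r_{k+1}$ in the defining ball to collapse to $4.9r_k$, so the $35r_k$ on the right-hand side will leave comfortable slack.

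Concretely, take $y \in D(x_{j,k+1},W,49r_{k+1})$ and write $y = z + w$ with $z \in W \cap B(x_{j,k+1},49r_{k+1})$ and $w \in W^\perp \cap B(0,49r_{k+1})$. By the triangle inequality,
\[
|y| \;\leq\; |z| + |w| \;\leq\; |x_{j,k+1}| + 49r_{k+1} + 49r_{k+1} \;\leq\; 2r_k + 4.9r_k + 4.9r_k \;=\; 11.8\,r_k.
\]

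Since $V$ is a closed linear subspace of $H$ passing through $0$ (by our translation), we may decompose $y = \pi(y) + \pi^\perp(y)$, where $\pi$ is the orthogonal projection onto $V$ and $\pi^\perp$ onto $V^\perp$. Orthogonal projections on linear subspaces have operator norm at most $1$, so $|\pi(y)|,\,|\pi^\perp(y)| \leq |y| \leq 11.8\,r_k < 35\,r_k$. Thus $\pi(y) \in V \cap B(0,35r_k)$ and $\pi^\perp(y) \in V^\perp \cap B(0,35r_k)$, which exhibits $y$ as an element of $D(0,V,35r_k)$.

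There is no real obstacle here: the containment is immediate from the triangle inequality once one uses that $x_{j,k+1}$ lies in $2B_{i,k}$ and that the scale drops by a factor of $10$. The constant $35$ is chosen generously so that none of the flatness hypotheses on $V$ and $W$ need to be invoked for this particular inclusion — those will only be needed in subsequent lemmas when we compare the two graph descriptions on this common domain.
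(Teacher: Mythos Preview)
Your proof is correct and is in fact more elementary than the paper's. The paper takes a different route: it first establishes $|\pi(y)-\pi_W(y)|\lesssim \ve r_k$ using the angle bound \eqref{e:dVW1} between $V$ and $W$ (via Lemma \ref{l:proj}), and then chains this through $|\pi_W(y)-x_{j,k+1}|\le 49r_{k+1}$ and $|x_{j,k+1}|\le 2r_k$ to obtain the sharper bounds $|\pi(y)|\le 8r_k$ and $|\pi(y)-y|\le 5r_k$. Your argument bypasses the flatness hypothesis entirely: you bound $|y|\le 11.8\,r_k$ by the triangle inequality and then use that both $\pi$ and $\pi^\perp$ are contractions on the linear space $V$. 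Since the target constant is $35$, your cruder bound is more than enough, and your observation that ``none of the flatness hypotheses on $V$ and $W$ need to be invoked'' is exactly right for this lemma. The paper's approach buys tighter constants that are not actually needed here; your approach buys transparency.
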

\begin{proof}
Let $y \in D(x_{j,k+1},W,49r_{k+1}).$ We want to show $|\pi(y)| \leq 35r_k$ and $|\pi(y) -y|  \leq 35 r_k$.  We start by showing $|\pi(y) - \pi_W(y)| \lesssim \ve r_k$ as follows. First,
\begin{align*}
	|\pi(y) - \pi_W(y)| &\leq |\pi(y) - \pi_{W'}(y)| + |\pi_{W}^\perp(p)| \\
	&\leftstackrel{\eqref{e:p}}{\leq} | \pi(y) - \pi (\pi_{W'}(y))| + |\pi(\pi_{W'}(y)) - \pi_{W'}(y)| + 20\ve r_k \\
	&= |\pi(\pi_W^\perp(y))| + |\pi^\perp(\pi_{W'}(y))| + 20\ve r_k \\
	&\leq C \ve(|\pi_W^\perp(y)| + |\pi_W(y)| + r_k),
\end{align*}
where the last inequality follows from \eqref{e:dVW1} and Lemma \ref{l:proj}. Continuing from this, since $y,p \in W$ and $W^\perp$ is orthogonal to $W,$ we must have $\pi_W^\perp(y) = \pi_W^\perp(p).$ So,
\begin{align*}
	|\pi(y) - \pi_W(y)| &\leq C \ve(|\pi_W^\perp(p)| + |\pi_W(y)| + r_k) \\
	&\stackrel{\eqref{e:pi(y)}}{\leq} C\ve ( |p| + 49r_{k+1} + r_k ) \\
	&\leq C\ve r_k. 
\end{align*}
Our required bounds for $|\pi(y)|$ and $|\pi(y) - y|$ now follow for $\epsilon$ small enough since
\begin{align}\label{e:pi(y)}
\begin{split}
|\pi(y)| & \leq |\pi(y) - \pi_W(y)| + |\pi_W(y) - x_{j,k+1}| + |x_{j,k+1}| \\
&\leftstackrel{\eqref{e:2B_ik}}{\leq} |\pi(y) - \pi_W(y)| + 49r_{k+1} + 2r_k \\
&\leq  |\pi(y) - \pi_W(y)| + 7r_k \leq 8r_k \leq 35r_k
\end{split}
\end{align} 
and 
\begin{align}\label{e:pD2}
\begin{split}
|\pi(y) -y | &\leq |\pi(y) - \pi_W(y)| + |\pi_W(y) - y| \\
&\leq C\ve r_k + 49r_{k+1} \leq 5 r_k \leq 35r_k.
\end{split}
\end{align}

\end{proof}

\begin{proof}[Proof of Lemma \ref{l:graphA}] Since $\Gamma_A \subseteq \Sigma_{k+1}$ by \eqref{e:AF}, the forward inclusion is clear. Our goal now is to show the backward inclusion. Let $p \in \Sigma_{k+1} \cap D(x_{j,k+1},W,49r_{k+1}),$ we must show $p \in \Gamma_A.$ By \eqref{e:D} we have $p \in D(0,V,35r_k),$ which, since $\Sigma_{k+1}$ is the graph of $F$ over $D(0,V,38r_k)$, means we can find a point $y \in V \cap 35B_{i,k}$ such that 
\[ p = y + F(y). \] 
For $\ve$ small enough we have $V \cap 35B_{i,k} \subseteq \pi(W \cap 38B_{i,k})$ by Lemma \ref{l:pcont}, so we can find a point $z \in W \cap 38B_{i,k}$ such that $y = \pi(z).$ Then,
\[ p = \pi(z) + F(\pi(z)) . \] 
Let $w = f(z) \in U.$ Since $f$ is bijective we can write
\[ p = \pi(f^{-1}(w)) + F( \pi(f^{-1}(w))) \] 
and so \eqref{e:AF} implies
\[ p = w + A(w) \in \Gamma_A \] 
as required. 
\end{proof}

To finish the proof of Lemma \ref{l:Stage1} and hence Stage 1, we are left to show the following. 

\begin{lem}
	We have $|A(x_{j,k+1})| \leq C_1\ve r_k$ and $|DA(z)| \leq C_2 \ve$ for all $z \in W \cap 49B_{j,k+1}.$
\end{lem}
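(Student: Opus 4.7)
The key identity driving everything is \eqref{e:AF}, which says
\[ A(y) \;=\; \pi(f^{-1}(y)) + F(\pi(f^{-1}(y))) - y. \]
So the plan is to exploit this formula in two different ways: at the single point $x_{j,k+1}$ (giving \eqref{e:Anorm}), and as a difference quotient (giving \eqref{e:Alip}).

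For \eqref{e:Anorm}, I would set $z = f^{-1}(x_{j,k+1}) \in W \cap 39 B_{i,k}$. By the very definition of $f$ we have $x_{j,k+1} = f(z) = \pi_W(\pi(z)+F(\pi(z)))$, so the identity above simplifies to
\[ A(x_{j,k+1}) = \pi_W^\perp\bigl( \pi(z) + F(\pi(z)) \bigr). \]
Since $z\in W$ and $W=p+W'$ with $W'\perp W^\perp$, we have $\pi_W^\perp(z)=\pi_W^\perp(p)$, hence $|\pi_W^\perp(z)| \leq |p| \leq 20\ve r_k$ by \eqref{e:p}. Combining with $|\pi(z)-z| \lesssim \ve r_k$ from \eqref{e:pi-y} and $|F(\pi(z))| \leq |F(\pi(z))-F(x_{i,k})| + |F(x_{i,k})| \lesssim \ve r_k$ from \eqref{e:Fnorm}--\eqref{e:Flip} (using $|\pi(z)| \leq 40 r_k$), one obtains $|A(x_{j,k+1})| \lesssim \ve r_k$.

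For \eqref{e:Alip}, the characterisation in Remark \ref{r:char} allows me to prove a Lipschitz estimate in place of a bound on $DA$. Given $y_1,y_2 \in U$, set $z_i = f^{-1}(y_i)$. Subtracting the identity for $y_1$ and $y_2$ gives
\[ A(y_1)-A(y_2) = \bigl[\pi(z_1-z_2) - (z_1-z_2)\bigr] + \bigl[F(\pi(z_1))-F(\pi(z_2))\bigr] + \bigl[(z_1-z_2)-(y_1-y_2)\bigr]. \]
The three bracketed terms are each $O(\ve|z_1-z_2|)$: the first by \eqref{e:pi-yz}, the second by \eqref{e:Flip} together with the fact that $\pi$ is $1$-Lipschitz, and the third by \eqref{bilipf}. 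Then \eqref{e:fprop} gives $|z_1-z_2| \leq 3|y_1-y_2|$, so $|A(y_1)-A(y_2)| \lesssim \ve|y_1-y_2|$, as required.

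The one remaining subtlety is verifying that the Lipschitz estimate is valid on all of $W \cap 49 B_{j,k+1}$, i.e.\ that $W \cap 49 B_{j,k+1} \subseteq U = f(W\cap 39 B_{i,k})$. Since $r_{k+1}=r_k/10$ and $|x_{j,k+1}| \leq 2r_k$ by \eqref{e:2B_ik}, the set $W\cap 49B_{j,k+1}$ sits comfortably inside $W \cap B(0,7r_k)$, which is deeply interior to $W \cap 39 B_{i,k}$; since $f$ moves points by only $O(\ve r_k)$ by \eqref{e:fdist}, a standard open-mapping/degree argument (analogous to the one in Lemma \ref{l:proj-covers}) shows $W\cap 49 B_{j,k+1} \subseteq U$ for $\ve$ small. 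I expect this domain verification to be the least mechanical step, but it is entirely analogous to arguments already used in the paper.
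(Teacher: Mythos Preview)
Your argument for the bound $|A(x_{j,k+1})|\leq C_1\ve r_k$ is correct and essentially the same as the paper's.

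For the Lipschitz estimate there is a subtle but genuine problem with the constants, and it matters because this lemma is one step in an induction (Proposition~\ref{p:DT}) where the constants $C_1,\dots,C_5$ must be fixed once and for all. Your decomposition is algebraically correct, but for the second bracket you invoke \eqref{e:Flip} (Lipschitz constant $C_4\ve$) and for the third bracket you invoke \eqref{bilipf}, whose constant is $(C+C_4)\ve$. The upshot is a bound of the form $|DA|\leq C(1+C_4)\ve$, forcing $C_2\geq C(1+C_4)$. But in Stage~2 one later shows $C_4\geq C(C_1+C_2+1)$. These two requirements are circular, and there is no choice of $C_2,C_4$ satisfying both in general; the induction on $k$ would not close. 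The whole reason the separate estimate \eqref{e:Flip1} on the smaller ball $7B_{i,k}$ (with the \emph{absolute} constant $C_5$) is recorded is precisely to break this loop.

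The paper avoids the issue by working directly from the definition $A(y)=\pi_W^\perp\bigl(\pi(f^{-1}(y))+F(\pi(f^{-1}(y)))\bigr)-\pi_W^\perp(p)$ rather than from \eqref{e:AF}. For $y,z\in W\cap 49B_{j,k+1}$ one first checks (using $49r_{k+1}<5r_k$, $|x_{j,k+1}|\leq 2r_k$, and \eqref{e:h}, \eqref{e:pi-y}) that $\pi(f^{-1}(y)),\pi(f^{-1}(z))\in V\cap 7B_{i,k}$, so \eqref{e:Flip1} applies. Then
\[
A(y)-A(z)=\pi_W^\perp\bigl(\pi(f^{-1}(y)-f^{-1}(z))\bigr)+\pi_W^\perp\bigl(F(\pi(f^{-1}(y)))-F(\pi(f^{-1}(z)))\bigr);
\]
the first term is $O(\ve)|f^{-1}(y)-f^{-1}(z)|$ by Lemma~\ref{l:proj} (since $\pi$ lands in $V$, which is $\ve$-close to $W'$), the second is $O(C_5\ve)|f^{-1}(y)-f^{-1}(z)|$ by \eqref{e:Flip1}, and finally $|f^{-1}(y)-f^{-1}(z)|\leq 2|y-z|$ by \eqref{e:fprop} with an absolute constant. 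This yields $C_2\leq C(1+C_5)$, independent of $C_4$, and the induction closes because $C_5$ is absolute.

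Your route can be repaired: once you verify $\pi(z_i)\in 7B_{i,k}$, replace \eqref{e:Flip} by \eqref{e:Flip1} in the second bracket, and observe that on this smaller region the derivation of \eqref{bilipf} upgrades to a constant $(C+C_5)\ve$, fixing the third bracket as well. But as written, the constant-tracking does not go through.
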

\begin{proof}
We start with by estimating $|A(x_{j,k+1})|.$ To simplify notation we will write $x = x_{j,k+1}.$ Notice, in the definition of $A$, we can replace $p$ with any point in $W.$ Choosing the point $f^{-1}(x) \in W,$ we have
\begin{align*}
|A(x)| &= |  \pi^\perp_W( \pi(f^{-1}(x)) + F( \pi(f^{-1}(x))) ) - \pi_W^\perp(f^{-1}(x)) | \\
&=|\pi^\perp_W( \pi(f^{-1}(x)) - f^{-1}(x) + F( \pi(f^{-1}(x)))  )| \\
&\leq |\pi(f^{-1}(x)) - f^{-1}(x)| + |F( \pi(f^{-1}(x)))| \\
&\leftstackrel{\eqref{e:pi-y} \atop \eqref{e:Fnorm}}{\leq} (C + C_3)\ve |f^{-1}(x)| \leq C(1 +C_3) \ve |x| \\
&\leftstackrel{\eqref{e:2B_ik}}{\leq} C(1+C_3)\ve r_{k+1}.
\end{align*}
The required estimate holds as long as we choose
\[ C(1+C_3) \leq C_1. \]
Now we estimate $|DA(z)|,$ recall Remark \ref{r:char}. Let $y,z \in W \cap 49B_{j,k+1}.$ By \eqref{e:fdist}, we know that $f^{-1}(y),f^{-1}(z) \in W \cap 49.5B_{j,k+1}$, which by \eqref{e:pi-y}, implies
\[\pi(f^{-1}(y)),\pi(f^{-1}(z)) \in V \cap 50B_{j,k+1} \stackrel{\eqref{e:2B_ik}}{\subseteq} V \cap 7B_{i,k}.\]
Then
\begin{align*}
|A(y) - A(z)| &\leq |\pi( f^{-1}(y) - f^{-1}(z))| + |F( \pi(  f^{-1}(y)) - F( \pi(f^{-1}(z)))| \\
&\leftstackrel{\eqref{e:Flip1}}{\leq} (1 + C_5) |\pi(f^{-1}(y)) - \pi(f^{-1}(z))| \\
&\leq C(1 + C_5)\ve |f^{-1}(y)-f^{-1}(z)| \\
&\leq C(1+C_5)\ve |y-z|,
\end{align*}
where the third inequality follows from \eqref{e:dVW1} since $f^{-1}(y) - f^{-1}(z) \in W'.$ So, if
\[ C(1+C_5) \leq C_2 \] 
then \eqref{e:Anorm} holds.
\end{proof}

\subsection*{Stage 2} In this section we show the following. 

\begin{lem} 
	If \eqref{e:Anorm}-\eqref{e:graphA} hold for $k$ then \eqref{e:Fnorm}-\eqref{e:graphF} also hold for $k$.
\end{lem}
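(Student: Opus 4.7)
The plan is to push forward the graph parameterization of $\Sigma_k$ given by \eqref{e:graphA} through the map $\sigma_k$, and then extract from $\sigma_k(\Gamma_{A_{j,k}})$ a graph parameterization of $\Sigma_{k+1}$ over the same plane $P_{j,k}$. Set $\Phi(z) = z + A_{j,k}(z)$ for $z \in P_{j,k} \cap 49B_{j,k}$, so that $\Phi$ parameterizes $\Sigma_k \cap D(x_{j,k}, P_{j,k}, 49r_k)$, and define
\[ g(z) = \pi_{j,k}(\sigma_k(\Phi(z))), \qquad h(z) = \pi_{j,k}^\perp(\sigma_k(\Phi(z))), \]
so that $\sigma_k(\Phi(z)) = g(z) + h(z)$ with $g(z) \in P_{j,k}$ and $h(z) \in P_{j,k}^\perp$. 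The map $F_{j,k}$ will be defined on a slightly shrunken domain as $F_{j,k} = h \circ g^{-1}$, in analogy with the construction of $A_{j,k+1}$ in Stage 1.

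First I would check that for $z \in P_{j,k} \cap 45B_{j,k}$, the point $\Phi(z)$ lies in $V_k^8$, so that Lemma \ref{l:sig-pi} applies and gives, for any $i \in J_k$ with $\Phi(z) \in 10B_{i,k}$,
\[ |\sigma_k(\Phi(z)) - \pi_{i,k}(\Phi(z))| \lesssim \ve r_k. \]
Combining this with Definition \ref{d:CCBP}(3) and Lemma \ref{l:proj}, I would obtain $\pi_{i,k}(\Phi(z)) = z + O(\ve r_k)$, using crucially that $\pi_{j,k}(\Phi(z)) = z$ because $A_{j,k}(z) \in P_{j,k}^\perp$. This immediately yields $|g(z) - z| \lesssim \ve r_k$ and $|h(z)| \lesssim \ve r_k$, hence \eqref{e:Fnorm}. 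The corresponding Lipschitz-type estimates
\[ |g(z) - g(z') - (z - z')| \lesssim \ve\,|z - z'|, \qquad |h(z) - h(z')| \lesssim \ve\,|z - z'| \]
follow from Lemma \ref{l:sig-pi-lip} together with \eqref{e:Alip} and Lemma \ref{l:proj}. For $\ve$ small enough, $g$ is therefore a $(1 + O(\ve))$-bi-Lipschitz perturbation of the identity on $P_{j,k} \cap 45B_{j,k}$, and an inverse-function argument analogous to \eqref{e:fprop} produces a Lipschitz inverse $g^{-1}$ whose image contains $P_{j,k} \cap 40B_{j,k}$.

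Setting $F_{j,k} = h \circ g^{-1}$ on $P_{j,k} \cap 40B_{j,k}$, the bounds \eqref{e:Fnorm} and \eqref{e:Flip} follow from the estimates on $g$ and $h$ together with the Lipschitz control on $g^{-1}$. The graph identity \eqref{e:graphF} is established in two steps. The inclusion $\Gamma_{F_{j,k}} \subseteq \Sigma_{k+1}$ is automatic from the definition, since $w + F_{j,k}(w) = \sigma_k(\Phi(g^{-1}(w))) \in \sigma_k(\Sigma_k) = \Sigma_{k+1}$. The reverse inclusion requires showing that every $p \in \Sigma_{k+1} \cap D(x_{j,k}, P_{j,k}, 40r_k)$ lies in the image of $\sigma_k \circ \Phi$; this is a topological argument in the spirit of the proof of Lemma \ref{l:graphA}, combined with Lemma \ref{l:pcont} and the fact that $\sigma_k$ moves points by only $O(\ve r_k)$ on $\Sigma_k$, which allows us to confine the relevant preimages to $D(x_{j,k}, P_{j,k}, 49r_k)$ and hence to the domain of $\Phi$.

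The main obstacle will be the sharper Lipschitz bound \eqref{e:Flip1} on the much smaller ball $P_{j,k} \cap 7B_{j,k}$ with a constant $C_5$ that does not inherit a loss from $C_2$. For this I would refine the computation above by exploiting that for $z \in P_{j,k} \cap 7B_{j,k}$, only indices $i \in J_k$ with $x_{i,k}$ very close to $x_{j,k}$ contribute to the convex combination defining $\sigma_k(\Phi(z))$. On such a neighborhood, Definition \ref{d:CCBP}(3) gives direct angular control between $P_{i,k}$ and $P_{j,k}$ without having to pass through the global graph parameterization of $\Sigma_k$, yielding a clean $C\ve$ bound with $C$ depending only on $d$.
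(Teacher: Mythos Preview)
Your overall construction --- $g = \pi_{j,k}\circ\sigma_k\circ\Phi$, $h = \pi_{j,k}^\perp\circ\sigma_k\circ\Phi$, and $F_{j,k} = h\circ g^{-1}$ --- is exactly the paper's (its auxiliary map coincides with your $g$, since $\pi_{j,k}(\Phi(z))=z$), and your treatment of the sharper bound \eqref{e:Flip1} on $7B_{j,k}$ is on solid ground because there $\Phi(z)\in 8B_{j,k}\subseteq V_k^8$ genuinely holds.

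The gap is the claim that $\Phi(z)\in V_k^8$ for all $z\in P_{j,k}\cap 45B_{j,k}$. The points $\{x_{i,k}\}_{i\in J_k}$ are only $r_k$-\emph{separated}; nothing in Definition~\ref{d:CCBP} forces them to be dense along $\Sigma_k$, so points of $\Sigma_k\cap 45B_{j,k}$ may lie outside every $8B_{i,k}$ (indeed $\sigma_k$ is built to be the identity off $V_k^{10}$). Consequently Lemmas~\ref{l:sig-pi} and~\ref{l:sig-pi-lip} are unavailable on the large ball, and your derivation of the pointwise and Lipschitz bounds for $g$ and $h$ there is not justified. The paper bypasses this by expanding $\sigma_k(y)-y=\sum_i\theta_{i,k}(y)[\pi_{i,k}(y)-y]$ directly: using the graph hypothesis \eqref{e:Anorm}--\eqref{e:Alip} to write $y=\pi_{j,k}(y)+A_{j,k}(\pi_{j,k}(y))$, condition~(3) of Definition~\ref{d:CCBP}, and bounded overlap of the $10B_{i,k}$, it obtains $|\sigma_k(y)-y|\le C(C_1+1)\ve r_k$ and $|(\sigma_k(y)-y)-(\sigma_k(z)-z)|\le C(C_1+C_2+1)\ve|y-z|$ for all $y,z\in\Sigma_k\cap 45B_{j,k}$, with no $V_k^8$ hypothesis. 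Once you have these, your argument goes through with $C_4=C(C_1+C_2+1)$. One minor correction: the reverse inclusion in \eqref{e:graphF} is not itself topological --- every $p\in\Sigma_{k+1}$ is $\sigma_k$ of a point of $\Sigma_k$ within $O(\ve r_k)$, which therefore lies in the domain of $\Phi$; the degree argument is needed earlier, only to show $g(P_{j,k}\cap 41B_{j,k})\supseteq P_{j,k}\cap 40B_{j,k}$.
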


Let $j \in J_k$. Our ultimate goal is to define a map $F = F_{j,k}$ so that \eqref{e:Fnorm}-\eqref{e:graphF} hold. We start with some preliminaries. By translation, we may assume $x_{j,k} = 0.$ Let $P_{j,k},B_{j,k}$ and $A_{j,k}$ satisfy \eqref{e:Anorm}-\eqref{e:graphA}. For ease of notation we denote 
\[P = P_{j,k},  \ B = B_{j,k} \ \text{and} \  A =A_{j,k}.\]
Also, we define $\pi$ to be the orthogonal projection onto the plane $P$. Before we can define $F$, we need to prove some preliminary estimates and define an auxiliary function. The first of these estimates is Theorem \ref{DT} (10).  

\begin{lem} 
	Let $y \in \Sigma_k.$ Then
	\begin{align}\label{e:sig-y}
		|\sigma_k(y) - y| \leq C(C_1 + 1) \ve r_k. 
	\end{align}
\end{lem}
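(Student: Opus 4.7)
The plan is to use the inductive graph description of $\Sigma_k$ (property \eqref{e:graphA} for $k$) together with the bounded overlap of the partition of unity to directly bound each term in the defining formula for $\sigma_k$.

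Starting from the definition
\[ \sigma_k(y) - y = \sum_{j \in J_k} \theta_{j,k}(y)\bigl[\pi_{j,k}(y) - y\bigr], \]
I would first note that only indices $j \in J_k$ with $\theta_{j,k}(y) \neq 0$ contribute; for these, $\supp \theta_{j,k} \subseteq 10 B_{j,k}$ forces $y \in 10 B_{j,k} \subseteq 49 B_{j,k}$. Combined with the hypothesis $y \in \Sigma_k$, the inductive assumption \eqref{e:graphA} for $k$ writes $y$ uniquely as $y = z + A_{j,k}(z)$ with $z \in P_{j,k}$, and therefore $\pi_{j,k}(y) = z$, so
\[ |\pi_{j,k}(y) - y| = |A_{j,k}(z)|. \]

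The next step is to estimate $|A_{j,k}(z)|$. Since $\pi_{j,k}$ is $1$-Lipschitz and $x_{j,k} \in P_{j,k}$, the point $z = \pi_{j,k}(y)$ lies in $P_{j,k} \cap 10 B_{j,k}$, so $|z - x_{j,k}| \leq 10 r_k$. Using \eqref{e:Anorm}, \eqref{e:Alip}, and the characterisation in Remark \ref{r:char}, I get
\[ |A_{j,k}(z)| \leq |A_{j,k}(x_{j,k})| + C_2 \ve |z - x_{j,k}| \leq (C_1 + 10 C_2)\,\ve r_k. \]

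Finally, since $\psi_k \geq 0$ and $\psi_k(y) + \sum_{j \in J_k} \theta_{j,k}(y) = 1$, we have $\sum_{j \in J_k} \theta_{j,k}(y) \leq 1$, so
\[ |\sigma_k(y) - y| \leq \sum_{j \in J_k} \theta_{j,k}(y)\,|\pi_{j,k}(y) - y| \leq (C_1 + 10 C_2)\,\ve r_k, \]
which is the claimed bound (absorbing $C_2$ into the implicit constant $C$; note $C_2$ is an absolute constant previously fixed in the induction, hence $C_1 + 10 C_2 \lesssim C_1 + 1$).

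There is no real obstacle here: the only mild subtlety is making sure that one only applies the graph description \eqref{e:graphA} at indices $j$ for which $y \in 10 B_{j,k} \subseteq 49 B_{j,k}$, which is automatic from $\supp \theta_{j,k} \subseteq 10 B_{j,k}$. Everything else is a one-line application of the bounded-overlap partition of unity and the inductive graph estimate.
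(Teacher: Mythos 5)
Your proof is correct and is, at its core, the same argument the paper gives, but with one small organisational simplification. The paper fixes a single anchor index $j$ with $y \in 10B_{j,k}$, uses the graph description \eqref{e:graphA} only at that $j$ to get $|\pi_{j,k}(y)-y|=|A_{j,k}(\pi_{j,k}(y))|\lesssim (C_1+C_2)\ve r_k$, and then for all the other contributing indices $i$ converts the bound through the CCBP angle condition (Definition \ref{d:CCBP}(3), giving $\dist(\pi_{j,k}(y),P_{i,k})\leq 100\ve r_k$). You instead apply \eqref{e:graphA} at \emph{every} contributing index $j$, which is legitimate because the inductive hypothesis holds for all $j\in J_k$, and then sum using $\sum_j \theta_{j,k}(y)\leq 1$ (the paper uses bounded overlap, but since the $\theta_{j,k}$ are non-negative and sum to at most $1$ this is equivalent). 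Your route avoids invoking the CCBP angle estimate entirely, which is arguably cleaner. Two tiny points worth noting: (i) the inclusion you need is $y\in D(x_{j,k},P_{j,k},49r_k)$, which does follow from $y\in 10B_{j,k}$ via $|\pi_{j,k}(y)-x_{j,k}|\leq 10r_k$ and $\dist(y,P_{j,k})\leq 10r_k$, but is not literally the statement $10B_{j,k}\subseteq 49B_{j,k}$ — worth making that explicit; (ii) your observation that the $C_2$ contribution is absorbed into the absolute constant is correct (both $C_1$ and $C_2$ are fixed absolute constants of the induction), and indeed the paper's own proof tacitly uses \eqref{e:Alip} in the same way even though it only cites \eqref{e:Anorm} in the display.
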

\begin{proof}
	If $y \not\in V_k^{10}$ then by definition $\sigma_k(y) = y$ and \eqref{e:sig-y} is trivial. Assume instead that $y \in V_k^{10}.$ Let $j \in J_k$ such that $y \in 10B_{j,k}$ Let $A_{j,k}$ be the map which satisfies \eqref{e:Anorm}-\eqref{e:graphA}. If $i \in J_k$ also satisfies $y \in 10B_{i,k}$ then by the Definition \ref{d:CCBP} (3) we have 
	\[ d_{x_{j,k},100r_k}(P_{j,k},P_{i,k}) \leq \ve \]
	and so
	\begin{align}\label{e:before}
		|\pi_{i,k}(y) - y| &= \dist(y,P_{i,k})  \leq  | \pi_{j,k}(y)- y|  + \dist( \pi_{j,k}(y),P_{i,k}) \\
		&\leq  |A_{j,k}(\pi_{j,k}(y))| + 100\ve r_k \stackrel{\eqref{e:Anorm}}{\leq} (C_1 + C )\ve r_k.
	\end{align}
	Since the functions $\theta_{j,k}$ are supported on $10B_{j,k}$, which have bounded overlap (by the arguments in the proof of Lemma \ref{l:spu}), we can use this and the above to get
	\begin{align}
		|\sigma_k(y) -y| \leq \sum_{i \in J_k} |\theta_{i,k}(y)||\pi_{i,k}(y) - y| \leq C(C_1 +1) \ve r_k, 
	\end{align} 
	which finishes the proof. 
\end{proof}

\begin{lem}
	Let $y,z \in \Sigma_k \cap 45B.$ Then,
\begin{align}\label{e:siglip}
| (\sigma_k(y) - y) - (\sigma_k(z) - z)| \leq C(C_1 +C_2 +1)\ve|y-z|.
\end{align}
\end{lem}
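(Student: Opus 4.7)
The plan is to mirror the scheme of Lemma \ref{l:sig-pi-lip}, but without the luxury of $y,z \in V_k^8$ (so we cannot use $\sum_i \theta_{i,k} \equiv 1$). I would begin from the identity $\sigma_k(w) - w = \sum_{i \in J_k} \theta_{i,k}(w)(\pi_{i,k}(w) - w)$ and telescope
\[
(\sigma_k(y) - y) - (\sigma_k(z) - z) = S_1 + S_2,
\]
where
\[
S_1 = \sum_i \theta_{i,k}(y) \bigl[(\pi_{i,k}(y) - y) - (\pi_{i,k}(z) - z)\bigr], \qquad S_2 = \sum_i [\theta_{i,k}(y) - \theta_{i,k}(z)](\pi_{i,k}(z) - z).
\]
The key algebraic observation is that, since $\pi_{i,k}$ is an affine orthogonal projection onto $P_{i,k} = x_{i,k} + P_{i,k}^{\mathrm{lin}}$, one has $\pi_{i,k}(w) - w = -q_i(w - x_{i,k})$ with $q_i$ the \emph{linear} orthogonal projection onto $(P_{i,k}^{\mathrm{lin}})^\perp$. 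Consequently the bracket in $S_1$ is exactly $-q_i(y-z)$, which removes the basepoints $x_{i,k}$ and lets one estimate a single operator acting on the tangential increment.

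For $S_1$, only indices with $\theta_{i,k}(y) \neq 0$ matter; for these, $y \in 10B_{i,k}$, so $|x_{i,k} - x_{j,k}| \leq 55r_k \leq 100r_k$ and hence Definition \ref{d:CCBP}(3) yields $d_{x_{j,k},100r_k}(P_{i,k},P_{j,k}) \leq \ve$. After translating $x_{j,k}$ to the origin, Lemma \ref{l:angAB} gives $d_{B(0,1)}(P_{i,k}^{\mathrm{lin}},P_{j,k}^{\mathrm{lin}}) \lesssim \ve$. I would now split $y-z = u+v$ with $u = \pi_{j,k}^{\mathrm{lin}}(y-z) \in P_{j,k}^{\mathrm{lin}}$ and $v \in (P_{j,k}^{\mathrm{lin}})^\perp$. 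Because $45B \subseteq D(x_{j,k},P_{j,k},49r_k)$, the graph property \eqref{e:graphA} for $A = A_{j,k}$ gives $v = A_{j,k}(\pi_{j,k}(y)) - A_{j,k}(\pi_{j,k}(z))$, so $|v| \leq C_2 \ve |u| \leq C_2 \ve |y-z|$. Then Lemma \ref{l:proj} yields $|q_i(u)| \lesssim \ve |u|$, while $|q_i(v)| \leq |v| \leq C_2\ve|y-z|$. Thus $|q_i(y-z)| \lesssim (1+C_2)\ve|y-z|$, and since $\sum_i \theta_{i,k}(y) \leq 1$ we get $|S_1| \lesssim (1+C_2)\ve|y-z|$.

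For $S_2$, the indices that contribute are those with either $y$ or $z$ in $10B_{i,k}$; by the bounded overlap of $\{10B_{i,k}\}$ (Lemma \ref{l:spu}), there are at most $C_d$ of them. For each such $i$, Lemma \ref{l:spu}(3) gives $|\theta_{i,k}(y) - \theta_{i,k}(z)| \lesssim r_k^{-1}|y-z|$, so it suffices to show $|\pi_{i,k}(z) - z| \lesssim (C_1+C_2+1)\ve r_k$. If $z \in 10B_{i,k}$, this follows from the graph identity $z - \pi_{i,k}(z) = A_{i,k}(\pi_{i,k}(z))$ together with \eqref{e:Anorm}--\eqref{e:Alip}. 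If instead only $y \in 10B_{i,k}$, I would bound $|z - \pi_{i,k}(z)|$ via $|z - \pi_{j,k}(z)| + |\pi_{j,k}(z) - \pi_{i,k}(\pi_{j,k}(z))| + |\pi_{i,k}(\pi_{j,k}(z)) - \pi_{i,k}(z)|$: the first and third are $\lesssim (C_1+C_2)\ve r_k$ by the $P_{j,k}$-graph estimate and 1-Lipschitzness of $\pi_{i,k}$, while the middle one is $\lesssim \ve r_k$ by $d_{x_{j,k},100r_k}(P_{i,k},P_{j,k}) \leq \ve$. Combining $S_1$ and $S_2$ produces the desired constant $C(C_1+C_2+1)\ve$.

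The principal obstacle will be the perpendicular–projection estimate $|q_i(y-z)| \lesssim \ve|y-z|$: we cannot appeal to the graph property of $\Sigma_k$ over $P_{i,k}$ because $y$ and $z$ need not lie simultaneously in $D(x_{i,k},P_{i,k},49r_k)$. Decomposing along $P_{j,k}^{\mathrm{lin}}$ (for which the graph property \emph{does} apply, since $45B \subseteq D(x_{j,k},P_{j,k},49r_k)$) and then comparing $q_i$ to $q_j$ via Lemma \ref{l:proj} is precisely what circumvents this issue; nothing else in the argument is more than careful bookkeeping.
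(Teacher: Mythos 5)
Your proposal is correct and takes essentially the same route as the paper's proof: both begin from the telescoped splitting of $(\sigma_k(y)-y)-(\sigma_k(z)-z)$ into a $\theta$-difference sum and a $\sum_i\theta_{i,k}(y)\,\pi_{i,k}^\perp(y-z)$ sum, bound the first via the bounded overlap of $\{10B_{i,k}\}$ and $|\dist(\cdot,P_{i,k})|\lesssim (C_1+C_2+1)\ve r_k$ on $\Sigma_k\cap 45B$, and bound the second by decomposing $y-z$ into its $P_{j,k}^{\mathrm{lin}}$-tangential and normal parts via the graph identity \eqref{e:graphA} and then invoking the angle control $d_{B(0,1)}(P_{i,k}^{\mathrm{lin}},P_{j,k}^{\mathrm{lin}})\lesssim\ve$ together with Lemma \ref{l:proj}. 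The only cosmetic differences are which factor carries the $\theta$-difference (you put $\pi_{i,k}(z)-z$ there, the paper puts $\pi_{i,k}(y)-y$), and your slightly more elaborate two-case bookkeeping for $|\pi_{i,k}(z)-z|$, which reduces to the same $\dist(\cdot,P_{i,k})\leq|\pi_{j,k}(\cdot)-\cdot|+\dist(\pi_{j,k}(\cdot),P_{i,k})$ estimate the paper uses.
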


\begin{proof}

First, we write
\begin{align}
|(\sigma_k(y)-y) - (\sigma_k(z) -z)| &\leq \sum_{i \in J_k}|\theta_{i,k}(y) - \theta_{i,k}(z)||\pi_{i,k}(y) - y| \\
&\quad+ \sum_{i \in J_k} |\theta_{i,k}(y)| |\pi_{i,k}^\perp(y - z)| \\ 
&=S_1 + S_2.
\end{align}
Since $y,z \in 45B,$ if for some $i \in J_k$ we have either $\theta_{i,k}(y) \not=0$ or $\theta_{i,k}(z) \not=0$ then $|x_{i,k}| \leq 100r_k.$ By the Definition \ref{d:CCBP} (3),  we get
\begin{align}\label{e:PPik}
d_{0,100r_k}(P,P_{i,k}) \leq \ve.
\end{align}
By exactly the same estimate as in \eqref{e:before}, this implies
\begin{align}
|\pi_{i,k}(y) - y|  \leq (C_1 + C)\ve r_k. 
\end{align}
Then, since the balls $10B_{i,k}$ have bounded overlap and $|\theta_{i,k}(y) - \theta_{i,k}(z)| \lesssim r_k^{-1}$ by Lemma \ref{l:tpu}(3), it follows that 
\begin{align}
S_1 \leq C(C_1 + 1)\ve|y-z|.
\end{align}
We now control $S_2.$ Recall that $\Sigma_k \cap 45B$ is contained in the graph of $A$ over $P \cap 49B$. In particular, $y = \pi(y) + A ( \pi(y) ),$ with a similar expression holding for $z$. Again, if $\theta_{i,k}(y)\not=0$ then \eqref{e:PPik} holds, in particular, this implies $d_{0,100r_k}(P,P_{i,k}') \lesssim \ve$ by Lemma \ref{l:angAB}, where $P_{i,k}'$ is the linear space orthogonal to $P_{i,k}$. We use these facts, with Lemma \ref{l:proj} and the fact that the balls $10B_{i,k}$ have bounded overlap, to control $S_2$ as follows, 
\begin{align}
S_2 &=  \sum_{i \in J_k} | \theta_{i,k}(y) |\left| \pi_{i,k}^\perp\left[ \pi(y) + A(\pi(y)) - (\pi(z) + A(\pi(z)) \right] \right| \\
&\leq \sum_{i \in J_k} |\theta_{i,k}(y)|\left( | \pi_{i,k}^\perp (\pi(y)  - \pi(z) ) | + \left| A(\pi(y)) -  A(\pi(z))\right| \right) \\
&\leftstackrel{\substack{\eqref{e:Anorm} \\ \eqref{e:proj}}}{\leq} \sum_{i \in J_k} |\theta_{i,k}(y)| ( C+ C_2 )\ve |\pi(y)  - \pi(z)| \\
&\leq C(C_2+1)\ve|y-z|.
\end{align}
This completes the proof. 

\end{proof}

We are almost ready to define $F_{j,k}.$ Let us start with an auxiliary function. For $x \in P \cap 40B,$ write 
\[ h(x) = x+ \pi( \sigma_k(x+A(x)) - (x + A(x))). \] 
See Figure \ref{f:Defnh}.
\begin{figure}
  \centering
  \includegraphics[scale=0.8]{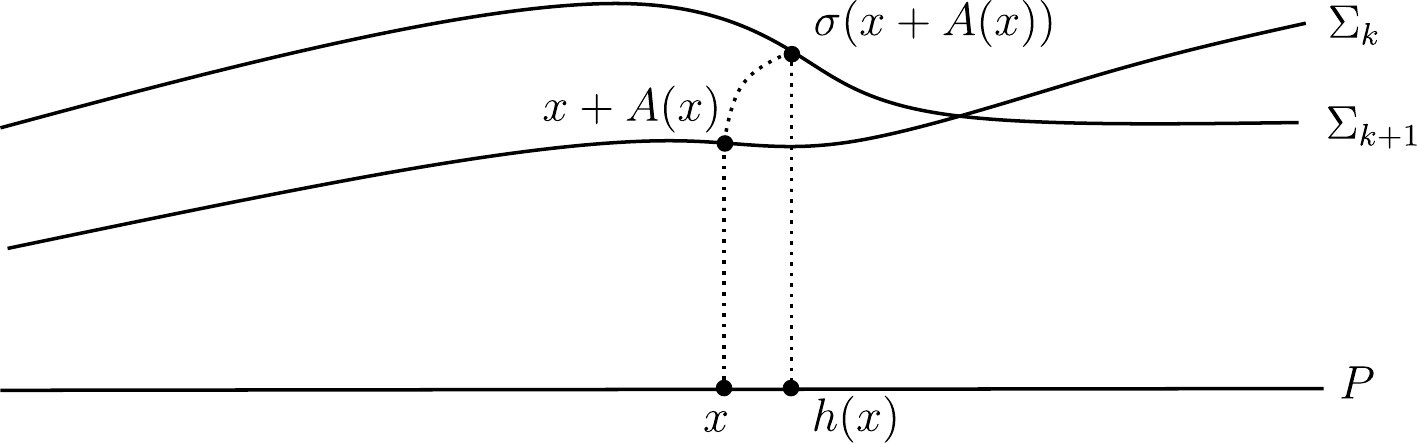}
\caption{Definition of $h(x)$.}\label{f:Defnh}
\end{figure}
We have 
\begin{align}\label{e:h-y}
|h(y) - y| \leq |\sigma_k(y+A(y)) - (y +A(y))| \stackrel{\eqref{e:sig-y}}{\leq} C\ve r_k
\end{align}
and
\begin{align}\label{e:liph}
|(h(y) - h(z)) - (y-z)| &\leq |\sigma_k(y+A(y)) -\sigma_k(z+A(z)) - \left( y+A(y) - (z+A(z)) \right)| \\
&\stackrel{\eqref{e:siglip}}{\leq} C \ve |y +A(y) - (z+A(z))| \\
&\leq C\ve|y-z|.
\end{align}
As in Stage 1 this implies that $h : P \cap 41B \rightarrow U$ is bijective, where $U = h(P \cap 41B).$ Denote its inverse by $h^{-1}: U \rightarrow P \cap 41B.$ By \eqref{e:h-y} and \eqref{e:liph}, $h^{-1}$ satisfies
\begin{align}\label{e:h}
|h^{-1}(x) - x| \lesssim \ve r_k, \quad \text{Lip}(h^{-1}) \leq 2. 
\end{align}
We don't include the details here, but a degree theory argument similar to that in the proof of Lemma \ref{l:bi-lip-BWGL} (see also \cite{david2012reifenberg}) implies 
\[U \supset P \cap 40B. \] 
Define
\[ F(y) = \pi^\perp ( \sigma_k ( h^{-1}(y) + A(h^{-1}(y)))). \] 
See Figure \ref{f:DefnF1}. We will now verify \eqref{e:Fnorm}-\eqref{e:graphF} for $F$. Let us start with \eqref{e:graphF}, which is the following.
\begin{figure}
  \centering
  \includegraphics[scale=0.7]{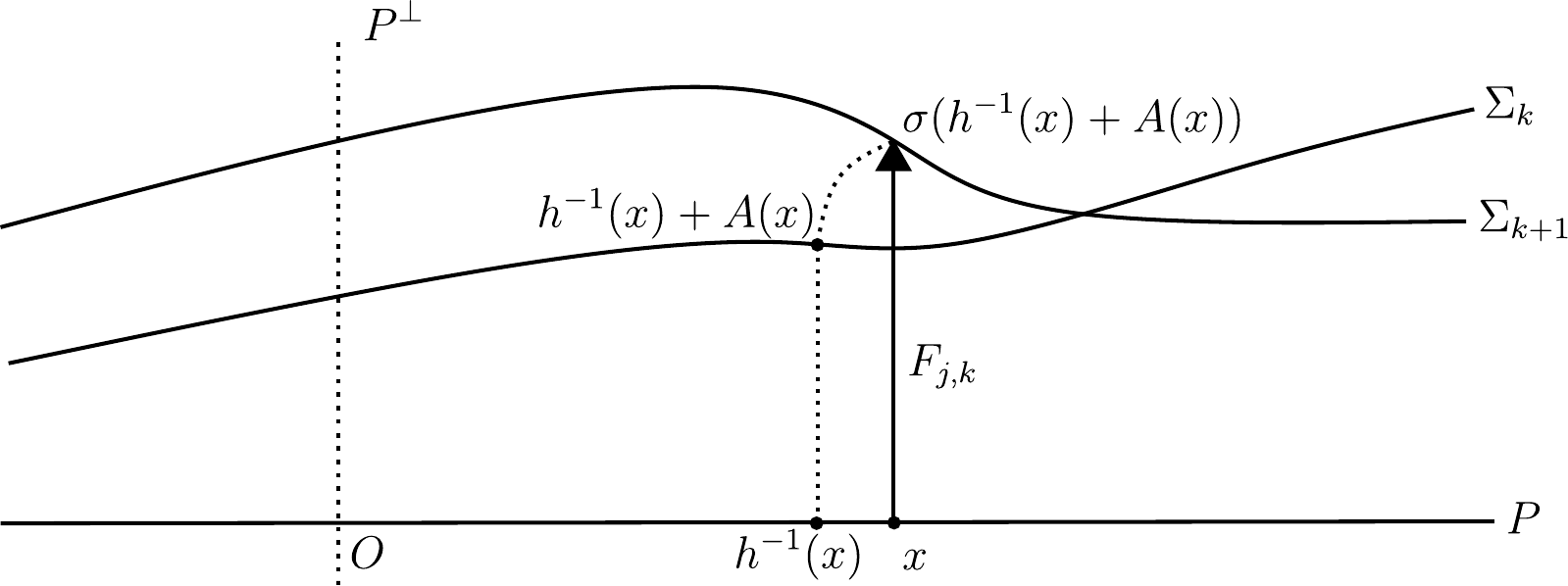}
\caption{Definition of $F_{j,k}$.}\label{f:DefnF1}
\end{figure}

\begin{lem}
	We have
	\[\Sigma_{k+1} \cap D(0,P,40B) \subseteq \Gamma_F \cap D(0,P,40B). \] 
\end{lem}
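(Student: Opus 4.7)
The plan is to show that every point $p \in \Sigma_{k+1} \cap D(0,P,40B)$ arises as $y + F(y)$ for the natural choice $y = \pi(p)$. Since $\Sigma_{k+1} = \sigma_k(\Sigma_k)$, I would begin by writing $p = \sigma_k(q)$ for some $q \in \Sigma_k$. The estimate \eqref{e:sig-y} (Theorem \ref{DT}(10)) gives $|q - p| \leq C(C_1+1)\ve r_k$, so for $\ve$ small enough $q$ lies in $D(0,P,41B) \subseteq D(0,P,49B)$. By the inductive hypothesis \eqref{e:graphA} applied at level $k$, this means $q = z + A(z)$ for a unique $z = \pi(q) \in P \cap 41B$.

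The key computation is then to evaluate $h$ at this $z$. Since $z \in P$ and $A(z) \in P^\perp$, we have $\pi(z+A(z)) = z$, and hence
\[ h(z) = z + \pi\bigl(\sigma_k(z+A(z)) - (z+A(z))\bigr) = z + \pi(p) - z = \pi(p). \]
Because $h$ is a bijection $P \cap 41B \to U$ with $U \supseteq P \cap 40B$, and since $p \in D(0,P,40B)$ gives $\pi(p) \in P \cap 40B \subseteq U$, we may invert to get $h^{-1}(\pi(p)) = z$. Plugging this into the definition of $F$ gives
\[ F(\pi(p)) = \pi^\perp\bigl(\sigma_k(z + A(z))\bigr) = \pi^\perp(p), \]
and therefore $\pi(p) + F(\pi(p)) = \pi(p) + \pi^\perp(p) = p$, so $p \in \Gamma_F \cap D(0,P,40B)$.

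The main obstacle here is a bookkeeping one rather than an analytic one: I need to be careful to verify that the point $q$ constructed from $p$ really lies in a neighborhood where $\Sigma_k$ is already known (by induction) to be the Lipschitz graph of $A$, and that $\pi(p)$ falls within the domain $U$ on which $h^{-1}$ is defined. Both require choosing $\ve$ small compared to the gap between the radii $40, 41, 49$ appearing in the relevant domains, which is exactly why the proposition is stated with these three different scales. Once those inclusions are checked, the identity $h(z) = \pi(p)$ is the one miracle of the argument, and everything else collapses algebraically.
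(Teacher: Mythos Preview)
Your proposal is correct and follows essentially the same route as the paper. The only cosmetic difference is that you identify the base point explicitly as $y=\pi(p)$ via the computation $h(z)=\pi(p)$, whereas the paper invokes the bijectivity of $h$ on $P\cap 41B$ to produce $y$ abstractly and then checks $p=y+F(y)$ by decomposing into $\pi$ and $\pi^\perp$ parts; both arguments unwind to the same identity.
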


\begin{proof}
Since $\Gamma_F \subseteq \Sigma_{k+1}$ by definition, the only non-trivial direction is to show
\[ \Sigma_{k+1} \cap D(0,P,40B) \subseteq \Gamma_F \cap D(0,P,40B). \] 
Let $p \in \Sigma_{k+1} \cap D(0,P,40B)$ and let $z \in \Sigma_k$ such that $p = \sigma_{k}(z).$ Since $|z - p| \lesssim \ve r_k$ by \eqref{e:sig-y}, we see that $\pi(z) \in P \cap 41B.$ This implies that $z$ is contained in the graph of the function $A$ over $P \cap D(0,P,49B)$, hence 
\[ p = \sigma_k(z) = \sigma_k( \pi(z) + A(\pi(z))). \] 
Since $h^{-1} : U \rightarrow 41B$ is bijective there exists $y \in U$ such that $h^{-1}(y) = \pi(z).$ This gives 
\begin{align} p = \sigma_k( h^{-1}(y) + A(h^{-1}(y))) &= \pi( \sigma_k( h^{-1}(y) + A(h^{-1}(y)))  ) \\
&\hspace{4em} + \pi^\perp ( \sigma_k( h^{-1}(y) + A(h^{-1}(y)))  ) \\
&= y + \pi^\perp( \sigma_k( h^{-1}(y) + A(h^{-1}(y)))  \\
& = y + F(y) ,
\end{align}
where the penultimate equality follows from the definition of $h$. This finishes the proof. 
\end{proof}

We now focus on proving \eqref{e:Fnorm}-\eqref{e:Flip1} for $F.$ Let us start with \eqref{e:Flip}. 
\begin{lem}
	For $y,z \in P \cap 40B$ we have $|F(y) - F(z)| \leq C_4 \ve |y-z|.$
\end{lem}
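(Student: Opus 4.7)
The plan is to decompose $F(y)$ into two pieces, each of which is readily estimated using information we already have. Specifically, for $y \in P \cap 40B$, set
\[ u_y = h^{-1}(y) + A(h^{-1}(y)) \in \Sigma_k, \]
and write
\[ \sigma_k(u_y) = u_y + (\sigma_k(u_y) - u_y), \]
so that, applying $\pi^\perp$ and using $\pi^\perp(h^{-1}(y)) = 0$ and $\pi^\perp(A(h^{-1}(y))) = A(h^{-1}(y))$, we get the clean identity
\[ F(y) = A(h^{-1}(y)) + \pi^\perp(\sigma_k(u_y) - u_y). \]

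Given this, for $y, z \in P \cap 40B$ I would estimate $|F(y) - F(z)|$ by controlling each of the two terms on the right separately. For the first, $|A(h^{-1}(y)) - A(h^{-1}(z))| \leq C_2 \ve |h^{-1}(y) - h^{-1}(z)| \leq 2C_2 \ve |y-z|$ by the inductive estimate \eqref{e:Alip} (applied on $P \cap 41B$, where $h^{-1}$ lands by construction) and the Lipschitz bound on $h^{-1}$ from \eqref{e:h}. For the second, since $\pi^\perp$ is an orthogonal projection, we can drop it and invoke \eqref{e:siglip} to get
\[ |\pi^\perp((\sigma_k(u_y) - u_y) - (\sigma_k(u_z) - u_z))| \leq C(C_1+C_2+1) \ve |u_y - u_z|, \]
provided $u_y, u_z \in \Sigma_k \cap 45B$. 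Since $h^{-1}(y) \in P \cap 41B$ and $|A(h^{-1}(y))| \leq |A(0)| + C_2\ve \cdot 41 r_k \leq (C_1 + 41 C_2)\ve r_k$ by \eqref{e:Anorm} and \eqref{e:Alip}, we indeed have $|u_y| \leq 41 r_k + C\ve r_k < 45 r_k$ for $\ve$ small enough, and similarly for $u_z$. Finally, $|u_y - u_z| \leq (1+C_2\ve)|h^{-1}(y)-h^{-1}(z)| \leq 4|y-z|$ using \eqref{e:Alip} and \eqref{e:h} again.

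Combining these two estimates gives
\[ |F(y) - F(z)| \leq \bigl[ 2C_2 + 4C(C_1 + C_2 + 1) \bigr] \ve |y - z|, \]
so the constant $C_4 = 2C_2 + 4C(C_1+C_2+1)$ works. There is no real obstacle here; the only thing to check carefully is the location of the point $u_y$ in $\Sigma_k$, which is why we insist on the slightly larger radius $45B$ in \eqref{e:siglip}. The corresponding estimates \eqref{e:Fnorm} and \eqref{e:Flip1} (and the refinement on $7B_{j,k}$) will follow from the same decomposition; for \eqref{e:Fnorm} one evaluates at $y = 0$, noting $h^{-1}(0)$ is close to $0$ and $A(0)$ is small by \eqref{e:Anorm}, and for \eqref{e:Flip1} one uses the sharper version of \eqref{e:siglip} available when both points lie inside a smaller ball where $\ve_k$ can be invoked pointwise via Lemma \ref{l:sig-pi-lip}.
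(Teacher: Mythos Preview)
Your proof is correct and follows essentially the same approach as the paper: you introduce the point $u_y = h^{-1}(y) + A(h^{-1}(y))$ (the paper calls it $\tilde{y}$), decompose $F(y) = A(h^{-1}(y)) + \pi^\perp(\sigma_k(u_y) - u_y)$, and then bound the two pieces using \eqref{e:Alip}, \eqref{e:h}, and \eqref{e:siglip} exactly as the paper does. If anything, you are a bit more careful than the paper in explicitly verifying $u_y \in \Sigma_k \cap 45B$ before invoking \eqref{e:siglip}.
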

\begin{proof}
Let $y,z \in P \cap 40B.$ To simplify notation we will write
\[ \tilde{y} = h^{-1}(y) + A(h^{-1}(y)), \]
with $\tilde{z}$ defined analogously. Then 
\begin{align*}
|F(y) - F(z)| &= |\pi^\perp(\sigma_k(\tilde{y})) -\pi^\perp(\sigma_k(\tilde{z}))| \\
&\leq|\pi^\perp ( (\sigma_k(\tilde{y}) - \tilde{y}) - (\sigma_k(\tilde{z}) - \tilde{z}) ) + |\pi^\perp ( \tilde{y} - \tilde{z})| \\
&=|\pi^\perp ( (\sigma_k(\tilde{y}) - \tilde{y}) - (\sigma_k(\tilde{z}) - \tilde{z}) ) +| A(h^{-1}(y)) - A(h^{-1}(z)| \\
&\leftstackrel{\substack{\eqref{e:Alip} \\ \eqref{e:siglip}}}{\leq} C(C_1 + C_2 +1) \ve | \tilde{y} - \tilde{z}| + C_2 \ve|y-z| \\
&\leq C(C_1 + C_2 +1)\ve |y-z|. 
\end{align*} 
If $C_4$ is chosen so that 
\[ C_4 \geq C(C_1 + C_2 +1) \]
then \eqref{e:Flip} follows.
\end{proof}

Finally, we prove \eqref{e:Fnorm} and \eqref{e:Flip1}. Both of the constants $C_3$ and $C_5$ will be chosen independent of all other constants and this closes the induction.
\begin{lem}
We have $|F(x_{j,k})| \leq C_3 \ve r_k$ and $|F(y) - F(z)| \leq C_5 \ve |y-z|$ for all $y,z \in P \cap 7B.$	
\end{lem}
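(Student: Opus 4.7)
The plan is to prove both estimates simultaneously by exploiting the sharper estimates \textit{(11)} of Theorem \ref{DT} (i.e.\ Lemma \ref{l:sig-pi}) and its Lipschitz companion Lemma \ref{l:sig-pi-lip}, applied with the privileged index $i=j$. The reason to restrict to $7B$ (rather than $40B$) is precisely that this ensures $\tilde{y} := h^{-1}(y) + A(h^{-1}(y))$ stays inside $8B_{j,k}$, so that $\tilde y \in V_k^8$ and the partition of unity $\{\theta_{i,k}\}_{i\in J_k}$ sums to $1$ there. Recall that after translation $x_{j,k}=0 \in P = P_{j,k}$, so $\pi = \pi_{j,k}$ and $\pi^\perp \circ \pi = 0$; this orthogonality is what will eliminate the ``bad'' first-order terms and let both constants $C_3,C_5$ be absolute (independent of $C_1,C_2,C_4$), closing the induction.

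First I would verify the geometric inclusion. Let $y \in P \cap 7B$. By \eqref{e:h}, $h^{-1}(y) \in P$ with $|h^{-1}(y)| \leq |y| + C\varepsilon r_k \leq (7 + C\varepsilon)r_k$. Using \eqref{e:Anorm} and \eqref{e:Alip},
\[ |A(h^{-1}(y))| \leq |A(0)| + C_2\varepsilon |h^{-1}(y)| \leq C_1\varepsilon r_k + C_2\varepsilon(7 + C\varepsilon)r_k. \]
Hence for $\varepsilon$ small enough (depending on $C_1,C_2$) we have $|\tilde y| < 8r_k$, i.e.\ $\tilde y \in 8B_{j,k}$. Since $\tilde y$ lies on the graph of $A$, it belongs to $\Sigma_k \cap V_k^8 \cap 10B_{j,k}$, so Lemmas \ref{l:sig-pi} and \ref{l:sig-pi-lip} apply with $i=j$.

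Next I would bound $|F(x_{j,k})| = |F(0)|$. Since $\pi^\perp(\pi(\tilde 0))=0$,
\[ F(0) = \pi^\perp(\sigma_k(\tilde 0)) = \pi^\perp\bigl(\sigma_k(\tilde 0) - \pi(\tilde 0)\bigr), \]
so Lemma \ref{l:sig-pi} (with $i=j$, noting $\pi_{j,k}=\pi$) gives
\[ |F(0)| \leq |\sigma_k(\tilde 0) - \pi(\tilde 0)| \lesssim \varepsilon_k(\tilde 0)\, r_k \leq C\varepsilon r_k, \]
so $C_3$ can be taken as an absolute constant.

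For the Lipschitz estimate on $7B$, take $y,z \in P \cap 7B$ and write
\[ F(y) - F(z) = \pi^\perp\bigl(\sigma_k(\tilde y) - \sigma_k(\tilde z)\bigr) = \pi^\perp\bigl[(\sigma_k(\tilde y) - \sigma_k(\tilde z)) - (\pi(\tilde y) - \pi(\tilde z))\bigr], \]
since $\pi(\tilde y) - \pi(\tilde z) \in P$ (where we use $0 \in P$) so $\pi^\perp$ kills it. Applying Lemma \ref{l:sig-pi-lip} with $i=j$ yields
\[ |F(y) - F(z)| \leq \bigl|(\sigma_k(\tilde y) - \sigma_k(\tilde z)) - (\pi(\tilde y) - \pi(\tilde z))\bigr| \lesssim \varepsilon|\tilde y - \tilde z|. \]
Finally, $|\tilde y - \tilde z| \leq |h^{-1}(y) - h^{-1}(z)| + |A(h^{-1}(y)) - A(h^{-1}(z))| \leq (2 + 2C_2\varepsilon)|y-z| \leq 3|y-z|$ by \eqref{e:h} and \eqref{e:Alip} for $\varepsilon$ small. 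So $|F(y) - F(z)| \leq C\varepsilon|y-z|$ with $C$ an absolute constant, and we can take $C_5 = C$. The main (modest) obstacle here is the domain bookkeeping in the first paragraph: the whole point of shrinking from $40B$ to $7B$ is to force $\tilde y \in V_k^8$ so the sharp Lemmas \ref{l:sig-pi}--\ref{l:sig-pi-lip} can be invoked with the distinguished index $i=j$; once this is in place, the induction hypothesis on $A_{j,k}$ is used only through $|A(0)| = O(\varepsilon r_k)$ and $\mathrm{Lip}(A) = O(\varepsilon)$, which are the mild ingredients needed to produce constants $C_3,C_5$ independent of $C_1,C_2,C_4$.
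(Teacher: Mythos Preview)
Your proof is correct and follows essentially the same approach as the paper: both arguments first check that $\tilde y = h^{-1}(y) + A(h^{-1}(y)) \in \Sigma_k \cap 8B_{j,k} \subseteq \Sigma_k \cap V_k^8 \cap 10B_{j,k}$, then apply Lemma~\ref{l:sig-pi} (for the pointwise bound) and Lemma~\ref{l:sig-pi-lip} (for the Lipschitz bound) with the privileged index $i=j$, using $\pi^\perp\circ\pi=0$ to subtract $\pi(\tilde y)$ for free. Your write-up is in fact a bit cleaner in its bookkeeping than the paper's version.
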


\begin{proof}
Let us start with the first statement, it will be useful to prove the same bound for any point in $P \cap 7B.$ Let $y \in P \cap 7B$ and let $w = h^{-1}(y) +A(h^{-1}(y)).$ By \eqref{e:sig-y} and \eqref{e:h} we have
\begin{align}\label{e:above}
|\sigma_k(w) - y| &\leq |\sigma_k(w) - w| + |w -y| \\
&\leq C\ve r_k + |h^{-1}(y) - y| + |A(h^{-1}(y))| \\
&\leq C(1+C_1) \ve r_k
\end{align}
so that $\sigma_k(w) \in 8B$ for $\ve$ small enough. By Lemma \ref{l:sig-pi}, this implies
\begin{align}
|F(y)| = \dist(\sigma_k(w),P) \leq  | \sigma_k(w) - \pi(w) | \leq C \ve r_k.
\end{align}
Taking $y = x_{j,k}$ proves \eqref{e:Fnorm} if we take $C_3$ large enough. \\

We prove the second statement. Let $y,z \in 7B$ and let $w_y = h^{-1}(y) + A(h^{-1}(y))$ and $w_z = h^{-1}(z) + A(h^{-1}(z)).$ Examining \eqref{e:above}, we see $w_y,w_z \in 8B$ for $\ve$ small enough. By Lemma \ref{l:sig-pi-lip}, \eqref{e:liph}, and \eqref{e:Alip}, we have 
\begin{align*}
|F(y) - F(z)| &= |\pi^\perp ( \sigma_k(w_y) - \sigma_k(w_z) ) | \\
&=|\pi^\perp ( \sigma_k(w_y) - \sigma_k(w_z) - \pi(w_y) - \pi(w_z) ) | \\
&\leq C\ve |w_y - w_z| \\
&\leq C\ve ( |h^{-1}(y) - h^{-1}(z)| + |A(y) - A(z)| )\\
&\leq C\ve ( |y-z| + C_2\ve|y-z| ) \\
&\leq C \ve |y-z|.
\end{align*} 
This completes the proof of \eqref{e:Flip} if we take $C_5$ large enough.
\end{proof}

Finally, we show Theorem \ref{DT}(13), which is the following.

\subsection*{Remaining results} First, let us proof Theorem \ref{DT}(13), which is the following.  
\begin{lem}\label{al:lowerreg}
	Let $\Sigma$ be the surface produced by Theorem \ref{DT}. Then, for $x \in \Sigma$ and $r > 0$, 
	\[ \mathscr{H}^d(\Sigma \cap B(x,r)) \gtrsim r^d. \] 
\end{lem}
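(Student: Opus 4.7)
The plan is to deduce lower $d$-regularity of $\Sigma$ from the $C\ve$-Reifenberg flatness stated in Theorem \ref{DT}(9), together with a standard projection argument onto the approximating plane. Fix $x \in \Sigma$ and $r > 0$.

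First, we find a $d$-plane $P$ with $d_{B(x,r)}(\Sigma,P) \leq C\ve$: for $r \leq 1$ this is immediate from Theorem \ref{DT}(9); for $r > 1$ we invoke Theorem \ref{DT}(4), which gives $|f(z) - z| \leq C\ve$, so $\Sigma$ lies within Hausdorff distance $C\ve$ of $P_0$ on any ball, and we can take (a translate of) $P_0$. In either case, pick $x' \in P$ with $|x - x'| \leq C\ve r$, and let $\pi_P$ denote the orthogonal projection onto $P$.

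The key claim is the covering property
\begin{equation}\label{e:cover-app}
P \cap B(x', r/2) \subseteq \pi_P(\Sigma \cap B(x,r)).
\end{equation}
Given this, since $\pi_P$ is $1$-Lipschitz, we obtain
\begin{equation*}
\mathscr{H}^d(\Sigma \cap B(x,r)) \geq \mathscr{H}^d(\pi_P(\Sigma \cap B(x,r))) \geq \mathscr{H}^d(P \cap B(x', r/2)) \gtrsim r^d,
\end{equation*}
as required. To prove \eqref{e:cover-app}, fix $y \in P \cap B(x', r/2)$. Using Theorem \ref{DT}(8) at a scale $r_k \sim r$ (which provides a local Lipschitz graph description of $\Sigma_k$) together with Theorem \ref{DT}(10) to pass between $\Sigma_k$ and $\Sigma$, the projection $\pi_P$ on $\Sigma \cap \partial B(x,r)$ stays uniformly far from $y$, so the map $z \mapsto (\pi_P(z) - y)/|\pi_P(z) - y|$ is well-defined and continuous on $\Sigma \cap \partial B(x,r)$. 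A degree/homotopy argument entirely analogous to the one used in the proof of Lemma \ref{l:proj-covers} shows that this map has degree $1$, forcing $y \in \pi_P(\Sigma \cap B(x,r))$.

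The main obstacle is the degree-theoretic argument underpinning \eqref{e:cover-app}. However, it is parallel to the one already executed for Lemma \ref{l:proj-covers}, and no infinite-dimensional difficulty arises since the relevant homotopy takes place within the finite-dimensional target plane $P$.
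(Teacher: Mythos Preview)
Your overall strategy---Reifenberg flatness plus a projection argument showing that $\pi_P(\Sigma\cap B(x,r))$ covers a disc of radius $\sim r$---is the same as the paper's. The problem is the justification of \eqref{e:cover-app}. You run the degree argument with domain $\Sigma\cap\partial B(x,r)$, but nothing establishes that this set is a topological $(d-1)$-sphere (or even a closed manifold), so speaking of the degree of the map $z\mapsto(\pi_P(z)-y)/|\pi_P(z)-y|$ is not a priori meaningful. In Lemma~\ref{l:proj-covers} the analogous map has domain $\partial(4B_{I_Q})\subseteq\R^d$, a genuine sphere, and the homotopy lives there; the analogy breaks unless you first pull back to a parametrizing plane. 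Invoking Theorem~\ref{DT}(8) and (10) does not close this: (8) parametrizes $\Sigma_k$, not $\Sigma$, and (10) only says points of $\Sigma_k$ are $O(\ve r_k)$-close to $\Sigma$, which does not transfer surjectivity of $\pi_P|_{\Sigma_k}$ to $\pi_P|_{\Sigma}$.

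The paper avoids this by quoting an external parametrization (Proposition~2.9 of \cite{edelen2018effective}): after rescaling so that $x=0$ and $r=1$, one obtains a bi-H\"older map $g:P\to H$ which equals the identity outside $B(0,3/2)$, satisfies $|g-\mathrm{id}|\lesssim\ve$, and realizes $\Sigma\cap B(0,1)=g(P)\cap B(0,1)$. Then $h\coloneqq\pi_P\circ g:P\to P$ is continuous and equals the identity outside a ball, hence is surjective; this immediately yields $P\cap B(0,1/2)\subseteq\pi_P(\Sigma\cap B(0,1))$ without any degree computation on $\Sigma$ itself. If you want to stay internal to Theorem~\ref{DT}, the natural fix is to use the global map $f:P_0\to\Sigma$ (which is a bijection onto $\Sigma$) and run the degree argument on an appropriate sphere in $P_0$, exactly as in Lemma~\ref{l:proj-covers}; but some version of a parametrization of $\Sigma$ by a $d$-plane is unavoidable.
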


\begin{proof}
	Consider the map $A:H \rightarrow H$ given by $A(y) =\tfrac{y-x}{r},$ and the set $\tilde{\Sigma} = A(\Sigma).$ Then, $0 \in \tilde{\Sigma}$, and since $\Sigma$ is $C\ve$-Reifenberg flat, the same is true of $\tilde{\Sigma}$. Let $P$ be a $d$-plane through $0$ so that 
	\[ d_{B(0,1)}(\tilde{\Sigma},P) \leq  C\ve. \] 
	By \cite[Proposition 2.9]{edelen2018effective}, for $\ve > 0$ small enough, there exists a bi-H\"older map $g: P \rightarrow H$ which is the identity outside of $B(0,3/2)$, satisfies $|g(y) -y| \lesssim \ve$ for $y \in P$, and so that $\tilde{\Sigma} \cap B(0,1) = g(P) \cap B(0,1)$. Let $\pi$ denote the orthogonal projection onto $P$, and define $h = \pi \circ g.$ By definition, $h$ is continuous and is the identity outside $B(0,3/2),$ so, it is surjective. Additionally, for $y \in P,$
	\[ |h(y) -y| = | \pi ( g(y) ) - \pi(y) | \leq |g(y) - y| \lesssim \ve. \]
	Let $z \in P \cap B(0,1/2)$. Since $h$ is surjective, we can find $w \in P$ so that $z = h(w).$ For $\ve$ small enough, it follows from the inequality above that $w \in B(0,3/4)$ and so
	\[ |g(w)| \leq |g(w) - w| + |w| \leq C\ve +\frac{3}{4} \leq 1.  \] 
	In particular, $g(w) \in \tilde{\Sigma} \cap B(0,1)$ and $z = \pi(g(w)).$ We have just show that $P \cap B(0,1/2) \subseteq \pi(\tilde{\Sigma} \cap B(0,1)).$ This gives,
	\[ \mathscr{H}^d(\tilde{\Sigma} \cap B(0,1)) \geq  \mathscr{H}^d(\pi(\tilde{\Sigma} \cap B(0,1))) \geq \mathscr{H}^d(P \cap B(0,\tfrac{1}{2}) )\gtrsim 1. \]
	The lemma follow from this since
	\[ \mathscr{H}^d(\Sigma \cap B(x,r)) = \mathscr{H}^d( A^{-1}(\tilde{\Sigma} \cap B(0,1))) = r^d \mathscr{H}^d(\tilde{\Sigma} \cap B(0,1)) \gtrsim r^d. \]
\end{proof}

As mentioned, we direct the reader to the proofs of the remaining results in Theorem \ref{DT}. For (1),(3),(8) and (9), see \cite[Equation 6.2, Proposition 8.1, Lemma 6.2 and Lemma 6.3]{david2012reifenberg}, respectively. For (12), see \cite[Lemma 7.1]{david2012reifenberg}. This proof relies on \cite[Equation 5.12]{david2012reifenberg} (which we do not prove), but the same estimate can be obtained from \eqref{e:siglip}. 

\section{Constants}\label{a:constants}
\begin{lem}\label{l:reduce-constants1a} 
	Suppose that for any $\tilde{C}_0 > 1$ and $\tilde{A} > \max\{6\tilde{C}_0,10^5\}$ there exists $\ve > 0$ so that \eqref{e:dTSP1'} holds with constant $\tilde{C}_0,\tilde{A}$ and $\tilde{\ve}.$ Then, for any $C_0 > 1$ and $A > 10^5$ there exists $\ve > 0$ so that \eqref{e:dTSP1'} holds with constant $C_0,A$ and $\ve.$
\end{lem}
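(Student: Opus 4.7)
My plan is to proceed by a two-case analysis based on the relative sizes of $A$ and $C_0$. If $A > 6 C_0$, I will apply the hypothesis directly with $\tilde{C}_0 = C_0$ and $\tilde{A} = A$, which immediately gives the conclusion with $\ve = \tilde{\ve}$. The substantive case is $A \leq 6 C_0$, which forces $C_0 > 10^5/6$; in this regime I will compare each wide-ball $\beta$-number $\beta^{d,p}_E(C_0 B_Q)$ to one at a suitably chosen ancestor cube $Q^\dagger$ in order to bring the estimate into a regime where the hypothesis applies.

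In the substantive case I will fix a small auxiliary $\tilde{C}_0 \in (1, A/12)$, for instance $\tilde{C}_0 = 2$, so that $2\tilde{C}_0 < A/6$. The hypothesis will then be applicable with parameter $2\tilde{C}_0$ and $\tilde{A} = A$, furnishing some $\tilde{\ve} > 0$ and the estimate
\[ 1 + \sum_{R \subseteq Q_0} \beta^{d,p}_E(2\tilde{C}_0 B_R)^2 \ell(R)^d \lesssim \mathscr{H}^d(Q_0) + \BWGL(Q_0, A, \tilde{\ve}). \]
For each $Q \subseteq Q_0$ I will set $Q^\dagger = Q^{(m)}$, where $m \in \N$ is chosen as the unique integer with $\rho^m \leq \tilde{C}_0/C_0 < \rho^{m-1}$. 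Then $\ell(Q^\dagger) \sim (C_0/\tilde{C}_0)\ell(Q)$, and since $x_Q \in Q^\dagger \subseteq B_{Q^\dagger}$ together with $C_0 \ell(Q) \leq \tilde{C}_0 \ell(Q^\dagger)$, the inclusion $C_0 B_Q \subseteq 2\tilde{C}_0 B_{Q^\dagger}$ will follow. Applying the monotonicity lemma (Lemma \ref{lemma:monotonicity}) will then yield $\beta^{d,p}_E(C_0 B_Q) \lesssim_{d,p,\rho} \beta^{d,p}_E(2\tilde{C}_0 B_{Q^\dagger})$.

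Squaring, weighting by $\ell(Q)^d$, and swapping the order of summation through the reindexing $R = Q^\dagger$ will reduce the target sum to
\[ \sum_Q \beta^{d,p}_E(C_0 B_Q)^2 \ell(Q)^d \lesssim \sum_{R \subseteq Q_0} \beta^{d,p}_E(2\tilde{C}_0 B_R)^2 \Bigl( \sum_{Q : Q^\dagger = R} \ell(Q)^d \Bigr), \]
at which point the hypothesis bound will close the argument provided the inner sum is controlled by $\ell(R)^d$. The finitely many boundary cubes $Q$ whose prescribed ancestor $Q^{(m)}$ would exceed $Q_0$ will be absorbed into $\mathscr{H}^d(Q_0)$ using the crude bound $\beta^{d,p}_E \lesssim 1$ together with lower content $d$-regularity. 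The hard part will be exactly the packing estimate $\sum_{Q : Q^\dagger = R} \ell(Q)^d \lesssim \ell(R)^d$: the cube centres $\{x_Q\}_{Q^\dagger = R}$ form a $\rho^m \ell(R)$-separated subset of $E \cap B_R$ in $\R^n$, and the disjoint balls $B(x_Q, c_0 \rho^m \ell(R))$ fit inside a ball comparable to $B_R$, so a standard $\R^n$-volume argument combined with the lower content $d$-regularity of $E$ will supply the bound (with constants that may a priori depend on $n$, which is permissible since Lemma \ref{l:reduce-constants1a} is stated in the Euclidean setting). Setting $\ve = \tilde{\ve}$ will then conclude the proof.
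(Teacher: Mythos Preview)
There is a genuine gap in your packing step. You assert that $\sum_{Q:Q^\dagger=R}\ell(Q)^d\lesssim\ell(R)^d$ via a ``standard $\R^n$-volume argument combined with lower content $d$-regularity'', and you excuse the resulting $n$-dependence on the grounds that the lemma references \eqref{e:dTSP1'}, which is an estimate in $\R^n$. But \eqref{e:dTSP1'} is explicitly set up in the paper ``with constant independent of $n$ (and $E$)''; the whole point of the reduction to $\R^n$ is to transfer back to $H$ via Lemma~\ref{l:main-red2}, and that transfer collapses if any step introduces $n$-dependence. A volume argument packs the disjoint balls $c_0B_Q$ into $B_R$ and yields only $\#\{Q:Q^{(m)}=R\}\lesssim (c_0\rho^m)^{-n}$, hence $\sum_Q\ell(Q)^d\lesssim\rho^{-m(n-d)}\ell(R)^d$, which is useless when $n\gg d$. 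Lower content $d$-regularity does not help: it gives $\ell(Q)^d\lesssim\mathscr{H}^d(E\cap c_0B_Q)$ and hence $\sum_Q\ell(Q)^d\lesssim\mathscr{H}^d(E\cap 2B_R)$, but $E$ is not \emph{upper} regular, so there is no bound of the form $\mathscr{H}^d(E\cap 2B_R)\lesssim\ell(R)^d$.

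The paper's proof avoids this by splitting on membership in $\BWGL(A,\ve)$. Cubes $Q$ (or suitable ancestors $Q^*$) that lie in $\BWGL(A,\ve)$ are absorbed directly into $\BWGL(Q_0,A,\ve)$. For the remaining cubes one can always arrange that the relevant ancestor $R$ (or a child of it) lies \emph{outside} $\BWGL(A,\ve)$, which furnishes a $d$-plane $P$ with $d_{AB_R}(E,P)\le\ve$; the centres $x_Q$ then satisfy $\dist(x_Q,P)\le\tfrac{c_0}{2}\ell(Q)$, and Lemma~\ref{ENV} gives the dimension-free bound $\#\{Q:Q^*=R\}\lesssim_d 1$. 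Your blind ancestor map $Q\mapsto Q^{(m)}$ discards this flatness information, and without a nearby plane there is simply no $n$-independent control on the number of descendants of a cube in a merely lower-regular set.
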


\begin{proof}
	Let $C_0 > 1$, $A > 10^5$, and $E \subseteq H$ be $(c,d)$-lower content $d$-regular. If $A > 6C_0$ then there is nothing to do, so let us assume $A \leq 6C_0.$ Choose $\kappa \sim_{A,C_0} 1$ so that $A > 6 \kappa C_0 > A/2$ and choose $K = K(A,C_0)$ to be the smallest integer so that $C_0 \rho^K + 1 \leq \kappa C_0.$ Choose $\ve > 0$ so that \eqref{e:dTSP1'} holds with constants $A,\kappa C_0$ and $\ve.$ First, we separate 
	\begin{align}
		\sum_{\substack{Q \in \mathscr{D} \\ Q \subseteq Q_0}} \beta^{d,p}_E(C_0B_Q)^2\ell(Q)^d &= \sum_{k=0}^{K-1} \sum_{\substack{Q \in \mathscr{D}_k \\ Q \subseteq Q_0}} \beta^{d,p}_E(C_0B_Q)^2\ell(Q)^d + \sum_{k=K}^\infty \sum_{\substack{Q \in \mathscr{D}_k \\ Q \subseteq Q_0 \\ Q \in \BWGL(A,\ve)}}  \beta^{d,p}_E(C_0B_Q)^2\ell(Q)^d \\
		&\hspace{4em} + \sum_{k=K}^\infty \sum_{\substack{Q \in \mathscr{D}_k \\ Q \subseteq Q_0 \\ Q \not\in \BWGL(A,\ve)}}  \beta^{d,p}_E(C_0B_Q)^2\ell(Q)^d \\
		&=I_1 + I_2 + I_3. 
	\end{align} 
	The term $I_2$ is bounded by $\BWGL(Q_0,A,\ve)$ be definition. Additionally, since $E$ is lower regular and the balls $c_0B_Q, \ Q \in \mathscr{D}_k$, are disjoint, we have 
	\[ I_1 \lesssim \sum_{k=0}^{K-1} \sum_{\substack{Q \in \mathscr{D}_k \\ Q \subseteq Q_0}} \mathscr{H}^d(E \cap c_0B_Q) \leq K \mathscr{H}^d(Q_0). \] 
	We are left to bound $I_3.$ Let $\mathscr{C}$ to be the collection of cubes $Q \in \bigcup_{k=K}^\infty \mathscr{D}_k$ so that $Q \subseteq Q_0$ and $Q \not\in \BWGL(A,\ve).$ Let $\mathscr{C}_1$ be the set of cubes $Q \in \mathscr{C}$ so that $Q^{(K)} \not\in \BWGL(A,\ve)$.  Also, set $\mathscr{C}_2$ to be the set of cubes $Q \in \mathscr{C}$ so that $Q^{(K)} \in \BWGL(A,\ve)$. Notice that $Q^{(K)} \subseteq Q_0$ in either case.  Then 
	\[ I_3 = \sum_{Q \in \mathscr{C}_1} \beta^{d,p}_E(C_0B_Q)^2\ell(Q)^d + \sum_{Q \in \mathscr{C}_2} \beta^{d,p}_E(C_0B_Q)^2\ell(Q)^d = I_{3,1} + I_{3,2}.  \] 
	We start by estimating $I_{3,1}.$ For $Q \in \mathscr{C}_1,$ set $Q^* = Q^{(K)}.$ We begin by showing that the map $Q \mapsto Q^*$ is at most $C$-to-$1$ for some universal constant $C.$ Indeed, let $R \in \mathscr{D}$ and consider $\mathscr{C}_1(R) = \{Q \in \mathscr{C}_1 : Q^* = R\}.$ Since $R \not\in \BWGL(A,\ve),$ there exists a $d$-plane $P$ so that $d_{AB_R}(E,P) \leq \ve.$ So, for $\ve$ small enough, $\dist(x_Q,P) \leq \ve A \ell(R) \leq \tfrac{c_0}{2} \ell(Q)$ and $c_0B_Q \subseteq B_{R} = B(x_{R},\rho^{-K}\ell(Q))$ for all $Q \in \mathscr{C}_1(R).$ By Lemma \ref{ENV}, this implies $\# \mathscr{C}_1(R) \lesssim 1$ as required. Now, by our choice of $K$, $C_0B_Q \subseteq \kappa C_0 B_{Q^*}$. So, by our assumption in the statement of Lemma \ref{l:reduce-constants1a}, Lemma \ref{lemma:monotonicity}, and the above, we have 
	\begin{align}
		I_{3,1} \lesssim \sum_{Q \in \mathscr{C}_1 } \beta_{E}^{d,p}(\kappa C_0B_{Q*})^2 \ell(Q^*)^d \lesssim \sum_{\substack{Q \in \mathscr{D} \\ Q \subseteq Q_0}}  \beta^{d,p}_E(\kappa C_0B_Q)^2\ell(Q)^d \lesssim \mathscr{H}^d(Q_0) + \BWGL(Q_0,A,\ve).
	\end{align}
	Now for $I_{3,2}$. For $Q \in \mathscr{C}_2$, define $Q^*$ to be the smallest cube so that $Q \subseteq Q^*$ and $Q^* \in \BWGL(A,\ve).$ Such a cube exists since $Q^{(K)} \in \BWGL(A,\ve)$ by definition. Again, we show the map $Q \mapsto Q^*$ is at most $C$-to-$1$ for some universal constant $C.$ Let $R \in \mathscr{D}$ for which the set $\mathscr{C}_2(R) = \{Q \in \mathscr{C}_2 : Q^* = R \}$ is non-empty. Note, there are at most $K$ scales for which $\#\mathscr{C}_2(R) \cap \mathscr{D}_k \not=\emptyset.$ Choose $k$ to be one of these scales and let $Q \in \mathscr{C}_2(R) \cap \mathscr{D}_k.$ Since $R$ is the smallest cube containing $Q$ so that $R \in \BWGL(A,\ve),$ there exists a child $R'$ of $R$ so that $R' \not\in \BWGL(A,\ve).$ As above, since $B_R \subseteq AB_{R'}$ (because $A > 10^5$), we can find a $d$-plane $P$ so that $\dist(x_Q,P) \leq \frac{c_0}{2}\ell(Q)$ and $c_0B_Q \subseteq B_R \subseteq B(x_R, \rho^{-K}\ell(Q)).$ Then, by Lemma \ref{ENV}, $\#\mathscr{C}_2(R) \cap \mathscr{D}_k \lesssim 1$, and so $\mathscr{C}_2(R) \lesssim 1.$ Hence, 
	\begin{align}
		I_{3,2} \lesssim \sum_{Q \in \mathscr{C}_2} \ell(Q^*)^d \lesssim \sum_{\substack{  Q \subseteq Q_0 \\ Q \in \BWGL(A,\ve)}} \ell(Q)^d = \BWGL(Q_0,A,\ve). 
	\end{align} 
	This finishes the proof. 
\end{proof}

\begin{lem}
	Let $\ve > 0$, $A \geq 1$ and $M \geq C_0$, and suppose
	\begin{align}\label{e:Section6a} \mathscr{H}^d(Q_0) + \BWGL(Q_0,A,\ve) \lesssim \ell(Q_0)^d + \sum_{\substack{Q \in \mathscr{D} \\ Q \subseteq Q_0}} \beta_E^{d,1}(MB_Q)^2\ell(Q)^d.
	\end{align}
	Then, \eqref{e:Section6'} holds with constant $\ve,A$ and $C_0$.  
\end{lem}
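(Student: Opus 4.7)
The two inequalities \eqref{e:Section6a} and \eqref{e:Section6'} share the same left-hand side, so the plan is to establish
\[
\sum_{Q\subseteq Q_0}\beta_E^{d,1}(MB_Q)^2\ell(Q)^d \;\lesssim\; \ell(Q_0)^d \;+\; \sum_{Q\subseteq Q_0}\beta_E^{d,p}(C_0B_Q)^2\ell(Q)^d,
\]
possibly with additional error terms that are multiples of $\mathscr{H}^d(Q_0)+\BWGL(Q_0,A,\ve)$ absorbable via the hypothesis itself. If $M\le C_0$ the result is immediate from Lemma \ref{lemma:monotonicity} (to pass from $C_0B_Q$ to $MB_Q$) combined with Lemma \ref{c:beta} (to raise $p=1$ to general $p$), so the substantive case is $M>C_0$.

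My first step is Jensen for Choquet integrals, Lemma \ref{c:beta}, which yields the pointwise bound $\beta_E^{d,1}(MB_Q)\le\beta_E^{d,p}(MB_Q)$. Next, choose $K=K(M,C_0,\rho)\in\mathbb{N}$ smallest with $\rho^{-K}(C_0-1)\ge M$; this is finite because $C_0>1$. For each $Q\subseteq Q_0$ with $\ell(Q)\le\rho^K\ell(Q_0)$, the $K$-th ancestor $Q^\ast:=Q^{(K)}$ lies in $Q_0$ and satisfies $MB_Q\subseteq C_0B_{Q^\ast}$, so Lemma \ref{lemma:monotonicity} gives $\beta_E^{d,p}(MB_Q)^2\lesssim_{M,C_0,d,p}\beta_E^{d,p}(C_0B_{Q^\ast})^2$. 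Swapping the order of summation then reduces the task to estimating $\sum_{Q:Q^{(K)}=R}\ell(Q)^d$ for each $R\subseteq Q_0$; the cubes $Q$ with $\ell(Q)>\rho^K\ell(Q_0)$ form only the top $K$ generations and will be handled as part of the absorption step below.

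To bound the inner sum I dichotomise on whether $R\in\BWGL(A,\ve)$. If $R\notin\BWGL(A,\ve)$, there exists a $d$-plane $P$ with $d_{AB_R}(E,P)\le\ve$; every $x_Q$ with $Q^{(K)}=R$ lies in $E\cap B_R$ (hence within $A\ve\ell(R)\le \tfrac{c_0}{2}\ell(Q)$ of $P$ for $\ve$ small), and the balls $\{c_0B_Q\}$ are pairwise disjoint, so Lemma \ref{ENV} produces $\#\{Q:Q^{(K)}=R\}\lesssim_{M,C_0,d}1$ and hence $\sum_{Q:Q^{(K)}=R}\ell(Q)^d\lesssim\ell(R)^d$; summed over such $R\subseteq Q_0$ this generates exactly the desired RHS. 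If $R\in\BWGL(A,\ve)$, the planar packing fails for $R$ itself; instead use the trivial bound $\beta_E^{d,p}(C_0B_R)^2\le 1$ and, for each such $Q$, descend to its smallest $\BWGL$-ancestor $R'\subseteq R$, whose child is (by minimality) not in $\BWGL(A,\ve)$, so Lemma \ref{ENV} applies at that child. Organising by $R'$, the total contribution is bounded by $\sum_{R'\in\BWGL,\,R'\subseteq Q_0}\ell(R')^d=\BWGL(Q_0,A,\ve)$, together with the top-$K$-generation contribution which is at most $K\mathscr{H}^d(Q_0)$ by lower content $d$-regularity. Both error terms are multiples of the LHS of \eqref{e:Section6a}, so a standard rearrangement, combined with the hypothesis \eqref{e:Section6a} and the WLOG assumption that $\mathscr{H}^d(Q_0)+\BWGL(Q_0,A,\ve)<\infty$, closes the estimate and yields \eqref{e:Section6'}.

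The main obstacle is the $R\in\BWGL(A,\ve)$ case: in infinite-dimensional $H$, lower content regularity alone provides no Euclidean-style volume bound on the number of $K$-th descendants of a cube, so Lemma \ref{ENV} is indispensable and must be invoked at the appropriate non-$\BWGL$ ancestor, in the spirit of the proof of Lemma \ref{l:reduce-constants1a}. A secondary technicality is the absorption of the $K\mathscr{H}^d(Q_0)+\BWGL(Q_0,A,\ve)$ error against the hypothesis, which requires the constant there to be sufficiently small—this can be arranged by taking $\ve$ small inside the $\BWGL$-argument and by noting that the top-generation error is already a constant multiple of the LHS of \eqref{e:Section6a}.
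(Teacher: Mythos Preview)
Your overall strategy—use Lemma~\ref{c:beta} to pass from $p=1$ to $p$, pick $K$ so that $MB_Q\subseteq C_0B_{Q^{(K)}}$, then count descendants via Lemma~\ref{ENV}—is the right shape, and your handling of the packing in both the non-$\BWGL$ and $\BWGL$ cases (using the plane from a non-$\BWGL$ child, exactly as in Lemma~\ref{l:reduce-constants1a}) is sound. The genuine gap is in the absorption step.

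Your argument produces
\[
\sum_{Q\subseteq Q_0}\beta_E^{d,1}(MB_Q)^2\ell(Q)^d \;\le\; C\Bigl(\ell(Q_0)^d+\sum_{Q\subseteq Q_0}\beta_E^{d,p}(C_0B_Q)^2\ell(Q)^d\Bigr)\;+\;C\bigl(K\mathscr{H}^d(Q_0)+\BWGL(Q_0,A,\ve)\bigr),
\]
with $C$ and $K$ absolute constants depending on $M,C_0,d$. Combining with the hypothesis \eqref{e:Section6a} then yields an inequality of the form $\text{LHS}\le C'\cdot(\text{desired RHS})+C''\cdot\text{LHS}$, and this can only be rearranged if $C''<1$. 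But $C''$ is a product of fixed structural constants—none of them become small by taking $\ve$ small. Shrinking $\ve$ shrinks the \emph{set} $\BWGL(A,\ve)$, not the \emph{coefficient} in front of $\BWGL(Q_0,A,\ve)$; likewise the $K\mathscr{H}^d(Q_0)$ term carries a fixed constant. So the absorption is genuinely circular, and the argument does not close. (A related but secondary issue: your packing bounds require $A\ve\rho^{-K}\le c_0/2$, i.e.\ $\ve$ small depending on $M,C_0,A$, whereas the lemma is stated for arbitrary $\ve>0$.)

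The paper avoids both problems by dichotomising on the $\beta$-number itself rather than on $\BWGL$. One fixes a small threshold $\delta>0$ and splits according to whether $\beta_E^{d,p}(C_0B_Q)\le\delta$; cubes with large $\beta$ are controlled trivially by $\delta^{-2}\beta_E^{d,p}(C_0B_Q)^2\ell(Q)^d$, which is already the desired right-hand side. For cubes with small $\beta$, one checks whether $\beta_E^{d,p}(C_0B_{Q^{(K)}})\le\delta$: if so, Lemma~\ref{lemma:betap_betainfty} supplies a plane at $Q^{(K)}$ for the packing; if not, one ascends to the smallest ancestor $Q^*$ with $\beta>\delta$, whose child (by minimality) has small $\beta$ and hence a plane, and then $\ell(Q)^d\lesssim\ell(Q^*)^d\lesssim_\delta\beta_E^{d,p}(C_0B_{Q^*})^2\ell(Q^*)^d$. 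Every error term thus lands directly in $\sum\beta_E^{d,p}(C_0B_Q)^2\ell(Q)^d$, with no $\BWGL$ or $\mathscr{H}^d(Q_0)$ appearing on the right, and no absorption is needed.
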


\begin{proof}
	The proof will be similar to the proof of Lemma \ref{l:reduce-constants1a}. First, by Lemma \ref{c:beta}, 
	\begin{align}\label{e:consts1}
		\sum_{\substack{Q \in \mathscr{D} \\ Q \subseteq Q_0}} \beta_E^{d,1}(MB_Q)^2\ell(Q)^d \lesssim \sum_{\substack{Q \in \mathscr{D} \\ Q \subseteq Q_0}} \beta_E^{d,p}(MB_Q)^2\ell(Q)^d.
	\end{align}
	The remainder of the proof will be focussed controlling the right-hand side of \eqref{e:consts1} in terms of the same quantity but with $C_0$ in place of $M$. Let $K = K(C_0,M)$ be the smallest integer so that $1+M\rho^K \leq C_0$ and let $\delta > 0$ be a constant to be chosen small. Let $\mathscr{C}$ be the set of cubes $Q \in \mathscr{D}$ so that $Q \subseteq Q_0$ and $\beta_E^{d,p}(C_0B_Q) \leq \delta.$ Notice that 
	\begin{align}
		\sum_{\substack{Q \in \mathscr{D} \setminus \mathscr{C}\\ Q \subseteq Q_0}}  \beta_E^{d,p}(MB_Q)^2\ell(Q)^d \lesssim_\delta \sum_{\substack{Q \in \mathscr{D} \\ Q \subseteq Q_0}} \beta_E^{d,p}(C_0B_Q)^2\ell(Q)^d,	
	\end{align}
	so we only need to control the sum over cubes in $\mathscr{C}.$ Let $\mathscr{C}_1$ be the set of cubes $Q \in \mathscr{C} \cap \bigcup_{k=K}^\infty \mathscr{D}_k$ so that $Q^{(K)}$ satisfies $\beta_E^{d,p}(C_0B_{Q^{(K)}}) \leq \delta$ and let $\mathscr{C}_2$ be the set of cubes $Q \in\mathscr{C} \cap  \bigcup_{k=K}^\infty \mathscr{D}_k$ so that $Q^{(K)}$ satisfies $\beta_E^{d,p}(C_0B_{Q^{(K)}}) > \delta$. Note that $Q^{(K)} \subseteq Q_0$ in either case. Then, 
	\begin{align}
		\sum_{Q \in \mathscr{C}} \beta_E^{d,p}(MB_Q)^2\ell(Q)^d  &\lesssim \sum_{k=0}^K \sum_{Q \in \mathscr{C} \cap \mathscr{D}_k} \beta_E^{d,p}(MB_Q)^2\ell(Q)^d + \sum_{Q \in \mathscr{C}_1} \beta_E^{d,p}(MB_Q)^2\ell(Q)^d \\
		&\hspace{4em} + \sum_{Q \in \mathscr{C}_2} \beta_E^{d,p}(MB_Q)^2\ell(Q)^d  \\
		&= I_0 + I_1 + I_2.  
	\end{align}
	Let us estimate $I_1.$ For $Q \in \mathscr{C}_1$ let $Q^* = Q^{(K)}$. Let $R \in \mathscr{D}$ be so that $\mathscr{C}_1(R) = \{Q \in \mathscr{C}_1 : Q^* = R \}$ is non-empty. By Lemma \ref{lemma:betap_betainfty}, $\beta^d_{E,\infty}(\tfrac{C_0}{2}B_R) \lesssim \delta^\frac{1}{d+1}$ so there exists a $d$-plane $P$ so that $\dist(y,P) \lesssim \ve \ell(R)$ for all $y \in E \cap \tfrac{C_0}{2}B_R.$ Then, for $\ve > 0$ small enough, a similar argument to Lemma \ref{l:reduce-constants1a} shows the map $Q \mapsto Q^*$ is at most $C$-to-$1$. Now, since $MB_Q \subseteq C_0B_{Q^*},$ Lemma \ref{lemma:betap_betainfty} implies 
	\begin{align}\label{e:consts2}
		I_1 \lesssim \sum_{Q \in \mathscr{C}_1} \beta^{d,p}_E(C_0B_{Q^*})^2\ell(Q^*)^d \lesssim \sum_{\substack{Q \in \mathscr{D} \\ Q \subseteq Q_0}}  \beta^{d,p}_E(C_0B_{Q})^2\ell(Q)^d
	\end{align} 
	Now for $I_2.$ For $Q \in \mathscr{C}_2$, define $Q^*$ to be the smallest cube so that $Q \subseteq Q^*$ and $\beta_E^{d,p}(C_0B_{Q^*}) > \delta.$ Let $R \in \mathscr{D}$ for which the set $\mathscr{C}_2(R) = \{Q \in \mathscr{C}_2 : Q^* = R \}$ is non-empty. We will show 
	\[ \#\mathscr{C}_2(R) \lesssim 1. \]
	Note, there are at most $K$ scales for which $\#\mathscr{C}_2(R) \cap \mathscr{D}_k \not=\emptyset.$ Choose $k$ to be one of these scales and let $Q \in \mathscr{C}_2(R) \cap \mathscr{D}_k.$ Since $R$ is the smallest cube containing $Q$ with $\beta_E^{d,p}(C_0B_R) > \delta$, there exists a child $R'$ of $R$ so that $\beta_E^{d,p}(C_0B_{R'}) \leq \delta.$ As above, since $B_R \subseteq C_0B_{R'}$ (because we assumed $C_0 \geq 2\rho^{-1}$), we can find a $d$-plane $P$ so that $\dist(x_Q,P) \leq \frac{c_0}{2}\ell(Q)$ and $c_0B_Q \subseteq B_R \subseteq B(x_R, \rho^{-K}\ell(Q)).$ Then, by Lemma \ref{ENV}, $\#\mathscr{C}_2(R) \cap \mathscr{D}_k \lesssim 1$ and so $\#\mathscr{C}_2(R) \lesssim 1.$ Hence, 
	\begin{align}
		I_2 \lesssim_\delta \sum_{Q \in \mathscr{C}_2} \beta_E^{d,p}(C_0B_{Q^*})^2\ell(Q^*)^d \lesssim \sum_{\substack{Q \in \mathscr{D} \\ Q \subseteq Q_0}}  \beta^{d,p}_E(C_0B_{Q})^2\ell(Q)^d. 
	\end{align}
	Finally, we deal with $I_0.$ Let $\delta > 0$ be as above. If $\beta_E^{d,p}(C_0B_{Q_0}) \leq \delta$ then the arguments above \eqref{e:consts2} show that $\#\{Q \in \mathscr{D}_k , 0 \leq k \leq K-1\} \lesssim 1.$ So, in this case,  
	\[ I_0 \lesssim \ell(Q_0)^d. \] 
	Suppose instead that $\beta_E^{d,p}(C_0B_{Q_0}) > \delta.$ For each $Q \in \bigcup_{k = 0}^{K-1} \mathscr{D}_k$ let $Q^*$ be the smallest cube so that $Q \subseteq Q^*$ and $\beta_E^{d,p}(C_0B_{Q^*}) > \delta.$ In this case, by exactly the same arguments as for $I_2,$ we get 
	\[ I_0 \lesssim_\delta \sum_{\substack{Q \in \mathscr{D} \\ Q \subseteq Q_0}} \beta_E^{d,p}(C_0B_Q)^2 \ell(Q)^d. \] 
	Combining the above gives 
	\[\sum_{\substack{Q \in \mathscr{D} \\ Q \subseteq Q_0}} \beta_E^{d,1}(MB_Q)^2\ell(Q)^d \lesssim \sum_{\substack{Q \in \mathscr{D} \\ Q \subseteq Q_0}} \beta_E^{d,p}(C_0B_Q)^2\ell(Q)^d \]
	as required. 
\end{proof}

\section{Reduction to Euclidean space}\label{a:Euclidean}

Here we prove Lemma \ref{l:main-red} and Lemma \ref{l:main-red2}. However, since Lemma \ref{l:main-red} is a special case of Lemma \ref{l:main-red2}, it suffices to only prove Lemma \ref{l:main-red2}, which we restate below for convenience. 

\begin{lem}\label{l:Euclidean}
	Suppose for any $n \geq 2$, $1 \leq d < n,$ $1 \leq p < p(d),$ $C_0 > 1$, and $A > \max\{6C_0,10^5\}$, that there exists $\ve > 0$ so that \eqref{e:dTSP1} holds with constants $d,p,C_0,A,$ and $\ve$, for any lower content $d$-regular set $E \subseteq \R^n$, with constants independent of $n$. Then, there exists $C \geq 1$ so that \eqref{e:dTSP1} holds with constant $d,p,C_0,2A$, and $C\ve,$ for all $E \subseteq H$ lower content $d$-regular sets $E \subseteq H.$ 
\end{lem}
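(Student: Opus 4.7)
The plan is to approximate $E \subseteq H$ by its projection onto a finite-dimensional subspace, invoke the Euclidean hypothesis, and pass to a limit. Fix $Q_0 \in \mathscr{D}$ and $N \in \N$, and choose $R$ large enough that $(2A + C_0 + 100)B_{Q_0} \subseteq B(x_{Q_0}, R)$, so that only $E_0 := E \cap B(x_{Q_0}, R)$ is relevant to the statement. Because $E$ is lower content $d$-regular, any $r$-separated subset of $E_0$ has at most $C(R/r)^d$ points (by Lemma \ref{ENV} applied ball-by-ball), so $\overline{E_0}$ is compact in $H$. For each $\eta > 0$, I would use compactness to choose a finite-dimensional subspace $V = V_\eta \subseteq H$ with $\dist(x, V) \leq \eta$ for every $x \in \overline{E_0}$, let $\pi = \pi_V$ be the orthogonal projection, and set $F = \pi(E) \subseteq V \cong \R^n$ for some $n = n(\eta)$.

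Since $E_0$ lies within $\eta$ of $V$ and $\pi$ is $1$-Lipschitz, one has the near-isometry $|x - y| - 2\eta \leq |\pi(x) - \pi(y)| \leq |x - y|$ for $x, y \in E_0$. This lets me transfer all geometric data between $E$ and $F$ at scales $r \gg \eta$: a direct cover-comparison argument shows $F$ is lower content $d$-regular with a constant close to $c$; Christ-David cubes for $F$ can be matched with those of $E$ above scale $\eta$ via projection of centres; the identity $|\dist(x, L) - \dist(\pi(x), L)| \leq \eta$ for any $d$-plane $L \subseteq V$, combined with the fact that any $d$-plane in $H$ can be replaced by a $d$-plane in $V$ at cost $O(\eta)$ on $E_0 \cap C_0 B_Q$, yields $\beta_E^{d,p}(C_0 B_Q) \leq \beta_F^{d,p}(C_0 B_{\pi(Q)}) + C\eta/\ell(Q)$. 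The same mechanism shows that any cube $Q$ with $\ell(Q) \gg \eta$ whose projected counterpart avoids $\BWGL(A, \ve)$ (for $F$) also avoids $\BWGL(2A, C\ve)$ (for $E$), since passing $d$-planes from $H$ to $V$ perturbs the bilateral distance by $O(\eta/\ell(Q))$ and the enlarged ball leaves room to absorb this error.

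Choosing $\eta$ so small that the correspondence is clean on the finite collection of cubes with $\ell(Q) \geq \rho^N \ell(Q_0)$, applying the Euclidean form of \eqref{e:dTSP1} to $F \subseteq \R^n$ (whose implicit constant is independent of $n$ by hypothesis), and noting $\mathscr{H}^d(\pi(Q_0)) \leq \mathscr{H}^d(Q_0)$, I would obtain the truncated bound
\[\sum_{\substack{Q \subseteq Q_0 \\ \ell(Q) \geq \rho^N \ell(Q_0)}} \beta_E^{d,p}(C_0 B_Q)^2 \ell(Q)^d \lesssim \mathscr{H}^d(Q_0) + \BWGL(Q_0, 2A, C\ve) + \text{err}(\eta, N),\]
where $\text{err}(\eta, N)$ vanishes as $\eta \to 0$ with $N$ fixed. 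Sending $\eta \to 0$ and then $N \to \infty$ yields the full bound \eqref{e:dTSP1} for $E$. The main obstacle will be the careful transfer of the near-isometry estimate into simultaneous comparisons of Christ-David cubes, $\beta$-numbers, and BWGL indicators uniformly over all scales $\ell(Q) \gtrsim \eta$: this is precisely where the loss from $A$ to $2A$ and from $\ve$ to $C\ve$ comes from, since the $O(\eta/\ell(Q))$ approximation errors must be absorbed into the enlarged scale and tolerance of the BWGL on the right-hand side.
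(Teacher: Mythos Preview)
Your overall strategy --- approximate $E$ by something finite-dimensional, apply the Euclidean hypothesis there, and control the error --- is the right one, but two of the steps fail as written.

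First, your justification for compactness of $\overline{E_0}$ is incorrect: Lemma~\ref{ENV} bounds the cardinality of a separated net only when the centres lie close to a single $d$-plane, and lower content regularity gives no such flatness. (The paper instead uses $\mathscr{H}^d(Q_0)<\infty$, assumed without loss of generality, to bound the number of cube centres inside $Q_0$; it never needs compactness of $E_0$ itself.)

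Second, and more seriously, the set $F=\pi(E_0)$ need not be lower content $d$-regular at scales $\lesssim\eta$, so the Euclidean hypothesis cannot be invoked for it. The near-isometry $|\pi(x)-\pi(y)|\geq|x-y|-2\eta$ gives no bi-Lipschitz control when $|x-y|\lesssim\eta$, and projection can genuinely collapse content at those scales: for instance, a segment in $V^\perp$ of length $\eta$ projects to a point. Your truncation to cubes with $\ell(Q)\geq\rho^N\ell(Q_0)$ does not help, because the hypothesis you want to apply is stated for sets that are lower content $d$-regular at \emph{all} scales, and its proof uses this throughout (e.g.\ in Lemma~\ref{betaest} and in constructing the Christ--David cubes for the approximating set).

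The paper avoids both problems by a different construction: rather than projecting $E$, it takes the finitely many cube centres $\{x_Q:Q\in\mathscr{D}^{k^*}(Q_0)\}$, lets $V$ be their linear span, and defines $\tilde E\subseteq V$ as a union of small $d$-simplices placed at those centres. This makes $\tilde E$ lower content $d$-regular at all scales by construction, and the comparison between $\beta_E$ and $\beta_{\tilde E}$ is done not via a near-isometry but via Lemma~\ref{betaest}, with the resulting error term (an integral of $\dist(y,\tilde E)$) controlled directly. If you want to salvage your projection approach, you would need to thicken $F$ at scales below $\eta$ by something $d$-regular --- which is essentially what the simplex construction does.
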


Let $1 \leq d < \infty, \ 1 \leq  p < p(d)$, $C_0 > 1$, $A > \max\{6C_0,10^5\}$, $\ve > 0$, and $E \subseteq H$ be a lower content $d$-regular set. Let $X_m, \ m \in \Z,$ be a sequence of maximally $\rho^m$-separated nets and let $\mathscr{D}$ be the Christ-David cubes for $E$ from Theorem \ref{cubes}. Fix $Q_0 \in \mathscr{D}$ and assume without loss of generality that $x_{Q_0} = 0$ and $\ell(Q_0) = 1.$ We may also assume $\mathscr{H}^d(Q_0) < \infty$ since otherwise \eqref{e:dTSP1} is trivial. For $k \geq 0$ let 
\[ \mathscr{D}^k(Q_0) = \bigcup_{m = 0}^k \{Q \in \mathscr{D}_m : Q \subseteq Q_0 \}. \]
Notice that
\begin{align}\label{e:red1}
	\sum_{\substack{Q \in \mathscr{D} \\ Q \subseteq Q_0}} \beta^{d,p}_E(C_0B_Q)^2\ell(Q)^d = \lim_{k \rightarrow \infty} \sum_{Q \in \mathscr{D}^k(Q_0)}  \beta^{d,p}_E(C_0B_Q)^2\ell(Q)^d,
\end{align} 
so it suffices to show 
\begin{align}\label{e:red1.5}
	\sum_{Q \in \mathscr{D}^k(Q_0)}  \beta^{d,p}_E(C_0B_Q)^2\ell(Q)^d \lesssim \mathscr{H}^d(Q_0) +\BWGL(Q_0,A,\ve) 
\end{align} 
with constant independent $k$. This is our goal for the rest of the section. Let $k \in \Z$ and let $k^*$ be the smallest integer so that $\rho^{k^*} \leq 5\ve \rho^k.$ Since $E$ is lower content $d$-regular, $\# \mathscr{D}^{k^*}(Q_0) < \infty$, because for each $0 \leq m \leq k^*$, 
\begin{align}
	\#\{Q \in \mathscr{D}_m : Q \subseteq Q_0\} \rho^{md} \lesssim \sum_{\substack{Q \in \mathscr{D}_m \\ Q \subseteq Q_0}} \mathscr{H}^d(E \cap c_0B_Q) \leq \mathscr{H}^d(Q_0) < \infty.
\end{align}
Let $V$ be the linear space spanned by the centres of the cubes in $\mathscr{D}^{k^*}(Q_0)$ (recall $x_{Q_0} = 0$). By the above, we can identify $V$ with $\R^n$ for some $n \leq \# \mathscr{D}^{k^*}(Q_0) < \infty.$ We now define a subset $\tilde{E}$ of $\R^n$ which well approximates $E$. Let $T$ be a $d$-dimensional simplex centred at the origin, with vertices in $S^{n-1} \subseteq \R^n$ (the unit sphere). For $Q \in \mathscr{D}_{k^*},$ define $T_Q = x_Q + c_0\ell(Q)T.$ Then each $T_Q$ is Ahlfors $d$-regular and contained in the ball $B_n(x_Q,c_0\ell(Q)) = B_n(x_Q,5c_0\rho^{k^*}).$ Define $\tilde{E}$ by setting
\[ \tilde{E} = \bigcup_{Q \in \mathscr{D}_{k^*}} T_Q \subseteq \R^n. \] 

\begin{lem}
	The set $\tilde{E}$ is lower content $d$-regular 
\end{lem}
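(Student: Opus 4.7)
The plan is a two-scale argument: for balls of radius small compared to $\rho^{k^*}$, use the Ahlfors $d$-regularity of the individual scaled simplex $T_Q$; for larger radii, compare $\tilde E$ to $E$ via a covering argument and invoke the lower regularity of $E$.

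Let $y \in \tilde E$ and $r \in (0, \diam \tilde E)$, and fix $Q \in \mathscr D_{k^*}$ with $y \in T_Q$. In the small-scale regime $r \leq C_1 \rho^{k^*}$ (where $C_1$ depends only on $d$ and $c_0$), $T_Q$ is a translate and dilate of the fixed $d$-simplex $T$, hence Ahlfors $d$-regular with $d$-dependent constants, and the inclusion $T_Q \subseteq \tilde E$ yields $\mathscr H^d_\infty(\tilde E \cap B(y,r)) \geq \mathscr H^d_\infty(T_Q \cap B(y,r)) \gtrsim_d r^d$.

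For the large-scale regime $r > C_1 \rho^{k^*}$, the inequality $|y - x_Q| \leq c_0 \rho^{k^*} < r/4$ gives $B(x_Q, r/2) \subseteq B(y,r)$. Given any cover $\{D_i\}$ of $\tilde E \cap B(y,r)$, I would introduce the expanded balls $\tilde D_i := B(x_{D_i}, r_{D_i} + 5\rho^{k^*})$. Any $z \in E \cap B(x_Q, r/2)$ belongs to some cube $R \in \mathscr D_{k^*}$ (property \textit{(4)} of Theorem~\ref{cubes}, available since we are in $\R^n$), whence $x_R \in T_R \subseteq \tilde E \cap B(y,r)$, $x_R$ lies in some $D_i$, and $z \in \tilde D_i$ since $|z - x_R| \leq 5\rho^{k^*}$. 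Thus $\{\tilde D_i\}$ covers $E \cap B(x_Q, r/2)$, and $\sum_i r_{\tilde D_i}^d \gtrsim r^d$ by the lower regularity of $E$.

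To pass from $\sum_i r_{\tilde D_i}^d$ back to $\sum_i r_{D_i}^d$ I would separate large balls $(r_{D_i} \geq \rho^{k^*})$ from small ones. For large balls the expansion is multiplicatively bounded, $r_{\tilde D_i} \leq 6 r_{D_i}$. For small balls I would use the following key geometric observation: taking $c_0 < 1/4$, the balls $B(x_R, c_0\rho^{k^*})$ are pairwise disjoint (since $\{x_R\}$ is $\rho^{k^*}$-separated), hence the simplices $T_R$ are pairwise disjoint, and any ball of radius $<(1-2c_0)\rho^{k^*}/2$ intersects at most one $T_R$. For each cube $R$ whose simplex $T_R$ is covered purely by small balls, the collective $r^d$-mass of those small balls is bounded below by $\mathscr H^d_\infty(T_R) \gtrsim_d \rho^{k^*d}$. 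Combining this with the lower bound $\#\{R \in \mathscr D_{k^*} : R \cap B(x_Q, r/2) \neq \emptyset\} \gtrsim r^d/\rho^{k^*d}$ coming from lower regularity of $E$, a case split on whether the large balls already satisfy $\sum r_{D_i}^d \gtrsim r^d$ or a large fraction of the $T_R$'s must be covered purely by small balls completes the proof.

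The main obstacle is keeping all constants independent of the ambient dimension $n$; the natural enemy is a single large ball intersecting many simplices $T_R$, giving a multiplicity that in $\R^n$ depends on $n$. The above split is designed to bypass this entirely: small balls have multiplicity one by the disjointness of the $T_R$'s, and the large-ball contribution is estimated through the dimension-free multiplicative expansion bound alone, without ever counting how many $T_R$'s a large ball hits.
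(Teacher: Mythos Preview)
Your two-scale split and the use of the $\tilde D_i$-expansion to cover $E\cap B(x_Q,r/2)$ are exactly the right ideas, and they match the paper's strategy. The gap is in the final dichotomy. You assert that either $\sum_{\text{large}} r_{D_i}^d \gtrsim r^d$ or else ``a large fraction of the $T_R$'s must be covered purely by small balls'', but you do not prove this, and in fact it does not follow from what you have. If $N_{\text{pure}}$ is small then many $T_R$'s are hit by some large $D_i$; to convert that into a lower bound on $\sum_{\text{large}} r_{D_i}^d$ you would need to control how many simplices a single large ball can touch. Since the centres $x_R$ are only $\rho^{k^*}$-separated points in $E$, and $E$ is merely \emph{lower} content $d$-regular, there is no upper bound on how many such centres lie in a ball of radius $\sim r_{D_i}$ --- the count can be as large as the ambient dimension allows. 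So the scenario you identify as the enemy (one large ball touching many $T_R$'s while contributing little $d$-mass) is not actually excluded by your argument; your split on small balls bypasses multiplicity, but your treatment of large balls still implicitly needs it.

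The paper's fix is a small but decisive change of viewpoint. Rather than splitting the simplices into ``pure-small'' versus ``hit by a large ball'', it splits them according to whether the small balls hitting $T_R$ have total $d$-mass exceeding a threshold $\alpha\rho^{dk^*}$ or not. In the first case one can replace those small balls by the single ball $B(x_R,5c_0\rho^{k^*})$ at cost controlled by what was already there; in the second case the small balls are too few to cover $T_R$, so there \emph{must} be a large $U_i$ hitting $T_R$, and a fixed dilate of that $U_i$ swallows $T_R$ entirely. Both cases produce a new cover of $\tilde E\cap B(x,r/2)$ by sets of diameter $\gtrsim\rho^{k^*}$ whose total $d$-cost is at most a constant times the original. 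Only then does one apply the additive $+5\rho^{k^*}$ expansion, which is now uniformly multiplicative. The point is that one never counts how many $T_R$'s a large ball hits: one only uses it to cover those $T_R$'s, and covering many simplices with one ball is free when proving an upper bound on the cost of the new cover.
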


\begin{proof}
	Let $x \in \tilde{E}$ and $0 < r < \diam(\tilde{E})$. Assume first of all that $r < 20c_0\rho^{k^*}.$ Let $Q \in \mathscr{D}_{k^*}$ be the cube so that $x \in T_Q.$ Since $T_Q$ is lower content $d$-regular (by virtue of being Ahlfors $d$-regular) and $\diam(T_Q) = 10c_0\rho^{k^*} \gtrsim r$, this implies 
	\begin{align}\label{e:EtildeLR}
		\mathscr{H}^d_\infty(\tilde{E} \cap B(x,r)) \geq \mathscr{H}^d_\infty(T_Q \cap B(x,r)) \gtrsim r^d.
	\end{align}
	Assume instead that $r \geq 20c_0\rho^{k^*}.$ For $A \subseteq \R^n$ and $0 < \delta < \infty$, set  
	\begin{align}
		\mathscr{H}^d_{\delta,\infty}(A) = \inf \left\{ \sum_i \diam(A_i)^d : A \subseteq \bigcup_i A_i \mbox{ and } \diam A_i \geq \delta \right\}.
	\end{align}
	This is like the Hausdorff content except we include a \textit{lower} bound on the diameters of the covering sets. The lemma will be proved once we show the following,  
	\begin{align}\label{e:lowerboundcontent}
		\mathscr{H}^d_\infty(\tilde{E} \cap B(x,r)) \gtrsim \mathscr{H}^d_{5c_0\rho^{k^*},\infty}(\tilde{E} \cap B(x,r/2)) \gtrsim r^d.
	\end{align}
	We start with the left-hand side. First note that $\mathscr{H}^d_\infty(\tilde{E} \cap B(x,r)) > 0$ by \eqref{e:EtildeLR}. Thus, we can find a collection of sets $\{U_i\}_{i \in I}$ so that $\tilde{E} \cap B(x,r) \subseteq \bigcup_{i \in I} U_i$ and 
	\[ \sum_{i \in I } \diam(U_i)^d \leq 2\mathscr{H}^d_\infty(\tilde{E} \cap B(x,r)). \]
	We will modify $\{U_i\}$ to replace those sets with $\diam(U_i) < 5c_0\rho^{k^*}.$ Let $\mathscr{G}$ be the set of cubes in $\mathscr{D}_{k^*}$ so that $T_Q \cap B(x,r/2) \neq\emptyset$. For each $Q \in \mathscr{G},$ let $I_Q$ be the set of $i \in I$ so that $T_Q \cap U_i \not=\emptyset$ and $\diam(U_i) < 5c_0\rho^{k^*}.$ Let $\alpha > 0$ (to be chosen shortly) and further sub-divide $\mathscr{G}$ into 
	\begin{align}
		\mathscr{G}_1 &=  \left\{ Q \in \mathscr{G} : \sum_{i \in I_Q} \diam(U_i)^d > \alpha (10c_0)^d \rho^{dk^*} \right\} ; \\
		\mathscr{G}_2 &=  \left\{Q \in \mathscr{G} : \sum_{i \in I_Q} \diam(U_i)^d \leq \alpha (10c_0)^d \rho^{dk^*} \right\}.
	\end{align}  
	If $Q \in \mathscr{G}_1$ then $T_Q \subseteq B(x_Q,5c_0\rho^{k^*})$ and 
	\[ \diam(B(x_Q,5c_0\rho^{k^*}))^d = (10c_0)^d \rho^{dk^*}  < \frac{1}{\alpha} \sum_{i \in I_Q} \diam(U_i)^d. \] 
	For $Q \in \mathscr{G}_2$ we claim there exists $i_Q \in I$ so that $T_Q \cap U_{i_Q} \not=\emptyset$ and $\diam(U_{i_Q}) \geq 5c_0\rho^{k^*}.$ in particular $i_Q \not\in I_Q.$ Assuming this to be true for the moment, we let $\tilde{I} \subseteq I$ be the set of indices for which there exists $Q \in \mathscr{G}_2$ so that $i_Q = i$. For each $i \in \tilde{I}$ we pick a point $x_i \in U_i$ and set $B_i = B(x_i,2\diam(U_i)).$ In this way
	\[ \diam(B_i) \geq 20c_0\rho^{k^*} \mbox{ and } \bigcup_{\substack{Q \in \mathscr{G}_2 \\ i_Q = i}} T_Q \subseteq B_i. \]
	Putting everything together we have
	\[ \tilde{E} \cap B(x,r/2) \subseteq \bigcup_{Q \in \mathscr{G}} T_Q \subseteq \bigcup_{Q \in \mathscr{G}_1} B(x_Q,5c_0\rho^{k^*}) \cup \bigcup_{i \in \tilde{I}} B_{i}, \]      
	hence, 
	\begin{align}
		\mathscr{H}^d_{5c_0\rho^{k^*},\infty}(\tilde{E} \cap B(x,r/2)) &\leq \sum_{Q \in \mathscr{G}_1} \diam(B(x_Q,5c_0\rho^{k^*}))^d + \sum_{i \in \tilde{I}} \diam(B_i)^d \\
		& \leq \left(\frac{1}{\alpha} + 2 \right) \sum_{i \in I} \diam(U_i)^d \lesssim \mathscr{H}^d(\tilde{E} \cap B(x,r)). 
	\end{align}
	This completes the left-hand side of \eqref{e:lowerboundcontent} modulo our claim. Let us see why the claim is true. Since $T_Q \cap B(x,r/2) \not=\emptyset, \ \diam(T_Q) =10c_0\rho^{k^*} \leq r/2,$ we have $T_Q \subseteq \tilde{E} \cap B(x,r).$ In particular, the collection of set $\{U_i : i \in I, \ T_Q \cap U_i\neq \emptyset\}$ form a cover for $T_Q.$ Then, since $T_Q$ is lower content $d$-regular, we have
	\begin{align}
		\rho^{dk^*} \lesssim \sum_{\substack{i \in I \\ T_Q \cap U_i \not=\emptyset}} \diam(U_i)^d = \sum_{\substack{i \in I\setminus I_Q \\ T_Q \cap U_i \not=\emptyset}} \diam(U_i)^d + \sum_{\substack{i \in I_Q}} \diam(U_i)^d \lesssim \sum_{\substack{i \in I\setminus I_Q \\ T_Q \cap U_i \not=\emptyset}} \diam(U_i)^d + \alpha \rho^{dk^*}.
	\end{align}   
	For $\alpha > 0$ small enough, the penultimate term must be non-zero which implies the existence of the required index $i \in I$ and proves the claim.

	Now for the right-hand side of \eqref{e:lowerboundcontent}. Let $\{V_j\}_{j \in J}$ be a collection of subsets of $\R^n$ so that $5c_0\rho^{k^*} \leq \diam(V_j) < \infty$, $\tilde{E} \cap B(x,r/2) \subseteq \bigcup_{j \in J} V_j$ and 
	\[ \mathscr{H}^d_{5c_0\rho^{k^*},\infty}(\tilde{E} \cap B(x,r/2)) \sim \sum_{j \in J} \diam(V_j)^d. \] 
	For each $j \in J,$ let $y_j \in V_j$ and $\tilde{B}_j = B(y_j,c_0^{-1}\diam(V_j)) \subseteq H.$ For each $Q \in \mathscr{D}_{k^*}$ so that $x_Q \in B(x,r/2)$, there exists $j \in J$ so that $x_Q \in V_j,$ which by the triangle inequality and the lower bound on $\diam(V_j)$, implies $B(x_Q,2\rho^{k^*}) \subseteq \tilde{B}_j.$ In particular, 
	\[ \bigcup_{\substack{Q \in \mathscr{D}_{k^*} \\ x_Q \in B(x,r/2) }} B(x_Q,2\rho^{k^*}) \subseteq \bigcup_{j \in J} \tilde{B}_j. \] 
	By maximality, the collection $\{B(x_Q,2\rho^{k^*}) : Q \in \mathscr{D}_{k^*}, \  x_Q \in B(x,r/2) \}$ forms a cover of $E \cap B(x,r/4)$. By the above, the same is true of the collection of balls $\{\tilde{B}_j\}_{j \in J}.$ Since $E$ is lower content $d$-regular, this implies 
	\begin{align}
		r^d \lesssim \mathscr{H}^d_\infty(E \cap B(x,r/4)) \leq \sum_{j \in J} \diam(\tilde{B}_j)^d \sim \sum_{j \in J} \diam(V_j)^d \sim \mathscr{H}^d_{5c_0\rho^{k^*},\infty}(\tilde{E} \cap B(x,r/2))
	\end{align}
	as required. 
\end{proof}

Let $Y_m, \ m \in \Z$, be a sequence of nested maximally $\rho^m$-separated nets in $\tilde{E}.$ We can construct these so that $Y_m = X_m$ for each $m \leq k^*$ (recall that $X_m$ were the nets we used to construct the cubes for $E$). Let $\tilde{\mathscr{D}}$ be the Christ-David cubes for $\tilde{E}$ constructed from the $Y_m.$ In this way, for each $Q \in \mathscr{D}^{k^*}$ there exists a unique $\tilde{Q} \in \tilde{\mathscr{D}}^{k^*}$ so that $x_Q = x_{\tilde{Q}}$ and $\ell(Q) = \ell(\tilde{Q}),$ and vice-versa. These also satisfy 
\begin{align}\label{e:Y_m}
	Y_m \cap Q = Y_m \cap \tilde{Q}  \mbox{ for all } m \leq k^*.
\end{align}   

\begin{lem}\label{l:measure}
	We have $\mathscr{H}^d(\tilde{Q}_0) \lesssim \mathscr{H}^d(Q_0).$ 
\end{lem}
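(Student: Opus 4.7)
The plan is to identify each level-$k^*$ cube $\tilde R$ of $\tilde{\mathscr{D}}$ explicitly with the simplex $T_R$, and then close using lower content regularity of $E$.

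First I would use the agreement of nets $X_m = Y_m$ for $m \leq k^*$, given by \eqref{e:Y_m}. Since the Christ--David cube tree down to level $k^*$ is determined entirely by the sequence of nets, this agreement produces a natural bijection $R \leftrightarrow \tilde R$ between $\{R \in \mathscr{D}_{k^*} : R \subseteq Q_0\}$ and $\{\tilde R \in \tilde{\mathscr{D}}_{k^*} : \tilde R \subseteq \tilde Q_0\}$, with $x_R = x_{\tilde R}$ and $\ell(R) = \ell(\tilde R) = 5\rho^{k^*}$. Moreover, because $\tilde E \subseteq \R^n$, property $(4)$ of Lemma \ref{cubes} gives the partition $\tilde Q_0 = \bigsqcup_{R} \tilde R$.

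Next I would show $\tilde R = T_R$ for each such $R$. The inclusion $T_R \subseteq \tilde R$ follows from property $(2)$ of Lemma \ref{cubes} applied to $\tilde R$ (in the form $B(x_{\tilde R}, c_0\ell(\tilde R)) \cap \tilde E \subseteq \tilde R$), combined with $T_R \subseteq B(x_R, 5c_0 \rho^{k^*}) = B(x_{\tilde R}, c_0 \ell(\tilde R))$ and $T_R \subseteq \tilde E$. For the reverse inclusion, I would use: (i) the simplices $T_Q$ are pairwise disjoint for $c_0 \leq 1/10$ (since each $T_Q \subseteq B(x_Q, 5 c_0 \rho^{k^*})$ and the $x_Q$'s are $\rho^{k^*}$-separated), (ii) $\tilde E = \bigsqcup_{Q \in \mathscr{D}_{k^*}} T_Q$ by construction, and (iii) the family $\tilde{\mathscr{D}}_{k^*}$ partitions $\tilde E$. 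If any $y \in \tilde R \setminus T_R$ existed, then $y \in T_{R'}$ for some $R' \neq R$ by (i) and (ii), whence $y \in T_{R'} \subseteq \tilde{R'}$ by the first inclusion applied to $R'$, contradicting the disjointness of $\tilde R$ and $\tilde{R'}$.

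Finally, since $T_R$ is a $d$-simplex of diameter $\sim \ell(R)$ we have $\mathscr{H}^d(T_R) \lesssim \ell(R)^d$. Lower content $d$-regularity of $E$ together with $E \cap c_0 B_R \subseteq R$ (from property $(2)$ of Lemma \ref{cubes} applied to $R$) gives $\ell(R)^d \lesssim \mathscr{H}^d_\infty(E \cap c_0 B_R) \leq \mathscr{H}^d(R)$. Summing over the pairwise disjoint cubes $R \subseteq Q_0$ and using the partition from the first paragraph,
\[ \mathscr{H}^d(\tilde Q_0) = \sum_R \mathscr{H}^d(T_R) \lesssim \sum_R \ell(R)^d \lesssim \sum_R \mathscr{H}^d(R) \leq \mathscr{H}^d(Q_0). \]
The main obstacle will be the identification $\tilde R = T_R$, which mixes two different properties of Lemma \ref{cubes} with the disjointness and partition structure of the simplex decomposition of $\tilde E$; once this step is in place the remaining measure estimate is routine.
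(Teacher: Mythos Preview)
Your proof is correct and follows essentially the same route as the paper. Both arguments rest on the key containment $T_R \subseteq \tilde E \cap c_0 B_{\tilde R} \subseteq \tilde R$ and then close with the chain $\mathscr{H}^d(T_R) \lesssim \ell(R)^d \lesssim \mathscr{H}^d(E \cap c_0 B_R)$ summed over $R \in \mathscr{D}_{k^*}$, $R \subseteq Q_0$.

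The only real difference is that you work harder than necessary in the middle step: you prove the \emph{equality} $\tilde R = T_R$, whereas the paper only proves the inclusion $\tilde Q_0 \subseteq \bigcup_{Q \subseteq Q_0} T_Q$ (by a short contradiction argument using $T_Q \subseteq \tilde Q$ and \eqref{e:Y_m}). The inclusion already suffices for the measure bound. Two small remarks on your write-up: first, your hypothesis $c_0 \leq 1/10$ in (i) is not explicitly guaranteed by Lemma~\ref{cubes}, but you do not actually need (i) --- once you have $T_{R'} \subseteq \tilde{R'}$ for all $R'$, the disjointness of the $T_Q$'s follows from the disjointness of the $\tilde Q$'s, and in any case your contradiction argument only uses that $y \notin T_R$ forces $y \in T_{R'}$ for some $R' \neq R$, which needs (ii) alone. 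Second, the identification of level-$k^*$ subcubes of $Q_0$ with those of $\tilde Q_0$ is asserted directly in the paper (just before \eqref{e:Y_m}), so you may simply cite it rather than arguing that the cube tree is determined by the nets.
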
 

\begin{proof}
	First observe that 
	\begin{align}\label{e:tilde{Q}_0}
		\tilde{Q}_0 \subseteq \bigcup_{\substack{Q \in \mathscr{D}_{k^*} \\ Q \subseteq Q_0}} T_Q.  
	\end{align}
	Indeed, suppose the opposite was true. Then we could find a point $z \in \tilde{Q}_0$ so that $z \in T_Q$ for some $Q \in \mathscr{D}_{k^*}$ satisfying $Q \cap Q_0 =\emptyset.$ Let $\tilde{Q}$ be the corresponding cube in $\tilde{\mathscr{D}}.$ Then $x_{\tilde{Q}} \not\in Q_0$ and so $x_{\tilde{Q}} \not\in \tilde{Q}_0$ by \eqref{e:Y_m}. Thus $\tilde{Q} \cap \tilde{Q}_0 = \emptyset.$ This, however, is a contradiction since 
	\[ z \in T_Q \subseteq \tilde{E} \cap c_0B_Q = \tilde{E} \cap c_0B_{\tilde{Q}} \subseteq \tilde{Q}. \] 
	
	Continuing from \eqref{e:tilde{Q}_0}, since each of the $T_Q, \ Q \in \mathscr{D}_{k^*},$ are Ahlfors $d$-regular, and the balls $E \cap c_0B_Q$ are disjoint and contained in $Q_0$, we have 
	\begin{align}
		\mathscr{H}^d(\tilde{Q}_0) \leq \sum_{\substack{Q \in \mathscr{D}_{k^*} \\ Q \subseteq Q_0}} \mathscr{H}^d(T_Q) \lesssim \sum_{\substack{Q \in \mathscr{D}_{k^*} \\ Q \subseteq Q_0}} \ell(Q)^d \lesssim \sum_{\substack{Q \in \mathscr{D}_{k^*} \\ Q \subseteq Q_0}} \mathscr{H}^d(E \cap c_0B_Q) \leq \mathscr{H}^d(Q_0)
	\end{align} 
\end{proof}

\begin{lem}\label{l:BWGL}
	Let $\tilde{Q} \in \mathscr{D}^{k}(\tilde{Q}_0)$ and let $Q \in \mathscr{D}^{k}(Q_0)$ be the corresponding cube from above. If $\tilde{Q} \in \BWGL(\tfrac{1}{2}A,c\ve)$ then $Q \in \BWGL(A,\ve).$
\end{lem}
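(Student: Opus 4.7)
\textbf{Proof proposal for Lemma \ref{l:BWGL}.} The natural approach is to argue by contrapositive: assuming $Q \notin \BWGL(A,\ve)$, I will produce a $d$-plane $P$ that certifies $\tilde{Q} \notin \BWGL(\tfrac{1}{2}A, c\ve)$ with an absolute constant $c$ (I expect $c = 4$ or $c = 5$ to suffice). The mechanism is that $E$ and $\tilde{E}$ are Hausdorff-close at the scale of any cube in $\mathscr{D}^k(Q_0)$: on the one hand, every $y \in \tilde{E}$ lies in a simplex $T_R$ with $|y - x_R| \leq c_0 \rho^{k^*}$ and $x_R \in X_{k^*} \subseteq E$; on the other, by maximality of the net $X_{k^*}$ in $E$, every $x \in E$ lies within $\rho^{k^*}$ of some $x_R \in X_{k^*} \subseteq \tilde{E}$. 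Since $\ell(Q) \geq 5\rho^k$ for $Q \in \mathscr{D}^k(Q_0)$ and $\rho^{k^*} \leq 5\ve \rho^k$, we get the quantitative bound $\rho^{k^*} \leq \ve \ell(Q)$.

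Concretely, suppose $P$ is a $d$-plane with $d_{AB_Q}(E, P) < \ve$, so that $\sup_{E \cap AB_Q} \dist(\cdot,P) < A\ve\ell(Q)$ and $\sup_{P \cap AB_Q} \dist(\cdot,E) < A\ve\ell(Q)$. Since $x_{\tilde Q} = x_Q$ and $\ell(\tilde Q) = \ell(Q)$, the ball $\tfrac{1}{2}AB_{\tilde Q}$ is contained in $AB_Q$. For $y \in \tilde{E} \cap \tfrac{1}{2}AB_{\tilde Q}$, the associated net point $x_R \in E$ satisfies $|x_R - x_Q| \leq c_0\ve\ell(Q) + \tfrac{A}{2}\ell(Q) \leq A\ell(Q)$ (for $\ve$ small relative to $A$), so $x_R \in E \cap AB_Q$ and therefore
\[
\dist(y, P) \leq |y - x_R| + \dist(x_R, P) \leq c_0 \ve \ell(Q) + A \ve \ell(Q) \leq 2A\ve\ell(Q).
\]
For $z \in P \cap \tfrac{1}{2}AB_{\tilde Q} \subseteq P \cap AB_Q$, pick $x \in E$ with $|z - x| < A\ve\ell(Q)$ and then $y \in \tilde{E}$ with $|x - y| \leq \rho^{k^*} \leq \ve\ell(Q)$; this yields $\dist(z, \tilde{E}) \leq 2A\ve\ell(Q)$ in the same way.

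Combining the two estimates and recalling $\diam(\tfrac{1}{2}AB_{\tilde Q}) = A\ell(Q)$,
\[
d_{\frac{1}{2}AB_{\tilde Q}}(\tilde{E}, P) = \frac{2}{A\ell(Q)} \max\left\{ \sup_{\tilde{E} \cap \frac{1}{2}AB_{\tilde Q}} \dist(\cdot, P), \ \sup_{P \cap \frac{1}{2}AB_{\tilde Q}} \dist(\cdot, \tilde{E}) \right\} \leq \frac{2 \cdot 2A\ve\ell(Q)}{A\ell(Q)} = 4\ve,
\]
so $\tilde{Q} \notin \BWGL(\tfrac{1}{2}A, c\ve)$ for any $c > 4$. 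Taking $c = 5$ (say) completes the proof.

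The entire argument is really just a triangle-inequality computation exploiting that the ``discretization error'' $\rho^{k^*}$ is negligible compared with $\ve \ell(Q)$; there is no substantive obstacle. The one point to be attentive to is verifying that the points produced by the approximation procedure lie in the correct (larger) ball $AB_Q$ so that the hypothesis on $P$ can be applied, which is why the factor $1/2$ on the radius of $\tilde{Q}$'s ball is essential.
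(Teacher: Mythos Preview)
Your triangle-inequality argument is correct as far as it goes, but there is a genuine gap concerning the ambient space of the approximating plane. Recall that $\tilde E$ is constructed to live in the finite-dimensional subspace $V\cong\R^n$ spanned by the net points $\{x_R: R\in\mathscr{D}^{k^*}(Q_0)\}$, and the whole point of this appendix is to apply the Euclidean hypothesis of Lemma~\ref{l:Euclidean} to $\tilde E\subseteq\R^n$. In that application, the term $\BWGL(\tilde Q_0,\tfrac12 A,c\ve)$ is computed \emph{for $\tilde E$ as a subset of $\R^n$}, which means the competing $d$-planes must lie in $\R^n$. Your proof starts from a $d$-plane $P\subseteq H$ witnessing $Q\notin\BWGL(A,\ve)$ for $E$, and uses that \emph{same} $P$ to certify $\tilde Q\notin\BWGL(\tfrac12 A,c\ve)$. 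But $P$ need not lie in $V$: the centres $x_R$ you use to mediate between $E$ and $\tilde E$ are in $V$, but $P$ itself was only assumed to approximate $E\subseteq H$. So you have shown $\tilde Q\notin\BWGL_H(\tfrac12 A,c\ve)$, which is weaker than the required $\tilde Q\notin\BWGL_{\R^n}(\tfrac12 A,c\ve)$.

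The paper handles this by inserting one extra step: it picks $d$ mutually orthogonal directions $z_i-x_Q$ in $P_Q\cap(B\setminus\tfrac12 B)$, pushes each $z_i$ first to a nearby point $y_i\in E$ (via $d_B(E,P_Q)\le 2\ve$), then to a nearby net point $x_{Q_i}\in X_{k^*}\subseteq V$ (via maximality and $\rho^{k^*}\le\ve\ell(Q)$), and defines $P_{\tilde Q}\subseteq\R^n$ to be the $d$-plane through $x_Q$ spanned by the $x_{Q_i}-x_Q$. Since the $z_i$ were well-separated and the perturbations are $O(\ve\ell(Q))$, one gets $d_B(P_Q,P_{\tilde Q})\lesssim\ve$, and then your Hausdorff-closeness argument (essentially the same chain of triangle inequalities) gives $d_{B/2}(\tilde E,P_{\tilde Q})\lesssim\ve$ with a plane that genuinely lives in $\R^n$. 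Your proof is easily repaired by adding exactly this construction; the rest of your estimates survive unchanged.
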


\begin{proof}
	We will prove the contrapositive. In particular, let us suppose that $Q \not\in \BWGL(A,\ve)$, we will show $\tilde{Q} \not\in \BWGL(\tfrac{1}{2}A,c\ve).$ 
	
	For brevity, we write $B = AB_Q = AB_{\tilde{Q}}.$ Since $Q \not\in \BWGL(A,\ve),$ there exists a $d$-plane $P_Q$ through $x_Q$ so that $d_{B}(E,P_Q) \leq 2\ve.$ Choose a collection of point $z_i, \ i = 1,\dots,d$, in $P_Q \cap (B \setminus \tfrac{1}{2}B)$ so that the vectors $z_i - x_Q$ are mutually orthogonal. Since $d_{B}(E,P_Q) \leq 2\ve,$ there exists $y_i \in E \cap \tfrac{3}{4}B$ so that $|y_i - z_i| \leq 2\ve A \ell(Q)$. By maximality, there exists $Q_i \in \mathscr{D}_{k^*}$ so that $x_{Q_i} \in B$ and $|y_i - x_{Q_i}| \leq \ell(Q_i) = 5\rho^{k^*} \leq \ve \ell(Q).$ Let $P_{\tilde{Q}} \subseteq \R^n$ be the $d$-plane spanned by the vectors $x_{Q_i} - x_Q.$ Since, by the triangle inequality, $|z_i - x_{Q_i}| \lesssim \ve \ell(Q)$, it follows that
	\begin{align}\label{e:nearend}
		d_{B}(P_Q,P_{\tilde{Q}}) \lesssim \ve.
	\end{align} 
	
	Let us now estimate $d_{B/2}(\tilde{E},P_{\tilde{Q}}).$ Let $y \in \tilde{E} \cap \tfrac{1}{2}B$, we will find $z \in P_{\tilde{Q}} \cap  \tfrac{1}{2}B$ so that $|y-z| \lesssim \ve \ell(Q).$ First, since the centres of cubes in $\mathscr{D}_{k^*}$ are contained in $E$ and form a maximally $\rho^{k^*}$ separated net for $\tilde{E},$ we can find $y_1 \in E \cap (\tfrac{1}{2} + \ve)B$ so that $|y-y_1| \leq \rho^{k^*} \leq \ve \ell(Q).$ Since $d_{B}(E,P_Q) < 2\ve$, there exists $y_2 \in P_Q \cap (\tfrac{1}{2} + 3\ve)B$ so that $|y_1 - y_2| \leq 2\ve A \ell(Q).$ By \eqref{e:nearend}, we can find $y_3 \in P_{\tilde{Q}} \cap (\tfrac{1}{2} + C \ve)B$ so that $|y_2 - y_3| \lesssim \ve\ell(Q).$ Finally, by considering the line segment joining $y_3$ and $x_{\tilde{Q}}$, we can find a point $z \in P_{\tilde{Q}} \cap \tfrac{1}{2}B$ so that $|y_3 - y_z| \leq CA\ve \ell(Q).$ Combining the above chain of inequalities shows $z \in P_{\tilde{Q}} \cap \tfrac{1}{2}B$ is the required point. 
	
	A similar argument (essentially working backwards) shows that for each $z \in P_{\tilde{Q}} \cap \tfrac{1}{2}B$ we can find $y \in \tilde{E} \cap \tfrac{1}{2}B$ so that $|y-z| \lesssim \ve \ell(Q).$ This concludes the argument.

\end{proof}

We can now prove Lemma \ref{l:Euclidean}. 

\begin{proof}[Proof of Lemma \ref{l:Euclidean}] 
	Recall that it suffices to show \eqref{e:red1.5}. Let $\mathscr{D}_{\text{Bad}}$ denote the set of cubes $Q$ in $\mathscr{D}^k(Q_0)$ for which there exists a point $y \in E \cap 3C_0B_Q \setminus Q_0,$ and let $\mathscr{G} = \mathscr{D}^k(Q_0) \setminus \mathscr{D}_{\text{Bad}}.$ Then, the left-hand side of \eqref{e:red1.5} can be written as 
	\[ \sum_{Q \in \mathscr{D}_{\text{Bad}}}\beta^{d,p}_E(C_0B_Q)^2\ell(Q)^d + \sum_{Q \in \mathscr{G}} \beta^{d,p}_E(C_0B_Q)^2\ell(Q)^d.\]
	
	Let us estimate the sum over $\mathscr{D}_{\text{Bad}}$. First, we claim that the collection of balls $\tfrac{c_0}{2}B_Q,$ $Q \in \mathscr{D}_{\text{Bad}}$, have bounded overlap. To see this, suppose $Q,R \in \mathscr{D}_\text{Bad}$ are such that $\tfrac{c_0}{2}B_Q \cap \tfrac{c_0}{2}B_R \neq \emptyset.$ The claim will follow once we show $\ell(Q) \sim \ell(R)$ since the balls $c_0B_Q$ are disjoint for cubes in the same generation. Assume without loss of generality that $\ell(Q) \leq \ell(R).$ Assume towards a contradiction that $\ell(Q) \leq \tfrac{c_0}{12C_0} \ell(R).$ It follows that $E \cap 3C_0B_Q \subseteq E \cap c_0B_R$ since for any $y \in 3C_0B_Q,$  
	\begin{align}
		|y - x_R| \leq |y-x_Q| + |x_Q - x_R| \leq 3C_0\ell(Q) + \frac{c_0}{2} \ell(Q) + \frac{c_0}{2}\ell(R) \leq c_0\ell(R).
	\end{align} 
	But this means that $E \cap 3C_0B_Q \subseteq Q_0$ which contradicts the definition of $\mathscr{D}_\text{Bad}.$ Using this, we have 
	\begin{align}
		\sum_{Q \in \mathscr{D}_\text{Bad}} \beta^{d,p}_E(C_0B_Q)^2\ell(Q)^d \lesssim \sum_{Q \in \mathscr{D}_\text{Bad}} \ell(Q)^d \lesssim \sum_{Q \in \mathscr{D}_\text{Bad}} \mathscr{H}^d(E \cap c_0B_Q) \lesssim \mathscr{H}^d(Q_0). 
	\end{align}  
	
	We turn our attention to the sum over $\mathscr{G}.$ It will be useful to note (for later on) that if $Q \in \mathscr{G}$ then $E \cap 3C_0B_Q \subseteq Q_0.$ We begin by applying Lemma \ref{betaest},  
	
	\begin{align}
		\sum_{Q \in \mathscr{G}} \beta^{d,p}_E(C_0B_Q)^2\ell(Q)^d &\lesssim \sum_{Q \in \mathscr{G}} \beta^{d,p}_{\tilde{E}}(2C_0B_Q)^2\ell(Q)^d \\
		&\hspace{4em}+ \sum_{Q \in \mathscr{G}} \left( \frac{1}{\ell(Q)^d} \int_{E \cap 2C_0B_Q} \left(\frac{\dist(y,\tilde{E})}{\ell(Q)} \right)^p \, d\mathscr{H}^d_\infty \right)^\frac{2}{p} \ell(Q)^d \\
		&\eqqcolon I_1 + I_2. 
	\end{align}
	By construction, for each $Q \in \mathscr{G}$ there exists a unique $\tilde{Q} \in\tilde{\mathscr{D}}$ so that $2C_0B_Q = 2C_0B_{\tilde{Q}}.$ Using this along with Theorem \ref{Thm3}, Lemma \ref{l:measure}, and Lemma \ref{l:BWGL}, we have
	\begin{align}
		I_1  \leq \sum_{m=0}^k\sum_{Q \in \tilde{\mathscr{D}}_m}\beta^{d,p}_{\tilde{E}}(2C_0B_Q)^2\ell(Q)^d &\lesssim \mathscr{H}^d(\tilde{Q}_0) + \sum_{\substack{Q \in \BWGL(\frac{1}{2}A,c\ve) \cap \tilde{\mathscr{D}}^k(\tilde{Q}_0)}} \ell(Q)^d \\ 
		&\lesssim \mathscr{H}^d(Q_0) +\sum_{\substack{Q \in \BWGL(A,\ve) \cap \mathscr{D}^k(Q_0)}} \ell(Q)^d  \\
		&\leq \mathscr{H}^d(Q_0) + \BWGL(Q_0,A,\ve).
	\end{align}
	We are left to estimate $I_2.$ We claim 
	\begin{align}\label{e:I_2last}
		I_2 \lesssim \mathscr{H}^d(Q_0) + \BWGL(Q_0,A,\ve).
	\end{align}
	Indeed, notice that
	\begin{align}
		I_2 \leq \sum_{Q \in \mathscr{G}\setminus \BWGL(A,\ve)} \left( \frac{1}{\ell(Q)^d} \int_{E \cap 2C_0B_Q} \left(\frac{\dist(y,\tilde{E})}{\ell(Q)} \right)^p \, d\mathscr{H}^d_\infty \right)^\frac{2}{p} \ell(Q)^d  + \BWGL(Q_0,A,\ve) \\ 
	\end{align}
	Denoting the first term above by $I_3,$ the proof of \eqref{e:I_2last} will be complete once we show
	\[ I_3 \lesssim \mathscr{H}^d(Q_0). \]
	Let $\mathscr{G}' = \mathscr{G} \setminus \BWGL(A,\ve)$ and for $m \in \Z$ set $\mathscr{G}_m' = \mathscr{G}' \cap \mathscr{D}_m.$ First, we may assume $p \geq 2$ by Lemma \ref{c:beta}.  Notice, if $y \in 2C_0B_R$ for some $R \in \mathscr{D}_k$ then $\dist(y,\tilde{E}) \lesssim \ell(R)$ since $x_R \in \tilde{E}.$ Hence, 
	\begin{align}\label{e:red2}
		I_3 &\lesssim \sum_{m=0}^{k} \sum_{Q \in \mathscr{G}_m'} \left( \frac{1}{\ell(Q)^d} \sum_{\substack{R \in \mathscr{D}_k \\ B_R \cap 2C_0B_Q \not=\emptyset}}  \int_{E \cap B_R}  \left(\frac{\dist(y,\tilde{E})}{\ell(Q)} \right)^p \, d\mathscr{H}^d_\infty \right)^\frac{2}{p} \ell(Q)^d \\
		&\lesssim \sum_{m=0}^k \sum_{Q \in \mathscr{G}_m'} \left( \sum_{\substack{R \in \mathscr{D}_k \\ B_R \cap 2C_0B_Q \not=\emptyset}} \frac{\ell(R)^{d+p}}{\ell(Q)^{d+p}}  \right)^\frac{2}{p} \ell(Q)^d \lesssim \sum_{m=0}^k \sum_{Q \in \mathscr{G}_m'} \sum_{\substack{R \in \mathscr{D}_k \\ B_R \cap 2C_0B_Q \not=\emptyset}} \frac{\ell(R)^{d\frac{2}{p} +2}}{\ell(Q)^{d(\frac{2}{p} -1) +2}}
	\end{align}  
	
	Let $R \in \mathscr{D}_k$. For $m \leq k$, let $\mathscr{G}_{R,m}'$ be the set of cubes $Q$ in $\mathscr{G}_m'$ so that $B_R \cap 2C_0B_Q \not=\emptyset.$ We will show 
	\begin{align}\label{e:red4}
		\# \mathscr{G}_{R,m}' \lesssim 1.
	\end{align}
	If $\mathscr{G}_{R,m}'$ is empty then there is nothing to show. Let us assume it is not empty, and fix some $Q^* \in \mathscr{G}_{R,m}'.$ For any other $Q \in \mathscr{G}_{R,m}'$, since $B_R \cap 2C_0B_Q \neq \emptyset$ and $\ell(R) \leq \ell(Q),$ it follows that $c_0B_Q \subseteq B \coloneqq 6C_0B_{Q^*}.$ Also, since $A > 6C_0$ and $Q \not\in \BWGL(A,\ve)$ (by definition of $\mathscr{G}'$) we can find a $d$-plane $P$ through $x_B$ so that $d_B(E,P) \lesssim \ve.$ Thus, for $\ve > 0$ small enough, the balls $c_0B_Q, \ Q \in \mathscr{G}_{R,m}',$ satisfy the conditions of Lemma \ref{ENV}, and we conclude \eqref{e:red4} since 
	\begin{align}
		\# \mathscr{G}_{R,m}' \rho^{md} \lesssim \sum_{Q \in \mathscr{G}_{R,m}'} \ell(Q)^d \lesssim \rho^{md}.
	\end{align}
	
	Let us return to $I_3.$ Notice, if $R \in \mathscr{D}_k$, $m \leq k$, and $Q \in \mathscr{G}'_m$ is such that $B_R \cap 2C_0B_Q \not=\emptyset$, then $E \cap B_R \subseteq E \cap 3C_0B_Q \subseteq Q_0.$ Thus, continuing from \eqref{e:red2} by swapping the order of summation, using \eqref{e:red4}, and summing over a geometric series, we have 
	\begin{align}
		I_3 \lesssim  \sum_{\substack{R \in \mathscr{D}_k \\ R \subseteq Q_0}} \sum_{m=0}^k \sum_{\substack{Q \in \mathscr{G}_m' \\ B_R \cap 2C_0B_Q \not=\emptyset}} \frac{\ell(R)^{d\frac{2}{p} +2}}{\ell(Q)^{d(\frac{2}{p} -1) +2}} \lesssim \sum_{\substack{R \in \mathscr{D}_k \\ R \subseteq Q_0}} \ell(R)^d \lesssim  \sum_{\substack{R \in \mathscr{D}_k \\ R \subseteq Q_0}} \mathscr{H}^d(E \cap c_0B_R) \leq \mathscr{H}^d(Q_0). 
	\end{align}

\end{proof}

\bibliography{Ref}
\bibliographystyle{alpha}

\end{document}